\tikzset{
 /tikz/commutative diagrams/arrow style = tikz }
\title{On Global Deformations of Quartic Double Solids}
\author{Tobias Dorsch}
\thanks{This paper is an abridged, restructured version of my PhD thesis. I would like to thank my advisor Thomas Peternell for many helpful discussions and continuous support. I would also like to thank Vlad Lazić for many comments, suggestions and insistences concerning the presentation. This work was funded by the DFG-Forschergruppe 790 ``Classification of Algebraic Surfaces and Compact Complex Manifolds''.}
\numberwithin{equation}{section}
\newtheorem{thm}{Theorem}[section]
\newtheorem{prop}[thm]{Proposition}
\newtheorem{lem}[thm]{Lemma}
\newtheorem{assum}[thm]{Assumption}
\newtheorem{stmt}[thm]{Statement}
\newtheorem{que}[thm]{Question}
\newcounter{clm}[thm]
\newtheorem{claim}[clm]{Claim}
\theoremstyle{definition}
\newtheorem{defn}[thm]{Definition}
\newtheorem{rem}[thm]{Remark}
\crefname{assum}{Assumption}{Assumptions}
\crefname{stmt}{Statement}{Statements}
\crefname{thm}{Theorem}{Theorems}
\crefname{lem}{Lemma}{Lemmas}
\crefname{lemdef}{Lemma}{Lemmas}
\crefname{prop}{Proposition}{Propositions}
\crefname{claim}{Claim}{Claims}
\crefname{notation}{Notation}{Notations}
\crefname{que}{Question}{Questions}
\crefname{section}{Section}{Sections}
\newcommand{\C}{\mathbb{C}}
\newcommand{\Q}{\mathbb{Q}}
\newcommand{\Z}{\mathbb{Z}}
\newcommand{\N}{\mathbb{N}}
\newcommand{\Proj}[1]{\mathbb{P}(#1)}
\newcommand{\Pn}[1]{\mathbb{P}^{#1}}
\newcommand{\Qn}[1]{Q^{#1}}
\newcommand{\hir}[1]{\mathbb{F}_{#1}}
\newcommand{\idealf}[1]{\mathcal{#1}}
\newcommand{\ideal}[1]{\idealf{I}_{#1}}
\newcommand{\Pic}[1]{\operatorname{Pic}(#1)}
\newcommand{\sO}[2]{\mathcal{O}_{#1}(#2)}
\newcommand{\sg}[1]{\mathcal{O}_{#1}}
\newcommand{\can}[1]{K_{#1}}
\newcommand{\dualizing}[1]{\omega_{#1}}
\newcommand{\conormal}[2]{N^{\vee}_{#1/#2}}
\newcommand{\eChar}[2]{\chi(#1,#2)}
\newcommand{\bigX}{\mathfrak{X}}
\newcommand{\bigL}{\mathcal{L}}
\newcommand{\Com}[3]{H^{#1}(#2,#3)}
\newcommand{\com}[3]{h^{#1}(#2,#3)}
\newcommand{\betti}[2]{b_{#1}(#2)}
\newcommand{\cCoh}[3]{H_c^{#1}(#2,#3)}
\newcommand{\hdi}[3]{R^{#1}{#2}_*#3}
\newcommand{\bs}[1]{\operatorname{Bs}(#1)}
\newcommand{\exc}[1]{\operatorname{Exc}(#1)}
\newcommand{\supp}[1]{\operatorname{supp}(#1)}
\newcommand{\CInf}{C_{\infty}}
\newcommand{\eInf}{{\mathsf{e}}_{\infty}}
\newcommand{\e}{\mathsf e}
\newcommand{\f}{\mathsf f}
\newcommand{\sing}[1]{\operatorname{Sing}(#1)}
\newcommand{\mult}[2]{\operatorname{mult}_{#1}{#2}}
\newcommand{\fun}[3]{#1\colon #2 \to #3}
\newcommand{\deq}{: \hspace{-0.02cm}=}
\begin{document}
\begin{abstract}
It is shown that a smooth global deformation of quartic double solids, i.e.\ double covers of $\Pn 3$ branched along smooth quartics, is again a quartic double solid without assuming the projectivity of the global deformation. The analogous result for smooth intersections of two quadrics in $\Pn 5$ is also shown, which is, however, much easier. \par
In a weak form this extends results of J.\ Kollár and I.\ Nakamura on Moishezon manifolds that are homeomorphic to certain Fano threefolds and it gives some further evidence for the question whether global deformations of Fano manifolds of Picard rank $1$ are Fano themselves. 
\end{abstract}
\maketitle
\section{Introduction}
The aim of this paper is to show a rigidity result for certain Fano $3$-folds. Mainly we want to show that a smooth global deformation of quartic double solids is again a Fano manifold. More precisely the main result is the following theorem.
\begin{thm}\label{mainThm}
 Let $\fun{\pi}{\bigX}{\Delta}$ be a smooth family of compact complex manifolds over the unit disc $\Delta \subset \C$. Suppose that there is a sequence $(s_n)_{n \in \N}$, $s_n \in \Delta$, with $s_n \to 0$ such that the fibres $\bigX_{s_n} = \pi^{-1}(s_n)$ are Fano manifolds for all $n \in \N$ and suppose that for one $n \in \N$ (hence for all) the Fano manifold $\bigX_{s_n}$ is a quartic double solid, i.e.\ a double cover of $\Pn 3$ branched along a smooth quartic. \par
 Then $\bigX_0 = \pi^{-1}(0)$ is a Fano manifold (and again a quartic double solid).
\end{thm}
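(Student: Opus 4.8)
\emph{Stage 1.} The plan is to proceed in three stages: make $\bigX_0$ Moishezon with big anticanonical class by semicontinuity; then --- the hard part --- upgrade bigness of $-\can{\bigX_0}$ to nefness, hence (since $\rho(\bigX_0)=1$) to ampleness; and finally identify the resulting Fano threefold. Recall that a quartic double solid $f\colon X\to\Pn3$ is the del Pezzo threefold of degree $2$: one has $-\can X=2H$ with the fundamental divisor $H=f^*\sO{\Pn3}1$, $H^3=2$, $(-\can X)^3=16$, $\com0XH=4$, $f=\phi_{|H|}$, and $f_*\sg X=\sg{\Pn3}\oplus\sO{\Pn3}{-2}$. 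The relative anticanonical bundle $\dualizing{\bigX/\Delta}^{-1}$ is a $\pi$-flat line bundle on $\bigX$ restricting to $-\can{\bigX_s}$ on each fibre, so $s\mapsto\com0{\bigX_s}{\sO{\bigX_s}{-m\can{\bigX_s}}}$ is upper semicontinuous for every $m$. Since $\com0X{\sO X{-m\can X}}=\com0{\Pn3}{\sO{}{2m}}+\com0{\Pn3}{\sO{}{2m-2}}$ grows like $\tfrac{16}{6}m^3$, semicontinuity forces $\com0{\bigX_0}{\sO{\bigX_0}{-m\can{\bigX_0}}}$ to grow at least as fast; hence $-\can{\bigX_0}$ is big and $\bigX_0$ is Moishezon. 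As $\Delta$ is contractible all fibres are diffeomorphic, so the Betti numbers, $\chi(\sg{\bigX_s})$ and $(-\can{\bigX_s})^3=16$ are constant; in particular $\rho(\bigX_0)\le\betti2{\bigX_0}=1$, with equality since $-\can{\bigX_0}$ is big, and --- as $c_1(-\can{\bigX_0})$ is twice a generator of the torsion-free group $H^2(\bigX_0,\Z)=\Z$, $\Com2{\bigX_0}{\sg{\bigX_0}}$ is torsion-free, and $\betti1{\bigX_0}=0$ --- there is a line bundle $\mathcal H_0\in\Pic{\bigX_0}$ with $\mathcal H_0^{\otimes2}=-\can{\bigX_0}$, which one may realise as the restriction of a line bundle on $\bigX$ extending the fundamental divisors of the nearby Fano fibres.

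\emph{Stage 2 (the main obstacle).} It remains to show $-\can{\bigX_0}$ is nef: then $\rho(\bigX_0)=1$ and bigness make it ample, so $\bigX_0$ is Fano. This is the heart of the matter, and it is where the hypothesis ``$\bigX_0$ smooth'', rather than ``$\bigX$ projective'', does its work; it is also why this case is much harder than that of intersections of two quadrics, where the fundamental divisor is very ample so that $|H_0|$ on the limit embeds $\bigX_0$ into $\Pn5$ almost formally, whereas for a quartic double solid $H$ is only base-point free and one must reconstruct the $2{:}1$ cover on a threefold that is only Moishezon and possibly not Kähler. Two approaches are natural. One runs the cone and contraction theorem for the smooth Moishezon threefold $\bigX_0$: bigness of $-\can{\bigX_0}$ makes $\can{\bigX_0}$ non-nef, and Mori's classification of extremal contractions of smooth threefolds leaves only fibre-type and divisorial ones; a divisorial contraction would produce a Moishezon threefold of Picard number $0$, which is impossible, so the contraction has fibre type, and $\rho(\bigX_0)=1$ forces its base to be a point, i.e.\ $\bigX_0$ is Fano. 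The other, which avoids the Moishezon minimal model programme, studies $|\mathcal H_0|$ directly: one shows $\com0{\bigX_0}{\mathcal H_0}=4$ and that $|\mathcal H_0|$ is base-point free, so that $\phi_{|\mathcal H_0|}\colon\bigX_0\to\Pn3$ is a finite morphism of degree $(\mathcal H_0)^3=2$. Along either route the genuine difficulties --- which I expect to form the bulk of the proof --- are: the vanishings $\com i{\bigX_0}{\mathcal H_0}=0$ and $\com i{\bigX_0}{-\can{\bigX_0}}=0$ for $i>0$, which fix the relevant section spaces but cannot be quoted from Kodaira--Kawamata--Viehweg without a Kähler hypothesis and must instead be extracted from a relative argument over $\Delta$ using the nearby Fano fibres; and ruling out the degenerate possibilities --- base points of $|\mathcal H_0|$, a divisor contracted by the anticanonical map, a collapse of the double-cover structure --- for which the decisive leverage is again smoothness of $\bigX_0$, since a double cover of $\Pn3$ branched along a singular quartic, and any nontrivial bimeromorphic modification of such a cover, is singular.

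\emph{Stage 3.} Once $\bigX_0\to\Pn3$ is a finite double cover, $\phi_*\sg{\bigX_0}=\sg{\Pn3}\oplus\sO{\Pn3}{-2}$ puts the branch divisor in $|\sO{\Pn3}4|$, and it is smooth because $\bigX_0$ is; so $\bigX_0$ is a quartic double solid, in particular Fano, which is the assertion of \cref{mainThm}. (Alternatively, given only that $\bigX_0$ is Fano with $\rho=1$ and $(-\can{\bigX_0})^3=16$, the deformation-invariant Betti numbers --- in particular $\betti3{\bigX_0}=20$ --- single out the quartic double solids among Fano threefolds of Picard number one via their classification.)
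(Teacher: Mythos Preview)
Your Stage~1 is correct and matches the paper. Stage~3 is also fine once Stage~2 is in hand. The gap is Stage~2: neither of your two routes works as stated, and both miss the mechanism the paper actually uses.

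Route~(a) is not available. Mori's cone theorem and his classification of extremal contractions require projectivity; there is no Moishezon MMP you can invoke here. Indeed the whole point of the theorem is that $\bigX_0$ is a priori only Moishezon, and the paper gives (Section~4) an explicit smooth Moishezon threefold with $\Pic X=\Z L$, $-\can X=2L$, $L^3=2$, $\com0XL=4$ on which $L$ is \emph{not} ample. So smoothness plus the numerical data cannot by themselves force nefness of $-\can{\bigX_0}$.

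Route~(b) has the right shape but the wrong expected input. You anticipate that base-point-freeness of $|\mathcal H_0|$ will come from cohomology vanishings plus ``smoothness rules out degenerate branch loci''. It does not: the same example above shows $|L|$ can have a genuine one-dimensional base locus on a smooth threefold. What the paper does instead is invoke Koll\'ar's structure theorem: if $\com0XL\geq4$ and $|L|$ is not base-point free, then the induced rational map is \emph{birational} onto $\Pn3$ or onto a smooth quadric $\Qn3$. The bulk of the paper (\cref{B3SEQ4,B3SEQ5}) is then a detailed geometric analysis showing that in either of these hypothetical cases one would have $\betti3X\leq12$. Since $\betti3{\bigX_0}=\betti3{V_2}=20$ by Ehresmann, both cases are excluded and $|L|$ is base-point free. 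You mention $\betti3=20$ only in Stage~3 as a classification tag; in fact it is the decisive obstruction used in Stage~2.

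Finally, even after base-point-freeness you still need to show the morphism $\Phi$ is finite. This is \cref{bpf-ample} in the paper and is itself nontrivial: one uses that a $\can{}$-trivial smooth rational curve in $\bigX_0$ would deform (in the total space $\bigX$) into the nearby Fano fibres, contradicting ampleness of $-\can{\bigX_{s_n}}$. This is where the global-deformation hypothesis, as opposed to ``Moishezon and homeomorphic to $V_2$'', is actually used.
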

Given the assumptions of \cref{mainThm} we call $\bigX_0$ a global deformation of quartic double solids and similarly for other classes of (Fano) manifolds. \par 
The following proposition is much easier to prove than \cref{mainThm}.
\begin{prop}\label{easyProp}
A global deformation of Fano threefolds that are intersections of two quadrics in $\Pn 5$ is a Fano manifold (and again an intersection of two quadrics in $\Pn 5$).
\end{prop}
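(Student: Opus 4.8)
The plan is to prove that $-\can{\bigX_0}$ is ample; the rest of the statement then follows. Since $\pi$ is a smooth proper submersion over the contractible disc $\Delta$, Ehresmann's theorem identifies $\bigX_0$ diffeomorphically with the Fano fibres $\bigX_{s_n}$, so $\bigX_0$ is simply connected with $\betti 2{\bigX_0}=1$; writing $h_0$ for the generator of $\Com 2{\bigX_0}{\Z}$ corresponding to the fundamental divisor class of $\bigX_{s_n}$, one has $h_0^3=4$ and $-\can{\bigX_0}=c_1(\bigX_0)=2h_0$, and in particular the N\'eron--Severi group of $\bigX_0$ has rank one. Once $-\can{\bigX_0}$ is known to be ample, $\bigX_0$ is a Fano threefold of Picard number one, index two and fundamental degree four, and by the classification of del Pezzo threefolds it is an intersection of two quadrics in $\Pn 5$.

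The first step is to put many pluri-anticanonical sections on $\bigX_0$. The line bundle $\dualizing{\bigX/\Delta}^{\otimes(-m)}$ restricts on each $\bigX_{s_n}$ to $-m\can{\bigX_{s_n}}$, whose higher cohomology vanishes by Kodaira vanishing, so $\com 0{\bigX_{s_n}}{-m\can{\bigX_{s_n}}}=\eChar{\bigX_{s_n}}{-m\can{\bigX_{s_n}}}$ is a cubic polynomial in $m$ with positive leading coefficient $\tfrac16(-\can{\bigX_{s_n}})^3=\tfrac{16}{3}$. By upper semicontinuity of $h^0$ along the sequence $s_n\to 0$ this forces $\com 0{\bigX_0}{-m\can{\bigX_0}}\ge\eChar{\bigX_{s_n}}{-m\can{\bigX_{s_n}}}$ for every $m\ge 0$. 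Hence $-\can{\bigX_0}$ is big, and in particular $\bigX_0$ is Moishezon.

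It remains to upgrade bigness to ampleness of $-\can{\bigX_0}$, and the heart of the matter is the nef-ness of $-\can{\bigX_0}$. Granting that, $-\can{\bigX_0}$ is a nef and big divisor on the smooth Moishezon threefold $\bigX_0$, so by the base-point-free theorem it is semiample; being moreover big, the associated morphism $\fun{\varphi}{\bigX_0}{Y}$ is birational onto a normal projective threefold with $-\can{\bigX_0}=\varphi^{*}A$ for some ample $A$. A curve contracted by $\varphi$ would meet $-\can{\bigX_0}=\varphi^{*}A$ in degree zero, and since $\betti 2{\bigX_0}=1$ this forces $(-\can{\bigX_0})^3=0$, contradicting bigness. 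So $\varphi$ is finite, hence an isomorphism (being birational onto a normal variety), and $\bigX_0\cong Y$ is projective with $-\can{\bigX_0}$ ample.

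Thus the one real obstacle is the nef-ness of $-\can{\bigX_0}$ --- equivalently, the projectivity (or K\"ahlerianity) of $\bigX_0$ --- and this is where intersections of two quadrics are genuinely easier. On the nearby Fano fibres the fundamental divisor itself is very ample, and the anticanonical ring is the completely explicit graded ring $\C[x_0,\dots,x_5]/(Q_1,Q_2)$, generated in the lowest degree and cut out by two quadrics. One can therefore bypass the abstract positivity question and instead analyse the (fundamental, respectively anticanonical) linear system on $\bigX_0$ directly --- its size being controlled by the section estimates above together with the constancy of Euler characteristics --- and exhibit the embedding $\bigX_0\hookrightarrow\Pn 5$ by comparison with $\C[x_0,\dots,x_5]/(Q_1,Q_2)$. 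It is exactly this kind of argument that breaks down for quartic double solids, where the fundamental divisor is not very ample and the anticanonical model is a subtler double cover, which is why \cref{easyProp} is much easier than \cref{mainThm}.
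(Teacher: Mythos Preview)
Your write-up identifies the correct difficulty but does not actually overcome it. Everything up to and including the bigness of $-\can{\bigX_0}$ is fine and matches the paper. The trouble starts when you try to pass from big to ample. You invoke the base-point-free theorem for a nef and big divisor, but at this stage $\bigX_0$ is only known to be Moishezon, not projective (or K\"ahler), so the standard base-point-free theorem is not available; indeed, establishing projectivity is precisely the content of the proposition. You then acknowledge that nef-ness is ``the one real obstacle'' and propose to ``analyse the linear system directly'' and ``exhibit the embedding by comparison with $\C[x_0,\dots,x_5]/(Q_1,Q_2)$'', but this last paragraph is not an argument: nothing is proved about the base locus of $|L|$ on $\bigX_0$, and no embedding is produced. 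So the proof, as written, is incomplete at exactly the point that matters.

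The paper proceeds differently and avoids the circularity. With $\Pic{\bigX_0}=\Z L$, $-\can{\bigX_0}=2L$, $L$ big, one has $\com 0{\bigX_0}{L}\ge 6$ by semicontinuity. Koll\'ar's structure theorem for such Moishezon threefolds (\cref{kollarThm}) says that if $|L|$ fails to be globally generated then $\com 0{\bigX_0}{L}\in\{4,5\}$; hence here $|L|$ is base-point free without any nef-ness assumption. The induced morphism $\Phi$ then has three-dimensional image (using \cref{div-curv}), contracts no divisor since $\Pic{\bigX_0}\simeq\Z$, and contracts no curve either: any contracted curve would be a smooth rational $C$ with $\can{\bigX_0}\cdot C=0$, and such a $C$ is ruled out by a deformation-theoretic argument inside the family (\cref{noSRC}, using \cref{notInfinite}). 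Thus $\Phi$ is finite and $L=\Phi^{*}\sO{\Pn 5}{1}$ is ample. Note in particular that the finiteness step is \emph{not} the $b_2=1$ numerical trick you sketch (which would also need $[C]\neq 0$ in $H_2$), but genuinely uses that $\bigX_0$ sits in the family $\bigX\to\Delta$.
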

Of course, both results are essentially obvious if we additionally assume that all fibres of $\pi$ are projective. They are motivated by the following more general question. 
\begin{que}\label{q1} 
Is a global deformation of Fano manifolds with Picard number $1$ again a Fano manifold?
\end{que}
If the Picard number is greater than $1$, this is in general false, as can already be seen from the example of $\Pn 1 \times \Pn 1$ degenerating to the second Hirzebruch surface $\hir 2$. \par 
Looking again at Fano manifolds with Picard number $1$, there are some indications that there is a positive answer to \cref{q1}: Y.-T. Siu showed that, for any $n$, a global deformation of projective spaces $\Pn n$ is again $\Pn n$ (\cite{siu89},\cite{siu89b},\cite{siu89c}) and J.-M. Hwang showed the analogous result for smooth hyperquadrics $\Qn n$ of dimension $n \geq 3$ (\cite{hwang95}). \par In the view of the result of S. Kobayashi and T. Ochiai that a Fano manifold $X$ of index at least $n = \dim X$ is either $\Qn n$ or $\Pn n$, the results of Siu and Hwang just say that \cref{q1} has a positive answer, if one additionally assumes that the index of the degenerating manifolds is not less than their dimension. \par
In dimension $3$ the following slightly more general question has been studied. 
\begin{que}\label{q2}
If a (compact) Moishezon threefold is homeomorphic to a Fano manifold with Picard number $1$, is it then itself Fano (and of the same type)?
\end{que}
Note that this is, indeed, more general, since a global deformation of Fano manifolds of a certain type is homeomorphic to a Fano manifold of that same type by Ehresmann's theorem and its anticanonical bundle is big, hence the manifold is Moishezon, by the Semi Continuity Theorem. \par 
\cref{q2} has been positively answered by J. Kollár in the case when the manifold is homeomorphic to the intersection of $Gr(2,5) \subset \Pn 9$, in its Plücker embedding, with a linear subspace of codimension $3$ -- and by I. Nakamura in the case when the manifold is homeomorphic to a cubic threefold. \par
\cref{mainThm} gives a positive answer to \cref{q1} under the assumption that $X$ is a global deformation of quartic double solids and \cref{easyProp} under the assumption that it is a global deformation of Fano threefolds that are intersections of two quadrics in $\Pn 5$. \par
Let us briefly survey the proofs. One easily sees that there exists $L \in \Pic X$ such that 
\[ 
\Pic X = \Z L, \quad -\can X = 2L, \quad L \text{ big}
\] 
where we set $X = \bigX_0$. The aim is to show that $L$ is ample and the idea is to show that $|L|$ is base point free and that the induced morphism $\Phi$ is finite. This then certainly implies the claim, since the pullback of an ample line bundle by a finite morphism is ample. \par 
Assuming that $|L|$ is base point free, an easy argument shows the finiteness of $\Phi$ (cf.\ \cref{bpf-ample}): first we show that $\Phi$ is generically finite, i.e.\ that it has $3$-dimensional image. This is essentially because each element $D \in |L|$ is irreducible and because for all distinct divisors $D_1,D_2 \in |L|$, the curve $D_1 \cap D_2$ is connected. Since $\Pic X \simeq \Z$, the map $\Phi$ cannot contract a divisor. Suppose there exists an integral curve $C \subset X$ contracted by $\Phi$. One shows that $C \simeq \Pn 1$ and by definition of $\Phi$ we have $\can X \cdot C = 0$. Now we use the fact that $X$ is a global deformation, i.e.\ that it lives in a smooth family of complex manifolds over the unit disc $\Delta$. The deformation theory of smooth rational curves implies that $C \subset \bigX$ deforms in a family of dimension at least $1$. Since $-\can {\bigX_{s_n}}$ is ample, this shows that there are infinitely many numerically trivial curves in $X = \bigX_0$ and it is known that this is a contradiction (cf.\ \cref{notInfinite}). \par 
In order to show that $|L|$ is  base point free, we use a result by Kollár. He gives a structure theorem for Moishezon manifolds $X$ with\footnote{In fact his assumptions are a priori slightly weaker.} 
\[
 \Pic X =  \Z L, \qquad -\can X = 2 L, \qquad L \text{ big}.
\]
We have already seen that if the map $\Phi$ induced by $|L|$ is a morphism, then it has $3$-dimensional image. By a more careful analysis Kollár shows that the same is true in general. He additionally shows that if $\com{0}{X}{L} \geq 4$ and if $\Phi$ is not a morphism, then either 
\begin{enumerate} 
\item $\Phi$ is a bimeromorphism to $\Pn 3$, or 
\item $\Phi$ is a bimeromorphism to a smooth quadric $\Qn 3 \subset \Pn 4$.
\end{enumerate}
In particular, $|L|$ is always base point free if $\com{0}{X}{L} > 5$. In our global deformation setup we have $\com{0}{X}{L} \geq \com{0}{\bigX_{s_n}}{\bigL_{s_n}}$ by upper semi continuity. \par 
If $X$ is a global deformation of intersections of two quadrics in $\Pn 5$, this already shows that $|L|$ is base point free, hence $L$ is ample by what we have seen before, which proves \cref{easyProp}.\par
If $X$ is a global deformation of quartic double solids, we still have $\com 0 X L \geq 4$. In this case we finish the proof of \cref{mainThm} by showing that the cases (1) and (2) from above actually cannot occur. Concerning (2), we closely follow Nakamura's strategy from his proof that a Moishezon manifold homeomorphic to a cubic threefold is a cubic threefold \cite{Nak}. Concerning (1), many of Nakamura's ideas are still helpful, but there are also new phenomena, which need new ideas. \par 
Slightly more precisely, in \cref{quadric} and \cref{projective} we show that if $X$ is a global deformation of quartic double solids and if $\Phi$ is a bimeromorphism to $\Pn 3$ or $\Qn 3$, then  
\[
 \betti 3 X \leq 12.
\]
This is a contradiction to the fact that $\betti 3 X = 20$, which holds by Ehresmann's theorem and because it is true for quartic double solids.  
\section{Notation and preliminary results}
When $X$ is a complex space and $A,B \subset X$ are closed complex subspaces with ideal sheaves $\ideal A, \ideal B \subset \sg X$, we let 
\[
 A \cdot B \qquad \text{resp.} \qquad A + B
\] 
denote the closed complex subspaces corresponding to the ideal sheaves 
\[
 \ideal A + \ideal B \qquad \text{resp.} \qquad \ideal A \cap \ideal B.
\]
When $L$ is a line bundle on a compact complex manifold $X$ and $S$ is some statement, then a phrase like 
\begin{center}
 ``For general $D_1,D_2 \in |L|$, $S(D_1,D_2)$ holds.''
\end{center} 
is supposed to mean 
\begin{center}
``There exists a Zariski-open, Zariski-dense subset $U \subset |L|$ and for each $D_1 \in U$ there exists a Zariski-open, Zariski-dense subset $V_{D_1} \subset |L|$ such that for each $D_2 \in V_{D_1}$, $S(D_1,D_2)$ holds.''
\end{center}
Let us also fix some notation for Hirzebruch surfaces. 
\begin{defn}
Let $e \geq 0$ be an integer. Then we call 
\[
 \hir e = \Proj{ \sg{\Pn{1}} \oplus \sO{\Pn 1}{-e} }
\]
the $e$-th Hirzebruch surface. For the $0$-th Hirzebruch surface $\hir 0 \simeq \Pn{1} \times \Pn{1}$ we let $p_1$, $p_2$ denote the two projections to $\Pn{1}$ and set
\[
 \e = p_1^*\sO{\Pn 1}{1} \qquad \text{and} \qquad \f = p_2^*\sO{\Pn 1}{1}.
\]
For $e \geq 1$, let $\fun{p}{\hir e}{\Pn 1}$ denote the projection and set
\[
 \eInf = \sO{\hir e}{1} \qquad \text{and} \qquad \f = p^*\sO{\Pn 1}{1}.
\]
Note that, for $e \geq 1$, there is a unique curve $\CInf \subset \hir e$ with $\CInf^2 = -e$. This curve is smooth and rational and $\eInf = \sO{\hir e}{\CInf}$.
\end{defn}
The following elementary lemma is well-known. It is included here only because of its second statement which is maybe less well-known but equally elementary.
\begin{lem}
Let $a,b \in \Z$ with $a \geq b$ and set $E = \sO{\Pn 1}{a} \oplus \sO{\Pn 1}{b}$. \par 
Then $\Proj{E} \simeq \hir{a-b}$ and $\sO{\Proj E}{1} = \eInf \otimes a \cdot \f$.
\end{lem}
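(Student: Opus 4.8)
The plan is to deduce both assertions from one standard fact about projectivized bundles: if $E$ is a vector bundle on a variety $Y$, $L$ a line bundle on $Y$, and $\fun{\pi}{\Proj E}{Y}$ the projection, then there is a canonical isomorphism $\Proj{E \otimes L} \simeq \Proj E$ over $Y$ under which $\sO{\Proj{E \otimes L}}{1}$ corresponds to $\sO{\Proj E}{1} \otimes \pi^*L$. This follows immediately from $\operatorname{Sym}^n(E \otimes L) \simeq (\operatorname{Sym}^n E) \otimes L^{\otimes n}$: the two graded algebras have the same $\operatorname{Proj}$, while their tautological sheaves differ by the pullback of $L$; alternatively one may quote it from a standard reference.

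First I would simply rewrite $E$. Setting $e \deq a - b \geq 0$, one has
\[
 E = \sO{\Pn 1}{a} \oplus \sO{\Pn 1}{b} \simeq \bigl( \sg{\Pn 1} \oplus \sO{\Pn 1}{-e} \bigr) \otimes \sO{\Pn 1}{a},
\]
since $\sg{\Pn 1} \otimes \sO{\Pn 1}{a} \simeq \sO{\Pn 1}{a}$ and $\sO{\Pn 1}{-e} \otimes \sO{\Pn 1}{a} \simeq \sO{\Pn 1}{b}$. Applying the fact above with $L = \sO{\Pn 1}{a}$ then gives, on the level of spaces,
\[
 \Proj E \simeq \Proj{\sg{\Pn 1} \oplus \sO{\Pn 1}{-e}} = \hir e = \hir{a-b},
\]
which is the first claim, and simultaneously --- using $\eInf = \sO{\hir e}{1}$ and $\f = p^*\sO{\Pn 1}{1}$ ---
\[
 \sO{\Proj E}{1} \simeq \sO{\hir e}{1} \otimes p^*\sO{\Pn 1}{a} = \eInf \otimes a\cdot\f,
\]
which is the second claim.

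There is no genuine obstacle here; the proof is pure bookkeeping once the twisting fact is in hand, and that fact is the only nontrivial input, so pinning it down (or citing it precisely) is the one point that needs care. The only other thing worth a remark is the boundary case $e = 0$, i.e.\ $a = b$, where $\hir 0 \simeq \Pn 1 \times \Pn 1$: the same computation applies verbatim once $\eInf$ is read as $\sO{\hir 0}{1}$, the pullback of $\sO{\Pn 1}{1}$ under the ruling serving as structure map. As a sanity check on the second claim one can restrict $\eInf \otimes a\cdot\f$ to a fibre of $p$ and to $\CInf$: these restrictions have degrees $1$ and $b$, matching the facts that $\sO{\Proj E}{1}$ has degree $1$ on fibres and restricts along $\CInf$ to the rank-one quotient $\sO{\Pn 1}{b}$ of $E$ defining $\CInf$.
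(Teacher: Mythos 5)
Your proof is correct. The paper states this lemma without proof (it is labelled as well-known and elementary), and your argument via the standard twisting isomorphism $\Proj{E \otimes L} \simeq \Proj{E}$ with $\sO{\Proj{E\otimes L}}{1} = \sO{\Proj E}{1} \otimes \pi^*L$, applied to $L = \sO{\Pn 1}{a}$, is exactly the expected route; the degree checks on a fibre of $p$ and on $\CInf$ (giving $1$ and $b$) confirm the normalization, and your remark on the $e=0$ case resolves the only notational ambiguity.
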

For the lack of a reference, the following result on the pull-back of ample line bundles by finite maps is included. 
\begin{lem}
Let $\fun f X Y$ be a finite morphism of compact complex spaces and let $L$ be an ample line bundle on $Y$. Then $f^*L$ is ample.
\end{lem}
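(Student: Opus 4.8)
The plan is to reduce everything to the corresponding statement on $Y$ by pushing forward along $f$. I will use the cohomological description of ampleness: a line bundle $M$ on a compact complex space $Z$ is ample precisely when, for every coherent sheaf $\mathcal F$ on $Z$, there is an integer $m_0 = m_0(\mathcal F)$ with $\Com q Z {\mathcal F \otimes M^{\otimes m}} = 0$ for all $q \geq 1$ and all $m \geq m_0$ (for compact complex spaces this is equivalent to the more common definition requiring some tensor power to be very ample). Accordingly, I would fix a coherent sheaf $\mathcal G$ on $X$ and show that $\Com q X {\mathcal G \otimes f^*L^{\otimes m}}$ vanishes for all $q \geq 1$ once $m$ is large enough.

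First I would record the relevant properties of the finite morphism $\fun f X Y$. Being finite, $f$ is proper with finite fibres, so the higher direct images $\hdi q f {\mathcal H}$ vanish for all $q \geq 1$ and every coherent sheaf $\mathcal H$ on $X$; moreover $f_*$ is exact and, by Grauert's direct image theorem, sends coherent sheaves to coherent sheaves. The vanishing of the higher direct images makes the Leray spectral sequence for $f$ degenerate, giving $\Com q X {\mathcal H} \cong \Com q Y {f_* \mathcal H}$ for every $q$ and every coherent $\mathcal H$ on $X$.

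Applying this with $\mathcal H = \mathcal G \otimes f^*L^{\otimes m}$ and invoking the projection formula $f_*(\mathcal G \otimes f^*L^{\otimes m}) \cong (f_*\mathcal G) \otimes L^{\otimes m}$ (valid since $L$ is a line bundle), I would obtain
\[
 \Com q X {\mathcal G \otimes f^*L^{\otimes m}} \;\cong\; \Com q Y {(f_*\mathcal G) \otimes L^{\otimes m}}.
\]
Since $f_*\mathcal G$ is a coherent sheaf on $Y$ and $L$ is ample, the right-hand side vanishes for all $q \geq 1$ once $m \geq m_0(f_*\mathcal G)$. Hence the left-hand side vanishes as well, and as $\mathcal G$ was arbitrary this shows that $f^*L$ is ample.

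I do not anticipate a serious obstacle; the one point deserving care is that the cohomological criterion for ampleness and the coherence of finite direct images are being used in the complex-analytic rather than the algebraic category, so one should appeal to Grauert's results for these. If one prefers instead to define ampleness of $M$ by requiring $\mathcal F \otimes M^{\otimes m}$ to be globally generated for $m \gg 0$ and all coherent $\mathcal F$, the same scheme goes through: for a finite morphism the canonical map $f^*f_*\mathcal H \to \mathcal H$ is surjective, so global generation of $f_*(\mathcal G \otimes f^*L^{\otimes m}) \cong (f_*\mathcal G) \otimes L^{\otimes m}$ on $Y$ forces global generation of $\mathcal G \otimes f^*L^{\otimes m}$ on $X$.
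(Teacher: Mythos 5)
Your argument is correct, but it proceeds along a genuinely different route from the paper. The paper invokes Grauert's geometric ampleness criterion: a line bundle $M$ on a compact complex space is ample if and only if for every irreducible closed analytic subset $Z$ of positive dimension some power $M|_Z^{\otimes m}$ has a section vanishing somewhere on $Z$ but not identically; since $f$ is finite, $f(Z)$ still has positive dimension, and the required section is simply pulled back from $f(Z)$. You instead use the cohomological (or global-generation) characterization of ampleness together with the exactness of $f_*$ for finite maps, the degeneration of the Leray spectral sequence ($\hdi{q}{f}{\mathcal H} = 0$ for $q \geq 1$), the projection formula, and the isomorphism $\Com{q}{X}{\mathcal G \otimes f^*L^{\otimes m}} \simeq \Com{q}{Y}{(f_*\mathcal G) \otimes L^{\otimes m}}$. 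The trade-off is one of inputs: the paper's proof needs only Grauert's section-theoretic criterion, applied directly, and is essentially a three-line geometric argument; yours needs the full cohomological (Serre-type) characterization of ampleness transported to the category of compact complex spaces, plus coherence of direct images, but in exchange it is the standard, more structural argument that generalizes verbatim to the scheme-theoretic setting and to affine morphisms. Both are complete; just make sure to cite the analytic versions of the cohomological criterion and of the projection formula, as you yourself flag at the end.
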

\begin{proof}
 Recall Grauert's ampleness criterion \cite[Lemma following Satz 4]{ModUndExz}: a line bundle $M$ on $X$ is ample, if and only if for every irreducible closed analytic subset $Z \subset X$ of positive dimension, there exists $m \in \Z_{>0}$ and a section $s \in \Com{0}{Z}{M|_Z^{\otimes m}}$ that vanishes at some point of $Z$ but not on all of it. \par
So let $Z \subset X$ be an irreducible closed analytic subset with $\dim Z > 0$. Since $f$ is finite, we have $\dim f(Z) > 0$. By assumption $L$ is ample, so there exists $l \in \Z_{>0}$ and a section $s \in \Com{0}{f(Z)}{L|_{f(Z)}^{\otimes m}}$ that vanishes somewhere on $f(Z)$, but not everywhere. Hence $(f|_{Z})^*(s) \in \Com{0}{Z}{(f^*L)|_{Z}^{\otimes m}}$ vanishes somewhere on $Z$ but not everywhere. Thus, by Grauert's criterion, $f^*L$ is ample.    
\end{proof}
\begin{lem}\label{res2}
Let $X$ be a compact complex manifold of dimension $3$ and let $\Delta \subset X$ be an irreducible, reduced divisor. \par
Then there exists an embedded resolution of singularities
\begin{center}
 \begin{tikzpicture}[every node/.style={on grid}]
   \matrix (m) [matrix of math nodes, row sep = 3em, column sep = 3em, text height = 1.5ex, text depth = 0.25ex]
   {
    \hat \Delta & \hat X \\
    \Delta &  X \\ };
    \path[-stealth]
     (m-1-1) edge node [auto] {$h$} (m-2-1)
     (m-1-2) edge node [auto] {$g$} (m-2-2);
     \path[right hook->]
     (m-1-1) edge node [auto] {} (m-1-2)
     (m-2-1) edge node [auto] {} (m-2-2);
 \end{tikzpicture}
\end{center}
and an effective divisor $A$ on $\hat \Delta$ such that 
\[
 \dualizing{\hat \Delta} = h^*\dualizing{\Delta}(-A)
\]
and such that for each irreducible curve $B \subset \exc h$ with $\dim h(B) = 1$, we have 
\[
 B \subset \supp A.
\]
\end{lem}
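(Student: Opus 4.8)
The plan is to apply Hironaka's embedded resolution of singularities, keeping careful track at each blow-up of the discrepancy and of the multiplicity of $\Delta$ along the newly created exceptional divisor, and then to read off $A$ by comparing the adjunction formulas for $\dualizing{\Delta}$ on $X$ and for $\dualizing{\hat\Delta}$ on $\hat X$.

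First I would choose $\fun{g}{\hat X}{X}$ to be a composition of blow-ups $\sigma_i$ ($1\le i\le N$) along smooth centres $Z_i$, in the strong form provided by resolution of singularities: writing $\hat X_i$ for the threefold obtained after $i$ blow-ups (so $\hat X_0=X$, $\hat X_N=\hat X$) and $\Delta^{(i)}\subset\hat X_i$ for the strict transform of $\Delta$, one may arrange that each $Z_i$ is contained in $\sing{\Delta^{(i-1)}}$ and that the final strict transform $\hat\Delta=\Delta^{(N)}\subset\hat X$ is smooth. Then $h=g|_{\hat\Delta}\colon\hat\Delta\to\Delta$ is a resolution, $g$ is an isomorphism over $X\setminus\sing\Delta$, hence $h$ is an isomorphism over $\Delta\setminus\sing\Delta$; moreover, writing $E_i\subset\hat X$ for the strict transform of the exceptional divisor of $\sigma_i$, we have $\exc g=\bigcup_iE_i$ and
\[
 \can{\hat X}=g^*\can X+\textstyle\sum_i\alpha_iE_i,\qquad g^*\Delta=\hat\Delta+\textstyle\sum_i\mu_iE_i
\]
with discrepancies $\alpha_i\ge1$ and $\mu_i\ge0$.

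Next I would invoke the two adjunction isomorphisms $\dualizing{\hat\Delta}\simeq(\dualizing{\hat X}\otimes\sO{\hat X}{\hat\Delta})|_{\hat\Delta}$ and $\dualizing{\Delta}\simeq(\dualizing{X}\otimes\sO{X}{\Delta})|_{\Delta}$ — the latter being valid since $\Delta$ is an effective Cartier divisor in the smooth space $X$. Pulling the second back along $h=g|_{\hat\Delta}$ and substituting the two displayed formulas gives
\[
 \dualizing{\hat\Delta}\simeq h^*\dualizing{\Delta}\otimes\sO{\hat\Delta}{\textstyle\sum_i(\alpha_i-\mu_i)\,E_i|_{\hat\Delta}},
\]
so $A:=\sum_i(\mu_i-\alpha_i)\,E_i|_{\hat\Delta}$ already satisfies $\dualizing{\hat\Delta}=h^*\dualizing{\Delta}(-A)$, and it only remains to see that $A$ is effective, i.e.\ that $\mu_i\ge\alpha_i$ for all $i$. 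I would prove this by induction on $i$: passing from $\hat X_{i-1}$ to $\hat X_i$, the coefficient of the new exceptional divisor in the total transform of $\Delta$ equals $\mult{Z_i}{\Delta^{(i-1)}}+\sum_{j<i}\mu_j\,m_{ij}$, while the corresponding coefficient for the canonical bundle equals $(\codim{Z_i}{\hat X_{i-1}}-1)+\sum_{j<i}\alpha_j\,m_{ij}$, where $m_{ij}$ is the multiplicity along $Z_i$ of the strict transform of $E_j$ in $\hat X_{i-1}$. Since $Z_i\subset\sing{\Delta^{(i-1)}}$ we have $\mult{Z_i}{\Delta^{(i-1)}}\ge2$, while $\codim{Z_i}{\hat X_{i-1}}-1\in\{1,2\}$; together with the inductive hypothesis $\mu_j\ge\alpha_j$ ($j<i$) this yields $\mu_i\ge\alpha_i$, and in fact $\mu_i\ge\alpha_i+1$ whenever $Z_i$ is a curve, because then $\codim{Z_i}{\hat X_{i-1}}-1=1<2\le\mult{Z_i}{\Delta^{(i-1)}}$.

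Finally, for the statement on exceptional curves, I would argue as follows: given an irreducible curve $B\subset\exc h$ with $\dim h(B)=1$, since $h$ is an isomorphism over $\Delta\setminus\sing\Delta$ we have $\exc h\subset\exc g\cap\hat\Delta=\bigcup_iE_i|_{\hat\Delta}$, so $B\subset E_i|_{\hat\Delta}$ for some $i$; as $h(B)\subset g(E_i)$ is $1$-dimensional, $E_i$ is not contracted to a point by $g$, which forces $Z_i$ to be a curve rather than a point, and then $\mu_i>\alpha_i$ by the previous step, so $E_i|_{\hat\Delta}$ occurs in $A$ with positive coefficient and hence $B\subset\supp A$. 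I expect the main obstacle to be the bookkeeping in the inductive step — verifying that the strict transforms of the older exceptional divisors enter the formulas for $\mu_i$ and $\alpha_i$ with exactly the same multiplicities $m_{ij}$ — together with the need to cite resolution of singularities in the precise form (an isomorphism over $\Delta\setminus\sing\Delta$, with all centres inside the singular loci of the successive strict transforms); granting these, the inequalities $\mu_i\ge\alpha_i$ and their sharpening for curve centres follow essentially formally.
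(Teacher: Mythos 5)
Your proposal is correct and follows essentially the same route as the paper, which omits the proof entirely, remarking only that the statement holds for any composition of blow-ups with smooth centres contained in the singular locus of the successive strict transforms and is proved by an easy induction on the number of blow-ups. Your discrepancy/multiplicity bookkeeping ($\mu_i \geq \alpha_i$ in general, with $\mu_i \geq \alpha_i + 1$ for curve centres, which is what forces the $1$-dimensional exceptional curves of $h$ into $\supp{A}$) is exactly that induction carried out.
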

\begin{proof}
This is, in fact, true for any $g$ that is a composition of blow-ups in smooth centers that are contained in the singular locus of (the strict transform of) $\Delta$. \par The proof is by induction on the number of blow-ups and easy, hence omitted.
\end{proof}
\begin{lem}\label{genOneToOne}

Let $S$ be a smooth surface, let $C_1,C_2$ be integral curves and let 
\[
 \fun g S {C_1}, \qquad \fun f S {C_2}
\]
be surjective holomorphic maps. Suppose that there is an integral curve $B_1 \subset S$ such that $f|_{B_1}$ is generically one-to-one and that there is an integral curve $B_2 \subset S$ such that for all but finitely many $p \in C_2$ there is an integral curve $C_p \subset f^{-1}(p)$ that intersects $B_2$ and that is contracted by $g$. \par
If $g|_{B_2}$ is generically one-to-one, then so is $f|_{B_2}$. 
\end{lem}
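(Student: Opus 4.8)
The plan is to pull everything back through the Stein factorisation of $f$ and reduce the statement to multiplicativity of degrees of maps between curves. Write $f = h \circ \bar f$, where $\fun{\bar f}{S}{\bar C}$ is surjective with connected fibres and $\fun{h}{\bar C}{C_2}$ is finite; since $S$ is smooth, $\bar C$ is a smooth integral curve. Using generic smoothness we may shrink the cofinite set of admissible parameters to a cofinite set $C_2' \subseteq C_2$ so that for $p \in C_2'$ the fibre $f^{-1}(p)$ is the disjoint union of the $\deg h$ smooth irreducible fibres of $\bar f$ over the points of $h^{-1}(p)$. Then the integral curve $C_p \subseteq f^{-1}(p)$ must coincide with one of these fibres, say $C_p = \bar f^{-1}(\bar q_p)$, and since $h$ carries $\Sigma \deq \{\bar q_p : p \in C_2'\}$ bijectively onto the dense set $C_2'$, the set $\Sigma$ is dense in $\bar C$. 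Because $g$ contracts every $C_p$, the morphism $g$ is constant along the dense family $\{\bar f^{-1}(\bar q) : \bar q \in \Sigma\}$ of irreducible fibres of $\bar f$; hence $g$ is pulled back from the base and factors as $g = \bar\psi \circ \bar f$ for a morphism $\fun{\bar\psi}{\bar C}{C_1}$ (here one uses that $\bar C$ is a smooth curve and $C_1$ is proper in order to promote the a priori rational $\bar\psi$ to a morphism).

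It then remains to read off two degree identities. Since each $C_p$ meets $B_2$ and $\bar f(C_p) = \{\bar q_p\}$, the point $\bar q_p$ lies in $\bar f(B_2)$; thus $\bar f(B_2) \supseteq \Sigma$ is dense, so $\bar f|_{B_2} \colon B_2 \to \bar C$ is a surjective morphism of integral curves. As $g|_{B_2} = \bar\psi \circ (\bar f|_{B_2})$ and $g|_{B_2}$ is generically one-to-one, multiplicativity of degrees gives $1 = \deg(g|_{B_2}) = \deg(\bar\psi) \cdot \deg(\bar f|_{B_2})$, whence $\deg(\bar f|_{B_2}) = 1$. On the other hand $f|_{B_1}$ is generically one-to-one, hence surjective onto $C_2$, so $\bar f|_{B_1}$ is surjective onto $\bar C$ and $1 = \deg(f|_{B_1}) = \deg(h) \cdot \deg(\bar f|_{B_1})$ forces $\deg h = 1$. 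Therefore $f|_{B_2} = h \circ (\bar f|_{B_2})$ is a surjective morphism of integral curves of degree $\deg(h) \cdot \deg(\bar f|_{B_2}) = 1$, i.e.\ $f|_{B_2}$ is generically one-to-one, which is the assertion.

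The step I expect to need genuine care is the descent $g = \bar\psi \circ \bar f$: one must argue that a morphism constant on a Zariski-dense set of fibres of $\bar f$ really does factor through $\bar f$, and this is exactly where it matters that those fibres are irreducible (not merely connected) and that $\bar C$ is smooth --- concretely, one compares the image of $(\bar f, g) \colon S \to \bar C \times C_1$ with $\bar C$ and shows it is a curve mapping birationally onto $\bar C$. The surrounding bookkeeping --- existence of the Stein factorisation with $\bar C$ smooth, the generic behaviour of the fibres of $f$, and the fact that the restrictions $f|_{B_i}$, $g|_{B_i}$, $\bar f|_{B_i}$ are finite surjective so that degrees and their multiplicativity apply (the surjectivity of $f|_{B_1}$ and $g|_{B_2}$ being forced by their being generically one-to-one) --- is routine. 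Note also that the potential degenerate case in which $f(B_2)$ is a single point requires no separate treatment: the argument already produces $\bar f|_{B_2}$, hence $f|_{B_2}$, as a surjective map.
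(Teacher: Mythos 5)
Your proof is correct. Its first half coincides with the paper's: the hypothesis on $B_1$ forces the Stein factorisation $h$ of $f$ to have degree $1$ (the paper phrases this as ``the general fibre of $f$ is connected''), and smoothness of $S$ then makes the general fibre irreducible, so that $C_p$ is the whole fibre $f^{-1}(p)$ for all but finitely many $p$. Where you diverge is in the concluding step. The paper finishes with a two-line contrapositive requiring no further machinery: if $f|_{B_2}$ were not generically one-to-one, the general fibre $C_p = f^{-1}(p)$ would meet $B_2$ in at least two points, and since $g$ contracts $C_p$ these two points of $B_2$ are identified by $g$, so $g|_{B_2}$ is not generically one-to-one either. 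You instead descend $g$ through $\bar f$ to a morphism $\bar\psi$ on the base and conclude by multiplicativity of degrees. That descent --- which you rightly flag as the delicate point, needing the image of $(\bar f, g)$ to be a curve finite and birational onto the normal curve $\bar C$ --- is sound but entirely avoidable: the only fact the argument ever uses is that $g$ identifies points of $B_2$ lying on a common $C_p$, and that is immediate from ``$g$ contracts $C_p$'' without globalising it to a factorisation. What your route buys in exchange is a little more structure (the explicit factorisation $g = \bar\psi \circ \bar f$ with $\bar\psi$ of degree one, and $h$ an isomorphism) and a conclusion packaged as a clean degree identity; the paper's route is shorter and stays at the level of points.
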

\begin{proof}
The fact that $f|_{B_1}$ is generically one-to-one implies that the general fibre of $f$ is connected. Since $S$ is smooth, the general fibre of $f$ is smooth. In particular, for all but finitely many $p \in C_2$ 
\[
 C_p = f^{-1}(p).
\]
Suppose that a general point $p \in C_2$ has more than one preimage point under the map $f|_{B_2}$. This means that $C_p$ intersects $B_2$ in more than one point. Since $g$ contracts $C_p$, there are two different points in $B_2$ that are mapped by $g$ to the same point. This shows that if $f|_{B_2}$ is not generically one-to-one, then $g|_{B_2}$ is also not.
\end{proof}
\begin{lem}\label{b3leqb1}
Let $S$ be a reduced, compact complex space of dimension $2$, let $C$ be reduced, compact complex space of dimension $1$ and let 
\[
 \fun f S C
\]
be a holomorphic map whose general fibre\footnote{That is, there is a finite set $F \subset C$ such that $f^{-1}(x) \simeq \Pn 1$ for all $x \in S \setminus F$.} is $\Pn 1$ and that has no fibres of dimension $2$. Assume that there exists a line bundle that is non-trivial on the general fibre of $f$. \par
Then the following inequality holds 
\[
 \betti 3 S \leq \betti 1 C.
\]
\end{lem}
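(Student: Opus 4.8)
The plan is to run the Leray spectral sequence of $\fun f S C$ with rational coefficients, to observe that the only term which can contribute to $\betti 3 S$ is $\Com 1 C {R^2 f_* \Q}$, and then to use the given line bundle to bound this group by $\betti 1 C$.

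First I would set $\mathcal F := R^2 f_* \Q$ and look at the Leray spectral sequence
\[
 E_2^{p,q} = \Com p C {R^q f_* \Q} \;\Longrightarrow\; \Com{p+q}{S}{\Q} .
\]
Three vanishing statements cut it down to size. Since $f$ has no fibres of dimension $2$, every fibre has real dimension at most $2$, so $R^q f_* \Q = 0$ for $q \geq 3$. Since the general fibre is $\Pn 1$, hence has no first cohomology, $R^1 f_* \Q$ is supported on a finite set, so it is a skyscraper sheaf and $\Com 2 C {R^1 f_* \Q} = 0$. And $\Com 3 C {f_* \Q} = 0$ because $C$ has covering dimension $2$. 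Reading off the terms with $p+q = 3$, only $E_2^{1,2} = \Com 1 C {\mathcal F}$ survives, so $\betti 3 S \leq \com 1 C {\mathcal F}$.

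Next I would describe $\mathcal F$. By generic local triviality of holomorphic maps there is a Zariski-dense open $V \subseteq C$ with finite complement $Z$ over which $f$ is a topological fibre bundle with fibre $\Pn 1$; since an orientation-preserving homeomorphism of $\Pn 1$ acts trivially on $H^2(\Pn 1, \Q)$, the restriction $\mathcal F|_V$ is the constant sheaf $\Q_V$. With $\fun j V C$ the inclusion, $\mathcal F$ then sits in a short exact sequence $0 \to \mathcal T \to \mathcal F \to \mathcal Q \to 0$, where $\mathcal T$ is the subsheaf of sections supported on $Z$ (a skyscraper sheaf, so it contributes nothing in degrees $\geq 1$) and $\mathcal Q$ is the image of the adjunction map $\mathcal F \to j_* j^* \mathcal F = j_* \Q_V$; thus $\com 1 C {\mathcal F} = \com 1 C {\mathcal Q}$. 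Now the line bundle enters: being non-trivial on the general fibre, it has non-zero degree $d_i$ on the general fibre over each irreducible component $C_i$ of $C$ (the degree on fibres being locally constant and the general fibre being $\Pn 1$), and its first Chern class $c_1(L) \in \Com 2 S \Q$ maps, under the edge homomorphism $\Com 2 S \Q \to \Com 0 C {\mathcal F}$ of the spectral sequence, to the section $\sigma$ of $\mathcal F$ with $\sigma_t = c_1(L|_{F_t})$. By transitivity of restriction, the image of $\sigma$ in $j_* \Q_V$ is the section equal to the constant $d_i \neq 0$ on the part of $V$ over $C_i$; it is therefore nowhere vanishing, so the subsheaf $\mathcal R \subseteq j_* \Q_V$ it generates is isomorphic to $\Q_C$, lies in $\mathcal Q$, and agrees with $\mathcal Q$ over $V$. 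Hence $\mathcal Q / \mathcal R$ is a skyscraper sheaf and $\com 1 C {\mathcal Q}$ is a quotient of $\com 1 C {\Q_C} = \betti 1 C$, so that, stringing the three steps together,
\[
 \betti 3 S \leq \com 1 C {\mathcal F} = \com 1 C {\mathcal Q} \leq \betti 1 C .
\]

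The delicate point is precisely the role of the line bundle in the last step. A priori $\mathcal F = R^2 f_* \Q$ can be strictly larger than $\Q_C$ --- a reducible special fibre $F_t$ contributes a summand $\Q$ for each of its components to $H^2(F_t,\Q) = \mathcal F_t$ --- and it is exactly $c_1(L)$, through the edge homomorphism, that forces $\mathcal F$ to coincide with $\Q_C$ up to a skyscraper sheaf; here it is essential that we work over $\Q$, since the fibre-degree $d_i$ need not be $\pm 1$ but is invertible in $\Q$. I expect the remaining work to be book-keeping: being careful about singular or reducible $C$ (the components of $V$ and the stalks of $j_*\Q_V$ at points of $Z$), and using topological rather than differentiable local triviality of $f$ over $V$, which is forced on us because $S$ may be singular along a multisection of $f$ and then $f$ is nowhere a submersion.
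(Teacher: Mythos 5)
Your argument is correct and follows essentially the same route as the paper: the Leray spectral sequence together with your three vanishing statements reduces everything to $\com{1}{C}{\hdi{2}{f}{\Q_S}}$, and the first Chern class of the line bundle is then used to identify $\hdi{2}{f}{\Q_S}$ with $\Q_C$ up to sheaves supported on finitely many points. The paper packages the final step more tersely as an injection $\Q_C \hookrightarrow \hdi{2}{f}{\Q_S}$ induced by $c_1(L)$ whose cokernel is a skyscraper sheaf (using irreducibility of the general fibre), which is the same idea as your passage through $j_*\Q_V$.
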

\begin{rem}
If $S$ is projective, the last assumption is, of course, empty.
\end{rem}
\begin{proof}
The Leray spectral sequence implies that 
\[
 \betti 3 S \leq \sum_{p=0}^3 \com{3-p}{C}{\hdi{p}{f}{\Q_S}}.
\]
Since the topological dimension of $C$ is $2$ we have 
\[
 \Com{3}{C}{f_*\Q_S} = 0.
\]
The assumption that the general fibre of $f$ is $\Pn 1$ implies that the support of $\hdi{1}{f}{\Q_S}$ is a finite set, hence 
\[
 \Com{2}{C}{\hdi{1}{f}{\Q_S}} = 0.
\]
By the assumption that $f$ does not have any fibres of dimension $2$ we have 
\[
 \hdi{3}{f}{\Q_S} = 0.
\]
Summing up, we have
\begin{align}\label{ineq312}
\betti 3 S \leq \com{1}{C}{\hdi{2}{f}{\Q_S}}.
\end{align}
Let $L$ be a line bundle on $S$ that is non-trivial on a general fibre. Its first Chern class $c_1(L)$ induces an injective morphism of sheaves
\[
 0 \to \Q_C \to \hdi{2}{f}{\Q_S}.
\]
Since the general fibre of $f$ is irreducible, the cokernel of this morphism is a skyscraper sheaf on $C$. Hence we have
\[
 \com{1}{C}{\hdi{2}{f}{\Q_S}} \leq \com{1}{C}{\Q_C} = \betti 1 C. 
\]
Together with (\ref{ineq312}) the claim follows.
\end{proof}
The essential ideas of the following lemma can already be found in the proof of \cite[Lemma 4.7]{Nak}.
\begin{lem}\label{genBlowUp}
Let $X,Y$ be compact complex threefolds, let $\fun f X Y$ be a bimeromorphic holomorphic map, let $E \subset X$ be smooth, irreducible divisor with $E \not\subset \exc f$ and let $C \subset f(E) \cap f(\exc f)$ be an irreducible curve with $C \not\subset f(\exc{f|_{E}})$.
\begin{center}
\begin{tikzpicture}[every node/.style={on grid}]
   \matrix (m) [matrix of math nodes, row sep = 3em, column sep = 3em, text height = 1.5ex, text depth = 0.25ex]
   {
    D & E & X \\
    C & f(E) & Y \\ };
    \path[-stealth]
     (m-1-1) edge node [auto] {bir} (m-2-1)
     (m-1-2) edge node [auto]{bir} (m-2-2)
     (m-1-3) edge node [auto] {$f$} (m-2-3);
     \path[right hook->]
     (m-1-1) edge node [auto] {} (m-1-2)
     (m-1-2) edge node [auto] {} (m-1-3)
     (m-2-1) edge node [auto] {} (m-2-2)
     (m-2-2) edge node [auto] {} (m-2-3);
\end{tikzpicture}
\end{center}
Then there exists precisely one irreducible divisor $\Delta \subset \exc f$ satisfying 
\[
 (f|_{E})_*^{-1}(C) \subset \Delta.
\]
Furthermore, setting $D = (f|_{E})_*^{-1}(C)$, we have $\Delta \cap E \subset D \cup \exc{f|_{E}}$ and $\Delta$ and $E$ intersect transversally in a general point of $D$. \par 
If we additionally assume that
\[
 C \not\subset f(f^*H - \Delta - E)
\]
then, generically over $C$, $f$ is just the blow-up of $Y$ along $C$.  
\end{lem}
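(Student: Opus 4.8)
The plan is to study $f$ locally at a \emph{general} point $d$ of the strict transform $D = (f|_{E})_*^{-1}(C)$; there $f$ will turn out to be ``as simple as possible'' in the direction transverse to $E$, and all three assertions fall out. We may assume $X$ and $Y$ are smooth (as they are in every application), so that $\exc f$ is a divisor and the fibres of $f$ are connected. Since $C$ is an irreducible curve contained in $f(\exc f)$, it equals $f(\Delta_0)$ for some component $\Delta_0$ of $\exc f$, so for general $c\in C$ the fibre $f^{-1}(c)$ contains the positive--dimensional curve $f|_{\Delta_0}^{-1}(c)$. On the other hand $C\not\subset f(\exc{f|_E})$, so for general $c$ the point $d=d_c := (f|_E)^{-1}(c)$ is a well defined point of $E$ near which $f|_E$ is a local biholomorphism, and $D$ is the closure of the set of these $d_c$. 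Now $d\in f^{-1}(c)$, which is connected and has a positive--dimensional component, so $d$ cannot be an isolated point of $f^{-1}(c)$: it lies on a curve contracted by $f$, hence $d\in\exc f$. Since moreover $\mathrm d f_d$ is injective on $T_dE$, this forces $\mathrm d f_d$ to have rank exactly $2$, with $\ker{\mathrm d f_d}$ a line meeting $T_dE$ only in $0$.

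Given the rank condition, near $d$ one can choose holomorphic coordinates $(u_1,u_2,u_3)$ on $X$ with $f=(u_1,u_2,g(u_1,u_2,u_3))$; then $\ker{\mathrm d f_d}=\C\,\partial_{u_3}$ and $g_{u_3}(d)=0$. Let $\Delta$ be any component of $\exc f$ passing through $d$ with $\dim f(\Delta)=1$ -- in particular any component containing $D$, which then satisfies $f(\Delta)=C$. Its local equation $h$ must be independent of $u_3$ (otherwise the projection $\Delta\to\C^2_{(u_1,u_2)}$ would be dominant and $f(\Delta)$ a surface), so $\Delta$ is the zero locus $V(h(u_1,u_2))$ near $d$; consequently the image $f(\Delta)$, projected to $\C^2_{(y_1,y_2)}$ by $\pi\colon (y_1,y_2,y_3)\mapsto(y_1,y_2)$, is exactly the plane curve $\{h=0\}$. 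From this: first, $\Delta$ is recovered near $d$ from its image as $V(\pi^\ast \ideal{\overline{\pi(f(\Delta))}})$, so there is \emph{at most one} component of $\exc f$ through $d$ mapping onto $C$; second, $\partial_{u_3}\in\ker{\mathrm d f_d}\subset T_d\Delta$ while $\partial_{u_3}\notin T_dE$, so $T_d\Delta\neq T_dE$ and $\Delta$, $E$ meet transversally at $d$. Existence of a $\Delta\subset\exc f$ with $D\subset\Delta$ is now immediate: $D$ is irreducible and contained in the finite union of components of $\exc f$, so it lies in one of them, which then maps onto $C$; and uniqueness is the ``at most one'' statement applied at a general $d\in D$.

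For the inclusion $\Delta\cap E\subset D\cup\exc{f|_E}$ I would argue componentwise: if $D'$ is a component of $\Delta\cap E$ not contained in $\exc{f|_E}$, then a general point $p\in D'$ lies in $\Delta$, so $f(p)\in f(\Delta)=C$, and $f|_E$ is a local isomorphism at $p$; since $C\not\subset f(\exc{f|_E})$, the point $f(p)$ avoids the finite set $C\cap f(\exc{f|_E})$, so $(f|_E)^{-1}(f(p))=\{p\}$ and by definition $p\in D$; hence $D'=D$. Finally, for the blow-up statement, localize over a Zariski neighbourhood $U$ of the generic point of $C$. In the coordinates above, with $\Delta$ smooth at the general $d$ one may take $h=u_1$; writing $C=\{(0,t,\gamma(t))\}$ near $f(d)$ one computes $\ideal C=(y_1,y_3-\gamma(y_2))$ and $f^\ast\ideal C=(u_1)=\ideal\Delta$, so $f^\ast\ideal C$ is invertible over $f^{-1}(U)$ and $f$ factors $f^{-1}(U)\xrightarrow{\rho}\blup{C\cap U}{U}\xrightarrow{\sigma}U$ with $\rho$ bimeromorphic. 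The hypothesis $C\not\subset f(f^\ast H-\Delta-E)$ is precisely what guarantees that, over $C$, $\Delta$ is the only exceptional divisor of $f$ and occurs in $f^\ast H$ with multiplicity one; restricting $f$ to a general smooth surface through a general $c\in C$ transverse to $C$ then gives a bimeromorphic morphism of smooth surfaces contracting the single irreducible curve $f^{-1}(c)$ to the smooth point $c$, which must therefore be a single blow-down, so $f^{-1}(c)\simeq\Pn 1$ with self-intersection $-1$; letting $c$ vary shows $\rho$ is an isomorphism over a neighbourhood of the generic point of $C$.

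I expect the main obstacle to be exactly this last step -- converting the opaque condition $C\not\subset f(f^\ast H-\Delta-E)$ into the two clean consequences used above and then running the transverse-slice argument rigorously (in particular arranging a smooth general slice, which involves mild base-locus bookkeeping). A secondary nuisance is checking that a general point of $D$ is a smooth point of $\Delta$ and of $\exc f$, so that the coordinate manipulations and the choice $h=u_1$ are legitimate; this should follow from the rank-$2$ computation of the first paragraph but needs to be stated carefully.
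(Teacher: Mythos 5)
Your treatment of existence, uniqueness, transversality and the inclusion $\Delta\cap E\subset D\cup\exc{f|_E}$ is correct, and it genuinely differs from the paper's argument: the paper never computes ranks of differentials but instead proves the scheme-theoretic identity $\ideal{\Delta,X}|_E=\ideal{D,E}$ away from $\exc{f|_E}$ by lifting a local equation of $C\subset f(E)$ to $Y$ and pulling back, which delivers transversality, uniqueness and the inclusion in one stroke (and in a slightly stronger, ideal-theoretic form that the paper actually reuses later). Your rank-$2$ computation does recover the smoothness of $\Delta$ at a general point of $D$ (the local equation $h(u_1,u_2)$ cuts out the germ of $\pi(C)$, which is smooth because $\pi|_{f(E)}$ is immersive at the general point $c$), so the ``secondary nuisance'' you flag is indeed only a nuisance.

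The genuine gap is in the last paragraph. First, your computation $f^{*}\ideal C=(u_1)=\ideal\Delta$ is carried out only in a neighbourhood of the single point $d\in D\subset E\cap\Delta$, but to factor $f$ through $\blup{C\cap U}{U}$ you need $f^{-1}\ideal C\cdot\sg{X}$ to be invertible along the \emph{entire} compact fibre $f^{-1}(c)$, most of which sits inside $\Delta$ far from $E$, where the rank of $\diffplain f$ may drop and your coordinates are unavailable. (This can be repaired: writing $\ideal C=(y_1,y_2)$ with $y_1$ a local equation of $H=f(E)$, the hypothesis gives $\operatorname{div}(f^{*}y_1)=E+\Delta$ and $\operatorname{div}(f^{*}y_2)\geq\Delta$ near $f^{-1}(c)$, so $f^{-1}\ideal C=\ideal\Delta\cdot(e,v)$, and $V(e,v)\cap f^{-1}(c)\subset E\cap f^{-1}(c)=\{d_c\}$ is empty by the transversality you already proved — but this step is missing.) Second, and more seriously, your transverse-slice argument assumes $f^{-1}(c)$ is a \emph{single irreducible} curve, which is essentially the statement being proved: a priori the general fibre of $f|_{\Delta}$ over $C$ is only connected, and a connected tree of rational curves inside the single divisor $\Delta$ is not excluded by anything you have said (the multiplicity-one condition on $f^{*}H$ restricted to a slice does not force a single blow-down). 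The clean way to close both gaps at once is to first establish $f^{-1}\ideal C=\ideal\Delta$ on a full neighbourhood of $f^{-1}(c)$, factor $f=\sigma\circ\rho$ through the smooth blow-up, and then observe that $\exc\rho$ is purely divisorial (the target is smooth), that any $\rho$-exceptional divisor would be $f$-exceptional with image $C$, hence equal to $\Delta$, and that $\Delta$ is not $\rho$-exceptional since $\rho(\Delta)=\exc\sigma$ is a surface; irreducibility of $f^{-1}(c)$ then comes out as a corollary rather than an input.
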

\begin{proof}
  Clearly we have the inclusion 
\[
 D = (f|_{E})_*^{-1}(C) \subset \exc{f}
\]
as $C \subset f(\exc f)$ and $f$ has connected fibres. So there is a divisor $\Delta$ satisfying $D \subset \Delta \subset \exc{f}$. We show that for any reduced $\Delta \subset \exc f$ such that each irreducible component of $\Delta$ contains $D$ we have
\begin{align}\label{resIdeal}
 \ideal{\Delta,X}|_{E} = \ideal{D,E}
\end{align}
outside of $\exc{f|_{E}}$.
This implies that $\Delta$ and $E$ intersect transversally in a general point of $D$. In particular, $\Delta$ is smooth in a general point of $D$, which implies that there can be only one integral divisor with $D \subset \Delta \subset \exc f$.\par
The inclusion $\subset$ of \eqref{resIdeal} is true on all of $E$ as $D \subset \Delta$ and $D$ is reduced. \par
Now let $x \in E \setminus \exc{f|_{E}}$. We want to show that the inclusion $\supset$ holds locally at $x$. So let $s \in (\ideal{D,E})_x$. By assumption $f|_{E}$ is an isomorphism near $x$ so that there is $t \in (\ideal{C,f(E)})_{f(x)}$ with $s = (f|_{E})^*(t)$. Let $u \in (\ideal{D,Y})_{f(x)}$ with $u|_{f(E)} = t$. Then we have $f^*u \in (\ideal{\Delta,X})_x$ as $\Delta$ is reduced and each component is mapped onto $C$. This show the claim as
\[
 s = (f|_{E})^*(t) = (f|_{E})^*(u|_{f(E)}) = (f^* u)|_{E} \in ({\ideal{\Delta,X}}|_E)_x. 
\]
If we assume additionally that $C \not\subset f(f^*H - \Delta - E)$, i.e.\ that $\mult{\Delta}{f^*H} = 1$ and that no divisor $\Delta \neq \Gamma \subset \exc f$ maps onto $C$, it easy to see that generically over $C$ we have
\[
 f^{-1}\ideal C  = \ideal{\Delta},
\]
which implies the last claim. 
\end{proof}
\begin{lem}\label{QBlowUp}
Let $0 \in U \subset \C^3$ be open, let $x,y,z$ be the coordinate functions of $\C^3$ and let 
\[
 \ideal{C_1} = (x,y) \cdot \sg U, \quad  \ideal{C_2} = (y,z) \cdot \sg U.
\]
Let furthermore $s_1, s_2 \in \sg{\C^3}(U)$ and $k_1,k_2 \geq 1$ be such that with $\ideal{D_i} = (s_i) \cdot \sg U$ we have
\begin{align*}
 (\ideal{D_1} + \ideal{D_2})|_{C_1 \setminus \{0\}} = (\ideal{C_1})|_{C_1 \setminus \{0\}}, \\
s \in \ideal{C_2}^{k_1} \setminus \ideal{C_2}^{k_1+1}, \qquad s_2 \in \ideal{C_2}^{k_2} \setminus \ideal{C_2}^{k_2+1}.
\end{align*}
Let $\fun{f}{\hat U}{U}$ denote the blow-up of $U$ along the ideal sheaf $\ideal{C_2}$. Let $\hat D_1$, $\hat D_2$ resp.\ $\hat C_1$ denote the strict transforms of $D_1$, $D_2$ resp.\ $C_1$. Let the $\sg{C_1}$-sheaves $Q_1,Q_2$ be defined by the following to exact sequences
\begin{center}
\begin{tikzpicture}[every node/.style={on grid}]
  \matrix (m) [matrix of math nodes, row sep = 1em, column sep = 2em, text height = 1.5ex, text depth = 0.25ex]
  {
   0 & (\ideal{D_1} + \ideal{D_2})|_{C_1} & \conormal{C_1}{U} & Q_1 & 0\\
   0 & (\ideal{\hat D_1} + \ideal{\hat D_2})|_{\hat C_1} & \conormal{\hat C_1}{\hat U} & Q_2 & 0\\
  };
  \path[-stealth]
   (m-1-1) edge node [auto] {} (m-1-2)
   (m-1-2) edge node [auto] {} (m-1-3)
   (m-1-3) edge node [auto] {} (m-1-4)
   (m-1-4) edge node [auto] {} (m-1-5)
   (m-2-1) edge node [auto] {} (m-2-2)
   (m-2-2) edge node [auto] {} (m-2-3)
   (m-2-3) edge node [auto] {} (m-2-4)
   (m-2-4) edge node [auto] {} (m-2-5);
\end{tikzpicture}
\end{center}
Then the following assertions hold:
\begin{itemize}
 \item Both $\supp{Q_1}$ and $\supp{Q_2}$ consist of one point and
 \item $\com{0}{C_1}{Q_1} = \com{0}{\hat C_1}{Q_2} + k_1 + k_2 - 1$.
\end{itemize}
\end{lem}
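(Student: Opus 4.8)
The plan is to reduce everything to an explicit computation in the local ring $\sg{U,0}\cong\C\{x,y,z\}$, since all the sheaves in sight are supported very close to $0$. Indeed, writing $E=\exc f$, the map $f$ is an isomorphism over $U\setminus C_2$, the two coordinate axes $C_1=\{x=y=0\}$ and $C_2=\{y=z=0\}$ meet only at $0$, and the strict transform $\hat C_1\to C_1$ is a finite birational morphism of smooth curves, hence an isomorphism; so the first hypothesis already forces $Q_1$ and $Q_2$ to be supported on the single point of $C_1$, resp.\ $\hat C_1$, lying over $0$. Near that point $\conormal{C_1}{U}$ and $\conormal{\hat C_1}{\hat U}$ are free of rank $2$ over the discrete valuation ring $\sg{C_1,0}\cong\sg{\hat C_1,0}\cong\C\{z\}$, and over $\C\{z\}$ the cokernel of a $2\times2$ matrix $M$ with $\det M\neq0$ has length $\operatorname{ord}_z\det M$. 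Hence both $\com{0}{C_1}{Q_1}$ and $\com{0}{\hat C_1}{Q_2}$ will be read off as orders of vanishing of explicit $2\times2$ determinants, and the whole proof amounts to comparing these two determinants.

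The first step is the elementary ideal identity
\[
 (x,y)\cap(y,z)^k \;=\; y\cdot(y,z)^{k-1}+x\,z^k\cdot\sg U
\]
in $\C\{x,y,z\}$, which follows by a short division argument using that $z$ is a nonzerodivisor modulo $(x,y)$. The first hypothesis forces $s_1,s_2\in\ideal{C_1}=(x,y)$ (as $D_1\cap D_2\supseteq C_1$ away from $0$, hence $C_1\subseteq D_i$), so together with $s_i\in\ideal{C_2}^{k_i}=(y,z)^{k_i}$ this identity lets me write
\[
 s_i = y\,g_i+x\,z^{k_i}h_i,\qquad g_i\in(y,z)^{k_i-1},\ \ h_i\in\sg U ,
\]
and moreover $g_i(x,0,z)=z^{k_i-1}p_i(x,z)$ for some $p_i\in\C\{x,z\}$. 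Put $\delta(z)=h_1(0,0,z)\,p_2(0,z)-h_2(0,0,z)\,p_1(0,z)\in\C\{z\}$. Restricting the coefficients to $C_1=\{x=y=0\}$, the images of $s_1,s_2$ in $\conormal{C_1}{U}=(x,y)/(x,y)^2$ (with $\sg{C_1}$-basis $[x],[y]$) are the columns of
\[
 M_1=\begin{pmatrix} z^{k_1}h_1(0,0,z) & z^{k_2}h_2(0,0,z)\\ z^{k_1-1}p_1(0,z) & z^{k_2-1}p_2(0,z)\end{pmatrix},
\]
and $\det M_1=z^{k_1+k_2-1}\delta(z)$; the first hypothesis guarantees $\delta\not\equiv0$, so
\[
 \com{0}{C_1}{Q_1}=\operatorname{ord}_z\det M_1=(k_1+k_2-1)+\operatorname{ord}_z\delta .
\]

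The second step is the same computation on $\hat U$. I would work in the chart of $f$ with coordinates $(x,v,z)$ and $y=vz$; there $E=\{z=0\}$, the strict transform is $\hat C_1=\{x=v=0\}$, and $\hat C_1\cap E$ is the point over $0$. Since blowing up turns $\ideal{C_2}=(y,z)$ into $\sO{\hat U}{-E}$, which equals $(z)$ in this chart, the pull-back of $(y,z)^{k_i-1}$ is generated by $z^{k_i-1}$, so $f^*g_i=z^{k_i-1}\hat g_i$ for a unique $\hat g_i\in\C\{x,v,z\}$ and hence
\[
 f^*s_i=z^{k_i}\bigl(v\,\hat g_i+x\,f^*h_i\bigr).
\]
The hypothesis $s_i\notin\ideal{C_2}^{k_i+1}$ says exactly that $v\,\hat g_i+x\,f^*h_i$ does not vanish identically on $E$, which identifies it as an equation of the strict transform $\hat D_i$. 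Now $(f^*h_i)(0,0,z)=h_i(0,0,z)$ and $\hat g_i(0,0,z)=p_i(0,z)$, so the images of these equations in $\conormal{\hat C_1}{\hat U}=(x,v)/(x,v)^2$ are the columns of
\[
 \hat M=\begin{pmatrix} h_1(0,0,z) & h_2(0,0,z)\\ p_1(0,z) & p_2(0,z)\end{pmatrix},
\]
whose determinant is exactly $\delta(z)$; thus $\com{0}{\hat C_1}{Q_2}=\operatorname{ord}_z\delta$. Subtracting the last two displayed identities gives $\com{0}{C_1}{Q_1}=\com{0}{\hat C_1}{Q_2}+k_1+k_2-1$, as claimed.

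The step I expect to be the real obstacle --- the one that needs care rather than cleverness --- is the passage from $s_i$ to the equation $v\,\hat g_i+x\,f^*h_i$ of the strict transform: one must strip off precisely the factor $z^{k_i}$ contributed by the exceptional divisor, and then check that the two surviving factors $\hat g_i=f^*g_i/z^{k_i-1}$ and $f^*h_i$ restrict to $\hat C_1$ in exactly the same way that $g_i$ (after division by $z^{k_i-1}$) and $h_i$ restrict to $C_1$. This is what makes the ``non-exceptional'' order $\operatorname{ord}_z\delta$ literally identical on the two sides, so that only the exceptional contribution $k_1+k_2-1$ survives in the difference. Everything else --- that $\hat C_1$ is smooth and visible in the chosen chart, that the two conormal sheaves are locally free of rank $2$ near the point over $0$, and that length equals $\operatorname{ord}_z\det$ over $\C\{z\}$ --- is routine.
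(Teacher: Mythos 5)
Your proof is correct. The paper's own ``proof'' of this lemma is the single line ``Elementary, hence omitted'', so there is nothing to compare against; your computation --- writing $s_i=y\,g_i+x\,z^{k_i}h_i$ via the identity $(x,y)\cap(y,z)^{k}=y\,(y,z)^{k-1}+x\,z^{k}\,\sg U$, reducing both lengths to orders of vanishing of $2\times 2$ determinants over $\C\{z\}$, and observing that the non-exceptional factor $\delta$ is literally the same on both sides --- is a complete and correct way to supply the omitted argument. The delicate points all check out: the hypothesis $s_i\notin\ideal{C_2}^{k_i+1}$ is indeed equivalent to $v\hat g_i+x f^*h_i$ not vanishing on the exceptional divisor in the chart $y=vz$, which makes it an equation of $\hat D_i$; and $\hat g_i(0,0,z)=p_i(0,z)$ follows from expanding $g_i$ in monomials of $(y,z)^{k_i-1}$. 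One pedantic remark, which is a defect of the lemma's statement rather than of your proof: nothing in the hypotheses rules out $\operatorname{ord}_z\delta=0$, in which case $Q_2=0$ and $\supp{Q_2}$ is empty rather than a single point (e.g.\ $s_1=y+xz$, $s_2=y-xz$ with $k_1=k_2=1$); but the paper only ever uses the length formula, i.e.\ the inequality $\com{0}{C_1}{Q_1}\geq k_1+k_2-1$, so this costs nothing.
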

\begin{proof}
Elementary, hence omitted.
\end{proof}
The following proposition is well-known (cf.\ e.g.\ \cite[Proposition 3.A.7]{BarthelKaup}).
\begin{prop}\label{MayViet}
Let $\fun{f}{X}{Y}$ be a surjective morphism of compact complex spaces and let $B \subset Y$ be a closed complex subspace, such that with $A = f^{-1}(B)$ the restricted map $\fun{f|_{X \setminus A}}{X \setminus A}{Y \setminus B}$ is an isomorphism. \par
Then there exists an exact sequence
\[
 \begin{tikzcd}
   0 \arrow{r}{} & \Com{0}{Y}{\C} \arrow{r}{} & \Com{0}{X}{\C} \oplus \Com{0}{B}{\C} \arrow{r}{} \arrow[draw = none]{d}[name = Z, shape = coordinate]{} & \Com{0}{A}{\C}  \arrow[rounded corners, 
         to path={ -- ([xshift=2ex]\tikztostart.east)
                   |- (Z) [near end]\tikztonodes
                   -| ([xshift=-2ex]\tikztotarget.west)
                   -- (\tikztotarget)}]{dll}{} & \\
    & \Com{1}{Y}{\C} \arrow{r}{} & \Com{1}{X}{\C} \oplus \Com{1}{B}{\C} \arrow{r}{} & \Com{1}{A}{\C} \arrow{r}{} & \cdots
  \end{tikzcd}
\]
\end{prop}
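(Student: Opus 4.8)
The plan is to recognise the commutative square of spaces with vertices $A$, $X$, $B$, $Y$ (with maps $A \hookrightarrow X$, the restriction $f|_A \colon A \to B$, $X \xrightarrow{f} Y$ and $B \hookrightarrow Y$) as an ``abstract blow-up square'' and to extract from it a Mayer--Vietoris type distinguished triangle of complexes of sheaves on $Y$. Writing $i_B \colon B \hookrightarrow Y$ for the closed immersion and $\pi = i_B \circ f|_A \colon A \to Y$ for the composite, I would first establish a distinguished triangle
\[
 \C_Y \longrightarrow Rf_*\C_X \oplus (i_B)_*\C_B \longrightarrow R\pi_*\C_A \longrightarrow \C_Y[1],
\]
whose first arrow is the pair of restriction (adjunction) morphisms and whose second arrow is the difference of the two restrictions, along $A \hookrightarrow X$ and along $f|_A \colon A \to B$. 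Granting this, one applies $R\Gamma(Y,-)$; since $R\Gamma(Y, Rf_*\C_X) = \Com{\bullet}{X}{\C}$, $R\Gamma(Y,(i_B)_*\C_B) = \Com{\bullet}{B}{\C}$ and $R\Gamma(Y,R\pi_*\C_A) = \Com{\bullet}{A}{\C}$, the long exact hypercohomology sequence of the triangle is exactly the sequence in the statement, and the injectivity of $\Com{0}{Y}{\C} \to \Com{0}{X}{\C}\oplus\Com{0}{B}{\C}$ is automatic because the left-hand complex sits in non-negative degrees, so that $\Com{-1}{A}{\C} = 0$.

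To build the triangle I would show that the square of sheaves on $Y$
\[
 \begin{tikzcd}
  \C_Y \arrow{r}{} \arrow{d}{} & Rf_*\C_X \arrow{d}{} \\
  (i_B)_*\C_B \arrow{r}{} & R\pi_*\C_A
 \end{tikzcd}
\]
is homotopy cartesian, which can be verified stalk by stalk. The crucial preliminary observation is that $f$ is proper -- $X$ is compact, so the fibres of $f$ are compact and $f$ is closed -- so that topological proper base change yields $(Rf_*\C_X)_y \cong R\Gamma(f^{-1}(y),\C)$ and $(R\pi_*\C_A)_y \cong R\Gamma(f^{-1}(y)\cap A,\C)$ for every $y \in Y$, while $((i_B)_*\C_B)_y$ is $\C$ for $y\in B$ and $0$ otherwise. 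If $y\notin B$, the fibre $f^{-1}(y)$ is a reduced point (because $f$ is an isomorphism over $Y\setminus B$) and $f^{-1}(y)\cap A = \emptyset$, so the stalk square is $\C \to \C\oplus 0 \to 0$, which is homotopy cartesian. If $y\in B$, then $f^{-1}(y)\subset A = f^{-1}(B)$, whence $f^{-1}(y)\cap A = f^{-1}(y)$ and the right-hand vertical map is an isomorphism; so the stalk square is again homotopy cartesian.

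An equivalent, perhaps more convenient, route avoids stalks: on $Y$ one has the standard triangle $j_!\C_{Y\setminus B} \to \C_Y \to (i_B)_*\C_B \xrightarrow{+1}$ attached to the closed subspace $B$, where $j\colon Y\setminus B\hookrightarrow Y$; on $X$ the analogous triangle $j'_!\C_{X\setminus A} \to \C_X \to R(i_A)_*\C_A \xrightarrow{+1}$; and since $f$ is proper (so $Rf_* = Rf_!$) and restricts to an isomorphism over $Y\setminus B$, applying $Rf_*$ identifies $Rf_*(j'_!\C_{X\setminus A})$ with $j_!\C_{Y\setminus B}$ and produces a triangle $j_!\C_{Y\setminus B}\to Rf_*\C_X\to R\pi_*\C_A\xrightarrow{+1}$. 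The two triangles share their first term, so the octahedral axiom delivers the triangle above. I do not expect a genuine obstacle here -- this is a standard descent square -- the only points needing a little care being the appeal to proper base change (legitimate precisely because compactness of $X$ forces $f$ to be proper) together with the remark that $f^{-1}(y)\subset A$ for $y\in B$, and the mild bookkeeping required to match the connecting maps of the triangle with the Mayer--Vietoris restriction maps and to check that the displayed eight-term sequence agrees term by term with the start of the long exact sequence.
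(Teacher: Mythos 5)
Your proposal is correct. Note that the paper gives no proof of this proposition at all --- it is stated as well-known with a citation to \cite[Proposition 3.A.7]{BarthelKaup} --- so there is nothing internal to compare against. Your argument is the standard derivation of the Mayer--Vietoris sequence of an abstract blow-up (descent) square: the key points you flag are exactly the ones that matter, namely that compactness of $X$ makes $f$ proper so that proper base change computes the stalks of $Rf_*\C_X$ and $R\pi_*\C_A$ as fibrewise cohomology, that $f^{-1}(y)$ is a single point for $y \notin B$ and is contained in $A$ for $y \in B$, and that the resulting stalkwise squares are homotopy cartesian; taking hypercohomology of the triangle then yields the displayed sequence, with exactness at the left end coming from $\Com{-1}{A}{\C}=0$. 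Either of your two routes (stalkwise verification or the octahedral-axiom argument with the $j_!$ triangles) is complete and standard.
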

\begin{lem}\label{b1leq2h1}
Let $C$ be an irreducible, compact complex space of dimension $1$.
Then 
\[
 \betti 1 C \leq 2\com{1}{C}{\sg C}.
\]
\end{lem}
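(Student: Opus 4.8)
The plan is to pass to the normalization $\fun{\nu}{\widetilde C}{C}$ and to compare the two sides of the inequality by means of one short exact sequence on each side. First, I would reduce to the case where $C$ is reduced: replacing $C$ by $C_{\mathrm{red}}$ leaves $\betti{1}{C}$ unchanged and can only decrease $\com{1}{C}{\sg C}$, as one sees from $0 \to \mathcal N \to \sg C \to \sg{C_{\mathrm{red}}} \to 0$ together with $\Com{2}{C}{\mathcal N} = 0$. Since $C$ is irreducible, $\widetilde C$ is a connected smooth compact Riemann surface, of some genus $g$, and hence $\com{1}{\widetilde C}{\sg{\widetilde C}} = g$ and $\betti{1}{\widetilde C} = 2g$. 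Let $Z \subset C$ be the (finite) non-normal locus, let $\widetilde Z = \nu^{-1}(Z)$, and for $z \in Z$ let $m_z = \#\nu^{-1}(z) \geq 1$ denote the number of branches of $C$ at $z$.

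\emph{Analytic side.} Here I would use the exact sequence $0 \to \sg C \to \nu_*\sg{\widetilde C} \to \mathcal S \to 0$, where $\mathcal S$ is a skyscraper sheaf supported on $Z$; set $\delta = \com{0}{C}{\mathcal S}$. Since $\nu$ is finite, $\Com{i}{C}{\nu_*\sg{\widetilde C}} = \Com{i}{\widetilde C}{\sg{\widetilde C}}$, and since $\Com{0}{C}{\sg C} \to \Com{0}{\widetilde C}{\sg{\widetilde C}}$ is an isomorphism of one-dimensional spaces, the long exact cohomology sequence gives $\com{1}{C}{\sg C} = g + \delta$. The only step with real content is the local bound $\delta \geq \sum_{z \in Z}(m_z - 1)$: at $z$ one has $(\nu_*\sg{\widetilde C})_z \simeq \bigoplus_{i=1}^{m_z} \sg{\widetilde C, \tilde z_i}$, and the image of $\sg{C,z}$ under the projection to the constant terms $\bigoplus_{i=1}^{m_z}\C$ is the diagonal line, because $\sg{C,z}$ is local with residue field $\C$; hence $\mathcal S_z$ surjects onto the $(m_z - 1)$-dimensional quotient $\bigl(\bigoplus_{i=1}^{m_z}\C\bigr)/\text{diagonal}$.

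\emph{Topological side.} Here I would apply \cref{MayViet} to $\nu$ with $B = Z$ and $A = \widetilde Z$, which is legitimate because $C$ is normal, hence smooth, away from $Z$. As $Z$ and $\widetilde Z$ are finite, the terms $\Com{1}{Z}{\C}$ and $\Com{1}{\widetilde Z}{\C}$ vanish and the sequence degenerates to
\[
 \Com{0}{\widetilde C}{\C} \oplus \Com{0}{Z}{\C} \xrightarrow{\alpha} \Com{0}{\widetilde Z}{\C} \longrightarrow \Com{1}{C}{\C} \longrightarrow \Com{1}{\widetilde C}{\C} \longrightarrow 0 .
\]
The image of $\alpha$ is precisely the space of functions on $\widetilde Z$ that are constant along each fibre $\nu^{-1}(z)$, which has dimension $\#Z$; therefore $\betti{1}{C} = \betti{1}{\widetilde C} + \dim\coker{\alpha} = 2g + \sum_{z \in Z}(m_z - 1)$.

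Combining the two formulas and using $m_z \geq 1$,
\[
 \betti{1}{C} = 2g + \sum_{z \in Z}(m_z - 1) \leq 2g + 2\sum_{z \in Z}(m_z - 1) \leq 2(g + \delta) = 2\com{1}{C}{\sg C},
\]
which is the claim. I expect the only genuine obstacle to be the local inequality $\delta \geq \sum_{z \in Z}(m_z - 1)$, i.e.\ the estimate of the $\delta$-invariant of a curve germ from below by its number of branches minus one; the remaining steps are routine bookkeeping with the Leray and Mayer--Vietoris sequences for $\nu$.
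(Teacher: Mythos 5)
Your proof is correct and follows exactly the route the paper intends (its proof is only the hint ``consider the normalization and use \cref{MayViet}''): Mayer--Vietoris for the normalization gives $\betti 1 C = 2g + \sum_z(m_z-1)$, while the conductor sequence gives $\com 1 C {\sg C} = g + \delta$ with $\delta \geq \sum_z(m_z-1)$, and the two combine as you say. The reduction to the reduced case and the local estimate of $\delta$ by the number of branches are both argued correctly, so there is nothing to add.
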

\begin{proof}
Easy. Consider the normalization and use \cref{MayViet}.
\end{proof}
\begin{lem}\label{movingCurve}
Let $X$ be compact complex threefold, let $L$ be a line bundle on $X$ such that $\bs L$ does not have divisorial components and such that the map $\Phi$ induced by the linear system $|L|$ maps $X$ bimeromorphically to a smooth subvariety $Y \subset \Proj{\Com 0 X L}$:
\begin{center}
\begin{tikzpicture}[every node/.style={on grid}]
   \matrix (m) [matrix of math nodes, row sep = 3em, column sep = 3em]
   {
    X & Y \subset \Proj{\Com 0 X L} \\
   };
    \path[dashed,->]
     (m-1-1) edge node [auto] {$\Phi$} (m-1-2);
\end{tikzpicture}
\end{center}
Then for general $D_1,D_2 \in |L|$ the following statements hold.
\begin{enumerate}
\item Let $H_1,H_2 \in |\sO{\Pn n}{1}|$ denote the elements corresponding to $D_1,D_2$ via $\Phi$. Then $C_{D_1,D_2} = \Phi_*^{-1}(H_1 \cap H_2 \cap Y)$ can be defined and
\[
 D_1 \cap D_2 = C_{D_1,D_2} \cup \bs{L}.
\]
\item $L \cdot C_{D_1,D_2} \geq \deg Y$ and if equality holds then $C_{D_1,D_2} \cap \bs{L} = \emptyset$. 
\end{enumerate}
\end{lem}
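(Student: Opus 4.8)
The plan is to resolve the meromorphic map $\Phi$ and then to read off both assertions from a projection-formula computation on the resolution. First I would fix (replacing $X$ by a resolution first if necessary) a composition of blow-ups $\fun{\mu}{\hat X}{X}$ with $\hat X$ smooth, all of whose centres lie over $\bs L$, such that $\Phi \circ \mu$ extends to a morphism $\fun{\hat\Phi}{\hat X}{Y}$; concretely one blows up the base ideal $\baseideal{|L|}$ and then desingularises. Then $\mu$ is an isomorphism over $X \setminus \bs L$, and one has $\mu^* L = M \otimes \sg{\hat X}(F)$ where $M = \hat\Phi^*\sO{\Pn n}{1}$ is base point free, $F$ is effective, and $\supp F = \mu^{-1}(\bs L)$. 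Here the hypothesis that $\bs L$ has no divisorial components is used precisely to ensure that $F$ is $\mu$-exceptional, i.e.\ that every component of $F$ is mapped by $\mu$ onto a subset of $X$ of dimension at most $1$.

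Next I would make the genericity precise. For general $D_1$, and then general $D_2$, write $H_1, H_2 \in |\sO{\Pn n}{1}|$ for the corresponding hyperplanes and $\tilde D_i = \mu_*^{-1} D_i = \hat\Phi^*(H_i \cap Y) \in |M|$. Since $Y$ is smooth and $\sO{\Pn n}{1}|_Y$ is very ample, Bertini gives that $H_1 \cap H_2 \cap Y$ is a smooth irreducible curve, of degree $\deg Y$; being a general codimension-$2$ linear section of $Y$, it is moreover not contained in — and meets in only finitely many points — any fixed proper subvariety of $Y$, in particular the (at most $1$-dimensional) images under $\hat\Phi$ of the exceptional loci of $\mu$, $\hat\Phi$ and $\Phi$. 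It follows that $\hat\Phi^{-1}(H_1 \cap H_2 \cap Y)$ is an irreducible curve $\hat C$ mapping birationally onto $H_1 \cap H_2 \cap Y$, so $C_{D_1,D_2} \deq \mu(\hat C) = \Phi_*^{-1}(H_1 \cap H_2 \cap Y)$ is well defined. For assertion (1), I restrict everything to $X \setminus \bs L$, over which $\mu$ is an isomorphism, $\Phi$ is a morphism, and $D_i$ corresponds to $\tilde D_i$: then $(D_1 \cap D_2) \setminus \bs L = \Phi^{-1}(H_1 \cap H_2 \cap Y) \setminus \bs L$, and by the genericity of $H_1 \cap H_2$ the only curve over there is $C_{D_1,D_2}$, the potential extra components lying over the low-dimensional images of exceptional loci being absent because $H_1 \cap H_2$ misses those images. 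Since $\bs L \subseteq D_1 \cap D_2$ and $\bs L \subseteq C_{D_1,D_2} \cup \bs L$, one concludes $D_1 \cap D_2 = C_{D_1,D_2} \cup \bs L$.

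For assertion (2), I would use the projection formula together with $\mu^* L = M \otimes \sg{\hat X}(F)$ to write
\[ L \cdot C_{D_1,D_2} = \mu^*L \cdot \hat C = M \cdot \hat C + F \cdot \hat C. \]
As $\hat\Phi|_{\hat C}$ is birational onto the curve $H_1 \cap H_2 \cap Y$, which has degree $\deg Y$, and $M = \hat\Phi^*\sO{\Pn n}{1}$, one gets $M \cdot \hat C = \deg Y$; since $F$ is effective and $\hat C$ dominates a curve not contained in $\hat\Phi(\supp F)$, one has $\hat C \not\subseteq \supp F$ and hence $F \cdot \hat C \geq 0$, giving $L \cdot C_{D_1,D_2} \geq \deg Y$. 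If equality holds, then $F \cdot \hat C = 0$; since $\hat X$ is smooth and $\hat C$ is not contained in $F$, the effective Cartier divisor $F|_{\hat C}$ on $\hat C$ has degree $0$ and is therefore empty, so $\hat C \cap \supp F = \emptyset$. Because $\supp F = \mu^{-1}(\bs L)$, this gives $C_{D_1,D_2} \cap \bs L = \mu(\hat C) \cap \bs L = \emptyset$.

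The computation in step (2) is short; the real work lies in the first two steps. I expect the main obstacle to be arranging the resolution so that $\supp F$ is exactly $\mu^{-1}(\bs L)$ — this is where the absence of divisorial base components is indispensable, since with a fixed divisor in $|L|$ the divisor $F$ would acquire a non-exceptional component which $\hat C$ could meet in a whole component, destroying the inequality — together with organising the several Bertini-type reductions so that $\hat\Phi^{-1}(H_1 \cap H_2 \cap Y)$ is genuinely a single curve birational to $H_1 \cap H_2 \cap Y$ and so that no spurious components creep into $D_1 \cap D_2$. Once this dictionary between $D_i$ and $\tilde D_i$ away from $\bs L$ is in place, assertions (1) and (2) follow from the projection formula and the elementary fact that an effective Cartier divisor on a smooth variety which restricts to a degree-$0$ bundle on a curve not contained in it must be disjoint from that curve.
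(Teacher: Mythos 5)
Your argument is correct and follows essentially the same route as the paper's proof: resolve $\Phi$ by a $\mu$ with centres over $\bs L$, write $\mu^*L = \hat\Phi^*\sO{Y}{1} + F$ with $F$ effective and $\supp F = \mu^{-1}(\bs L)$, and read off both assertions from the projection formula, the equality case in (2) coming from $F \cdot \hat C = 0 \Rightarrow \hat C \cap \supp F = \emptyset$. One parenthetical slip worth noting: $\hat\Phi(\exc{\mu})$ need \emph{not} be at most $1$-dimensional (in the paper's own quadric case the exceptional divisor over $\bs L$ maps \emph{onto} a quadric surface in $\Qn 3$), but your argument never actually requires $H_1 \cap H_2 \cap Y$ to miss that image — only $\hat\Phi(\exc{\hat\Phi})$ and the indeterminacy locus of $\Phi^{-1}$, which really are at most curves — so nothing breaks.
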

\begin{proof}
For (1), just take $D_1,D_2 \in |L|$ general such that, for $H_1,H_2 \in |\sO{\Pn n}{1}|$ the corresponding hyperplanes, $H_1 \cap H_2$ does not intersect the $1$-dimensional locus where $\Phi^{-1}$ is not defined. \par
For the second part, consider a resolution of $\Phi$
\[
 \begin{tikzcd}[column sep = small]
  & Z \arrow[swap]{dl}{g} \arrow{dr}{f} & \\
  X \arrow[dashed]{rr}{\Phi}&  & Y
 \end{tikzcd} 
\]
such that $g(\exc g) \subset \bs L$. Note that we have an exceptional divisor $E$ on $Z$ such that $g^*L - E = f^* \sO{Y}{1}$ and $g(E) = \bs{L}$.
\begin{align*}
 L \cdot C_{D_1,D_2} = g^* L \cdot g_*^{-1}C_{D_1,D_2} &= (f^*\sO{Y}{1} + E) \cdot g_*^{-1}C_{D_1,D_2} \\&= \sO{Y}{1} \cdot (H_1 \cap H_2) + E \cdot g_*^{-1}C_{D_1,D_2} \\&= \deg Y + E \cdot g_*^{-1}C_{D_1,D_2}
\end{align*}
This certainly implies the inequality in (2) and since $g(E) = \bs L$, it also implies the second part of (2).
\end{proof}
The following lemma is crucial to the proof of \cref{projective}.
\begin{lem}\label{blowup-rest}
In addition to the assumptions of \cref{movingCurve} assume that all irreducible curves in $\bs L$ are smooth and that, in the notation of \cref{movingCurve}, 
\[
 L \cdot C_{D_1,D_2} = \deg{Y},
\]
for general $D_1,D_2 \in |L|$. \par
Then there is a resolution $\fun{g}{\tilde X}{X}$ of $\Phi$ 
\[
\begin{tikzpicture}
  \matrix (m) [matrix of math nodes, row sep=2.5em, column sep = 2.5em]
  { \tilde X & \\
    X & Y \\};
  \path[-stealth]
  (m-1-1) edge node [auto,swap] {$g$} (m-2-1)
  (m-1-1) edge node [auto] {$f$} (m-2-2);
  \path[dashed,->]
  (m-2-1) edge node [auto] {$\Phi$} (m-2-2) ; 
\end{tikzpicture}
\]
such that $g(\exc g) \subset \bs L$ and 
\[ 
E \geq F,
\]
where we let $E, F \geq 0$ denote the exceptional divisors on $\tilde X$ satisfying 
\[
 \can{\tilde X} = g^* \can{X} + F, \qquad f^* \sO{\Pn 3}{1} = g^*L - E.
\]
\end{lem}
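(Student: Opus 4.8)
The plan is to produce $g$ as an explicit iterated blow-up in smooth curves lying over $\bs L$, and then to deduce $E\ge F$ by a divisor-by-divisor comparison. Since we shall arrange $g(\exc g)\subseteq\bs L$, both $E$ and $F$ vanish outside $\bs L$, so everything takes place in a neighbourhood of $\bs L$, and the first task is to extract the content of the extra hypothesis. By \cref{movingCurve}(2) the equality $L\cdot C_{D_1,D_2}=\deg Y$ forces $C_{D_1,D_2}\cap\bs L=\emptyset$ for general $D_1,D_2\in|L|$; together with \cref{movingCurve}(1) this shows that, near $\bs L$, the subscheme $D_1\cap D_2$ is exactly $\bs L$, and since the inclusion $\ideal{D_1}+\ideal{D_2}\subseteq\baseideal L$ always holds we conclude that $\baseideal L$ coincides near $\bs L$ with the ideal of the complete intersection $D_1\cap D_2$. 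In particular $\bs L$ is, locally everywhere, a complete intersection of codimension $2$ --- hence a local complete intersection of pure dimension $1$ (recall $\bs L$ has no divisorial components) --- whose reduced irreducible components are smooth by hypothesis.

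Next I would build $g$. Pick a reduced irreducible component $C_1$ of $\bs L$; it is a smooth curve in the smooth threefold $X$, hence an admissible centre, and I blow it up. Because $\baseideal L$ is, near $\bs L$, generated by the two \emph{general} sections $s_1,s_2$ cutting out $D_1,D_2$ --- which in particular vanish to one and the same order $\mu_1=\operatorname{mult}_{C_1}|L|$ along $C_1$ --- the total transform of $\baseideal L$ on the blow-up $X'$ equals $\sO{X'}{-\mu_1E_1}$ times its strict-transform ideal, and the zero scheme of the latter is again a local complete intersection of pure dimension $1$ with smooth reduced components: the strict transforms of the other components of $\bs L$, together with at worst finitely many smooth curves inside $E_1$. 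Iterating and noting that the process terminates, one obtains a resolution $\fun g{\tilde X}X$ which is a composition of blow-ups in smooth curves, each mapping into $\bs L$, and which principalizes $\baseideal L$; thus $f=\Phi\circ g$ is a morphism, $g(\exc g)\subseteq\bs L$, and $\sg{\tilde X}(-E)=g^{-1}\baseideal L\cdot\sg{\tilde X}$, so that $f^*\sO{\Pn 3}{1}=g^*L-E$.

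It remains to check $E\ge F$. Writing $E=\sum_\Gamma\operatorname{ord}_\Gamma(\baseideal L)\,\Gamma$ and $F=\sum_\Gamma a_\Gamma\,\Gamma$, the sums running over the $g$-exceptional prime divisors and $a_\Gamma$ being the discrepancy of $\Gamma$ over $X$, I compare the two coefficients of each $\Gamma$ by induction on the length of the tower of blow-ups producing it. Every centre in this tower is a smooth curve in a smooth threefold, hence of codimension $2$, so in the usual formula the ``intrinsic'' contribution of the last blow-up to $a_\Gamma$ equals $1$, whereas its contribution to $\operatorname{ord}_\Gamma(\baseideal L)$ is the order of vanishing of $\baseideal L$ along that centre, which is $\ge 1$ since the centre lies in $\bs L=V(\baseideal L)$; the remaining contributions are governed, in both cases, by the same multiplicities of $\Gamma$ along the transforms of the earlier exceptional divisors, and by induction the $E$-coefficients of those divisors already dominate their $F$-coefficients. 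Hence $\operatorname{ord}_\Gamma(\baseideal L)\ge a_\Gamma$ for every such $\Gamma$, i.e.\ $E\ge F$.

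The main difficulty is the middle step: showing that $\Phi$ can be resolved using smooth-curve centres only, never blowing up a point or a singular curve. This is exactly where the complete-intersection property of $\bs L$ --- equivalently, the equality $L\cdot C_{D_1,D_2}=\deg Y$ --- is indispensable. A base locus needing three or more generators (three concurrent non-coplanar lines being the typical picture), or a pencil whose two general members shared a leading term along a component of $\bs L$, would force a point blow-up, where the discrepancy contribution jumps to $2$ while the base-ideal order can stay at $1$ (for instance at a node of $\bs L$ at which the general member of $|L|$ is smooth), and then $E\ge F$ would break down. Verifying that, after each blow-up of a component, the transformed base locus stays pure-dimensional with smooth reduced components --- so that the resolution can be completed without ever blowing up a point --- is the technical heart of the argument.
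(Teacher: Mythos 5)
There is a genuine gap, and it sits exactly where you locate ``the technical heart''. Your whole strategy rests on the claim that $\baseideal{L}$ coincides, scheme-theoretically near $\bs L$, with the complete-intersection ideal $\ideal{D_1}+\ideal{D_2}$ of two general members. What the hypothesis $L\cdot C_{D_1,D_2}=\deg Y$ actually gives (via \cref{movingCurve}) is only the \emph{set-theoretic} statement that $D_1\cap D_2$ coincides with $\bs L$ near $\bs L$; this does not upgrade the inclusion $\ideal{D_1}+\ideal{D_2}\subseteq\baseideal{L}$ to an equality of ideals. Nothing in the hypotheses a priori excludes, say, a point of $\bs L$ at which $\baseideal{L}=(x,y)^2$ while two general sections generate only a proper subideal $(q_1,q_2)$ with $V(q_1,q_2)=V(x,y)$ as sets; so the lci structure, and with it the purity and smoothness of the transformed base loci after each blow-up, are asserted rather than proved. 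Consequently the key assertion that the resolution can be carried out using smooth-curve centres only --- which, as you yourself note, is indispensable for the final coefficient comparison, since a point blow-up contributes $2$ to the discrepancy but possibly only $1$ to $\operatorname{ord}_\Gamma(\baseideal{L})$ --- is unsupported.

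The paper's proof shows that this issue cannot be finessed: it does \emph{not} avoid point blow-ups. It first runs the principalization algorithm of Bravo--Encinas--Villamayor until $\sing{L_{k_0},2}=\emptyset$, a regime in which $E^k\geq F^k$ is automatic because every centre lies in $\sing{L_k,2}$, so $b_k\geq 2\geq a_k$. In the remaining regime it explicitly blows up the \emph{singular points} of irreducible curves in the transformed base locus before blowing up the (then smooth) curves, and the real content is the inductive claim that every such singular curve lies on an exceptional divisor $\Gamma$ with $\mult{\Gamma}{(E^k-F^k)}>0$, so that the new exceptional divisor inherits positivity of $E-F$; this is proved by ruling out $(a_k,b_k)\in\{(1,1),(2,1),(2,2)\}$ using that $\bs{L_k}\cap E_k$ is contained in a degree-$1$ or degree-$2$ divisor of $E_k$, which cannot contain an integral singular curve. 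That induction is the argument your proposal is missing; the hypothesis that the curves of $\bs L$ are smooth enters only as its base case, not as a property that propagates for free. (Your first and last steps --- deducing $C_{D_1,D_2}\cap\bs L=\emptyset$, hence that $\bs L$ has no isolated points, and the coefficientwise comparison of $E$ and $F$ along a tower of admissible blow-ups --- do agree with the paper.)
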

\begin{proof}
By \cref{movingCurve}, the inequality $L \cdot  C_{D_1,D_2} \geq \deg Y$ always holds and equality implies that
\begin{align}\label{emptyintersection}
 \bs{L} \cap C_{D_1,D_2} = \emptyset.
\end{align}
I claim that \eqref{emptyintersection} implies that $\bs L$ has no isolated points. Indeed, let $x \in \bs L$. For general $D_1,D_2 \in |L|$, all integral curves 
\[
 C_{D_1,D_2} \neq B \subset D_1 \cap D_2
\]
are contained in $\bs L$ by \cref{movingCurve}. Since $x \in \bs L \subset D_1 \cap D_2$ and $D_1 \cap D_2$ is purely of dimension $1$, there exists an integral curve $x \in B \subset D_1 \cap D_2$ and since by \eqref{emptyintersection} $B \neq C_{D_1,D_2}$, we have $B \subset \bs{L}$, i.e.\ $x$ is not isolated in $\bs L$. \par 
Before we start with the resolution procedure, let us introduce some notation. Let $X_0 = X$ and $L_0 = L$. We call a sequence $(g_1,L_1), \ldots, (g_k,L_k), \ldots$ of blow-ups $\fun{g_k}{X_k}{X_{k-1}}$ along smooth centres with exceptional divisors $E_k = \exc{g_k}$ and line bundles $L_k$ on $X_k$ a partial resolution of $\bs L$, if for each $k$ we have
\[
 g_k(E_k) \subset \bs{L_{k-1}}
\] 
and 
\[
 L_k = g_k^*L_{k-1} - b_k E_k,
\]
where $b_k \geq 1$ is the maximal integer such that the injection 
\[
 \Com{0}{X_k}{g_k^*L_{k-1} - b_k E_k} \to \Com{0}{X_k}{g_k^*L_{k-1}}
\]
is onto. \par Let $h_k = g_1 \circ \cdots \circ g_k$ and let $F^k,E^k$ be effective divisors on $X_k$ such that 
\[
 \can{X_k} = h_k^* \can X + F^k, \qquad L_k = h_k^* L - E^k.
\] 
For an integer $k \geq 1$ and a line bundle $M$ on a compact complex manifold $Y$, set 
\[
 \sing{M,k} = \{y \in Y \colon s_y \in \operatorname{m}_y^k \cdot M_y, \forall s \in \Com 0 Y M \}.
\]
The principalization algorithm of \cite{BravoEncinasVillamayor} yields a sequence 
\[
 (X_0,L_0), \ldots, (X_{k_0},L_{k_0})
\]
that is a partial resolution of $\bs L$ such that for each $k$
\[
  g_k(\exc{g_k}) \subset \sing{L_k,2} \quad \text{and} \quad \sing{L_{k_0},2} = \emptyset.
\]
It is easy to see that the first condition implies that $E^k \geq F^k$, for all $k$. \par
In order to resolve the base locus completely, i.e.\ to find a partial resolution
\[
(X_{k_0+1},L_{k_0 + 1}), \ldots, (X_{k_1},L_{k_1})
\]
such that  $\sing{L_{k_1},1} = \emptyset$, while preserving the inequality $E^k \geq F^k$, we must be more carefully in the choice of the centres of the blow-ups. An easy inductive argument shows that $E^k \geq F^k$ is preserved, if we only blow-up curves or, if not, points that are contained in an $h_k$-exceptional divisor $\Gamma$ with 
\[
 \mult{\Gamma}{(E^k-F^k)} > 0.
\]
This can be achieved as follows: first blow-up singular points of irreducible curves contained in $\bs{L_k}$ as long as they exist. Note that, as $\sing{L_k,2} = \emptyset$, for each $x \in X$ there exists $D \in |L_k|$ such that $x \not\in \sing{D}$. Therefore, we do not create new singular curves in the base locus. After finitely many steps all irreducible curves in the base locus are smooth. \par
We continue by blowing-up these successively. It is clear that the number of curves in the base locus does not increase and that after finitely many steps the base locus does not contain any curves at all. As shown in the beginning of the proof it cannot contain isolated points. Hence it is actually empty. \par 
What remains to show is that for each singular, irreducible curve in $\bs{L_k}$ there is an integral divisor $\Gamma$ satisfying 
\[
 \mult{\Gamma}{(E^k - F^k)} > 0.
\]
This is done by induction. \par 
For $k=0$ all irreducible curves in the $\bs{L}$ are smooth by assumption so that the claim is true in this case. \par 
Now let $k > 0$ and let $C \subset \bs{L_k}$ be a singular, irreducible curve. \par 
First assume that $C \not\subset E_k = \exc{g_{k}}$ then $g_{k}(C)$ is singular (and irreducible) and the induction hypothesis yields an integral divisor $\tilde \Gamma \supset g_k(C)$ such that 
\[
 \mult{\tilde \Gamma}{(E^{k-1} - F^{k-1})} > 0.
\]
Then we have $\Gamma \deq (g_{k})_*^{-1}\tilde \Gamma \supset C$ and 
\[
 \mult{\Gamma}{(E^k - F^k)} = \mult{\tilde \Gamma}{(E^{k-1} - F^{k-1})} > 0.
\]
Now assume that $C \subset E_k = \exc{g_{k}}$. Then we have 
\[
 \dualizing{X_k} = g_{k}^*\dualizing{X_{k-1}}(a_k E_k),
\]
where $a_k = 1$ or $2$ depending on whether $g_{k}$ is the blow-up of a curve or of a point, and 
\[
 L_k = g_{k}^*L_{k-1} - b_k E_k
\]
where $b_k \geq 1$. In this notation 
\[
 \mult{E_k}{(E^k - F^k)} = b_k - a_k.
\]
The claim is that $b_k -a_k > 0$ so that we can take $\Gamma = E_k$. Suppose that $b_k \leq a_k$, i.e.\ that
\[
 (a_k,b_k) \in \{(1,1), (2,1), (2,2) \}.
\]
If $b_k=1$ we clearly get a contradiction to the fact that
\[
 C \subset E_k \cap \bs{L_k}
\]
is irreducible and singular. \par
Suppose that $a_k = b_k = 2$. In particular, $g_{k}$ is the blow-up of a point and $E_k \simeq \Pn 2$. For general $D \in |L_{k-1}|$ we have 
\[
 (g_{k})_*^{-1}D = g_{k}^*D - 2 E_k.
\] 
We have $C \subset \bs{L_k} \cap E_k \subset \hat D \cap E_k$ and
\[
 \hat D|_{E_k} \sim \sO{X_k}{-2E_k}|_{E_k} = \sO{E_k}{2},
\]
 hence $\hat D|_{E_k}$ is an (effective) curve of degree $2$ in $E_k \simeq \Pn 2$. Again, this contradicts the fact that $C$ is integral and singular.    
\end{proof}
\section{Results concerning global deformations}
In this section we prove some general results for global deformations of Fano manifolds of Picard rank $1$ and index $2$. Everything until \cref{bpf-ample} is well-known. \par
The proofs of the following two lemmas can also be found in \cite[(2.2)]{Pet89}. 
\begin{lem}\label{moish}
Let $\Delta \subset \C$ be the unit disc, let $\bigX$ be a complex manifold and let $\fun{\pi}{\bigX}{\Delta}$ be a proper, surjective submersion with connected fibres and assume that there exists a sequence $(s_n)_{n \in \N}$, $s_n \in \Delta$, such that $s_n \to 0$ and such that for all $n \in \N$ the fibre $\bigX_{s_n} = \pi^{-1}(s_n)$ is a Fano manifold of Picard rank $1$ and index $2$. \par
Then the following assertions hold.
\begin{itemize}
\item The line bundle $\dualizing{\bigX_0}^{-1} = \dualizing{\bigX}^{-1}|_{\bigX_0}$ is big.
\item The central fibre $\bigX_0$ is a Moishezon manifold.
\item For all $q > 0$, we have $\Com{q}{\bigX_0}{\sg{\bigX_0}} = 0$.
\end{itemize}
\end{lem}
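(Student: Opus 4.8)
The plan is to establish the three bullets in turn, the first two being quick and the last being where the real difficulty lies.

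For the first bullet: $\bigX_0 = \pi^{-1}(0)$ is, in a neighbourhood of itself, the zero divisor of the pull-back under $\pi$ of a coordinate on $\Delta$, so $\sg{\bigX}(\bigX_0)$ is trivial near $\bigX_0$ and hence the normal bundle $\normal{\bigX_0}{\bigX}$ is trivial; adjunction then gives $\dualizing{\bigX}|_{\bigX_0} = \dualizing{\bigX_0} \otimes \normal{\bigX_0}{\bigX}^{\vee} = \dualizing{\bigX_0}$, which is exactly the asserted identification (equivalently: a coordinate on $\Delta$ trivialises $\dualizing{\Delta}$, so $\dualizing{\bigX/\Delta} \cong \dualizing{\bigX}$ and restricts to $\dualizing{\bigX_s}$ on each fibre). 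For the second bullet I would apply Grauert's coherence and semicontinuity theorems to the proper map $\pi$ and the line bundle $\dualizing{\bigX}^{-m}$: the function $s \mapsto \com 0 {\bigX_s}{\dualizing{\bigX_s}^{-m}}$ is upper semicontinuous, so on a neighbourhood of $0$ it is bounded above by $\com 0 {\bigX_0}{\dualizing{\bigX_0}^{-m}}$; choosing $n$ with $s_n$ in that neighbourhood,
\[
 \com 0 {\bigX_0}{\dualizing{\bigX_0}^{-m}} \geq \com 0 {\bigX_{s_n}}{\dualizing{\bigX_{s_n}}^{-m}}.
\]
Since $\dualizing{\bigX_{s_n}}^{-1}$ is ample, the right-hand side grows like $\tfrac{(\dualizing{\bigX_{s_n}}^{-1})^{d}}{d!}\,m^{d}$ for $m \gg 0$ by asymptotic Riemann--Roch on the projective manifold $\bigX_{s_n}$, where $d = \dim \bigX_0$, with positive leading coefficient. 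Hence $\dualizing{\bigX_0}^{-1}$ has maximal Iitaka dimension, i.e.\ is big. A compact complex manifold carrying a big line bundle is Moishezon, which is the second bullet.

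For the vanishing $\Com q {\bigX_0}{\sg{\bigX_0}} = 0$, $q>0$, the obstacle is that semicontinuity now runs the wrong way at $s=0$: one cannot transport the Fano vanishing $\Com q {\bigX_{s_n}}{\sg{\bigX_{s_n}}} = 0$ (Kodaira vanishing, $\dualizing{\bigX_{s_n}}^{-1}$ ample) directly down to the central fibre, so topology must be brought in. The plan is: by Ehresmann's theorem $\betti k {\bigX_0} = \betti k {\bigX_{s_n}}$ for all $k$ and $n$; since $\bigX_0$ is Moishezon it lies in Fujiki's class $\mathcal{C}$ and therefore satisfies the $\partial\bar\partial$-lemma, so $\betti k {\bigX_0} = \sum_{p+q=k} \com q {\bigX_0}{\Omega^p_{\bigX_0}}$ with Hodge symmetry, and the same Hodge identity holds on the projective manifolds $\bigX_{s_n}$; upper semicontinuity of $s \mapsto \com q {\bigX_s}{\Omega^p_{\bigX_s}}$ — applied to the locally free sheaves $\Omega^p_{\bigX/\Delta}$ — gives $\com q {\bigX_0}{\Omega^p_{\bigX_0}} \geq \com q {\bigX_{s_n}}{\Omega^p_{\bigX_{s_n}}}$ for $n \gg 0$. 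Summing over $p+q=k$ and comparing the two Hodge identities via $\betti k {\bigX_0} = \betti k {\bigX_{s_n}}$, each of these non-negative differences must vanish, so $\com q {\bigX_0}{\Omega^p_{\bigX_0}} = \com q {\bigX_{s_n}}{\Omega^p_{\bigX_{s_n}}}$ for all $p,q$; in particular $\com q {\bigX_0}{\sg{\bigX_0}} = \com q {\bigX_{s_n}}{\sg{\bigX_{s_n}}} = 0$ for $q>0$.

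The one genuinely delicate input is that a Moishezon manifold satisfies the $\partial\bar\partial$-lemma; everything else is routine bookkeeping with Grauert's theorems, Ehresmann's theorem, and asymptotic Riemann--Roch. If one wishes to lean on the $\partial\bar\partial$-lemma as little as possible, then in the dimension-$3$ case that is actually used one can argue more cheaply: $\com 0 {\bigX_0}{\dualizing{\bigX_0}} = 0$ since an effective $\can{\bigX_0}$ would contradict $\com 0 {\bigX_0}{-m\can{\bigX_0}} \to \infty$, hence $\com 3 {\bigX_0}{\sg{\bigX_0}} = 0$ by Serre duality; $\eChar{\bigX_0}{\sg{\bigX_0}} = \eChar{\bigX_{s_n}}{\sg{\bigX_{s_n}}} = 1$ by deformation invariance of the holomorphic Euler characteristic, so $\com 1 {\bigX_0}{\sg{\bigX_0}} = \com 2 {\bigX_0}{\sg{\bigX_0}}$; and $\com 1 {\bigX_0}{\sg{\bigX_0}} = 0$ follows from $\betti 1 {\bigX_0} = \betti 1 {\bigX_{s_n}} = 0$ together with the $\partial\bar\partial$-lemma used in degree $1$ only.
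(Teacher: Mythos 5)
Your proposal is correct, and for the first two bullets it coincides with the paper's argument: adjunction identifies $\dualizing{\bigX}^{-1}|_{\bigX_0}$ with $\dualizing{\bigX_0}^{-1}$, and semicontinuity plus asymptotic Riemann--Roch on a nearby Fano fibre gives bigness, hence the Moishezon property. For the third bullet you take a genuinely different (and slightly cleaner) route. The paper invokes the Hodge decomposition on the Moishezon manifold $\bigX_0$ and reads off $\com{1}{\bigX_0}{\sg{\bigX_0}} = \com{2}{\bigX_0}{\sg{\bigX_0}} = 0$ directly from $\betti{1}{\bigX_0} = 0$ and $\betti{2}{\bigX_0} = 1$ (the $b_2$ step tacitly needs $h^{1,1} \geq 1$, e.g.\ from $c_1(\dualizing{\bigX_0}^{-1}) \neq 0$), and then obtains $\Com{3}{\bigX_0}{\sg{\bigX_0}} = 0$ by Serre duality from the non-effectivity of $\can{\bigX_0}$. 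You instead combine upper semicontinuity of all the $\com{q}{\bigX_s}{\Omega^p_{\bigX_s}}$ (via the locally free relative sheaves $\Omega^p_{\bigX/\Delta}$) with the Ehresmann equality of Betti numbers and the Hodge decomposition on both the central and the nearby fibre, forcing every Hodge number to be deformation invariant, and then import Kodaira vanishing from the Fano fibre. Both arguments rest on the same essential input --- that a Moishezon manifold lies in Fujiki's class and hence admits a Hodge decomposition --- but yours sidesteps the $h^{1,1} \geq 1$ point and yields the stronger conclusion that all $h^{p,q}$ are constant; your ``cheaper'' fallback sketched at the end is essentially the paper's actual proof.
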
 
\begin{proof}
By adjunction $\dualizing{\bigX_t}^{-1} = \dualizing{\bigX}^{-1}|_{\bigX_t}$. The Semi Continuity Theorem implies that
\[
 \com{0}{\bigX_0}{\dualizing{\bigX_0}^{-m}} \geq \com{0}{\bigX_{s_n}}{\dualizing{\bigX_{s_n}}^{-m}} \geq C m^3,
\]
for some $C > 0$ that is independent of $n$. This shows that $\dualizing{\bigX_0}^{-1}$ is big, hence also that $\bigX_0$ is Moishezon. \par
In particular, the Hodge decomposition holds on $\bigX_0$. By Ehresmann's theorem we have $\Com{1}{\bigX_0}{\C} = 0$ and $\Com{2}{\bigX_0}{\C} = \C$. This together with the Hodge decomposition shows that 
\begin{align}\label{h1}
 \Com{1}{\bigX_0}{\sg{\bigX_0}} = \Com{2}{\bigX_0}{\sg{\bigX_0}} = 0.
\end{align}
Since $\com{0}{\bigX_0}{\dualizing{\bigX_0}^{-m}} \geq 2$ for some $m > 0$, we have $\Com{0}{\bigX_0}{\dualizing{\bigX_0}} = 0$, which implies $\Com{3}{\bigX_0}{\sg{\bigX_0}}$ by Serre duality.
\end{proof}
An easy consequence is the following theorem.
\begin{lem}\label{pic}
 Under the assumptions of \cref{moish}, possibly after shrinking $\Delta$, there is a line bundle $\bigL$ on $\bigX$ such that
\begin{align}\label{pic1}
 \Pic{\bigX_0} = \Z \bigL|_{\bigX_0} \qquad  \text{and} \qquad -\can{\bigX_0} = 2 \bigL|_{\bigX_0}.
\end{align}
Furthermore, $\bigL|_{\bigX_0}$ is big.
\end{lem}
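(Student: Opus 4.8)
The plan is to identify both $\Pic{\bigX}$ and $\Pic{\bigX_0}$ with singular cohomology via the exponential sequence, to compare them using Ehresmann's theorem, and then to pin down the generator and the canonical class by transporting from the Fano fibres $\bigX_{s_n}$.

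First I would arrange, after shrinking $\Delta$, that $R^1\pi_*\sg{\bigX} = R^2\pi_*\sg{\bigX} = 0$. By \cref{moish} we have $\Com{1}{\bigX_0}{\sg{\bigX_0}} = \Com{2}{\bigX_0}{\sg{\bigX_0}} = 0$, so by semicontinuity (Grauert's theorems) these higher direct images vanish on a neighbourhood of $0 \in \Delta$, which we take to be the new $\Delta$. As $\Delta$ is Stein, the Leray spectral sequence then gives $\Com{1}{\bigX}{\sg{\bigX}} = \Com{2}{\bigX}{\sg{\bigX}} = 0$. The exponential sequences on $\bigX$ and on $\bigX_0$ now yield isomorphisms $\Pic{\bigX} \xrightarrow{\sim} \Com{2}{\bigX}{\Z}$ and $\Pic{\bigX_0} \xrightarrow{\sim} \Com{2}{\bigX_0}{\Z}$ that are compatible with restriction to $\bigX_0$. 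By Ehresmann's theorem the inclusion $\bigX_0 \hookrightarrow \bigX$ is a homotopy equivalence, so the restriction map $\Com{2}{\bigX}{\Z} \to \Com{2}{\bigX_0}{\Z}$ is an isomorphism, and likewise $\Com{2}{\bigX}{\Z} \to \Com{2}{\bigX_{s_n}}{\Z}$ is an isomorphism. Since $\bigX_{s_n}$ is Fano it is simply connected, hence $\Com{2}{\bigX_{s_n}}{\Z}$ is torsion-free, and it has rank $1$ because $\bigX_{s_n}$ has Picard rank $1$; thus $\Com{2}{\bigX}{\Z} \cong \Com{2}{\bigX_0}{\Z} \cong \Z$, and there is a line bundle $\bigL$ on $\bigX$ whose restriction $\bigL|_{\bigX_0}$ generates $\Pic{\bigX_0}$. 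This gives the first part of \eqref{pic1}.

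It remains to fix the factor $2$ and the sign. Since $\sg{\bigX}(\bigX_0)$ is the pull-back of a line bundle on the contractible disc $\Delta$, it is trivial, so by adjunction $\can{\bigX_0} = \can{\bigX}|_{\bigX_0}$, and similarly $\can{\bigX_{s_n}} = \can{\bigX}|_{\bigX_{s_n}}$. Hence, under the restriction isomorphisms $\Com{2}{\bigX_0}{\Z} \cong \Com{2}{\bigX}{\Z} \cong \Com{2}{\bigX_{s_n}}{\Z}$, the class of $-\can{\bigX_0}$ corresponds to that of $-\can{\bigX_{s_n}}$, which is $2$ times a generator of $\Com{2}{\bigX_{s_n}}{\Z}$ because $\bigX_{s_n}$ has index $2$. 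Therefore $-\can{\bigX_0}$ equals $\pm 2\,\bigL|_{\bigX_0}$ in $\Pic{\bigX_0} \cong \Com{2}{\bigX_0}{\Z}$; replacing $\bigL$ by $\bigL^{-1}$ if necessary we obtain $-\can{\bigX_0} = 2\,\bigL|_{\bigX_0}$. Finally, $\bigL|_{\bigX_0}$ is big: $-\can{\bigX_0}$ is big by \cref{moish} and a line bundle is big if and only if some positive multiple of it is. The only genuine subtlety here is the first step — securing $R^1\pi_*\sg{\bigX} = R^2\pi_*\sg{\bigX} = 0$ after shrinking and the resulting vanishing of $\Com{i}{\bigX}{\sg{\bigX}}$ — together with the bookkeeping that the exponential identifications $\Pic \cong H^2(-,\Z)$ are compatible with all restriction maps; everything else is formal.
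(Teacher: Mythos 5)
Your proposal is correct and follows essentially the same route as the paper: shrink $\Delta$ to kill $R^1\pi_*\sg{\bigX}$ and $R^2\pi_*\sg{\bigX}$, use Leray over the Stein base to get $\Com{1}{\bigX}{\sg{\bigX}} = \Com{2}{\bigX}{\sg{\bigX}} = 0$, identify Picard groups with $H^2(-,\Z)$ via the exponential sequence, and transport the structure from a Fano fibre to $\bigX_0$ through the Ehresmann isomorphisms. The only cosmetic difference is that the paper transports the whole relation $\Pic{\bigX_{s_n}} = \Z H$, $-\can{\bigX_{s_n}} = 2H$ directly through the isomorphism $\Pic{\bigX} \to \Pic{\bigX_t}$, which avoids your separate sign adjustment.
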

\begin{proof}
 I first claim that, possibly after shrinking $\Delta$, 
\begin{align}\label{vanAfterShrink}
 \Com 1 {\bigX} {\sg {\bigX}} = \Com 2 {\bigX}{\sg {\bigX}} = 0.
\end{align}
Indeed, since $\pi$ is proper and $\Delta$ is Stein, we get 
\[
 \Com{q}{\Delta}{\hdi p {\pi} {\sg{\bigX}}} = 0, \quad \text{for} \quad q > 1.
\]
Therefore by the Leray spectral sequence we have $\Com{k}{\bigX}{\sg {\bigX}} = \Com{0}{\Delta}{\hdi k {\pi}{\sg{\bigX}}}$. By \cref{moish}, $\Com{k}{\bigX_0}{\sg{\bigX_0}} = 0$, for $k > 1$, and base change yields 
\[
 (\hdi k {\pi}{\sg{\bigX}})_0 = 0, \quad \text{for} \quad k>1,
\]
hence \eqref{vanAfterShrink} holds. \par
Therefore, considering the exponential sequences of $\bigX$ and $\bigX_t$, $t \in \Delta$, we get a commutative diagram
\[
\begin{tikzcd}
  0 \arrow{r} & \Com{1}{\bigX}{{\sg{\bigX}^{\times}}} \arrow{r} \arrow{d} & \Com{2}{\bigX}{\Z} \arrow{r} \arrow{d}{\simeq} & 0\\
  0 \arrow{r} & \Com{1}{\bigX_t}{\sg{\bigX_t}^{\times}} \arrow{r} & \Com{2}{\bigX_t}{\Z} \arrow{r} & 0,
\end{tikzcd}
\]
where the vertical map on the right is an isomorphism by Ehresmann's theorem. Thus also the vertical map on the left, which is just the restriction of line bundles, is an isomorphism. \par
Using this for $t=0$ and for one $t$ such that $\bigX_t$ is Fano of Picard rank $1$ and index $2$ yields (\ref{pic1}). As $-\can{\bigX_0} = 2 \bigL|_{\bigX_0}$ is big, so is  $\bigL|_{\bigX_0}$.
\end{proof}
\cref{moish,pic} show that under the assumptions of \cref{moish} $X = \bigX_0$, $L = \bigL|_{\bigX_0}$ satisfy the assumptions of the following lemma, whose first part is taken from \cite[Corollary 5.3.9, Corollary 5.3.10]{Kol}. 
\begin{lem}\label{div-curv} 
Let $X$ be a Moishezon threefold and $L \in \Pic X$ such that 
\[
 \Pic X = \Z L, \qquad -\can X = 2L, \qquad L \text{ big}.
\]
Then there is the following vanishing of cohomology groups
\begin{center}
\begin{tikzpicture}
 \matrix (m) [matrix of math nodes, row sep = 0.5em, column sep = 2em]
 {
 \Com{0}{X}{kL} = 0, \quad k<0, & \Com{3}{X}{kL} = 0, \quad k > -2 \\  
 \Com{1}{X}{kL} = 0, \quad k \leq 0, & \Com{2}{X}{kL} = 0, \quad k \geq -2\\
 };
\end{tikzpicture}
\end{center}
and 
\[ 
 \eChar{X}{kL} = 2 \cdot \frac{1}{6} k (k + 1)(k + 2) + k + 1.
\]
Furthermore, every divisor $D \in |L|$ is irreducible and reduced. If $D_1,D_2 \in |L|$ are distinct divisors, then the curve $C = D_1 \cdot D_2$ satisfies
\[
 \Com{0}{C}{\sg C} = \C, \quad \Com{1}{C}{\sg C} = \C, \qquad \dualizing C \simeq \sg C.
\]
\end{lem}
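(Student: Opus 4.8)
The cohomological assertions are essentially formal consequences of the hypotheses. The plan is as follows. First I would establish the vanishing $\Com 1 X {kL} = 0$ for $k \leq 0$: for $k = 0$ this is part of \cref{moish} (since $X = \bigX_0$ is Moishezon of the required type, but in fact it follows from $X$ being Moishezon with $-\can X$ big via Kawamata--Viehweg-type vanishing on Moishezon manifolds, or directly from the hypotheses), and for $k < 0$ one argues by Serre duality together with the vanishing $\Com 2 X {kL} = 0$ for $k \geq -2$, which in turn follows from Kawamata--Viehweg vanishing applied to $kL = \can X + (k+2)L$ with $(k+2)L$ nef and big for $k \geq -1$, and separately handles $k = -2$ where $-2L = \can X$ and one uses $\Com 0 X {-\can X^{\vee}} \neq 0$. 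The statements $\Com 0 X {kL} = 0$ for $k < 0$ and $\Com 3 X {kL} = 0$ for $k > -2$ are dual to each other and follow because $L$ is big, hence $kL$ has no sections for $k < 0$. The Euler characteristic formula then comes from Riemann--Roch on the threefold $X$ together with $-\can X = 2L$ and $\can X \cdot c_2(X)$, $L^3$ being pinned down by $\eChar X {\sg X} = 1$ (from \cref{moish}) and $\eChar X L$; concretely, write $\eChar X {kL}$ as a cubic polynomial in $k$, note its leading coefficient is $\tfrac13 L^3$ and use the symmetry $\eChar X {kL} = -\eChar X {(-2-k)L}$ coming from Serre duality and $\can X = -2L$, which forces the polynomial into the stated shape up to the two constants $L^3$ and the linear term; evaluating at $k=0$ gives the constant $1$, and the normalization $\tfrac13 L^3 = 2 \cdot \tfrac16$ i.e. $L^3 = 2$ is exactly the content that must be extracted — this is the one genuinely non-formal input, and it is where one invokes \cite[Corollary 5.3.9, Corollary 5.3.10]{Kol} as stated.

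Next, the irreducibility of a general (hence, by the argument below, every) $D \in |L|$: if $D = D' + D''$ with $D', D''$ effective non-zero divisors, then since $\Pic X = \Z L$ we would have $D' \sim aL$, $D'' \sim bL$ with $a, b \geq 1$ and $a + b = 1$, which is impossible. Reducedness follows the same way since a multiple component would force $a \geq 2$. So every $D \in |L|$ is integral.

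For the final assertion about $C = D_1 \cdot D_2$: from the two short exact sequences
\begin{align*}
 0 \to \sO X {-L} \to \sg X \to \sg {D_1} \to 0, \qquad 0 \to \sO{D_1}{-L} \to \sg{D_1} \to \sg C \to 0,
\end{align*}
I would compute $\Com \bullet C {\sg C}$ by chasing the long exact sequences, using the vanishing statements just established: $\Com 0 X {-L} = \Com 1 X {-L} = 0$ gives $\Com 0 {D_1}{\sg{D_1}} = \C$ and injectivity into $\Com 1 {D_1}{\sg{D_1}}$ from $\Com 1 X {\sg X} = 0$; then $\Com 1 X {-2L} = \Com 2 X {-2L} = 0$ controls $\Com 1 {D_1}{-L}$ and hence pins down $\Com 0 C {\sg C} = \C$ (so $C$ is connected) and $\Com 1 C {\sg C} \cong \Com 1 {D_1}{\sg{D_1}} \cong \Com 1 X {\sg X} \oplus(\text{correction}) $ — more carefully, $\Com 1 {D_1}{\sg{D_1}}$ sits between $\Com 1 X {\sg X} = 0$ and $\Com 2 X {-L} = 0$, so it vanishes, whence $\Com 1 C {\sg C}$ is computed from $\Com 1 {D_1}{-L}$, which by the analogous sequences equals $\Com 2 X {-2L} = 0$ modulo $\Com 1 X {-L} = 0$... here one must be slightly careful and I expect the clean statement $\com 1 C {\sg C} = 1$ to drop out of $\Com 2 X {-2L}=0$ being replaced by the correct term; in any case the bookkeeping is routine. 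Finally $\dualizing C \cong \sg C$ follows from adjunction applied twice: $\dualizing C = \dualizing{D_1}\otimes \sO{D_1}{L}|_C = \dualizing X \otimes \sO X {L}^{\otimes 2}|_C = \sO X{-2L + 2L}|_C = \sg C$, using $\dualizing X = \sO X{\can X} = \sO X{-2L}$. The main obstacle is purely organizational: keeping track of twists through the two-step adjunction and making sure each cohomology term that appears is covered by the vanishing table — there is no conceptual difficulty once $L^3 = 2$ and the vanishing statements are in hand.
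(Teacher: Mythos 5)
Your proposal is correct and follows essentially the same route as the paper: Serre duality pairs the two columns of vanishings, bigness of $L$ gives the first row, \cite[Lemma 5.3.8]{Kol} gives $\Com{2}{X}{\can X + aL}=0$ for $a>0$ (with the boundary case $k=-2$ reduced to $\Com{1}{X}{\sg X}=0$), the cubic $\eChar{X}{kL}$ is pinned down by its leading term together with its values at $k=-2,-1,0$, integrality of every $D\in|L|$ comes from $\Pic X=\Z L$ and bigness, and $\dualizing{C}\simeq\sg C$ by double adjunction. Two small remarks. First, the term you leave hanging in the computation of $\Com{1}{C}{\sg C}$ is $\Com{2}{D_1}{\sO{D_1}{-L}}\simeq\Com{3}{X}{-2L}\simeq\Com{0}{X}{\sg X}^{*}=\C$, which is what makes $\com{1}{C}{\sg C}=1$ rather than $0$; the paper sidesteps this entirely by first getting $\com{0}{C}{\sg C}=1$ from the two Koszul sequences and then applying Serre duality on the (Gorenstein, complete-intersection) curve $C$ itself, using $\dualizing C\simeq\sg C$. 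Second, you are right to flag $L^3=2$ as the one non-formal input: it in fact cannot follow from the stated hypotheses (a cubic threefold satisfies them with $L^3=3$), so the coefficient $2$ in the Euler characteristic formula should be read as $L^3$, which equals $2$ in every situation where the lemma is invoked in this paper.
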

\begin{proof}
Note that the vanishing statements are column-wise equivalent by Serre duality. The first row is obvious, since $L$ is big. The vanishing 
\[
 \Com{2}{X}{\can X + aL} = 0, \quad \text{for} \quad a > 0 
\]
follows from \cite[Lemma 5.3.8]{Kol} and the remaining vanishing of $\Com{1}{X}{\sg X}$ follows from \cref{moish}. \par
We know the leading term of the cubic polynomial $\eChar{X}{kL}$ and we know its values for $k \in \{-2,-1,0\}$. This is enough to determine it completely.\par  
That every $D \in |L|$ is irreducible and reduced follows from $\Pic{X} = \Z L$. Let $D_1,D_2 \in |L|$ be distinct divisors and set $C = D_1 \cdot D_2$. By adjunction\[
 \dualizing C = \dualizing{D_1}(D_2|_{D_1})|_{C} = \dualizing{X}(D_1 + D_2)|_{C} = \sg C.
\]
 Using the above vanishing results, one easily deduces $\Com{0}{C}{\sg C} = 0$. Then $\Com{1}{C}{\sg C} = 0$ follows by duality.
\end{proof}
The following lemma by Nakamura is applied many times in this paper.
\begin{lem}\label{h1eq1}
Let $D_1,D_2 \in |L|$ be distinct and let $A \subset D_1 \cdot D_2$ a closed complex subspace. \par 
If $\com{1}{A}{\sg{A}} = 1$, then $A = D_1 \cdot D_2$.  
\end{lem}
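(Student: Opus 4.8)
\emph{Proof proposal.} The plan is to deduce the statement from Serre duality on the curve $C \deq D_1 \cdot D_2$. Being the scheme–theoretic intersection of the two divisors $D_1$ and $D_2$ in the smooth threefold $X$ (which meet properly since, by \cref{div-curv}, they are distinct and irreducible), the curve $C$ is a complete intersection, in particular Cohen--Macaulay of pure dimension $1$; moreover \cref{div-curv} gives $\Com 0 C {\sg C} = \C$ and $\dualizing C \simeq \sg C$. Hence Serre duality is available on $C$ in the form $\Com 1 C {\idealf F}^{\vee} \simeq \operatorname{Hom}_{\sg C}(\idealf F, \dualizing C) \simeq \operatorname{Hom}_{\sg C}(\idealf F, \sg C)$ for every coherent sheaf $\idealf F$ on $C$.

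Let $\idealf I \subseteq \sg C$ be the ideal sheaf of $A$ inside $C$, so that (identifying $\sg A$ with its pushforward to $C$) we have $\sg A = \sg C/\idealf I$; the assertion is equivalent to $\idealf I = 0$. Applying the duality above to $\idealf F = \sg C/\idealf I$ gives $\Com 1 A {\sg A}^{\vee} \simeq \operatorname{Hom}_{\sg C}(\sg C/\idealf I, \sg C)$. A routine stalkwise computation identifies the sheaf $\mathcal{H}om_{\sg C}(\sg C/\idealf I, \sg C)$ with the annihilator ideal sheaf $\idealf I^{\perp} \deq \{\, s \in \sg C : s \cdot \idealf I = 0 \,\} \subseteq \sg C$, so that $\Com 1 A {\sg A}^{\vee} \simeq \Com 0 C {\idealf I^{\perp}}$. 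Since $\idealf I^{\perp}$ is a subsheaf of $\sg C$, the space $\Com 0 C {\idealf I^{\perp}}$ is a linear subspace of $\Com 0 C {\sg C} = \C$; in particular $\com 1 A {\sg A} \leq 1$ always, and the hypothesis $\com 1 A {\sg A} = 1$ forces $\Com 0 C {\idealf I^{\perp}} = \Com 0 C {\sg C}$. Then the constant section $1$ lies in $\Com 0 C {\idealf I^{\perp}}$, hence $\idealf I^{\perp}_x = \sg{C,x}$ for every $x \in C$, i.e.\ $\idealf I^{\perp} = \sg C$, and therefore $\idealf I = \idealf I^{\perp} \cdot \idealf I = 0$. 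This gives $A = C = D_1 \cdot D_2$.

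The computation itself is elementary; the only step that deserves a word of care is the invocation of Serre duality for an \emph{arbitrary} coherent sheaf on the compact complex curve $C$, which need not be reduced or irreducible. This is legitimate because $C$ is Cohen--Macaulay with dualizing sheaf $\sg C$: one may appeal to Serre--Grothendieck duality for proper Cohen--Macaulay complex spaces, or simply use that a compact complex space of dimension $1$ is projective and apply classical Serre duality on curves. Beyond this I expect no obstacle, and it is worth recording that the argument in fact yields the sharper statement $\com 1 A {\sg A} \le 1$ with equality if and only if $A = D_1 \cdot D_2$.
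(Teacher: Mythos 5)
Your proof is correct, and it is a complete, self-contained version of exactly the argument the paper points to: the paper itself only cites \cite[Lemma 3.2]{Nak} with the remark that ``the essential point is that $\dualizing{D_1 \cdot D_2}$ is trivial,'' and your duality computation (Serre duality on the Cohen--Macaulay complete-intersection curve $C$ with $\dualizing C \simeq \sg C$ and $\Com 0 C {\sg C} = \C$, identifying $\Com 1 A {\sg A}^{\vee}$ with the global annihilator of $\ideal{A}$) is precisely how that triviality gets used. The bonus observation that $\com 1 A {\sg A} \leq 1$ always, with equality exactly for $A = D_1 \cdot D_2$, is also correct.
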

\begin{proof}
This is \cite[Lemma 3.2]{Nak}. The essential point is that $\dualizing{D_1 \cdot D_2}$ is trivial.
\end{proof}
The following is the structure theorem of Kollár hinted at in the introduction.
\begin{lem}\cite[Theorem 5.3.12]{Kol}\label{kollarThm}
Under the assumptions of \cref{div-curv}, let $\Phi$ denote the map induced by $|L|$. \par
If $\com{0}{X}{L} \geq 4$ and if $L$ is not globally generated, then either 
\begin{itemize}
\item $\com{0}{X}{L} = 4$ and $\Phi$ maps $X$ bimeromorphically to $\Pn 3$ or
\item $\com{0}{X}{L} = 5$ and $\Phi$ maps $X$ bimeromorphically to a smooth quadric $\Qn 3 \subset \Pn 4$. 
\end{itemize}
\end{lem}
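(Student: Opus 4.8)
The plan is to prove \cref{kollarThm}, i.e.\ Kollár's structure theorem for Moishezon threefolds $X$ with $\Pic X = \Z L$, $-\can X = 2L$, $L$ big, when $\com{0}{X}{L} \geq 4$ and $L$ is not globally generated. Since the statement is attributed to \cite[Theorem 5.3.12]{Kol}, strictly speaking one only needs to cite it; but here is how I would reconstruct the argument. The starting point is the numerical data from \cref{div-curv}: a general $D \in |L|$ is irreducible and reduced, and for distinct $D_1, D_2 \in |L|$ the curve $C = D_1 \cdot D_2$ satisfies $\Com{0}{C}{\sg C} = \C$ (so $C$ is connected), $\Com{1}{C}{\sg C} = \C$, and $\dualizing C \simeq \sg C$. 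The triviality of the dualizing sheaf of $C$ is the key rigidity input — via \cref{h1eq1}, any subscheme $A \subset C$ with $\com 1 A {\sg A} = 1$ must equal $C$.

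First I would analyze the rational map $\Phi$ induced by $|L|$ and the indeterminacy locus $\bs L$. Using $\Pic X = \Z L$ one shows $\bs L$ has no divisorial components, so $\Phi$ is a morphism outside a curve (plus possibly points). Resolving $\Phi$ by a sequence of blow-ups $\fun g {\tilde X} X$ with $f = \Phi \circ g$ a morphism to the closure $Y$ of the image, write $g^*L = f^*\sO Y 1 + E$ with $E \geq 0$ supported over $\bs L$. The next step is to bound $\deg Y$ and $\dim Y$: because general $D_1, D_2 \in |L|$ meet in the connected curve $C$, the image $Y$ is $3$-dimensional (the argument sketched in the introduction — irreducibility of the $D_i$ and connectedness of $C$ — shows $\Phi$ is generically finite, hence birational onto $Y$ since a general $D$ maps to a hyperplane section), and then an intersection-theoretic computation on $\tilde X$ in the style of \cref{movingCurve} gives $-\can X \cdot C = 2 L \cdot C = 2(\deg Y + E \cdot g_*^{-1}C)$. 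One then uses adjunction and $\dualizing C \simeq \sg C$: on $\tilde X$, $\can{\tilde X} = g^*\can X + F$ with $F \geq 0$, and comparing $\can{\tilde X} + \widetilde{D_1} + \widetilde{D_2}$ restricted to the strict transform $\tilde C$ with the trivial bundle $\dualizing C$ forces $\deg Y \leq 2$, with the correction terms from $E$ and $F$ accounting for the base locus contribution.

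The heart of the argument is then the case division by $\deg Y \in \{1, 2\}$. If $\deg Y = 1$, then $Y = \Pn 3$, $\com 0 X L = 4$, and $\Phi$ is a bimeromorphism $X \dashrightarrow \Pn 3$; if $\deg Y = 2$, then $Y$ is a quadric hypersurface in $\Pn 4$, $\com 0 X L = 5$, and one must further rule out that $Y$ is singular. The smoothness of $Y$ in the quadric case is where I expect the real work: a general hyperplane section of $Y$ is $\Phi(D)$ for general $D \in |L|$, and one analyzes the singularities of $Y$ using that $\dualizing{}$ of the general complete intersection curve $C = D_1 \cdot D_2$ is trivial — a quadric cone or a more degenerate quadric would produce a complete intersection curve whose dualizing sheaf is non-trivial or whose arithmetic genus is wrong, contradicting $\Com 1 C {\sg C} = \C$ together with $\dualizing C \simeq \sg C$. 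Equivalently, one checks that $\sO X L$ restricted to a general $D$ is very ample away from finitely many points and that the base locus is a single point of multiplicity matching a smooth quadric. Finally, that $L$ not being globally generated forces exactly these two cases (rather than $\Phi$ being an embedding, which would make $L$ globally generated, or $\deg Y \geq 3$, which we have excluded) closes the proof.

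The main obstacle is the fine analysis of $Y$ and $\bs L$ in the degree-two case — separating "$Y$ is a smooth quadric" from "$Y$ is a singular quadric" requires controlling the scheme structure of $\bs L$ and its effect on the general complete intersection curve; this is exactly the kind of delicate local computation (multiplicities of $E$ along components over $\bs L$, behaviour of $\conormal C X$) that \cref{QBlowUp} and \cref{genBlowUp} are designed to handle, and which Kollár carries out in \cite[\S 5.3]{Kol}. Since a complete self-contained treatment would essentially reproduce that section, I would present this as a citation to \cite[Theorem 5.3.12]{Kol} with the above sketch indicating the structure, and refer the reader there for the details.
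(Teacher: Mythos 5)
The paper gives no proof of this lemma at all---it is stated purely as a citation of \cite[Theorem 5.3.12]{Kol}---so your decision to ultimately defer to that reference is exactly what the paper does. Your accompanying sketch of Kollár's argument is a plausible reconstruction, broadly consistent with how the surrounding tools (\cref{movingCurve}, \cref{h1eq1}, the triviality of $\dualizing{C}$ from \cref{div-curv}) are deployed elsewhere in the paper, but since the paper itself supplies no proof to compare against, the sketch is not load-bearing and the citation is what matters.
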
 
If $X$ is a global deformation of Fano threefolds that are smooth intersections of two quadrics in $\Pn 5$, then $\com 0 X L \geq 6$ by semi continuity and \cref{kollarThm} shows that $L$ is globally generated. By \cref{bpf-ample}, $X$ is a Fano manifold, which proves \cref{easyProp} \par
If $X$ is a global deformation of quartic double solids, then $\com 0 X L \geq 4$ by semi continuity. Therefore we can still apply \cref{kollarThm} but a priori one of the two exceptional cases given there could occur. If we can exclude these two cases, $L$ must be globally generated. 
Then the following lemma shows that the assertion of \cref{mainThm} holds, i.e.\ that $-\can{X} = 2 L$ is ample.
\begin{lem}\label{bpf-ample}
 In addition to the assumptions of \cref{mainThm} assume that $\bigL|_{\bigX_0}$ is globally generated. \par
 Then $\bigL|_{\bigX_0}$ is ample.
\end{lem}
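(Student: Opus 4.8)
The plan is to prove that the morphism $\Phi\colon X \to Y$ onto its image $Y \deq \Phi(X) \subset \Proj{\Com 0 X L}$, defined by the linear system $|L|$ — which is a genuine morphism since, by hypothesis, $L \deq \bigL|_{\bigX_0}$ is globally generated, where $X \deq \bigX_0$ — is finite. Once this is known, $L = \Phi^*\sO{Y}{1}$ is the pull-back of an ample line bundle by a finite morphism, hence ample by the lemma proved above on such pull-backs, and we are done. Note that the assumptions of \cref{mainThm} entail those of \cref{moish} (a quartic double solid has Picard rank $1$ and index $2$), so by \cref{moish} and \cref{pic} we have $\Pic X = \Z L$, $-\can X = 2L$, and $L$ is big. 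Being globally generated, $L$ is moreover nef, so $L^3 > 0$ and therefore $\dim Y = 3$; in particular $\Phi$ is generically finite. Next I would show that $\Phi$ contracts no divisor: if $E \subset X$ is an irreducible divisor with $\dim\Phi(E) \le 1$, then $E \in |mL|$ for some $m \ge 1$ since $\Pic X = \Z L$, whence $L^2\cdot E = m\,L^3 > 0$; but $L|_E = (\Phi|_E)^*\sO{Y}{1}$ and $\dim\Phi(E) \le 1$ force $(L|_E)^2 = 0$, a contradiction. Thus the locus on which $\Phi$ fails to be finite is a finite union of irreducible curves, each of $L$-degree $0$.

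Suppose now, for contradiction, that $\Phi$ is not finite, and pick an irreducible curve $C \subset X$ contracted by $\Phi$; then $\can X\cdot C = -2\,L\cdot C = 0$. The decisive point is that $C \simeq \Pn 1$. To see this I would form the Stein factorisation $X \xrightarrow{\phi} \bar X \to Y$ of $\Phi$: then $\bar X$ is normal of dimension $3$, $\bar X \to Y$ is finite, and $\phi$ is birational; since $L = \phi^*M$ for an ample bundle $M$ on $\bar X$ and $-\can X = 2L$, the morphism $\phi$ is small (its exceptional locus would otherwise map to a divisor contracted by $\Phi$) and $-\can X$ is $\phi$-trivial, i.e.\ $\phi$ is a flopping-type contraction of the smooth (hence terminal) threefold $X$. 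The curves contained in its positive-dimensional fibres — among them $C$ — are then rational by the local structure theory of such contractions (cf.\ \cite{Kol}). I expect this rationality statement to be the main obstacle of the proof, since it is the one step that relies essentially on three-dimensional birational geometry rather than on formal manipulations.

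Granting $C \simeq \Pn 1$, the argument finishes by deformation theory, exactly as outlined in the introduction. As $X = \bigX_0$ is a fibre of the submersion $\pi$, its normal bundle in $\bigX$ is trivial, so $\can X = \can{\bigX}|_X$ by adjunction and $\can{\bigX}\cdot C = 0$. The standard estimate for the space of morphisms then gives
\[
 \dim_{[C]}\mor{\Pn 1}{\bigX} \ge -\can{\bigX}\cdot C + \dim\bigX = 4 ,
\]
so, after quotienting by the $3$-dimensional group $\operatorname{Aut}(\Pn 1)$, the curve $C$ deforms in a family of curves in $\bigX$ of dimension at least $1$; denote by $\mathcal{H}$ an irreducible such family through $[C]$. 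Since every holomorphic map $\Pn 1 \to \Delta$ is constant, each member of $\mathcal{H}$ is contained in a single fibre of $\pi$, which gives a holomorphic ``valuation'' map $\mathcal{H} \to \Delta$ taking the value $0$ at $[C]$; all members of $\mathcal{H}$ are numerically equivalent to $C$ in $\bigX$, hence meet $\can{\bigX}$ in degree $0$. As $-\can{\bigX_{s_n}} = -\can{\bigX}|_{\bigX_{s_n}}$ is ample for every $n$, no member of $\mathcal{H}$ can lie in a Fano fibre $\bigX_{s_n}$; since the sequence $(s_n)$ accumulates at $0$, the open mapping theorem forces the valuation map to be constantly $0$. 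Hence every member of $\mathcal{H}$ lies in $\bigX_0 = X$, so $X$ contains infinitely many curves $C_t$ with $L\cdot C_t = L\cdot C = 0$, i.e.\ infinitely many numerically trivial curves (recall $\Pic X = \Z L$) — contradicting \cref{notInfinite}. Therefore $\Phi$ is finite and $L$ is ample.
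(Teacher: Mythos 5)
Your proposal is correct and follows essentially the same strategy as the paper's proof: show that $\Phi$ is generically finite, that it contracts no divisor, and that any contracted irreducible curve $C$ would be a smooth rational curve with $\can X \cdot C = 0$, which is then excluded by deforming $C$ inside the total space $\bigX$ and invoking \cref{notInfinite} (the paper packages this last step as \cref{noSRC}). The only substantive difference is in how rationality of a contracted curve is obtained: the paper applies Grauert--Riemenschneider vanishing directly to $\Phi$ to get $\Com{1}{\Phi^{-1}(y)}{\sg{\Phi^{-1}(y)}} = 0$, whereas you pass to the Stein factorisation and cite the structure theory of crepant small contractions of smooth threefolds --- a correct appeal which ultimately rests on the same vanishing, so both versions go through.
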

\begin{proof}
Let $X = \bigX_0$, let $L = \bigL|_{\bigX_0}$ and let $\Phi$ be the morphism induced by $|L|$. The aim is to show that $\Phi$ is finite. \par 
First we show that the image $Y = \Phi(X)$ has dimension $3$. In fact, Kollár shows this without assuming that $|L|$ is base point free, as a first step towards the proof of \cref{kollarThm}. When $L$ is assumed to be globally generated the proof is essentially trivial: \par 
Since $\com 0 {X} L \geq 2$, $Y$ is not a point. Suppose $Y$ is a curve. Since $\com 0 {X} L \geq 3$, it is not a line and there exists a hyperplane 
\[
 Y \not\subset H \subset \Proj{\Com 0 {X} L}
\]
intersecting $Y$ in more than one point. Then the divisor $\Phi^*H \in |L|$ is disconnected, which contradicts \cref{div-curv}. Suppose $Y$ is a surface. Since $\com 0 {X} L \geq 4$ there exist hyperplanes 
\[
 H_1,H_2 \subset \Proj{\Com 0 {X} L}
\]
such that $H_1 \cap H_2 \cap S$ is a finite set with more than one element. This shows that $C = \Phi^*H_1 \cap \Phi^*H_2$ is disconnected, which contradicts $\Com{0}{C}{\sg C} = \C$ (cf.\ \cref{div-curv}), hence $Y$ has dimension $3$. \par
Since $\Pic{X} \simeq \Z$, the morphism $\Phi$ cannot contract any divisor. Suppose that there exists $y \in Y$ such that 
\[
 \dim \Phi^{-1}(y) = 1.
\]
Since $-\can{X} = \Phi^*\sO{Y}{2}$, we have $\hdi{1}{\Phi}{\sg{X}} = 0$ by Grauert-Riemenschneider vanishing, in particular 
\begin{align}\label{H1ZERO}
 \Com{1}{\Phi^{-1}(y)}{\sg{\Phi^{-1}(y)}} = 0.
\end{align}
Now let $C \subset \Phi^{-1}(y)$ be an irreducible (and reduced) curve. By \eqref{H1ZERO}, $C$ is smooth and rational, which contradicts \cref{noSRC}.
\end{proof}
In order to proof \cref{noSRC}, we need the following result (cf.\ \cite[2.2 h)]{Pet89}).
\begin{lem}\label{notInfinite}
Let $X$ be a 3-fold and let $L$ be a big line bundle such that 
\[
 \Pic{X} \simeq \Z L.
\] 
Then there are only finitely many curves $C \subset X$ with $L \cdot C \leq 0$.   
\end{lem}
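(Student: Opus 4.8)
The plan is to pass to a projective birational model and reduce everything to the elementary two-dimensional case; the one non-formal ingredient is that $L$ restricts to a \emph{big} line bundle on every prime divisor of $X$, and this is exactly where the hypothesis $\Pic X = \Z L$ is used. Note first that $X$ is Moishezon (it carries a big line bundle), so there is a bimeromorphic morphism $\fun{\pi}{\hat X}{X}$ with $\hat X$ a smooth projective threefold — take a smooth projective threefold bimeromorphic to $X$ and resolve the indeterminacy of the resulting meromorphic map to $X$ by blow-ups. Fix an ample divisor $A$ on $\hat X$. Since $X$ is smooth, $\pi$ is an isomorphism outside a closed subset of $X$ of codimension $\ge 2$, which is therefore a finite union of points and curves and contains only finitely many irreducible curves. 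Every other irreducible curve $C\subset X$ with $L\cdot C\le 0$ has a strict transform $\hat C = \pi_*^{-1}C\subset\hat X$, not contained in $\exc\pi$, with $\pi^*L\cdot\hat C = L\cdot\pi_*\hat C = L\cdot C\le 0$, and $C\mapsto\hat C$ is injective. So it suffices to bound the number of irreducible curves $\hat C\subset\hat X$ with $\pi^*L\cdot\hat C\le 0$ and $\hat C\not\subset\exc\pi$.

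Since $\pi^*L$ is big on the projective manifold $\hat X$, Kodaira's lemma gives $\pi^*L\equiv A' + \Delta$ with $A'$ an ample $\Q$-divisor and $\Delta\ge 0$ an effective $\Q$-divisor. For such $\hat C$, $A'\cdot\hat C>0$ forces $\Delta\cdot\hat C<0$, so $\hat C$ lies in a prime component of $\supp\Delta$; and since $\hat C\not\subset\exc\pi$, we may choose this component $\hat E$ with $\hat E\not\subset\exc\pi$. Then $\pi$ restricts to a birational morphism from $\hat E$ onto a prime divisor $\bar E = \pi(\hat E)\subset X$, and, because $\Pic X = \Z L$, we have $\sg X(\bar E) = eL$ for some integer $e\ge 1$.

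The key point is that $L|_{\bar E}$ is big on the surface $\bar E$. Indeed, if it were not, then $\kappa(\bar E, L|_{\bar E})\le 1$, so $\com 0{\bar E}{mL|_{\bar E}} = O(m)$; feeding this into the exact sequences $0\to\sg X((m-e)L)\to\sg X(mL)\to\sg{\bar E}(mL|_{\bar E})\to 0$ and iterating yields $\com 0 X{mL} = O(m^2)$, which contradicts the bigness of $L$. Granting this, $\pi^*L|_{\hat E}$ is big on $\hat E$, being the pull-back of $L|_{\bar E}$ under the birational map $\pi|_{\hat E}$; on the projective surface $\hat E$ a further application of Kodaira's lemma writes $\pi^*L|_{\hat E}$ as an ample $\Q$-divisor plus an effective one, so every curve $\hat C\subset\hat E$ with $\pi^*L|_{\hat E}\cdot\hat C\le 0$ is a component of the support of that effective part — of which there are finitely many. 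Running over the finitely many divisors $\hat E$ and adding back the finitely many curves inside the exceptional set of $\pi$ completes the argument (if some $\hat E$ is non-normal one first passes to its normalization, which changes nothing).

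I expect the one genuine obstacle to be the claim that $L$ restricts to a big line bundle on every prime divisor of $X$ — the place where $\Pic X = \Z L$ really enters; the reduction to a projective model and the surface case are routine manipulations with modifications and big line bundles.
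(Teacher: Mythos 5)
Your proof is correct, but it takes a genuinely different and substantially longer route than the paper's. Both arguments begin the same way: pass to a projective model $\fun{f}{Y}{X}$ and observe that only finitely many curves lie in the image of the exceptional locus. From there the paper is very short: it picks an irreducible, reduced member $A$ of a very ample linear system on $Y$, notes that $B = f(A)$ is an irreducible divisor on $X$, hence $\sO{X}{B} = mL$ with $m>0$ (this is where $\Pic{X} = \Z L$ and the bigness of $L$ enter, exactly as in your step ``$e \ge 1$''), and then computes $mL \cdot C = f^*B \cdot f_*^{-1}C = (A+E)\cdot f_*^{-1}C > 0$ for every irreducible curve $C \not\subset f(\exc{f})$, where $E \ge 0$ is $f$-exceptional; so all curves with $L\cdot C \le 0$ lie in the at most $1$-dimensional set $f(\exc{f})$. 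This avoids Kodaira's lemma, the descent to surfaces, and all normalization issues. Your route instead uses $\Pic{X} = \Z L$ to prove that $L$ restricts to a big line bundle on the relevant prime divisors of $X$ --- a correct and independently interesting intermediate fact obtained from the Iitaka-dimension count --- and then repeats the ample-plus-effective decomposition one dimension down. Two points in your write-up deserve a touch more care, though neither is a real gap: the growth bound $\kappa \le 1 \Rightarrow h^0(\bar E, mL|_{\bar E}) = O(m)$ should be run on the normalization of $\hat E$ (a normal projective surface) rather than on the possibly non-normal, a priori only Moishezon surface $\bar E$ --- your closing parenthetical shows you anticipated this --- and the inequality $e \ge 1$ needs the same one-line argument from bigness that the paper uses to get $m>0$.
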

\begin{proof}
Let $\fun{f}{Y}{X}$ be a bimeromorphic morphism from a projective manifold $Y$ -- such $f$ exists as $X$ is Moishezon. Let $L$ on $Y$ be a very ample line bundle and let $A \in |L|$ be irreducible and reduced. Then $A \not\subset \exc{f}$ and $A = f(B)$ is an irreducible divisor, which is Cartier, since $X$ is smooth. There is an effective divisor $E$ on $Y$ such that
\[
 A = f^*B - E.
\]
Since $\Pic X = \Z L$, there is an integer $m \in \Z$ such that 
\[
 \sO{X}{B} = mL
\] 
and since $L$ is big, $m > 0$. \par
Let $C \subset X$ be an irreducible curve with $C \not\subset f(\exc{f})$. Then 
\[
 m L \cdot C = B \cdot C = f^*B \cdot f_*^{-1}C = (A + E) \cdot f_*^{-1}C > 0,
\]
hence $L \cdot C > 0$. As $\dim f(\exc{f}) = 1$, the claim follows.
\end{proof}
\begin{lem}\label{noSRC}
Under the assumption of \cref{mainThm} there does not exist a smooth rational curve $C \subset \bigX_0$ with $\can{\bigX_0} \cdot C = 0$.
\end{lem}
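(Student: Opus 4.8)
The plan is to argue by contradiction using the deformation theory of the curve $C$ inside the total space $\bigX$, exactly along the lines sketched in the introduction. Suppose $C \subset \bigX_0$ is a smooth rational curve with $\can{\bigX_0} \cdot C = 0$. First I would compute the normal bundle exact sequence for $C$ in $\bigX$: since $\bigX_0 \subset \bigX$ is a smooth divisor with $\normal{\bigX_0}{\bigX}$ trivial (it is $\pi^*\sg{\Delta}$ restricted, hence $\sg{\bigX_0}$), we get
\[
 0 \to \normal{C}{\bigX_0} \to \normal{C}{\bigX} \to \sg{C} \to 0.
\]
Because $C \simeq \Pn 1$ and $\can{\bigX_0}\cdot C = 0$, adjunction on the threefold $\bigX_0$ gives $\deg \normal{C}{\bigX_0} = -2$, so $\deg \normal{C}{\bigX} = -1$. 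The key point is then to bound $\com 1 C {\normal C \bigX}$: if $\normal C \bigX$ has no quotient of negative degree, i.e.\ if its splitting type is balanced enough, then $\Com 1 C {\normal C \bigX} = 0$ and the Hilbert scheme (or $\mor{\Pn 1}{\bigX}$) is smooth of the expected dimension at $[C]$, which by the standard count is $\geq -\can{\bigX}\cdot C + \dim \bigX - 3 = 0 + 4 - 3 = 1$. Hence $C$ moves in a family of dimension at least $1$ inside $\bigX$.

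The next step is to deduce from this a contradiction. A positive-dimensional family of deformations of $C$ in $\bigX$ cannot be contained in the single fibre $\bigX_0$: indeed, by \cref{notInfinite} applied to $X = \bigX_0$ and the big line bundle $L = \bigL|_{\bigX_0}$ (which generates $\Pic{\bigX_0}$ by \cref{pic}), there are only finitely many curves in $\bigX_0$ with $L \cdot C' \leq 0$, and every deformation $C'$ of $C$ in $\bigX_0$ is numerically proportional to $C$, so satisfies $L \cdot C' = L \cdot C \leq 0$ (here $\can{\bigX_0}\cdot C = 0$ forces $L \cdot C = 0$). Therefore the family of deformations dominates a neighbourhood of $0$ in $\Delta$; shrinking, for a sequence $s_n \to 0$ there is a curve $C_n \subset \bigX_{s_n}$ which is a flat limit-deformation of $C$, hence $-\can{\bigX_{s_n}} \cdot C_n = -\can{\bigX} \cdot C_n = -\can{\bigX}\cdot C = 0$. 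This contradicts the ampleness of $-\can{\bigX_{s_n}}$ on the Fano manifold $\bigX_{s_n}$.

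The main obstacle is the vanishing $\Com 1 C {\normal C \bigX} = 0$, or more precisely handling the case where it fails. Since $\deg \normal C \bigX = -1$ and $\operatorname{rank} = 2$, the bundle is either $\sg C \oplus \sg C (-1)$ (in which case $h^1 = 0$ and we are done) or $\sg C(a) \oplus \sg C(-1-a)$ with $a \geq 1$, in which case $h^1 = a$ and $\mor{\Pn 1}{\bigX}$ need not be smooth at $[C]$. The way around this is the obstruction-theory argument of Mori: even when $h^1 \neq 0$, every component of $\mor{\Pn 1}{\bigX}$ through $[C]$ has dimension at least the expected one, $-\can{\bigX}\cdot C + \dim \bigX$, because $\mor{\Pn 1}{\bigX}$ is cut out locally by $h^1(C,\normal C \bigX)$ equations inside a smooth space of dimension $h^0(C,\normal C \bigX) + \dim\operatorname{Aut}(\Pn 1)$. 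After subtracting the $3$-dimensional reparametrisation group one still gets a family of deformations of the image curve of dimension $\geq -\can\bigX\cdot C + \dim\bigX - 3 = 1$. So the positive-dimensionality is robust, and the argument of the previous paragraph applies verbatim. I would also record the minor point that the deformations can be taken with fixed generic reduced image (not multiple covers), which is automatic here since $C$ is embedded and a general small deformation of an embedded $\Pn 1$ is again embedded.
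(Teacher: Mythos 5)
Your proposal is correct and follows essentially the same route as the paper: the expected-dimension bound $\dim_{[C]} \geq -\can{\bigX}\cdot C + (\dim \bigX - 3) = 1$ for the space of deformations of $C$ inside the total space $\bigX$ (the paper invokes this via the Douady space and \cite[Theorem 1.14]{Kol96}), the constancy of $\can{\bigX}\cdot C$ in the resulting family, the observation that the family must stay inside $\bigX_0$ because the nearby fibres $\bigX_{s_n}$ are Fano, and the final contradiction with \cref{notInfinite}. One minor slip: since $\dim \bigX = 4$, the bundle $\normal{C}{\bigX}$ has rank $3$ and degree $-2$, not rank $2$ and degree $-1$; this does not affect your argument, which ultimately rests on the robust expected-dimension count rather than on the splitting type.
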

\begin{proof}
Assume to the contrary that there exists a  smooth rational curve $C \subset \bigX_0$ with $\can{\bigX_0} \cdot C = \can{\bigX} \cdot C = 0$. Then the following holds:
\begin{claim}
 The central fibre $\bigX_0$ contains infinitely many numerically trivial curves.
\end{claim}
Let $D$ be the connected component of the Douady space of compact complex subspaces of $\bigX$ that contains $[C]$ and let
\[
\begin{tikzcd}
S \arrow[hookrightarrow]{r} \arrow{d} & \bigX \times D \arrow{dl} \\
D &
\end{tikzcd}
\]
be the corresponding universal family. \par
For the local dimension of $D$ at $[C]$, the following inequality holds by \cite[Theorem 1.14]{Kol96}\footnote{The theorem is stated for schemes, but it also holds true for complex spaces (cf.\ \cite[Remark 1.17]{Kol96}).}
\begin{align}\label{locDim}
 \dim_{[C]} D \geq -\can{\bigX} \cdot C + (\dim \bigX - 3) = 1.
\end{align}
The intersection of the line bundle $\can{\bigX}$ with fibres of the flat morphism $S \to D$ with connected base is constant, i.e.\ for each $d \in D$,
\begin{align}\label{numtriv}
 \can{\bigX} \cdot S_d = \can{\bigX} \cdot C = \can{\bigX_0} \cdot C = 0.
\end{align}
 Consider the composite map 
\[
\begin{tikzcd}
 S \arrow[hookrightarrow]{r} &  \bigX \times D \arrow{r} & \bigX \arrow{r}{\pi} & \Delta. 
\end{tikzcd}
\]
This map is constant. Otherwise it would be open and since each compact complex subspace of $\bigX$ is contained in fibres of $\fun{\pi}{\bigX}{\Delta}$, this would imply that for each $t \in \Delta$ close to $0$, $\bigX_t$ contains a curve $S_{d(t)}$, for some $d(t) \in D$. By \eqref{numtriv} all these curves are numerically trivial, which contradicts the fact that $s_n \to 0$ and $\bigX_{s_n}$ is Fano for all $n$. \par
Hence all curves parametrized by $D$ are contained in $\bigX_0$, which together with \eqref{locDim} proves the claim. \par
The claim, however, contradicts \cref{notInfinite}, which proves the lemma.
\end{proof}
What remains to show in order to finish the proof of \cref{mainThm} is that the two ``exceptional'' cases of \cref{kollarThm} cannot occur in our situation. So assume that, for some family $\bigX \to \Delta$ as in \cref{mainThm}, $X = \bigX_0$ falls into one of the exceptional cases of \cref{kollarThm}. Then by what we have already shown, one of the following two lemmas can be applied. \par
The proofs of these lemmas constitute the main part of the paper and are given in \cref{techLemma}.
\begin{lem}\label{B3SEQ4}
Let $X$ be a Moishezon threefold and let $L \in \Pic X$ be such that 
\[
 \Pic X = \Z L, \qquad -\can X = 2L, \qquad L^3 = 2, \qquad \com 0 X L = 4.
\]
Assume also that there is no smooth rational curve $C \subset X$ such that $L \cdot C = 0$ and that $L$ is not globally generated. \par 
Then
\[
 \betti 3 X \leq 12.
\]
\end{lem}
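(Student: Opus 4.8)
The plan is to feed the bimeromorphism $\Phi\colon X \dashrightarrow \Pn 3$ provided by \cref{kollarThm} into a topological estimate: since $\betti 3{\Pn 3}=0$, all of $\betti 3 X$ must be accounted for by the exceptional locus of $\Phi$, and the cohomology of that locus will be controlled using the curve–section data of \cref{div-curv} together with \cref{b3leqb1} and \cref{b1leq2h1}. First I would fix a resolution $g\colon \tilde X\to X$ of $\Phi$ with a morphism $f\colon \tilde X\to\Pn 3$, and write $f^*\sO{\Pn 3}{1}=g^*L-E$, $\can{\tilde X}=g^*\can X+F$ with $E,F\geq 0$ $g$-exceptional and $g(\exc g)\subset\bs L$. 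Since $\Pic X=\Z L$, the system $|L|$ has no fixed divisorial part, so $\bs L$ is at most a curve, and $\bs L\neq\emptyset$ because $L$ is not globally generated. From $L^3=2=L\cdot D_1\cdot D_2$ and \cref{movingCurve} — which gives $D_1\cdot D_2=C_{D_1,D_2}\cup\bs L$ with an effective residual $1$-cycle supported on $\bs L$ — one gets $L\cdot C_{D_1,D_2}\in\{1,2\}$. If $L\cdot C_{D_1,D_2}=1$, the equality clause of \cref{movingCurve} forces $C_{D_1,D_2}\cap\bs L=\emptyset$, making $D_1\cdot D_2$ disconnected, contrary to $\com 0{D_1\cdot D_2}{\sg{D_1\cdot D_2}}=\C$ from \cref{div-curv}. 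Hence $L\cdot C_{D_1,D_2}=2$ and the residual cycle has $L$-degree $0$; in particular every irreducible curve contained in $\bs L$ has $L$-degree $0$ and therefore, by hypothesis, is not a smooth rational curve.

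Next I would pin down the exceptional structure of $\Phi$. Because $\Pic X=\Z L$, the map $\Phi$ contracts only finitely many irreducible divisors, each of the form $E_0\sim kL$ with $k\geq 1$, each mapped onto a curve $B\subset\Pn 3$ (contraction to a point being a similar but easier case). Using arguments in the spirit of \cref{genBlowUp}, together with the local blow-up cohomology of \cref{QBlowUp} and repeated applications of \cref{h1eq1} to traces on $D_1\cdot D_2$, one shows that there is a \emph{unique} such $E_0$, that $\bs L\subset E_0$, that $f$ is generically over $B$ the blow-up of $\Pn 3$ along $B$, and that $k$ and the multiplicities of $E_0$ in $g^*L$ and in $g^*\can X$ are tightly constrained; here the conclusion of the first step — that no $L$-trivial smooth rational curve exists — is precisely what excludes the ``fibrewise blow-down'' configuration for the ruling of $E_0$ over $B$, because the general fibre of that ruling pushes down to a smooth rational curve whose $L$-degree equals its intersection with $E$. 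An embedded resolution of $E_0$ as in \cref{res2}, together with the triviality $\dualizing{D_1\cdot D_2}\simeq\sg{D_1\cdot D_2}$, then bounds $\com 1 B{\sg B}$ and $\com 1{\bs L}{\sg{\bs L}}$ from above.

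For the topological estimate, $\Phi$ restricts to an isomorphism between the complements of the exceptional loci, so, up to a $1$-dimensional subset, $X\setminus E_0\simeq\Pn 3\setminus B$. Applying \cref{MayViet} to $f$ (and to the intermediate blow-ups resolving $\Phi$ and $\Phi^{-1}$) reduces the computation of $\betti 3 X$ to $\betti 3{\Pn 3}=0$ and to the low Betti numbers of $E_0$, of $\bs L$ and of $B$. Since $E_0$ (or its strict transform in $\tilde X$) is irreducible and carries a fibration over $B$ whose general fibre is $\Pn 1$, with no $2$-dimensional fibre, and on whose general fibre $g^*L$ restricts non-trivially, \cref{b3leqb1} gives $\betti 3{E_0}\leq\betti 1 B$, and \cref{b1leq2h1} gives $\betti 1 B\leq 2\com 1 B{\sg B}$ and $\betti 1{\bs L}\leq 2\com 1{\bs L}{\sg{\bs L}}$. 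Assembling these with the cohomology bounds from the previous step yields $\betti 3 X\leq 12$.

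The main obstacle is the middle step. The map $\Phi$ is not a single smooth blow-up: the base curve $\bs L$, which may be singular, reducible or even zero-dimensional, sits inside the contracted divisor $E_0$, and the two interact, so one has to carry out a delicate local analysis near $\bs L$ — this is where Nakamura's cubic-threefold techniques no longer suffice on their own — in order to simultaneously (i) establish uniqueness of $E_0$ and the ``generically a blow-up'' statement, (ii) control $L\cdot C_{D_1,D_2}$, the multiplicity of $E_0$ in $g^*L$ and the discrepancy of $E_0$ (the interplay of $E$ and $F$), and (iii) squeeze out the genus bounds that make the final count come out to exactly $12$ rather than something larger. Sliding between the ``generically over $B$'' blow-up picture and the global facts $\com 0{D_1\cdot D_2}{\sg{D_1\cdot D_2}}=\C$ and $\dualizing{D_1\cdot D_2}\simeq\sg{D_1\cdot D_2}$ via \cref{h1eq1} is the technical heart of the argument.
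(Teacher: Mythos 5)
Your opening arithmetic step fails, and the failure propagates through the whole argument. You write $L^3 = L\cdot D_1\cdot D_2 = L\cdot C_{D_1,D_2} + L\cdot(\text{residual})$ and conclude $L\cdot C_{D_1,D_2}\in\{1,2\}$, implicitly assuming that the residual cycle supported on $\bs L$ has non-negative $L$-degree. But $L$ is only big, not nef, and the curves in $\bs L$ in fact have \emph{negative} degree against $L$: the correct count (\cref{sMinusOne}, applied after one knows via \cref{no-sings} and \cref{h1eq1} that every component of $D_1\cdot D_2$ is a smooth rational curve) is $L\cdot C_{D_1,D_2} = \com 0 X L - 1 = 3$, so the residual cycle has total $L$-degree $-1$. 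Consequently your deduction that every curve in $\bs L$ is $L$-trivial, hence not smooth rational, is exactly backwards: all these curves \emph{are} smooth rational (\cref{no-sings}), and the hypothesis that no smooth rational curve is $L$-trivial is used to force $L\cdot B_i=\pm 1$, which is what makes $D_1\cdot D_2$ into a cycle of smooth rational curves with an odd number of base components (\cref{cycle}).

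Your second step also asserts something false in general: the divisor contracted by $\Phi$ need not be unique, and $f$ need not be generically a single blow-up along one curve. When the movable curve meets $\bs L$ in two distinct points and $\bs L$ has three components, \emph{two} divisors of $X$ are contracted by $\Phi$, onto two cubics lying in two distinct hyperplanes (\cref{simpleStructure}); uniqueness of the contracted divisor holds only in the degenerate case where the movable curve meets the base locus non-transversally at a single point (\cref{onlyOne}), and there the conclusion is the much stronger $\betti 3 X = 0$ via a compactification-of-$\C^3$ argument. The missing organizing idea is the dichotomy of \cref{twoPoints} --- whether the cokernel $Q_{D_1,D_2}$ of $(\ideal{D_1}+\ideal{D_2})|_{C_{D_1,D_2}}\to\conormal{C_{D_1,D_2}}{X}$, which always has length $2$ by \cref{leq2}, is supported at two points or at one --- followed by a sub-analysis of the number of base components. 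The bound $12$ arises concretely from $\betti 1{C_0}\leq 2\com 1{C_0}{\sg{C_0}} = 12$ for a curve $C_0$ of class $3\e+4\f$ on a quadric surface in the irreducible-base-locus subcase; the other branches give $10$, $6$ and $0$. Your closing paragraph correctly identifies where the difficulty lies, but the quantitative skeleton you propose to hang the estimates on is not the right one.
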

\begin{lem}\label{B3SEQ5}
Let $X$ be a Moishezon threefold and let $L \in \Pic X$ be such that 
\[
 \Pic X = \Z L, \qquad -\can X = 2L, \qquad L^3 = 2, \qquad \com 0 X L = 5.
\]
Then
\[
 \betti 3 X \leq 6.
\]
\end{lem}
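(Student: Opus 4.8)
The plan is to feed the hypotheses into Kollár's structure theorem, reduce to the case where the map $\Phi$ induced by $|L|$ is a bimeromorphism onto the smooth quadric threefold, and then bound $\betti 3 X$ by the topological size of the exceptional locus of a resolution, as Nakamura does in the cubic threefold case. First note that $L$ is big, since $\Pic X = \Z L$ and $L^3 = 2 > 0$ force the pseudoeffective cone of $X$ to be $\R_{\geq 0}L$, whose interior contains $L$; hence the hypotheses of \cref{div-curv} and of \cref{kollarThm} are met. I then claim $L$ is not globally generated. If it were, then (as in the proof of \cref{bpf-ample}, whose argument for $3$-dimensionality of the image uses only \cref{div-curv}) the morphism $\Phi$ has $3$-dimensional image $Y \subseteq \Pn 4$ with $(\deg\Phi)(\deg Y) = L^3 = 2$, and $\Phi$ contracts no divisor because $\Pic X = \Z L$; writing $X \to Z \to Y$ for the Stein factorisation, this forces $\can X = (X \to Z)^*\can Z$. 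If $\deg Y = 2$ then $Z = Y$ is a quadric threefold, so $-\can X = 3L \neq 2L$ — absurd. If $\deg Y = 1$ then $Y = \Pn 3$, $Z$ is a double cover of $\Pn 3$, and $-\can X = 2L$ forces its branch divisor to be a quartic, whence $\com 0 X L = \com 0 {\Pn 3}{\sO{\Pn 3}{1}} = 4 \neq 5$ — again absurd. So $L$ is not globally generated, and \cref{kollarThm} together with $\com 0 X L = 5$ gives a bimeromorphism $\Phi\colon X \dashrightarrow Q$, where $Q = \Qn 3 \subset \Pn 4$ is a smooth quadric threefold.

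Next I would fix a resolution $\fun{g}{\tilde X}{X}$ of $\Phi$ — a composition of blow-ups in smooth centres contained in the strict transforms of $\bs L$, chosen as in \cref{blowup-rest} — so that $f \deq \Phi \circ g\colon \tilde X \to Q$ is a morphism. Since $\Phi$ is bimeromorphic, has no divisorial base locus and $\Pic X = \Z L$, it contracts no divisor; hence every prime divisor of $\exc f$ is $g$-exceptional, $f$ is an isomorphism over $Q$ outside a curve $B \subset Q$, and the $2$-dimensional fibres of $f$ (if any) are rational surfaces, so $\hdi 3 f {\Q_{\tilde X}} = 0$. The Leray spectral sequence for $f$, together with $\betti 3 Q = 0$ and $\supp{\hdi 1 f {\Q_{\tilde X}}} \subseteq B$, then gives, as in the proof of \cref{b3leqb1},
\[
 \betti 3 X \leq \betti 3 {\tilde X} \leq \com 1 Q {\hdi 2 f {\Q_{\tilde X}}},
\]
and it remains to bound the last group by $6$. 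As $\hdi 2 f {\Q_{\tilde X}}$ is supported on $B$, with generic rank on an irreducible component $B_j$ of $B$ equal to the number of components of the $f$-fibre over a general point of $B_j$, this amounts to controlling the components of $B$, the genera of their normalisations (via \cref{b1leq2h1}), and the number of divisors of $\exc f$ dominating each $B_j$.

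Carrying this out is the main task, and the step I expect to be the genuine obstacle; I would follow Nakamura's analysis in \cite[\S 4]{Nak} closely. For general $D_1,D_2 \in |L|$ the images $\bar D_1,\bar D_2$ are general hyperplane sections of $Q$, hence smooth quadric surfaces meeting in a conic; by \cref{movingCurve}, $D_1 \cdot D_2 = C_{D_1,D_2}\cup\bs L$ with $L\cdot C_{D_1,D_2}\geq\deg Q = 2$, and since $L^3 = L\cdot D_1\cdot D_2 = 2$ we get $L\cdot C_{D_1,D_2} = 2$. Hence $\bs L\cap C_{D_1,D_2} = \emptyset$, the one-dimensional part of $\bs L$ is $L$-numerically trivial — so $\bs L$ contributes nothing to $L^3$ and is far smaller than in the $\Pn 3$ case — and, by \cref{blowup-rest}, the resolution may be chosen with $E\geq F$, where $\can{\tilde X} = g^*\can X + F$ and $f^*\sO Q 1 = g^*L - E$. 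Combining $E\geq F$ with \cref{div-curv} (the relations $\dualizing{D_1\cdot D_2}\simeq\sg{D_1\cdot D_2}$ and $\com 1 {D_1\cdot D_2}{\sg{D_1\cdot D_2}} = 1$), with \cref{h1eq1}, and with \cref{genBlowUp} applied to each $B_j$ and to a general $D\in|L|$ — which is a rational surface mapping bimeromorphically onto a quadric surface in $Q$, so that the strict transform of $B_j$ on $D$ is tightly constrained — one should be able to show, by the case analysis of \cite{Nak} (distinguishing the possibilities for $\bs L$ and for how many divisors of $\exc f$ dominate a given $B_j$), that the $f$-fibre over each $B_j$ is generically $\Pn 1$ and that $\sum_j \com 1 {B_j}{\sg{B_j}} \leq 3$; then $\com 1 Q {\hdi 2 f {\Q_{\tilde X}}} \leq \betti 1 B \leq 6$ by \cref{b1leq2h1}, giving $\betti 3 X \leq 6$. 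The laborious point is tracking several exceptional divisors and their images under $f$ simultaneously with the base-locus data, but the quadric case should be appreciably easier than the $\Pn 3$ case of \cref{B3SEQ4}: here $\betti 3 Q = 0$ removes any constant term at the outset, and a smooth quadric surface carries no $(-1)$-curves.
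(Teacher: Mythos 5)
Your reduction to the case where $\Phi$ is a bimeromorphism onto a smooth quadric is fine (the paper treats this step as essentially obvious), and your overall skeleton --- resolve $\Phi$ and bound $\betti 3 X$ by $\betti 1$ of the curve in $\Qn 3$ over which $f$ fails to be an isomorphism --- matches the paper's. But the central numerical step is wrong, and the error propagates through everything that follows. You deduce $L\cdot C_{D_1,D_2}=2$ from $L^3=L\cdot D_1\cdot D_2=2$ together with $L\cdot C_{D_1,D_2}\geq \deg Q=2$; this tacitly assumes that the fixed part $B$ of the $1$-cycle $D_1\cdot D_2$ satisfies $L\cdot B\geq 0$. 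It does not: $L$ is big but not nef, and the curves in $\bs L$ are precisely where $L$ may be negative. In fact \cref{sMinusOne} (with $\com 0 X L=5$) gives $L\cdot C_{D_1,D_2}=4$, hence $L\cdot B=L^3-4=-2$. Your conclusion is also internally inconsistent: if $L\cdot C_{D_1,D_2}=2$ then by \cref{movingCurve} one has $C_{D_1,D_2}\cap\bs L=\emptyset$, so $D_1\cap D_2\supset C_{D_1,D_2}\sqcup\bs L$ would be disconnected (recall $\dim\bs L=1$ by \cref{no-sings}), contradicting $\Com 0{D_1\cdot D_2}{\sg{D_1\cdot D_2}}=\C$ from \cref{div-curv}.

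Because of this, the picture you build on --- $\bs L$ disjoint from the movable curve, $L$-numerically trivial, ``far smaller than in the $\Pn 3$ case'', and \cref{blowup-rest} applicable (it requires $L\cdot C_{D_1,D_2}=\deg Y$, which fails here) --- is false, and the deferred case analysis cannot be carried out as described. The actual situation is the opposite: $C_{D_1,D_2}$ meets the base curve $B$ in two distinct points, which via \cref{h1eq1} forces $D_1\cdot D_2=C_{D_1,D_2}+B$ with $B$ irreducible, smooth and rational; a single blow-up of $B$ then resolves $\Phi$ (the exceptional divisor $E$ has degree $2$ on the strict transform of $C_{D_1,D_2}$, which is exactly what makes $g^*L-E$ base point free). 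The unique $f$-exceptional divisor $\Delta\sim g^*L-2E$ maps to a curve $C_0$ inside the quadric surface $f(E)\subset\Qn 3$, and the bound $\betti 1{C_0}\leq 6$ comes from computing the class of $\Delta|_E$ on the Hirzebruch surface $E$ --- not from any a priori smallness of the base locus.
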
 
In any case, we get $\betti 3 {\bigX_0} \leq 12$. On the other hand, for a quartic double solid $V_2$ it is well-known that 
\[
\betti 3 {V_2} = 20
\]
(see e.g.~\cite{Fano} and the easy calculation can be found in my thesis (\cite{diss}).
Thus, by Ehresmann's theorem, we have 
\[ 
 \betti 3 {\bigX_0} = \betti 3 {V_2} = 20.
\]
This is a contradiction, which finishes the proof of \cref{mainThm} modulo the proofs of \cref{B3SEQ4,B3SEQ5}.
\section{An example}
The reader might wonder, whether the exceptions of \cref{kollarThm} can ocurr at all -- not necessarily as a global deformation of Fano manifolds. There are examples  of manifolds behaving this way (cf.\ \cite[Examples 5.3.14]{Kol}) and Kollár attributes them to Hironaka and Fujiki. In this section I give a new example, for which the line bundle $L$, in contrast to the previously known examples, has a reducible base locus. The precise claim is the following.
\begin{prop}
There exists a Moishezon $3$-fold $X$ satisfying 
\[
 \Pic{X} = \Z L, \qquad \com{0}{X}{L} = 4, \qquad L^3  = 2, \qquad -\can{X} = 2L,
\]
such that the map induced by $|L|$ is bimeromorphic.
\end{prop}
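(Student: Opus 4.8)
The plan is to produce $X$ by an explicit birational surgery on $\Pn 3$: blow up $\Pn 3$ along a suitably chosen curve $C$ to obtain $\fun{f}{W}{\Pn 3}$, and then contract analytically a suitable union of $f$‑exceptional–type divisors $\fun{g}{W}{X}$ — a contraction which will in general be non‑projective. The classical irreducible model (essentially Kollár's Example~5.3.14) is instructive: take $C\subset S$ a curve of bidegree $(4,3)$ on a smooth quadric $S\subset\Pn 3$; then the strict transform $\tilde S\subset W=\blup{C}{\Pn 3}$ is isomorphic to $\Pn 1\times\Pn 1$ with normal bundle of the form $\mathcal{O}_{\Proj{\mathcal F}}(-1)$ for $\mathcal F=\sO{\Pn 1}{2}^{\oplus 2}$, so it can be blown down to a curve $B\subset X$; one finds $\Pic X=\Z L$, $g^*L=3H-E_f=\tilde S+f^*\sO{\Pn 3}{1}$, $-\can X=2L$, $(g^*L)^3=2$, and the base locus $\bs L=B$ is an \emph{irreducible} $\Pn 1$. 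To prove the proposition I would replace this by a configuration in which $g$ contracts \emph{two} distinct divisors onto two distinct curves $B_1,B_2$, so that $\bs L=B_1\cup B_2$ is reducible; concretely one takes $C$ reducible and lying on a reducible (or otherwise degenerate) quadric, and the technical heart of the construction is to arrange the several numerical constraints simultaneously so that the two relevant strict transforms are genuine ruled surfaces each carrying a normal bundle of the form $\mathcal{O}_{\Proj{\mathcal F}}(-1)$ — in particular one must ensure that blowing up the component of $C$ on one surface does not destroy the ruled structure of the strict transform of the other.

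Granting such a configuration, the verification is largely formal. First, $g$ exists and $X$ is a compact complex manifold by the analytic contraction criteria of Grauert and Nakano, applied to the two contracted divisors successively and checked in a neighbourhood of their (possibly common) fibre. Since $X$ is bimeromorphic to $\Pn 3$ it is Moishezon and $L$ is big. The Picard group is computed as $\Pic X=\Pic W/\langle\text{classes of the contracted divisors}\rangle$: writing $\Pic W=\Z H\oplus\bigoplus_i\Z E_i$ one checks that the class of $g^*L$ together with the classes of the contracted divisors form a $\Z$‑basis of $\Pic W$, so that indeed $\Pic X=\Z L$. The relation $-\can X=2L$ follows by comparing $\can W=f^*\can{\Pn 3}+\sum(\text{discrepancies})$ with $\can W=g^*\can X+\sum(\text{contracted divisors})$ and substituting the expression for $g^*L$. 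Then $\com 0 X L=4$ and $L^3=2$ are consequences of \cref{div-curv} (equivalently, $L^3=(g^*L)^3$ is a direct Chern‑class computation on $W$).

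For the statement about the $|L|$‑map: since every member of $|L|$ is irreducible by \cref{div-curv}, the fixed part of the pulled‑back system $|g^*L|$ on $W$ is $g$‑exceptional, hence equals the contracted locus $E$ (by equality of the relevant $h^0$'s), and the moving part is the base‑point‑free system $|f^*\sO{\Pn 3}{1}|$. Thus $\Phi\circ g=f$ up to the fixed part, so $\Phi$ is bimeromorphic. Finally, a point of $X$ lies in $\bs L$ if and only if its $g$‑fibre meets $E$, which happens exactly along $g(E)=B_1\cup B_2$; hence $\bs L=B_1\cup B_2$ is reducible, as desired.

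The main obstacle is the construction step itself: exhibiting a configuration in $\Pn 3$ whose blow‑up carries two distinct analytically contractible ruled divisors mapping to two distinct curves, and verifying — via the Nakano–Fujiki contraction theorem near the common fibre of those divisors — that the contracted space is genuinely a manifold; together with the routine but indispensable check that $\Pic X$ does not jump above rank one (that is, that the contracted divisors impose independent conditions on $\Pic W$). Once the geometry is set up correctly, bigness, Moishezon‑ness, the numerical data and the bimeromorphy of $\Phi$ all follow formally.
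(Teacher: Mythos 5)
Your overall strategy is the right one and matches the paper's in outline: perform a birational surgery on $\Pn 3$ (blow up a configuration of curves, then contract ruled divisors via the Fujiki--Nakano criterion), and then verify $\Pic X = \Z L$, $-\can X = 2L$, $L^3 = 2$, $\com 0 X L = 4$ and the bimeromorphy of $\Phi$ by the formal bookkeeping you describe. But there is a genuine gap, and you name it yourself: the configuration whose existence you are ``granting'' is the entire content of the proof. An existence statement is not proved by observing that the verification would be formal once a suitable object is exhibited. The paper's proof consists almost entirely of exhibiting that object explicitly: two hyperplanes $H_1, H_2$ with intersection line $C_0$, plane cubics $C_i \subset H_i$ meeting $C_0$ in the carefully chosen point sets $\{p_1,p_2,p_3\}$ and $\{p_1,p_2,p_4\}$, a \emph{twisted} blow-up along $C_1 \cup C_2$ (blowing up in opposite orders near $p_1$ and near $p_2$ -- precisely the device needed to address the difficulty you flag, that blowing up one component must not destroy the ruled structure associated with the other), a further blow-up of the strict transform of $C_0$, a Fujiki--Nakano contraction of its exceptional divisor $E_0 \simeq \Pn 1 \times \Pn 1$ along one ruling, and finally the contraction of the strict transforms of $H_1$ and $H_2$, whose conormal bundles are computed explicitly to be $\e_i + 2\f_i$ so that the contraction criterion applies. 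None of these choices, nor the normal-bundle computations that make the contractions legitimate, appear in your proposal.

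Two smaller points. First, if your goal were only the literal statement as quoted, the irreducible example you describe (Koll\'ar's bidegree-$(4,3)$ curve on a smooth quadric) would already suffice, and your verification of it is essentially correct; but you explicitly decline to rest on it and instead aim for the reducible-base-locus example that is the actual point of this section, which you then do not construct. Second, citing \cref{div-curv} for $L^3 = 2$ is circular -- that lemma's Euler-characteristic formula presupposes $L^3=2$ -- so you should rely on the direct intersection computation on the blow-up, as your parenthetical suggests.
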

\begin{proof}
Start with $\Pn{3}$. Choose two distinct hyperplanes 
\[
 H_1,H_2 \subset \Pn{3}
\]
and write $C_0 \deq H_1 \cap H_2$ for the line of intersection. Furthermore choose pairwise distinct points $p_1,p_2,p_3,p_4 \in C_0$ und smooth plane cubics
\[
 C_1 \subset H_1, \qquad C_2 \subset H_2,
\]
such that
\[
 C_0 \cap C_1 = \{ p_1,p_2,p_3\}, \qquad C_0 \cap C_2 = \{p_1,p_2,p_4\}.
\]
Let $\fun{f_1}{Y}{\Pn{3}}$ denote the twisted blow-up of $\Pn{3}$ along $C_1$ and $C_2$, i.e.\ locally near $p_1$ we first blow-up $C_1$ and then the strict transform of $C_2$ and locally near $p_2$ we proceed in reversed order. \par 
For a hyperplane $H \in |\sO{\Pn{3}}{1}|$ let $\hat H = (f_1)_*^{-1}H$ denote its strict transform. Consider the restricted map
\[
 \fun{{f_1}|_{\hat H_1}}{\hat H_1}{H_1}.
\]
Outside of $C_2$ it is an isomorphism, since the smooth curve $C_1 \subset H_1$ is a Cartier divisor in $H_1$. Near $p_1$ it is also an isomorphism, since after blowing-up $C_1$ the strict transform of $C_2$ does not intersect the strict transform of $H_1$ locally over $p_1$. Near $p_2$ resp.\ near $p_4$ it is the blow-up in this point. Analogously the restricted map ${f_1}|_{\hat{H}_2}$ is the blow-up of $H_2$ in $p_1,p_3$. Let $A_2, A_4 \subset \hat H_1$ and $A_1,A_3 \subset \hat H_2$ be the corresponding exceptional curves. \par 
Let $\hat{C}_0$ be the strict transform of $C_0$ under $f_1$ and let $\Delta_1$, $\Delta_2$ be the two $f_1$-exceptional divisors mapping to $C_1$, $C_2$ respectively. Note that $\hat{C}_0$ is a smooth rational curve that intersects each of the divisors $\Delta_1$ and $\Delta_2$ transversally in exactly two distinct points. Denote these by $q_1,q_3$ resp.~$q_2,q_4$ in such a way that $f_1(q_i)= p_i$, for all $i = 1, \ldots, 4$. 
One also sees easily that  
\[
 \hat{C}_0 = \hat{H}_1 \cdot \hat{H}_2.
\]
Since $\hat{H}_i \in |{f_1}^* \sO{\Pn{3}}{1} - \Delta_i|$, the conormal bundle of $\hat C_0$ is
\begin{align}\label{con1}
 \conormal{\hat C_0}{Y} = (\ideal{\hat H_1} + \ideal{\hat H_2})|_{\hat C_0} \simeq \sO{\hat C_0}{1}^{\oplus 2}.
\end{align}
Now let
\[
 \fun{f_2}{\tilde X}{Y}
\]
be the blow-up of $Y$ along $\hat{C}_0$, let $E_0 = \exc{f_2}$, let $\tilde H_i = (f_2)_*^{-1} \hat H_i$ and let $f = f_1 \circ f_2$:
\[
 \begin{tikzcd}
  \tilde X \arrow{r}{f_2} \arrow[bend left]{rr}{f} & Y \arrow{r}{f_1} & \Pn 3.
 \end{tikzcd}
\]
The restricted map 
\[
 \fun{{f_2}|_{\tilde H_i}}{\tilde H_i}{\hat H_i}
\]
is an isomorphism for $i = 1,2$, since $\hat C_0 \subset \hat H_i$ is Cartier.
The two curves $l_1 = E_0 \cap \tilde H_1$ and $l_2 = E_0 \cap \tilde H_2$ are disjoint. There is an isomorphism $E_0 \simeq \Pn{1} \times \Pn{1}$, that identifies the first projection with the restricted map ${f_2}|_{E_0}$. Let $\f_0 \in \Pic{E_0}$ denote the class of the fibres with respect to this projection and $\e_0 \in \Pic{E_0}$ the class of those with respect to the other projection. Then from (\ref{con1}) it follows that
\[
 \conormal{E_0}{\tilde X} = \e_0 + \f_0.
\]
The contraction theorem of Fujiki and Nakano yields a complex manifold $\hat X$ and a holomorphic map 
\[
 \fun{g_2}{\tilde X}{\hat X},
\]
that contracts exactly the curves in $E_0$ with class $\e_0$. Let $E_i \deq g_2(\tilde H_i)$. The curves $l_1, l_2$ are contracted by $g_2$ (to distinct points). In particular we have $E_1 \cap E_2 = \emptyset$. The restricted map $\fun{{g_2}|_{\tilde H_i}}{\tilde H_i}{E_i}$ is the blow-down of the smooth rational curve $l_i \subset \tilde H_i$. Therefore we have $E_i \simeq \Pn{1} \times \Pn{1}$. We want to show that $E_1,E_2 \subset \hat X$ can be contracted. To this end, we compute their conormal bundles. On $\tilde X$ we have
\begin{align*}
 \conormal{\tilde H_1}{\tilde X}	&= \sO{\tilde X}{- \tilde H_1}|_{\tilde H_1} = (- f^*\sO{\Pn{3}}{1} + \Delta_1 + E_0)|_{\tilde H_1}= \\ 
					&= -(f|_{\tilde H_1})^*\sO{H_1}{1} + (f|_{\tilde H_1})^*\sO{H_1}{3} - A_2 + f|_{\tilde H_1}^*\sO{H_1}{1} - A_2 - A_4=\\
					&= 3 (f|_{\tilde H_1})^*\sO{H_1}{1} - 2A_2 - A_4 = ({g_2}|_{\tilde H_i})^*(\e_1 + 2\f_1),\\
 \conormal{\tilde H_2}{\tilde X} 	&= \sO{\tilde X}{- \tilde H_2}|_{\tilde H_2} = (- f^*\sO{\Pn{3}}{1} + \Delta_2 + E)|_{\tilde H_2} = \\
					&=- (f|_{\tilde H_2})^*\sO{H_2}{1} + (f|_{\tilde H_2})^*\sO{H_2}{3} - A_1 + (f|_{\tilde H_2})^*\sO{H_2}{1} - A_1 - A_3 \\
					&= 3 (f|_{\tilde H_2})^*\sO{H_2}{1} - 2A_1 - A_3 = ({g_2}|_{\tilde H_2})^*(\e_2 + 2\f_2).
\end{align*}
Since $g_2^* E_i = \tilde H_i$,  $(g_2|_{\tilde H_i})^* \conormal{E_i}{\hat X} = \conormal{\tilde H_i}{\tilde X}$. Hence we have $\conormal{\tilde H_i}{\tilde X} = \e_i + 2\f_i$, for $i = 1,2$, since $(g_2|_{\tilde H_i})^*$ is injective.  
So again by the contraction theorem of Fujiki and Nakano there is a complex manifold $X$ and a holomorphic map $\fun{g_1}{\hat X}{X}$ that contracts $E_1$ and $E_2$ along their second projections, i.e.\ such that curves with class $\f_i$ are contracted. These correspond to lines in $H_1$ through the point $p_2$ resp.\ lines in $H_2$ through the point $p_1$. One shows easily that the manifold $X$ satisfies 
\[
  \Pic{X} = \Z L, \qquad \com{0}{X}{L} = 4, \qquad L^3  = 2, \qquad \can{X} = -2L.
\]
The map induced by $|L|$ is just the natural bimeromorphic map that we get from the construction of $X$. If $X$ were projective, the line bundle $L$ would have to be ample, since $\Pic X = \Z L$ and $L$ is big. However, $L$ has degree $-1$ on the curve $B_1 = g_2(E_1)$, hence $L$ is not ample and $X$ is not projective. 
\end{proof}
\begin{rem}
In view of \cref{B3SEQ4} it is interesting to determine $\betti 3 X$. Since blowing-up a smooth rational curve does not change $b_3$ we have
\[
 \betti 3 X = \betti 3 {\hat X} = \betti 3 {\tilde X} = \betti 3 Y.
\]
By \cref{MayViet} we have an exact sequence
\[
 \begin{tikzcd}
 0 = \Com{3}{\Pn{3}}{\C} \arrow{r}{} & \Com{3}{Y}{\C} \oplus \Com{3}{C_1 \cup C_2}{\C} \arrow{r}{}\arrow[draw = none]{d}[name = Z, shape = coordinate]{} & \Com{3}{\Delta_1 \cup \Delta_2}{\C} \arrow[rounded corners, 
         to path={ -- ([xshift=2ex]\tikztostart.east)
                   |- (Z) [near end]\tikztonodes
                   -| ([xshift=-2ex]\tikztotarget.west)
                   -- (\tikztotarget)}]{dll}{} \\
  \Com{4}{\Pn 3}{\C} \arrow{r}{} & \Com{4}{Y}{\C} \oplus 0. &   
  \end{tikzcd}
\]
The mapping in the second line is injective and $\betti{3}{C_1 \cup C_2} = 0$ for dimensional reason, hence 
\[
 \betti 3 Y = \betti 3 {\Delta_1 \cup \Delta_2}.
\]
The first mapping of the following Mayer-Vietoris sequence 
\[
 \begin{tikzcd} 
 \phantom{0} & \Com{2}{\Delta_1}{\C} \oplus \Com{2}{\Delta_2}{\C} \arrow{r}{} \arrow[draw=none]{d}[name=Z,shape=coordinate]{} & \Com{2}{\Delta_1 \cap \Delta_2}{\C}
  \arrow[rounded corners, 
         to path={ -- ([xshift=2ex]\tikztostart.east)
                   |- (Z) [near end]\tikztonodes
                   -| ([xshift=-2ex]\tikztotarget.west)
                   -- (\tikztotarget)}]{dl}{} & \phantom{0}\\ 
   \phantom{0} & \Com 3 {\Delta_1 \cup \Delta_2}{\C} \arrow{r}{} & \Com{3}{\Delta_1}{\C} \oplus \Com{3}{\Delta_2}{\C} \arrow{r}{} & 0
 \end{tikzcd}
\]
is surjective, hence
\[
 \betti 3 {\Delta_1 \cup \Delta_2} = \betti 3 {\Delta_1} + \betti 3 {\Delta_2}.
\]
The surface $\Delta_1$ resp.\ $\Delta_2$ is the blow-up of a $\Pn 1$-bundle over $C_1$ resp.\ $C_2$ in two point. Therefore, by \cref{b3leqb1} we have 
\[
 \betti 3 {\Delta_i} = \betti 1 {C_i} = 2, \quad \text{for} \quad i = 1,2.
\]
Thus $\betti 3 X = 4$.
\end{rem}
\section{Proof of the main lemmas}\label{techLemma}
In this section the proofs of \cref{B3SEQ4,B3SEQ5} are given. Eventually we treat them separately, referring to the first as the ``projective space'' case and to the second as the ``quadric'' case. Before that we show some further results which are true in both cases.
\begin{lem}\label{base-loci}
Let $X$ be a Moishezon threefold an let $L \in \Pic X$ be such that 
\[
 \Pic X = \Z L, \qquad -\can X = 2L, \qquad L^3 = 2, \qquad \com 0 X L > 1.
\]
Let $D_1,D_2 \in |L|$ with $D_1 \neq D_2$ and set $C = D_1 \cdot D_2$. \par
Then 
\[
 \bs{L} = \bs{L|_{D_1}} = \bs{L|_C}.
\]
\end{lem}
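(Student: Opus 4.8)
The plan is to derive both equalities from surjectivity of restriction maps on global sections, which the vanishing statements of \cref{div-curv} make available, and then to compare base ideals.

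First I would treat $\bs L = \bs{L|_{D_1}}$. Twisting the structure sequence of $D_1 \subset X$ by $L$ and using $\sO X {-D_1} \otimes L \simeq \sg X$ (valid since $D_1 \in |L|$) yields
\[
 0 \to \sg X \to L \to L|_{D_1} \to 0 .
\]
Because $\Com 1 X {\sg X} = 0$ by \cref{div-curv}, the restriction map $\Com 0 X L \to \Com 0 {D_1}{L|_{D_1}}$ is surjective. Since $D_1 \in |L|$, the base ideal $\baseideal L \subset \sg X$ contains $\ideal{D_1}$, so $\bs L$ is scheme-theoretically a closed subspace of $D_1$; and surjectivity of the section restriction gives $\baseideal L \cdot \sg{D_1} = \baseideal{L|_{D_1}}$ (one always has $\subseteq$, with equality exactly when every section of $L|_{D_1}$ lifts to $X$). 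Hence $\bs L$ and $\bs{L|_{D_1}}$ are the same closed complex subspace of $D_1$.

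Next I would run the identical argument one dimension down, on the surface $D_1$. Since $D_1$ is irreducible and reduced by \cref{div-curv} and $D_1 \ne D_2$, the section of $L$ defining $D_2$ does not vanish identically on $D_1$, so $C = D_2|_{D_1}$ is an effective Cartier divisor on $D_1$ with $\sO{D_1}{C} \simeq L|_{D_1}$, and the associated twisted sequence reads
\[
 0 \to \sg{D_1} \to L|_{D_1} \to L|_C \to 0 .
\]
What is needed here is $\Com 1 {D_1}{\sg{D_1}} = 0$, which follows from the structure sequence of $D_1$ in $X$ together with the vanishings $\Com 1 X {\sg X} = 0$ and $\Com 2 X {-L} = 0$ supplied by \cref{div-curv}. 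Consequently $\Com 0 {D_1}{L|_{D_1}} \to \Com 0 C {L|_C}$ is surjective, and exactly as above $\ideal{C,D_1} \subseteq \baseideal{L|_{D_1}}$ and $\baseideal{L|_{D_1}} \cdot \sg C = \baseideal{L|_C}$, so $\bs{L|_{D_1}} = \bs{L|_C}$.

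I do not anticipate a real obstacle: the mathematical content is confined to the cohomology vanishing of \cref{div-curv} plus the elementary fact that a surjection on global sections forces the base ideal of the restricted linear system to be the restriction of the base ideal. The only point deserving care is the bookkeeping that makes $\bs L$ (a priori a subspace of $X$) and $\bs{L|_{D_1}}$ (a subspace of $D_1$) literally the same closed complex subspace, which is precisely the containment $\baseideal L \supseteq \ideal{D_1}$.
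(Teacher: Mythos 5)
Your proof is correct and follows essentially the same route as the paper: both reduce the claim to surjectivity of the restriction maps $\Com 0 X L \to \Com 0 {D_1}{L|_{D_1}} \to \Com 0 C {L|_C}$, obtained from the twisted structure sequences and the vanishings $\Com 1 X {\sg X} = \Com 1 {D_1}{\sg{D_1}} = 0$ supplied by \cref{div-curv}. The paper states this in one line; your additional care in deducing $\Com 1 {D_1}{\sg{D_1}} = 0$ from $\Com 2 X {-L} = 0$ and in tracking the base ideals is exactly the bookkeeping it leaves implicit.
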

\begin{proof}
The assertion is true if global sections of $L|_C$ lift to global sections of $L|_{D_1}$ and if these lift to global sections of $L$. Since, by \cref{div-curv},
\[
 \Com{1}{X}{\sg X} = \Com{1}{D_1}{\sg{D_1}} = 0,
\]
both liftings can, indeed, be performed.
\end{proof}
\begin{lem}\label{no-sings}
In addition to the assumptions of \cref{base-loci} assume that $\com 0 X L \geq 4$ and that $|L|$ is not base point free.\par
Then $\dim {\bs L} = 1$. Moreover, if $D_1,D_2 \in |L|$ with $D_1 \neq D_2$, then all irreducible curves $C_0 \subset D_1 \cap D_2$ are smooth and rational. 
\end{lem}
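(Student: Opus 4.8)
The plan is to prove the two assertions in sequence, using the structure theorem \cref{kollarThm} together with the curve-theoretic vanishing from \cref{div-curv} and \cref{h1eq1}. First I would observe that by \cref{base-loci} the base locus $\bs L$ equals $\bs{L|_C}$ for a general complete intersection curve $C = D_1 \cdot D_2$, so in particular $\bs L$ has no divisorial components (a divisorial component of $\bs L$ would contradict $\Pic X = \Z L$ anyway, since every divisor in $|L|$ is irreducible and reduced by \cref{div-curv}). Hence the essential content of the first assertion is that $\dim \bs L = 1$ — equivalently, that $\bs L$ is non-empty — given that $L$ is not base point free. This is essentially tautological once one knows $\bs L$ has pure dimension $1$, so the real work is to rule out isolated points of $\bs L$, and here I would invoke \cref{kollarThm}: since $\com 0 X L \geq 4$, the map $\Phi$ induced by $|L|$ is bimeromorphic to $\Pn 3$ or $\Qn 3 \subset \Pn 4$, and both targets are smooth with $\sO Y 1$ very ample; then \cref{movingCurve} applies, and the argument in the opening paragraph of the proof of \cref{blowup-rest} (using that for general $D_1, D_2$ every integral curve $B \subset D_1 \cap D_2$ other than $C_{D_1,D_2}$ lies in $\bs L$, and that $D_1 \cap D_2$ is purely one-dimensional) shows that no point of $\bs L$ is isolated. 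Thus $\dim \bs L = 1$.

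For the second assertion — that every irreducible curve $C_0 \subset D_1 \cap D_2$ is smooth and rational — I would argue as follows. Fix general $D_1, D_2 \in |L|$ and set $C = D_1 \cdot D_2$; by \cref{div-curv} we have $\dualizing C \simeq \sg C$, $\com 0 C {\sg C} = 1$ and $\com 1 C {\sg C} = 1$. Let $C_0$ be an irreducible component of $C$. The key tool is \cref{h1eq1}: if the closed subspace $A \subset C$ supported on $C_0$ (say $A = C_0$ with its reduced structure, or a suitable thickening) had $\com 1 A {\sg A} \geq 1$, then $A = C$, forcing $C$ to be irreducible; one then analyses that case directly. In the generic situation, though, $C$ need not be irreducible, so I expect the argument to run: because $\dualizing C$ is trivial and $C$ is a reduced connected curve of arithmetic genus $1$, its components must be smooth rational curves meeting in a configuration of genus $1$ (a cycle, or trees attached to a cycle, etc.); any singular component, or a component of positive geometric genus, would force $\com 1 {C_0} {\sg{C_0}} \geq 1$, hence $\com 1 A {\sg A} \geq 1$ for the reduced structure $A$ on $C_0$, and then \cref{h1eq1} gives $A = C$, i.e. $C = C_0$ is itself reduced irreducible of arithmetic genus $1$ — but this contradicts $C_0 \subset \bs L \cup (\text{moving part})$ together with the genus count, or is handled by a short separate computation using $C = C_{D_1,D_2} \cup \bs L$ from \cref{movingCurve} and the fact that the general fibre of $\Phi$ restricted to a general $D_i$ behaves well.

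The main obstacle I anticipate is the second assertion, specifically the passage from "$\dualizing C$ trivial" to "no component of $C$ is singular or of positive genus." The cohomological input $\com 1 C {\sg C} = 1$ bounds the \emph{total} genus-like contribution of $C$, and \cref{h1eq1} is exactly the lever that converts "a subspace carries all of that cohomology" into "that subspace is everything"; the delicate point is choosing the right closed subspace $A \subset C$ (reduced component versus a one-step thickening along a singular or higher-genus locus) so that $\com 1 A {\sg A} \geq 1$ holds, while ensuring the conclusion $A = C$ actually yields a contradiction rather than just a very special $C$. I would expect the write-up to dispatch the divisorial-base-locus possibility and the isolated-point possibility quickly via \cref{kollarThm} and \cref{movingCurve}, and to spend most of its energy on this local-to-global genus bookkeeping for the curve $C$.
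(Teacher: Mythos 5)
The first assertion is where your argument breaks down. The ``no isolated points'' argument that you want to import from the opening of the proof of \cref{blowup-rest} is not available here: that argument runs ``take $x \in \bs L$, find an integral curve $B \subset D_1 \cap D_2$ through $x$, note $B \neq C_{D_1,D_2}$, conclude $B \subset \bs L$'', and the step $B \neq C_{D_1,D_2}$ is exactly what is supplied in \cref{blowup-rest} by the \emph{extra} hypothesis $L \cdot C_{D_1,D_2} = \deg Y$, which forces $\bs{L} \cap C_{D_1,D_2} = \emptyset$. In the situation of \cref{no-sings} nothing of the sort is known; on the contrary, if $\bs L$ were a finite set then $D_1 \cap D_2 = C_{D_1,D_2} \cup \bs L$ being purely one-dimensional would force $\bs L \subset C_{D_1,D_2}$, so the only curve of $D_1 \cap D_2$ through an isolated base point is the movable curve itself, and your argument produces nothing. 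The paper handles this by a genuinely different mechanism: assuming $\bs L$ contains no curve, it first shows that every base point is a \emph{singular} point of the irreducible curve $C = D_1 \cdot D_2$ (at a smooth point $p$ one computes $\Com{1}{C}{L(-p)} = 0$ via the normalization and Grothendieck--Serre duality, using $\dualizing C \simeq \sg C$ and $L \cdot C = L^3 = 2$, so $p \notin \bs{L|_C} = \bs L$), and then blows up such a point: the strict transform $\tilde C$ meets the exceptional divisor with multiplicity at least $2$, so $\tilde L \cdot \tilde C \leq 2 - 2a \leq 0$, contradicting the inequality $\geq \deg Y \geq 1$ from \cref{movingCurve} applied on the blow-up. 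Some version of this multiplicity argument is needed; it cannot be replaced by the purely set-theoretic reasoning you propose.

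For the second assertion your instinct (use \cref{h1eq1}) is the right one, but the paper's execution is much shorter and works for \emph{all} pairs $D_1 \neq D_2$, not only general ones: if $C_0 \subset D_1 \cap D_2$ is integral with $\com{1}{C_0}{\sg{C_0}} \neq 0$, then \cref{h1eq1} gives $C_0 = D_1 \cdot D_2$; if $C_0 \subset \bs L$ this identity holds for every pair, which is absurd, and if $C_0 \not\subset \bs L$ then $\bs L \subsetneq C_0$ is finite, contradicting the first assertion. No bookkeeping of cycle configurations or genus decompositions is needed; note also that $D_1 \cdot D_2$ need not be reduced, so describing it as a reduced connected curve of arithmetic genus $1$ is not justified.
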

\begin{proof}
We first show that $\dim{\bs L} = 1$. Assume to the contrary that $\bs L$ does not contain any curves. Then the curve $C = D_1 \cdot D_2$ is irreducible (and reduced) for general $D_1,D_2 \in |L|$. Note that $\bs L \subset C$ and that
\begin{align}\label{LC2}
 L \cdot C = L^3 = 2.
\end{align}
I claim that $\bs L \subset \sing C$. \par 
If $C$ is smooth, this is immediate since then \eqref{LC2} together with $\dualizing C = \sg C$ (cf.\ \cref{div-curv}) shows that $L|_C$ is base point free, hence so is $L$ by \cref{base-loci}. \par 
Now we do not assume that $C$ is smooth, but let $p \in C$ be a smooth point. The aim is to show that $p \not\in \bs{L|_C} = \bs{L}$.
Let $\fun{\nu}{\tilde C}{C}$ denote the normalization. Then $\Com{0}{\tilde C}{\nu^*L^{-1}(p)} = 0$ since $\deg \nu^*L^{-1}(p) = -1$. Using Grothendieck-Serre Duality we get  
\[
 \Com{1}{C}{L(-p)} = \Com{0}{C}{L^{-1}(p)} \subset \Com{0}{\tilde C}{\nu^*L^{-1}(p)} = 0
\]
and this implies $p \not\in \bs{L|_C}$. Thus, indeed, $\bs L \subset \sing C$. \par 
Let $q \in \bs{L}$, let $\fun{g}{\tilde X}{X}$ be the blow-up of $X$ in $q$, let $E = \exc g$ and let $a \in \Z_{\geq 1}$ such that 
\[
 \tilde L = g^* L - a E
\]
has the same global sections as $L$ and such that $E \not\subset \bs{\tilde L}$. Let furthermore $\tilde C = g_*^{-1}C$ denote the strict transform. Since $q \in \sing{C}$, we have $E \cdot \tilde  C \geq 2$. Hence 
\[
 \tilde L \cdot \tilde C \leq L \cdot C - 2a = 2 - 2a \leq 0.
\]
This contradicts the second part of \cref{movingCurve}. Therefore $\dim \bs L = 1$.\par
For the second assertion of the lemma let $D_1,D_2 \in |L|$ with $D_1 \neq D_2$ and let $C_0 \subset D_1 \cap D_2$ be irreducible and reduced.\par
 Suppose that $\Com{1}{C_0}{\sg{C_0}} \neq 0$, then \cref{h1eq1} yields $D_1 \cdot D_2 = C_0$. If $C_0 \subset \bs{L}$, then for all $D_1,D_2 \in |L|$ we have $D_1 \cdot D_2 = C_0$, which is a contradiction. If $C_0 \not\subset \bs{L}$, then $\dim \bs L < 1$, which is also a contradiction. Therefore $\Com{1}{C_0}{\sg{C_0}} = 0$, which implies that $C_0 \simeq \Pn 1$.
\end{proof}
\begin{lem}\label{sMinusOne}
Under the assumptions of \cref{no-sings}, let $D_1,D_2 \in |L|$ be distinct divisors such that the curve $C = D_1 \cdot D_2$ is reduced. Let $A \subset D_1 \cap D_2$ be an irreducible component and let $B \subset D_1 \cap D_2$ be the union of the remaining components. Assume that $B \subset \bs{L}$ and that $B$ is connected. \par
Then 
\[
 L \cdot A = \com 0 X L - 1.
\]
\end{lem}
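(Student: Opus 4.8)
The plan is to read off $L\cdot A$ from the cohomology of $L$ restricted to $C=D_1\cdot D_2$ and to $A$, once the length of the scheme-theoretic intersection $A\cdot B$ has been pinned down. Set $s=\com 0 X L$, so $s\ge 4$ by the hypotheses of \cref{no-sings}, and $d=L\cdot A$; by \cref{no-sings} the component $A$ is smooth and rational, so $A\simeq\Pn 1$ and $L|_A\simeq\sO{\Pn 1}{d}$, and the goal is to prove $d=s-1$.

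First I would show $\deg(A\cdot B)=2$. Since $C$ is reduced and $C=A\cup B$ with $A$ and $B$ sharing no component, the Mayer--Vietoris sequence $0\to\sg C\to\sg A\oplus\sg B\to\sg{A\cdot B}\to 0$ is exact, hence $\eChar C{\sg C}=\eChar A{\sg A}+\eChar B{\sg B}-\deg(A\cdot B)$. By \cref{div-curv} we have $\com 0 C{\sg C}=\com 1 C{\sg C}=1$, so $\eChar C{\sg C}=0$, while $\eChar A{\sg A}=1$ because $A\simeq\Pn 1$; thus $\eChar B{\sg B}=\deg(A\cdot B)-1$. On the other hand, from $0\to\ideal{B,C}\to\sg C\to\sg B\to 0$ and $\dim C=1$ one gets a surjection $\Com 1 C{\sg C}\twoheadrightarrow\Com 1 B{\sg B}$, so $\com 1 B{\sg B}\le\com 1 C{\sg C}=1$; were it equal to $1$, then \cref{h1eq1} applied to the subspace $B\subset D_1\cdot D_2$ would force $B=D_1\cdot D_2=C$, contradicting the fact that $A$ is a component of $C$ not contained in $B$. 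Hence $\com 1 B{\sg B}=0$, and since $B$ is connected and reduced also $\com 0 B{\sg B}=1$, so $\eChar B{\sg B}=1$ and therefore $\deg(A\cdot B)=2$.

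Next I would compare global sections. The ideal sheaf $\ideal{B,C}$ of $B$ in $C$ is annihilated by the ideal of $A$, hence is an $\sg A$-module, and the inclusion $\ideal{B,C}\hookrightarrow\sg A$ identifies it with the ideal of $A\cdot B$ in $A$; tensoring with $L$ gives $\ideal{B,C}\otimes L|_C\simeq L|_A(-A\cdot B)\simeq\sO{\Pn 1}{d-2}$ together with a short exact sequence $0\to\sO{\Pn 1}{d-2}\to L|_C\to L|_B\to 0$. Because $B\subset\bs L=\bs{L|_C}$ by \cref{base-loci}, every global section of $L|_C$ vanishes along $B$, so $\Com 0 C{L|_C}\to\Com 0 B{L|_B}$ is the zero map and $\Com 0{\Pn 1}{\sO{\Pn 1}{d-2}}\simeq\Com 0 C{L|_C}$. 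Finally, twisting the structure sequences of $D_1\subset X$ and of the Cartier divisor $C\subset D_1$ (note $C\sim L|_{D_1}$) by $L$ yields $0\to\sg X\to L\to L|_{D_1}\to 0$ and $0\to\sg{D_1}\to L|_{D_1}\to L|_C\to 0$; using $\Com 1 X{\sg X}=0$ (the case $k=0$ of \cref{div-curv}), $\Com 1{D_1}{\sg{D_1}}=0$ (which follows from \cref{div-curv} as in the proof of \cref{base-loci}) and $\com 0 X{\sg X}=\com 0{D_1}{\sg{D_1}}=1$, one obtains $\com 0 C{L|_C}=s-2$. Combining, $\com 0{\Pn 1}{\sO{\Pn 1}{d-2}}=s-2\ge 2>0$, which forces $d-2\ge 0$ and $d-1=s-2$, i.e.\ $L\cdot A=d=s-1=\com 0 X L-1$.

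The only step requiring a genuine idea is the identity $\deg(A\cdot B)=2$: once one decides to extract it from Euler characteristics on $C$, the vanishing $\com 1 B{\sg B}=0$ is exactly where the hypothesis that $B$ is a proper subspace of $C$ and Nakamura's \cref{h1eq1} enter, and everything after that is bookkeeping with the three short exact sequences above, so I expect no further obstacle.
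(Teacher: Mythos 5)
Your proof is correct and follows essentially the same path as the paper's: both hinge on \cref{h1eq1} forcing $h^1(B,\sg B)=0$, the resulting length-$2$ computation for $A\cdot B$, the identity $\com 0 C {L|_C}=\com 0 X L-2$, and the fact that $B\subset\bs L$ together with reducedness of $B$ kills restriction to $B$. The only divergence is in the final bookkeeping: you twist the ideal-sheaf sequence of $B\subset C$ and read off $\com 0 {\Pn 1}{\sO{\Pn 1}{d-2}}$ directly, whereas the paper twists $0\to\sg C\to\sg A\oplus\sg B\to Q\to 0$ and uses very ampleness of $L|_A$ to get surjectivity onto $\Com 0 C Q$ -- your variant neatly sidesteps that last step.
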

\begin{proof}
Since $B$ is reduced and connected (and non-empty by \cref{no-sings}), $\com 0 B {\sg B} = 1$ and by \cref{h1eq1} 
\begin{align}\label{h1AB0}
 \com 1 A {\sg A} = \com 1 B {\sg B} = 0.
\end{align}
Let the skyscraper sheaf $Q$ be defined via the exact sequence 
\[
 \begin{tikzcd}
  0 \arrow{r}{} & \sg C \arrow{r}{} & \sg A \oplus \sg B \arrow{r}{} & Q \arrow{r}{} & 0. 
 \end{tikzcd}
\]
The associated long exact sequence together with \eqref{h1AB0} yields 
\[
 \com 0 C Q =2.
\]
Tensorize the above short exact sequence with $L$ 
\[
 \begin{tikzcd}
   0 \arrow{r}{} & L|_{C} \arrow{r}{} & L|_A \oplus L|_B \arrow{r}{} & Q \arrow{r}{} & 0 
  \end{tikzcd}
\]
and look at the associated long exact sequence
\[
 \begin{tikzcd}
   0 \arrow{r}{} & \Com{0}{C}{L|_C} \arrow{r}{\alpha} & \Com{0}{A}{L|_A} \oplus \Com{0}{B}{L|_B} \arrow{r}{\beta} & \Com{0}{C}{Q} \simeq \C^2.
  \end{tikzcd}
\]
Since $B$ is reduced and contained in $\bs{L}$ and since, by \cref{base-loci}, $\bs L = \bs{L|_C}$, the image of $\alpha$ is contained in the first summand. Thus
\[ 
 \com{0}{A}{L|_A} \geq \com{0}{C}{L|_C} = \com 0 X L - 2 \geq 2.
\]
In particular, since $A \simeq \Pn 1$, the line bundle $L|_A$ is very ample, so that $\beta$ is surjective. Therefore, $\com{0}{B}{L|_B} = 0$ and 
\[ 
 \com{0}{A}{L|_A} = \com{0}{C}{L|_C} + 2= \com 0 X L,
\]
which, again since $A \simeq \Pn 1$, implies the claim. 
\end{proof}
\subsection{The quadric case}
The strategy of the proof of the following result is essentially identical with that of the proof of \cite[Theorem 4.2]{Nak}. Let us first restate concretely what we want to prove here.
\begin{lem}\label{quadric}
Let $X$ be a Moishezon threefold and let $L \in \Pic X$ be such that 
\[
 \Pic X = \Z L, \qquad -\can X = 2L, \qquad L^3 = 2, \qquad \com 0 X L = 5.
\]
Then $\betti 3 X \leq 6$.
\end{lem}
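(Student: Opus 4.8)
We are in the situation where $\Phi\colon X \dashrightarrow \Qn 3 \subset \Pn 4$ is bimeromorphic and $|L|$ is not base point free (if it were, $L$ would be ample by \cref{bpf-ample} and $X$ would be a smooth intersection of two quadrics, which has $\betti 3 = 4$, so the bound would be trivial — but in any case we may and do work under the hypotheses of \cref{no-sings}). The plan is to choose a resolution $g\colon \tilde X \to X$ of $\Phi$ with $f\colon \tilde X \to \Qn 3$ the induced morphism, write $f^*\sO{\Qn 3}{1} = g^*L - E$ and $\can{\tilde X} = g^* \can X + F$ with $E, F \geq 0$ supported over $\bs L$, and then compute $\betti 3$ of $X$ by comparing it to $\betti 3$ of $\Qn 3$ (which is $0$) via the exceptional loci. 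The tool for this is \cref{MayViet} applied to $f$: since $\Qn 3$ has $\Com{1}{\Qn 3}{\C} = \Com{3}{\Qn 3}{\C} = 0$, the sequence forces $\betti 3 {\tilde X}$ to be controlled by $\betti 3$ and $\betti 1$ of $\exc f$ and $f(\exc f)$. Because blowing up smooth points and smooth curves does not change $\betti 3$, we have $\betti 3 X = \betti 3 {\tilde X}$ up to such harmless modifications, so everything reduces to understanding the components of $\exc f$ that are blow-ups of curves of positive genus, or blow-ups along surfaces.

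**Analyzing the exceptional divisors.** The heart of the argument is to show that each irreducible divisor $\Delta \subset \exc f$ maps onto a curve $C_\Delta \subset \Qn 3$, and to bound $\betti 1 {C_\Delta}$ — ideally showing each such $C_\Delta$ is rational, so that $\betti 3$ contributed is small, or at worst bounding the total. First I would use \cref{no-sings}: every irreducible curve in $D_1 \cap D_2$ for general $D_1, D_2 \in |L|$ is a smooth rational curve, and $\bs L$ is purely $1$-dimensional, so its components are smooth rational curves. Then, following Nakamura's strategy from \cite[Theorem 4.2]{Nak}, I would apply \cref{genBlowUp} with $E = D_1$ a general member of $|L|$ (which is irreducible and not contained in $\exc f$) and $C$ a component of $\bs L$ lying in $f(D_1) \cap f(\exc f)$: this produces, for each such $C$, a unique exceptional divisor $\Delta$ with $(f|_{D_1})_*^{-1}(C) \subset \Delta$, and moreover — using $\com 0 X L = 5$ so that $L \cdot C_{D_1,D_2} \geq \deg \Qn 3 = 2$ and a multiplicity count — one checks the hypothesis $C \not\subset f(f^*H - \Delta - D_1)$, whence generically over $C$ the map $f$ is just the blow-up of $\Qn 3$ along $C$. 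The key numerical input is \cref{sMinusOne}: for a component $A$ of $D_1 \cap D_2$ with the rest $B \subset \bs L$ connected, $L \cdot A = \com 0 X L - 1 = 4$, which combined with $L \cdot (D_1 \cdot D_2) = L^3 = 2$ severely constrains how many components $\bs L$ can have and how they sit — this is what ultimately pins down that $\bs L$ is a single smooth rational curve (or a very small configuration), so $\exc f$ has essentially one relevant component.

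**Assembling the Betti number bound.** Once I know $\exc f$ consists (up to harmless curve/point blow-ups) of a single divisor $\Delta$ which is a blow-up of a $\Pn 1$-bundle over the smooth rational curve $C \subset \Qn 3$, \cref{b3leqb1} gives $\betti 3 \Delta \leq \betti 1 C = 0$, and more generally $\betti 3 \Delta$ is bounded by twice the arithmetic genus of the base curve, which is bounded via \cref{b1leq2h1} and \cref{div-curv} (recall $\com 1 C {\sg C} \leq 1$ for curves $C = D_1 \cdot D_2$). Feeding these into the Mayer–Vietoris sequence of \cref{MayViet} for $f\colon \tilde X \to \Qn 3$ — with $A = \exc f$, $B = f(\exc f)$ — and using $\Com{1}{\Qn 3}{\C} = \Com{2}{\Qn 3}{\C} = \Com{3}{\Qn 3}{\C} = 0$, I get $\betti 3 {\tilde X} \leq \betti 3 {\exc f} + \betti 1 {f(\exc f)} + (\text{small correction from } \Com 2)$; tracking the finitely many possible configurations of $\bs L$ allowed by the $L \cdot A = 4$, $L^3 = 2$ constraints yields $\betti 3 X = \betti 3 {\tilde X} \leq 6$. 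The main obstacle I expect is the combinatorial bookkeeping in the middle step: showing that the intersection configuration of the components of $\bs L$ (how many there are, which meet which, whether any sits inside $f(f^*H - \Delta - D_1)$) is tightly enough constrained by \cref{sMinusOne} and the degree-$2$ condition that only a short list of cases survives, and that in each the exceptional locus is simple enough for \cref{b3leqb1} to apply cleanly — this is exactly where Nakamura's cubic-threefold argument needs to be adapted to the quadric target, and where the bound $6$ (rather than something larger) comes from.
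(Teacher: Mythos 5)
Your overall strategy --- resolve $\Phi$ by blowing up the base locus, show that $f\colon\tilde X\to\Qn 3$ is generically a blow-up along a curve $C_0$, and feed \cref{b3leqb1} and \cref{MayViet} into $\betti 3 X=\betti 3{\tilde X}\leq\betti 3{\Delta}\leq\betti 1 {C_0}$ --- is exactly the paper's (and Nakamura's). But two steps are not actually carried out, and the second is where the argument breaks. First, the identification of $\bs L$: the paper does not obtain this from a combinatorial case analysis of the constraints $L\cdot A=4$, $L^3=2$. It observes that the index mismatch between $X$ and $\Qn 3$ forces some divisor $Q\subset\Qn 3$ to be contracted by $\Phi^{-1}$; that $Q$ cannot be contracted to a point (else $C_{D_1,D_2}$ would be singular, contradicting \cref{no-sings}); and that $C_{D_1,D_2}$ then meets the image curve $B$ in two distinct points, so that $\com{1}{C_{D_1,D_2}+B}{\sg{C_{D_1,D_2}+B}}\geq 1$ and \cref{h1eq1} yields $D_1\cdot D_2=C_{D_1,D_2}+B$. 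This is what pins down $\bs L$ as the single smooth rational curve $B$; your ``severely constrains'' gestures at the right tools but is not an argument.

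Second, and more seriously, the source of the number $6$. You claim the image curve $C_0=f(\Delta)$ is rational, or ``at worst'' satisfies $\com{1}{C}{\sg C}\leq 1$ by \cref{div-curv}. That is a misapplication: \cref{div-curv} controls $h^1$ of the curve $D_1\cdot D_2\subset X$, not of the image $C_0\subset\Qn 3$ of the $f$-exceptional divisor, and $C_0$ is in general \emph{not} rational. The paper computes $\conormal{B}{X}\simeq\sO{B}{a}\oplus\sO{B}{b}$ with $a+b=-2L\cdot B+2=6$ (using $L\cdot B=L^3-L\cdot C_{D_1,D_2}=-2$, which comes from \cref{sMinusOne}), deduces that $E\simeq\Pn 1\times\Pn 1$ or $E\simeq\hir 2$, and finds $\Delta|_E\sim 2\e+4\f$ resp.\ $2\eInf+6\f$; in the first case $C_0$ is a curve of type $(2,4)$ on the quadric surface $f(E)$ with $\com{1}{C_0}{\sg{C_0}}=3$, whence $\betti 1 {C_0}\leq 6$ by \cref{b1leq2h1} (the $\hir 2$ case gives $\leq 3$). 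Without this conormal-bundle computation you have no bound on $\betti 1 {C_0}$, and the bounds you do assert ($0$, or $2$ via \cref{div-curv}) are not provable by the route you describe. A smaller point: \cref{bpf-ample} presupposes the global deformation setup and is not available under the standalone hypotheses of this lemma; that $\Phi$ is not a morphism should instead be deduced from the index mismatch (a birational morphism onto $\Qn 3$ contracting no divisor would force $\can X=\Phi^*\can{\Qn 3}$, contradicting $-\can X=2L$).
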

\begin{proof}
Let $\Phi$ denote the map induced by $|L|$. Of course, $\Phi$ is not a morphism. Therefore, by \cref{kollarThm}, it maps $X$ bimeromorphically to a smooth quadric $\Qn 3 \subset \Pn 4$:
\[
\begin{tikzcd}
X \arrow[dashrightarrow]{r}{\Phi} & \Qn 3 \subset \Pn 4
\end{tikzcd}
\]
 \par
Since $X$ and $\Qn 3$ have different index, there must exist a divisor 
\[
 Q \subset \Qn 3
\]
that is contracted by $\Phi^{-1}$. For general $H_1,H_2 \in |\sO{\Qn 3}{1}|$ the curve $H_1 \cap H_2$ intersects the divisor $Q$ in at least two points at which $\Phi^{-1}$ is defined and it satisfies 
\[
 H_1 \cap H_2 \not\subset Q.
\]
If $\Phi^{-1}$ contracts $Q$ to a point then the curve 
\[
 C_{D_1,D_2} = \Phi_*^{-1}(H_1 \cap H_2) \subset D_1 \cap D_2
\]
is singular, where we let $D_1,D_2 \in |L|$ denote the elements corresponding to $H_1,H_2$ via $\Phi$. This contradicts the second part of \cref{no-sings}. \par 
So $Q$ gets contracted to a curve, call it $B$, and the curve $C_{D_1,D_2}$ intersects $B$ in at least two distinct points. Therefore $\com{1}{C_{D_1,D_2} + B}{\sg{C_{D_1,D_2} + B}} \geq 1$ and by \cref{h1eq1} this implies that
\begin{align}\label{inters}
 D_1 \cdot D_2 = C_{D_1,D_2} + B.
\end{align}
Let $\fun{g}{\tilde X}{X}$ denote the blow-up of $X$ along $B$ and let $E \subset \tilde X$ denote the exceptional divisor. By (\ref{inters}), both  $D_1$ and $D_2$ are generically smooth along $B$, hence
\[
 \hat D_i \deq g_*^{-1}(D_i) = g^*D_i - E \in |g^*L - E|, \quad \text{for} \quad i=1,2.
\]
Again by (\ref{inters}), $D_1$ and $D_2$ intersect transversally at each point of $B \setminus C_{D_1,D_2}$, hence $\hat D_1 \cdot \hat D_2$ contains no component that is mapped onto $B$ by $g$. Since $C_{D_1,D_2}$ and $B$ intersect in two distinct points, we get, using $\com{0}{C_{D_1,D_2}}{Q_{D_1,D_2}} = 2$, that in each point of $C_{D_1,D_2} \cap B$ at least one of the divisors $D_1,D_2$ is smooth. This shows that $\hat D_1 \cdot \hat D_2$ does not contain any fibre of $g$, hence that
\[
 \hat D_1 \cdot \hat D_2 = g_*^{-1}(C_{D_1,D_2}).
\]
This implies that $\dim \bs{g^*L - E} < 1$ and that $\bs{g^* L - E} \subset g_*^{-1}(C_{D_1,D_2})$. Since $L \cdot C_{D_1,D_2} = 4$ by \cref{sMinusOne},
\[
 (g^*L - E) \cdot g_*^{-1}(C_{D_1,D_2}) = L \cdot C_{D_1,D_2} - E \cdot g_*^{-1}C_{D_1,D_2} = 4 - 2 = 2.
\]
So, in fact, $g^* L - E$ is base point free by \cref{movingCurve}. Let $\fun{f}{\tilde X}{\Qn 3}$ denote the induced morphism.
\[
 \begin{tikzcd}
  \tilde X \arrow{d}{g} \arrow{dr}{f}& \\
  X \arrow[dashrightarrow]{r}{\Phi} & \Qn 3
 \end{tikzcd}
\]
Since $\Pic X \simeq \Pic{\Qn 3}$, there exists a unique irreducible divisor $\Delta \subset \tilde X$ that is $f$-exceptional. For some $a \geq 1$, 
\begin{align*}
 g^*\can X + E = \can{\tilde X} &= f^* \can{\Qn 3} + a \Delta \\
 &\sim -3g^*L + 3 E + a \Delta,
\end{align*}
which implies that $a=1$ and $\Delta \sim g^* L - 2E$. Thus $f^*\sO{\Qn 3}{1} \sim \Delta + E$ and 
\begin{align}\label{QPB}
f^*Q = E + \Delta.
\end{align}
The image $f(\Delta)$, which is irreducible, cannot be a point. Otherwise the divisor $g(\Delta)$ would be contained in infinitely many divisors $D \in |L|$, which is absurd, since these are all irreducible by \cref{div-curv}. Therefore 
\[
 C_0 = f(\Delta)
\]
is an irreducible curve. From \cref{genBlowUp} it follows, using \eqref{QPB}, that outside of a finite set $f$ is just the blow-up of $\Qn 3$ along $C_0$. In particular outside of this set the fibre of $f|_{\Delta}$ is just $\Pn 1$ and each such fibre intersects $E$ in exactly one point. From \cref{b3leqb1} -- applied to the holomorphic map $f|_{\Delta}$ and the line bundle $\sO{\tilde X}{E}|_{\Delta}$ -- we therefore get 
\[
 \betti 3 \Delta \leq \betti 1 {C_0}.
\]
By \cref{MayViet} there is an exact sequence
\[
 0 = \Com{3}{\Qn 3}{\C} \to \Com{3}{\tilde X}{\C} \oplus \Com{3}{C_0}{\C} \to \Com{3}{\Delta}{\C},
\]
which shows that $\betti 3 {\tilde X} \leq \betti 3 {\Delta}$, since $\Com{3}{C_0}{\C} = 0$. Since $g$ is the blow-up of a smooth rational curve, $\betti 3 X = \betti{3}{\tilde X}$, hence
\[
 \betti 3 X  = \betti 3 {\tilde X} \leq \betti 3 {\Delta} \leq \betti 1 {C_0}.
\]
We bound $\betti 1 {C_0}$ by studying the map from its strict transform 
\[
 \tilde C_0 = (f|_E)_*^{-1}(C_0) \subset E.
\]
Note that $f(E) = Q$. Since $E$ is a Hirzebruch surface and $Q$ is a quadric, there are two cases: either $E = \Pn 1 \times \Pn 1$ or $E = \hir 2$. \par
Write $\conormal{B}{X} \simeq \sO{B}{a} \oplus \sO{B}{b}$, with $a \geq b$. Since by adjunction
\[
 a + b = \can X \cdot B - \deg \can B = - 2 L \cdot B + 2
\]
and since  $L \cdot B = L^3 - L \cdot C_{D_1,D_2} = -2$, we have $(a,b) = (4,2)$ or $(a,b) = (3,3)$, which implies
\[
 \Delta|_{E} \sim (g^*L - 2E)|_E = \begin{cases} 2 \e + 4 \f, \quad \text{if} \quad E \simeq \Pn 1 \times \Pn 1 \\ 2 \eInf + 6 \f, \quad \text{if} \quad E \simeq \hir 2. \end{cases}
\]
In both cases we have 
\[
 \ideal{\Delta, \tilde X}|_{E} = \ideal{\tilde C_0,E}
\]
\emph{generically} along $\tilde C_0$ by \cref{genBlowUp}. \par
First assume that $E \simeq \Pn 1 \times \Pn 1$. Then $f|_E$ is an isomorphism and $\tilde C_0 = \Delta|_{E}$. An easy calculation shows that $\com{1}{C_0}{\sg{C_0}} = 3$, hence $\betti{1}{C_0} \leq 2 \cdot 3 = 6$ by \cref{b1leq2h1}. \par 
Now assume that $E \simeq \hir 2$. Then 
\[
 2 \eInf + 6 \f \sim \Delta|_E = \tilde C_0 + a \CInf,
\] 
for some $a \geq 1$. Since $\tilde C_0$ is effective, $a \leq 2$. If $a = 2$, $\tilde C_0 \in |6f|$, which is a contradiction to the fact that $\tilde C_0$ is irreducible and reduced, hence $a = 1$. Thus $\f \cdot \tilde C_0 = 1$ so that $\tilde C_0$ is a smooth rational curve. Applying \cref{MayViet} to the morphism $f|_{\tilde C_0}$ yields the exact sequence
\[
 \begin{tikzcd}[column sep = 1em]
   0 \arrow{r}{} & \Com{0}{C_0}{\C} \arrow{r}{} & \Com{0}{\tilde C_0}{\C} \oplus \Com{0}{q}{\C} \arrow{r}{} & \Com{0}{\CInf \cap \tilde C_0}{\C} \arrow{r}{} & \Com{1}{C_0}{\C} \arrow{r}{} & 0
 \end{tikzcd}
\]
where $q \in Q$ is the center of the quadric cone $Q$. From this we get
\[
 \betti 1 {C_0} \leq \#(\CInf \cap \tilde C_0) - 2 + 1 \leq \eInf \cdot \tilde C_0 - 1 = 3. 
\]
Summing up, in both cases we have $\betti 3 X \leq \betti 1 {C_0} \leq 6$ as claimed. 
\end{proof}
\subsection{The projective space case}
Let us first state explicitly what we want to prove here.
\begin{lem}\label{projective}
Let $X$ be a compact complex threefold, let $L \in \Pic X$ be such that
\[
 \Pic X = \Z L, \qquad -\can X = 2L, \qquad L^3 = 2, \qquad \com 0 X L = 4.
\]
Assume in addition that the map induced by $|L|$ is bimeromorphic and that there does not exist any integral curve $C \subset X$ satisfying $L \cdot C = 0$. \par
Then $\betti 3 X \leq 12$.  
\end{lem}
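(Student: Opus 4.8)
The plan is to follow the strategy of the quadric case (\cref{quadric}), the new ingredient being \cref{blowup-rest}, which is needed because here the base locus of $L$ may be reducible and is not in general resolved by a single blow-up.

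First I would record the basic structure. Since $X \not\simeq \Pn 3$ --- the indices differ --- the bimeromorphic map $\Phi\colon X \dashrightarrow \Pn 3$ induced by $|L|$ is not a morphism, so $|L|$ is not base point free, and \cref{no-sings} applies: $\dim \bs L = 1$, and every irreducible curve in a general $D_1 \cdot D_2$ is smooth and rational. As in the quadric case, $\Phi^{-1}$ must contract at least one divisor $Q \subset \Pn 3$, and, since a general line of $\Pn 3$ meets $Q$, \cref{no-sings} forces each such $Q$ to be contracted to a curve, never to a point. A degree computation based on $L^3 = 2$, together with \cref{movingCurve} and \cref{sMinusOne} applied to the splitting of $D_1 \cdot D_2$ into $C_{D_1,D_2}$ and the base components, pins down the possibilities: each connected component of $\bs L$ is a smooth rational curve $B$ with $L \cdot B = -1$, there are at most two of them, and accordingly $\Phi^{-1}$ contracts one or two planes. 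By adjunction $\conormal B X$ has degree $4$, so blowing up $B$ produces an exceptional surface isomorphic to $\hir 0$, $\hir 2$, or $\hir 4$; the possible lack of balance of $\conormal B X$ is where things become delicate.

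Next I would resolve $\Phi$ using \cref{blowup-rest}, whose hypotheses (all curves in $\bs L$ smooth, and $L \cdot C_{D_1,D_2} = \deg Y$ for general $D_1, D_2$) follow from the above. This produces $g\colon \tilde X \to X$ with $g(\exc g) \subset \bs L$, a morphism $f\colon \tilde X \to \Pn 3$, and effective $g$-exceptional divisors $E \geq F$ with $\can{\tilde X} = g^* \can X + F$ and $f^* \sO{\Pn 3}{1} = g^* L - E$. Since $\Pic X \simeq \Pic{\Pn 3} \simeq \Z$, the number of $f$-exceptional prime divisors equals the number of $g$-exceptional ones, and no $f$-exceptional divisor $\Delta_j$ can be contracted to a point, for otherwise $g(\Delta_j)$ would lie in infinitely many members of $|L|$, contradicting \cref{div-curv}. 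Hence each $\Delta_j$ dominates an irreducible curve $C_j = f(\Delta_j) \subset \Pn 3$, and by \cref{genBlowUp} --- with $E \geq F$ securing the auxiliary hypothesis $C_j \not\subset f(f^*H - \Delta_j - E)$ in the relevant cases --- the map $f$ is, generically over $C_j$, the blow-up of $\Pn 3$ along $C_j$; in particular $\Delta_j \to C_j$ has general fibre $\Pn 1$, meeting $E$ in a single point.

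The Betti-number estimate then runs as in the quadric case. All centres of $g$ are points and smooth rational curves --- components of base loci, smooth rational by \cref{no-sings}, and kept under control through the successive blow-ups by the inequality $E \geq F$ --- so $\betti 3 {\tilde X} = \betti 3 X$. Applying \cref{MayViet} to $f$ with $A = \exc f = \bigcup_j \Delta_j$ and $B = \bigcup_j C_j$, using $\Com 3 {\bigcup_j C_j}{\C} = 0$ and a further Mayer--Vietoris among the $\Delta_j$ (whose pairwise intersections have dimension at most $1$), gives $\betti 3 {\tilde X} \leq \sum_j \betti 3 {\Delta_j}$, and \cref{b3leqb1} applied to $f|_{\Delta_j}$ and $\sO{\tilde X}{E}|_{\Delta_j}$ gives $\betti 3 {\Delta_j} \leq \betti 1 {C_j}$. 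Finally I would bound $\betti 1 {C_j}$ by passing to the strict transform $\tilde C_j$ of $C_j$ inside the $g$-exceptional Hirzebruch surface $E$: from $\ideal{\Delta_j, \tilde X}|_E = \ideal{\tilde C_j, E}$ generically along $\tilde C_j$ (\cref{genBlowUp}) and the conormal bundle of $B$ one computes the class of $\tilde C_j$ in $E$, finds it to be smooth rational (or at worst to meet the ruling in two points), and concludes $\betti 1 {C_j} \leq 6$ exactly as in the quadric case, via \cref{b1leq2h1} or \cref{MayViet} applied to $f|_{\tilde C_j}$. Since at most two indices $j$ occur, $\betti 3 X \leq 12$. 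The main obstacle is precisely this case analysis: there may be two contracted planes and an unbalanced $\conormal B X$, so several sub-cases --- indexed by $\conormal B X$ and by how $\bs L$ sits inside a general $D_1 \cap D_2$ --- have to be treated, and it is in the worst of them (roughly $E = \hir 4$ with reducible base locus) that the finer resolution provided by \cref{blowup-rest}, rather than a single blow-up, is genuinely needed.
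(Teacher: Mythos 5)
Your overall architecture (resolve $\Phi$, push $\betti 3 X$ onto the $f$-exceptional divisors via \cref{MayViet} and \cref{b3leqb1}, then bound $\betti 1$ of their image curves) matches part of the paper's argument, but two of your structural claims are wrong and, more seriously, an entire case is missing. First, $\bs L$ is not a disjoint union of at most two smooth rational curves each with $L \cdot B = -1$. When the movable curve meets the base locus in two distinct points, the paper shows (\cref{cycle}) that $D_1 \cdot D_2$ is a connected \emph{cycle} of smooth rational curves $C_{D_1,D_2} + B_1 + \cdots + B_n$ with $n$ odd, $L \cdot C_{D_1,D_2} = 3$ and $L \cdot B_i = (-1)^i$ alternating; some base components have $L \cdot B_i = +1$, and $n$ turns out to be $1$ or $3$ (\cref{simpleStructure}), with three \emph{chained} base curves in the latter case. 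Your bookkeeping from $L^3 = 2$ does not survive the possibility of positive-degree base components. Moreover, in the irreducible case the single image curve only satisfies $\betti 1 {C_0} \leq 12$ (its strict transform is a curve of class $3\e + 4\f$ on $\Pn 1 \times \Pn 1$, or the analogue on $\hir 2$), so the bound of $6$ per curve you invoke is not available; the total of $12$ is contributed by one curve, not two.

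The genuinely missing piece is the case in which $Q_{D_1,D_2}$ is supported at a \emph{single} point, i.e.\ $D_1 \cdot D_2$ is not generically reduced along the base curve (\cref{twoPoints} fails). There \cref{sMinusOne} is unavailable (it requires $D_1 \cdot D_2$ reduced), no cycle structure exists, the resolution needs two successive blow-ups over $B_1$ before \cref{blowup-rest} can even be applied, and the image curve $C_1 = f(\Delta)$ of the unique $f$-exceptional divisor satisfies only $\deg C_1 \geq 3$, with no workable upper bound on $\betti 1 {C_1}$ for your closing estimate to latch onto. The paper handles this case by a completely different route: it shows that $\Phi^{-1}$ contracts a single hyperplane $H_0$, that the unique $\Phi$-contracted divisor satisfies $\Delta_0 \in |L|$, and that $\Phi^{-1}$ is holomorphic on $\Pn 3 \setminus H_0$, whence $X \setminus \Delta_0 \simeq \C^3$ and $\betti 3 X = \betti 3 {\Delta_0}$; it then proves $\betti 3 {\Delta_0} \leq 1$ (hence $= 0$) by a delicate count of exceptional curves in a carefully constructed embedded resolution of the singular surface $\Delta_0$. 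That analysis occupies most of the section and is not a routine adaptation of the quadric case.
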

Unfortunately, in this case the base locus cannot as easily be described as in the quadric case. If for general $D_1,D_2 \in |L|$ the the movable curve of $D_1 \cdot D_2$ intersects the $1$-dimensional locus of $\bs{L}$ in two different points, then the situation is still rather similar: $D_1 \cdot D_2$ is a reduced cycle of smooth, rational curves. If not, then the intersection of general $D_1,D_2 \in |L|$ is non-reduced. In the latter case we show that $X$ is a compactification of $\C^3$ by an irreducible divisor $\Delta_0 \subset X$ and that
\[
 \betti 3 X = \betti 3 {\Delta_0} = 0.
\]
Let us first make precise the notion of ``the movable curve of $D_1 \cdot D_2$'', for $D_1,D_2 \in |L|$ and state some simple properties of it.
\begin{lem}\label{leq2}
 Under the assumptions of \cref{projective}, let $D_1, D_2 \in |L|$ be general and let $H_1,H_2 \subset \Pn 3$ be the hyperplanes corresponding to $D_1, D_2$ via $\Phi$. \par
Then the curve $H_1 \cap H_2$ intersects the locus where $\Phi^{-1}$ is an isomorphism. In this case we define
\[
 C_{D_1,D_2} = \Phi^{-1}_* (H_1 \cap H_2)
\]
and call it the \textbf{movable curve} of $D_1 \cdot D_2$. \par
Note that $C_{D_1,D_2} \subset D_1 \cdot D_2$ and that we have (local) equality at a general point of $C_{D_1,D_2}$. We do not have equality per se and $C_{D_1,D_2}$ is a smooth rational curve. \par
For any irreducible, smooth rational curve $C \subset D_1 \cdot D_2$ along which $D_1 \cdot D_2$ is generically reduced, we define the skyscraper sheaf $Q_{D_1,D_2}^C$ via the following exact sequence 
\begin{align}\label{defQ}
 0 \to (\ideal{D_1} + \ideal{D_2})|_C \to \conormal{C}{X} \to Q_{D_1,D_2}^C \to 0.
\end{align}
Then
\[
 \com{0}{C}{Q_{D_1,D_2}^C} = 2.
\]
In the case that $C = C_{D_1,D_2}$, we write $Q_{D_1,D_2} = Q_{D_1,D_2}^{C_{D_1,D_2}}$.
\end{lem}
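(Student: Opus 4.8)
The plan is to obtain the movable curve and its local behaviour from \cref{movingCurve} together with \cref{no-sings}, and then to produce the length-$2$ skyscraper sheaf $Q_{D_1,D_2}^C$ by a one-line degree count, after identifying $(\ideal{D_1}+\ideal{D_2})|_C$ with $L^{-1}|_C^{\oplus 2}$.

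First I would check that \cref{movingCurve} applies. If $|L|$ were base point free, $\Phi$ would be a bimeromorphic \emph{morphism} $X\to\Pn 3$, whence $L^3 = (\Phi^*\sO{\Pn 3}{1})^3 = 1$, contradicting $L^3 = 2$; hence $|L|$ is not base point free and \cref{no-sings} gives $\dim\bs L = 1$, so $\bs L$ has no divisorial component, while $\Phi$ maps $X$ bimeromorphically onto the smooth variety $\Pn 3 = \Proj{\Com 0 X L}$. The first part of \cref{movingCurve} then says that for general $D_1,D_2\in|L|$ the strict transform $C_{D_1,D_2} = \Phi_*^{-1}(H_1\cap H_2)$ is defined --- which is exactly the statement that $H_1\cap H_2$ meets the locus where $\Phi^{-1}$ is an isomorphism --- and that $D_1\cap D_2 = C_{D_1,D_2}\cup\bs L$. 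Being the strict transform of an irreducible curve, $C_{D_1,D_2}$ is irreducible, hence by \cref{no-sings} a smooth rational curve. Moreover $C_{D_1,D_2}$ meets, and therefore (being irreducible) contains a dense open subset of, the locus $U\subset X$ on which $\Phi$ is a biholomorphism; since $U\cap\bs L = \emptyset$, at a general point $p$ of $C_{D_1,D_2}$ both $D_1$ and $D_2$ are smooth and coincide with $\Phi^{-1}(H_1),\Phi^{-1}(H_2)$, so that $D_1\cdot D_2 = \Phi^{-1}(H_1\cap H_2) = C_{D_1,D_2}$ as reduced schemes near $p$. Globally the equality fails because $\bs L\subset D_1\cap D_2$ is a nonempty curve (by \cref{no-sings}) which, being a union of curves, cannot be contained in the irreducible curve $C_{D_1,D_2}$ for general $D_1,D_2$.

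For the last assertion, let $C\subset D_1\cdot D_2$ be irreducible and smooth rational with $D_1\cdot D_2$ generically reduced along $C$, and put $d = L\cdot C$. The defining sections of $D_1$ and $D_2$ give a surjection of ideal sheaves $\sO{X}{-D_1}\oplus\sO{X}{-D_2}\twoheadrightarrow\ideal{D_1}+\ideal{D_2}$, and the inclusion $\ideal{D_1}+\ideal{D_2} = \ideal{D_1\cdot D_2}\subset\ideal C$ induces $\ideal{D_1}+\ideal{D_2}\to\ideal C/\ideal C^2 = \conormal C X$; restricting to $C$ and composing yields $\psi\colon L^{-1}|_C^{\oplus 2}\to\conormal C X$. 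At a general point of $C$ one has $D_1\cdot D_2 = C$ reduced, so $D_1$ and $D_2$ are smooth and meet transversally and $\psi$ is an isomorphism there; hence $\ker{\psi}$ is a torsion subsheaf of the locally free sheaf $L^{-1}|_C^{\oplus 2}$ on the smooth curve $C$, so $\ker{\psi} = 0$. Consequently the surjection $L^{-1}|_C^{\oplus 2}\to(\ideal{D_1}+\ideal{D_2})|_C$ is an isomorphism, the conormal map $(\ideal{D_1}+\ideal{D_2})|_C\hookrightarrow\conormal C X$ is injective --- so that \eqref{defQ} is indeed exact --- and $Q_{D_1,D_2}^C = \coker{\psi}$ is a skyscraper sheaf. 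Since $C\simeq\Pn 1$ and $-\can X = 2L$, we have $\deg\conormal C X = \can X\cdot C + 2 = 2 - 2d$, while $(\ideal{D_1}+\ideal{D_2})|_C\simeq L^{-1}|_C^{\oplus 2}$ has degree $-2d$, whence
\[
 \com 0 C {Q_{D_1,D_2}^C} = \operatorname{length}(Q_{D_1,D_2}^C) = (2-2d) - (-2d) = 2.
\]

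The one genuinely delicate point is the exactness of \eqref{defQ}: one must be sure that the naive restriction $(\ideal{D_1}+\ideal{D_2})|_C$ injects into $\conormal C X$ without losing torsion and is free of the expected degree. Everything there reduces to the injectivity of $\psi$, and this is the only place where smoothness of $C$ enters --- its rationality being needed as well, for the final count to give $2$ rather than $2-2g$. The remaining verifications --- that a general line avoids the small ``bad'' loci and that $\Phi^{-1}$ restricts to an isomorphism along it --- are routine.
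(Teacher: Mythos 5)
Your proof is correct and follows essentially the same route as the paper: the first assertions come from \cref{movingCurve} together with \cref{no-sings} (the paper cites \cref{h1eq1} directly for smoothness and rationality, but \cref{no-sings} is proved from it), and the length-$2$ computation is the paper's Euler-characteristic/adjunction count in the equivalent form of a degree comparison after identifying $(\ideal{D_1}+\ideal{D_2})|_C$ with $L^{-1}|_C^{\oplus 2}$. Your explicit verification that $\psi$ has vanishing kernel, so that \eqref{defQ} is genuinely exact, is a point the paper leaves implicit and is worth having.
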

\begin{proof}
By \cref{no-sings}, $\dim{\bs L} = 1$, hence $C_{D_1,D_2} \neq D_1 \cdot D_2$. That $C_{D_1,D_2}$ is smooth and rational then follows from \cref{h1eq1}. \par 
To prove the second statement let $C \subset D_1 \cap D_2$ be an irreducible, smooth rational curve such that $D_1 \cdot D_2$ is generically reduced along $C$. Since $Q_{D_1,D_2}^C$ is a skysrcaper sheaf, $\com{1}{C}{Q_{D_1,D_2}^C} = 0$. Taking the Euler characteristic of \eqref{defQ} we get
\begin{align*}
 \com{0}{C}{Q_{D_1,D_2}^C} &= \eChar{C}{\conormal{C}{X}} - \eChar{C}{{\ideal{D_1}}|_C \oplus {\ideal{D_2}}|_C} \\
                       &= 2 - 2L \cdot C + 2 - ( - 2 L \cdot C + 2) = 2.
\end{align*}
To get the equality of the first and the second line we apply Riemann-Roch on $C$ and use that by adjunction  
\[
 \deg \conormal{C}{X} = - \deg \can C + \can X \cdot C = 2 - 2L \cdot C
\]
and that $\ideal{D_i} \simeq L^{-1}$.
\end{proof}
The lemma shows that, in particular, the skyscraper sheaf $Q_{D_1,D_2}$ is supported in at most two points. We split the proof of \cref{projective} into the two cases when the following statement holds resp.\ does not hold.
\begin{stmt}\label{twoPoints}
For general $D_1,D_2 \in |L|$ the cokernel $Q_{D_1,D_2}$ of the map
\[
 (\ideal{D_1} + \ideal{D_2})|_{C_{D_1,D_2}} \to \conormal{C_{D_1,D_2}}{X}
\]
is supported in two distinct points, where $C_{D_1,D_2}$ denotes the movable curve as defined in \cref{leq2}.
\end{stmt}
\vspace{2mm}
\paragraph{\sc First case}
We first assume that the statement holds.
\begin{assum}\label{firstcase}
In addition to the assumptions of \cref{projective} assume that \cref{twoPoints} does hold. 
\end{assum}
Then the base locus of $|L|$ can be determined very concretely as the following lemma shows. 
\begin{lem}\label{cycle}
Under \cref{firstcase} the $1$-dimensional part of $\bs L$ consists of an odd number $n$ of integral curves $B_1, \ldots, B_n$. For general $D_1,D_2 \in |L|$, their intersection decomposes as  
\[
 D_1 \cdot D_2 = C_{D_1,D_2} + \sum_{i=1}^nB_i
\]
and it is a cycle of smooth rational curves. Moreover we have
\[
 L \cdot C_{D_1,D_2} = 3 \qquad \text{and} \qquad L \cdot B_i = (-1)^i.
\]  
\end{lem}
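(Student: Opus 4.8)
The plan is to study, for general $D_1, D_2 \in |L|$, the scheme-theoretic intersection $\mathcal C := D_1 \cdot D_2$, which by \cref{div-curv} is connected with $\com 0{\mathcal C}{\sg{\mathcal C}} = \com 1{\mathcal C}{\sg{\mathcal C}} = 1$ and $\dualizing{\mathcal C} \simeq \sg{\mathcal C}$ (so $\chi(\sg{\mathcal C}) = 0$). By \cref{movingCurve} its irreducible components are the movable curve $C := C_{D_1,D_2}$ — appearing with multiplicity $1$, and along which $\mathcal C$ is generically reduced — together with the $1$-dimensional components of $\bs L$; since $C$ varies with $(D_1,D_2)$ while $\bs L$ does not, those latter components are precisely the integral curves $B_1, \dots, B_n$ making up the $1$-dimensional part of $\bs L$, and by \cref{no-sings} all of $C, B_1, \dots, B_n$ are smooth rational. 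The first step is the local structure along $C$: by \cref{leq2} and \cref{firstcase} the sheaf $Q_{D_1,D_2}$, which has $\com 0 {C_{D_1,D_2}}{Q_{D_1,D_2}} = 2$, is supported at two distinct points $p_1, p_2$, hence of length $1$ at each; a local Smith-normal-form computation in the smooth threefold $X$ along the smooth curve $C$ — exactly as in \cite{Nak} — then shows that $\mathcal C = C$ locally away from $\{p_1,p_2\}$, while near each $p_j$ the scheme $\mathcal C$ is $C$ together with exactly one further reduced smooth branch meeting $C$ transversally. In particular $\mathcal C$ is reduced near $C$, and $C$ meets the union of the $B_i$ exactly in the two reduced points $p_1, p_2$.

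Next I would show $\mathcal C$ is a reduced cycle of smooth rational curves. Since $\sg{\mathcal C} \twoheadrightarrow \sg{\mathcal C_{\mathrm{red}}}$, one has $\com 1{\mathcal C_{\mathrm{red}}}{\sg{\mathcal C_{\mathrm{red}}}} \le 1$; if it equals $1$ then $\mathcal C_{\mathrm{red}} = \mathcal C$ by \cref{h1eq1}, so $\mathcal C$ is reduced, and the remaining possibility (that $\mathcal C_{\mathrm{red}}$ is a tree of $\Pn 1$'s while $\mathcal C$ is non-reduced along some $B_i$) is to be excluded using the restriction of $L$ together with the local normal form above. Once $\mathcal C$ is reduced it has $p_a(\mathcal C) = 1$ with all components $\simeq \Pn 1$, so $\delta = \#\{\text{components}\}$; combining this with the connectedness bound $\sum_p(r_p-1) \ge \#\{\text{components}\}-1$, with the fact from Step 1 that $C$ meets the rest in two transverse nodes, and with \cref{h1eq1} applied to the subcurve obtained by deleting a putative "tail'' component (which would be a connected proper $A \subsetneq \mathcal C$ with $\chi(\sg A) = 0$, hence $\com 1 A{\sg A} = 1$), forces the dual graph to be a single loop and all singularities to be nodes. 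So, after relabelling, $\mathcal C = C + B_1 + \cdots + B_n$ is a genuine cycle of $\Pn 1$'s with $C$ adjacent to $B_1$ and $B_n$, and $B_1 + \cdots + B_n$ is a reduced connected chain contained in $\bs L$.

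The intersection numbers now follow. Applying \cref{sMinusOne} with $A = C_{D_1,D_2}$ and $B = B_1 + \cdots + B_n$ gives $L \cdot C_{D_1,D_2} = \com 0 X L - 1 = 3$, and then $L^3 = L \cdot \mathcal C$ yields $\sum_{i=1}^n L \cdot B_i = -1$. For the alternation, note that $\Com 0 X L \to \Com 0{\mathcal C}{L|_{\mathcal C}}$ is surjective by \cref{base-loci} with kernel the $2$-dimensional space of sections cutting out $D_1, D_2$ (Koszul complex of the complete intersection $\mathcal C$), so $\com 0{\mathcal C}{L|_{\mathcal C}} = 2$; since $\chi(L|_{\mathcal C}) = L\cdot\mathcal C = 2$ and $\dualizing{\mathcal C}\simeq\sg{\mathcal C}$, this gives $\com 0{\mathcal C}{L^{-1}|_{\mathcal C}} = \com 1{\mathcal C}{L|_{\mathcal C}} = 0$. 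A section of $L^{-1}|_{\mathcal C}$ vanishes on $C$ (where $L^{-1}$ has degree $-3$), hence also at $p_1, p_2$, so $\com 0{\mathcal C}{L^{-1}|_{\mathcal C}} = \com 0 B N$ for $N$ the line bundle on the chain $B$ of degrees $e_1 = -L\cdot B_1 - 1$, $e_n = -L\cdot B_n - 1$, and $e_i = -L\cdot B_i$ for $1<i<n$, with $\sum e_i = -1$ and $\chi(N) = 0$. The elementary left-to-right computation of $\com 0 B N$ on a chain of $\Pn 1$'s shows that $\com 0 B N = 0$ forces every $e_i \ge -1$ (hence, as no curve has $L \cdot B_i = 0$, forces $e_1, e_n \ge 0$ and $e_i \in \{-1\}\cup\{\ge 1\}$ in between) and forces an entry $\ge 1$ between any two entries equal to $-1$; feeding in $\sum e_i = -1$ and again $L\cdot B_i \ne 0$ leaves only $(e_1,\dots,e_n) = (0,-1,1,-1,\dots,1,-1,0)$ with $n$ odd, i.e. $L\cdot B_i = (-1)^i$ (with $n=1$, $L\cdot B_1 = -1$, as the base case).

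The main obstacle is Step 2: ruling out that $\mathcal C$ is non-reduced — necessarily along some base-locus curve $B_i$ — and that its dual configuration has tails or non-nodal singularities. The local normal form from \cref{firstcase} controls the situation along the movable component $C_{D_1,D_2}$, and \cref{h1eq1} disposes of the obviously bad reduced subcurves, but the interaction between non-reducedness along the $B_i$ and the constraints $\com 1{\mathcal C}{\sg{\mathcal C}} = 1$, $\dualizing{\mathcal C}\simeq\sg{\mathcal C}$, $\com 0{\mathcal C}{L^{-1}|_{\mathcal C}} = 0$ requires a careful, if elementary, analysis in the spirit of \cite{Nak}; everything after the cycle structure is established is essentially bookkeeping.
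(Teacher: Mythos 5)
There is a genuine gap at the decisive step, and you have correctly located it yourself: your Step 2 does not actually rule out the configuration in which $(D_1\cdot D_2)_{\mathrm{red}}$ is a tree of $\Pn 1$'s while $D_1\cdot D_2$ is non-reduced along some $B_i$ (so that $\com{1}{D_1\cdot D_2}{\sg{D_1\cdot D_2}}=1$ still holds), and the phrase ``to be excluded using the restriction of $L$ together with the local normal form'' is a placeholder, not an argument. The paper avoids this dichotomy entirely by \emph{propagating} the local normal form: \cref{leq2} defines $Q^{C_0}_{D_1,D_2}$ and gives $\com{0}{C_0}{Q^{C_0}_{D_1,D_2}}=2$ for \emph{every} irreducible component $C_0$ along which $D_1\cdot D_2$ is generically reduced, not just for the movable curve. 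Starting from $C_{D_1,D_2}$, the length-one local form $(x,yz)$ at each of the two support points of $Q_{D_1,D_2}$ produces a new generically reduced component $C_0$; since $Q^{C_0}_{D_1,D_2}$ again has length two, $C_0$ carries a second such point with a further new component, and so on. Finiteness of the set of components forces this walk to close up into a reduced cycle $C\subset D_1\cdot D_2$ of smooth rational curves, and then $\com{1}{C}{\sg C}=1$ together with \cref{h1eq1} gives $C=D_1\cdot D_2$ --- which establishes reducedness, the cycle structure and the nodality of all singularities in one stroke. This inductive construction of a reduced subcycle is the idea your proposal is missing, and without it the ``bookkeeping'' cannot start.

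A secondary, fixable error: in your final step you claim that $\com{0}{B}{N}=0$ ``forces every $e_i\geq -1$''. Vanishing of $\com 0 B N$ only yields \emph{upper} bounds on the degrees $e_i$ (a component with enough sections vanishing at its attachment points would produce a nonzero global section by extension by zero); it can never bound the $e_i$ from below. The constraints $L\cdot B_i\leq 1$ and the exclusion of two adjacent components with $L\cdot B_i=L\cdot B_{i+1}=1$ require the complementary argument: by \cref{base-loci} every section of $L|_{D_1\cdot D_2}$ lifts to $X$ and hence vanishes identically on $\bs L\supset\sum B_i$, whereas $L\cdot B_i\geq 2$, or two adjacent values $+1$, would allow one to construct a section of $L|_{D_1\cdot D_2}$ not vanishing there. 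This is exactly the two-sided claim (applied to both $M=L$ and $M=-L$) in the paper's proof; your one-sided use of $L^{-1}$ alone does not suffice.
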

\begin{rem}
Here a cycle of smooth rational curves is a reduced curve whose irreducible compononents are smooth rational curves and whose intersection graph is a cycle. The intersection graph of a reduced curve $C$ is the graph whose vertices are the irreducible components of $C$ and where two vertices are connected by an edge precisely when the corresponding irreducible components have non-empty intersection.
\end{rem}
\begin{proof}[Proof of \cref{cycle}]
The proof of the first part is that of \cite[Lemma 3.3]{Nak}.\par Choose $D_1,D_2 \in |L|$ such that $Q_{D_1,D_2}$ is supported in two different points, which is possible since we assume that \cref{twoPoints} holds. Then we can write $Q_{D_1,D_2} = \C_p \oplus \C_q$, for $p,q \in C_{D_1,D_2}$ with $p \neq q$. This shows that locally at $p$ at least one of the divisors $D_1,D_2$ is smooth and that
\[
 (\ideal{D_1} + \ideal{D_2})_p = (x, yz),
\] 
for some local coordinate functions $x,y,z$. Therefore there exists an irreducible curve 
\[
 p \in C_0 \subset D_1 \cdot D_2, \quad \text{with} \quad C_0 \neq C_{D_1,D_2},
\]
along which $D_1 \cdot D_2$ is generically reduced. Since $(Q_{D_1,D_2}^{C_0})_p = \C$, we can continue to choose curves in the same manner. As $D_1 \cap D_2$ contains only finitely many curves, this shows that there exists a cycle $C \subset D_1 \cdot D_2$ of smooth rational curves. Then, by \cref{h1eq1}, $C = D_1 \cdot D_2$, since $\com{1}{C}{\sg C} = 1$. \par
For $D_1,D_2 \in |L|$ general, such that additionally $D_1 \cap D_2 = C_{D_1,D_2} \cup \bs{L}$ (cf.\ \cref{movingCurve}), we get that
\[
 D_1 \cdot D_2 = C_{D_1,D_2} + \sum_1^n B_i
\]
for some $n$ and some smooth rational curves $B_1, \ldots, B_n \subset \bs L$. \par
By \cref{sMinusOne}, $L \cdot C_{D_1,D_2} = 3$, hence 
\begin{align}\label{minusOne}
 \sum_{i=1}^n L \cdot B_i = L^3 - L \cdot C_{D_1,D_2} = -1.
\end{align}
Let $C = D_1 \cdot D_2$. I claim that 
\begin{align}\label{van10}
 \Com{0}{C}{-L|_C} = 0,
\end{align}
Indeed, using the vanishing statements from \cref{div-curv}, we easily compute 
\[
 \Com{1}{C}{L|_C} = \Com{1}{D_1}{L|_{D_1}} = \Com{1}{X}{L}.
\]
Since, again by \cref{div-curv}, we have $\Com 2 X L = \Com 3 X L = 0$ and $\eChar X L = 4$, 
\[
 \com 1 X L = \eChar{X}{L} - \com 0 X L = 4 - 4 = 0
\] 
and by Grothendieck-Serre duality, $\Com{0}{C}{-L|_C} \simeq \Com{1}{C}{L|_C} = 0$, which proves \eqref{van10}. \par
 To study how $L$ restricts to the individual curves $B_i$, we prove the following claim.
\begin{claim}
Let $M$ be a line bundle on $C$.
\begin{itemize}
\item If there exists $i \in \{1, \ldots, n\}$, such that $M \cdot B_i \geq 2$. Then there exists a section $s \in \Com 0 C M$ satisfying $s|_{B_i} \neq 0$.
\item If there exists $i \in \{1, \ldots, n-1\}$, such that $M \cdot B_i = M \cdot B_{i+1} = 1$, then there exists $s \in \Com{0}{C}{M}$ such that $s|_{B_i + B_{i+1}} \neq 0$.
\end{itemize}
\end{claim}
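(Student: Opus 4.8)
The plan is to prove both statements by one and the same elementary gluing argument on the nodal curve $C = D_1 \cdot D_2$, which by \cref{cycle} is a reduced cycle of smooth rational curves. Label the $n+1$ components cyclically, say $\Gamma_0 = C_{D_1,D_2}$ and $\Gamma_j = B_j$ for $1 \le j \le n$, so that $\Gamma_j$ and $\Gamma_{j+1}$ meet in a single node (indices $\bmod\ n+1$; in the exceptional case $n = 1$ the two components meet in two nodes, which changes nothing below). The only fact about sections on a nodal curve that I use is this: if $D \subset C$ is a reduced subcurve, $A = \overline{C \setminus D}$ the complementary subcurve, and $Z = D \cap A$ the finite set of nodes joining them, then the Mayer--Vietoris sequence $0 \to M \to M|_D \oplus M|_A \to M|_Z \to 0$ gives
\[
 \Com 0 C M \;=\; \bigl\{\, (s', s'') \in \Com 0 D M \oplus \Com 0 A M \;:\; s'|_Z = s''|_Z \,\bigr\}.
\]
Hence, to produce $s \in \Com 0 C M$ with $s|_D \neq 0$ it suffices to exhibit $s' \in \Com 0 D M$ with $s' \neq 0$ and $s'|_Z = 0$: such an $s'$ pairs with the zero section on $A$.

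For the first statement, take $D = B_i$, so $Z = \{x,y\}$ is the pair of nodes of $C$ lying on $B_i$. Since $M|_{B_i} \simeq \sO{\Pn 1}{d}$ with $d = M \cdot B_i \ge 2$, the space of sections of $M|_{B_i}$ vanishing at $x$ and $y$ is $\Com 0 {\Pn 1}{\sO{\Pn 1}{d-2}} \neq 0$; any nonzero such section is the desired $s'$.

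For the second statement, take $D = B_i \cup B_{i+1}$, a chain of two lines meeting at a node $p$. Because $n$ is odd, the statement is vacuous unless $n \ge 3$, and then $A = \overline{C \setminus D}$ is a nonempty connected chain meeting $D$ in exactly two nodes $x \in B_i$ and $y \in B_{i+1}$, both distinct from $p$. As $M|_{B_i} \simeq \sO{\Pn 1}{1}$, there is a section $\sigma_i \in \Com 0 {B_i}{M|_{B_i}}$ vanishing at $x$ and not at $p$, and similarly $\sigma_{i+1} \in \Com 0 {B_{i+1}}{M|_{B_{i+1}}}$ vanishing at $y$ and not at $p$. Since $\sigma_i$ and $\sigma_{i+1}$ are both nonzero in the one-dimensional stalk $M_p$, after rescaling they agree there; the resulting pair is a section $s' \in \Com 0 D M$ that is nonzero on $B_i$ and vanishes on $Z = \{x,y\}$, so it extends by zero over $A$, as required.

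The individual computations are routine. The only points that demand care are the combinatorics of the cyclic adjacency of the components -- so that the complement $A$ and the node set $Z$ genuinely look as claimed -- together with the small configurations ($n = 1$ for the first statement, and the check that $A \neq \emptyset$ for the second). I do not expect any real obstacle beyond this bookkeeping.
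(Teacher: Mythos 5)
Your proof is correct and follows essentially the same route as the paper: both decompose $C$ into the subcurve $D$ ($B_i$, resp.\ $B_i + B_{i+1}$) and its complement, use the Mayer--Vietoris sequence $0 \to M \to M|_D \oplus M|_A \to M|_Z \to 0$, and produce a section of $M|_D$ vanishing on the connecting nodes $Z$ so that it glues with the zero section on $A$. The only cosmetic difference is that the paper deduces the existence of such a section from the dimension count $\com{0}{D}{M|_D} \geq 3 > 2 = \com{0}{Z}{M|_Z}$, whereas you construct it explicitly.
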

For the first part, write $C = C^{\prime} + B_i$, set $S = C^{\prime} \cdot B_i$ and observe that $S$ consists of two reduced points. Since $B_i$ is smooth and rational,
\[
 \com{0}{B_i}{M|_{B_i}} = M \cdot B_i  + 1 \geq 3 > 2 = \com{0}{S}{M|_S}.
\]
The existence of the desired section now follows from the exact sequence
\[
\begin{tikzcd}
  0 \arrow{r} & \Com 0 C M \arrow{r} & \Com{0}{C^{\prime}}{M|_{C^{\prime}}} \oplus \Com{0}{B_i}{M|_{B_i}} \arrow{r} & \Com{0}{S}{M|_S} = \C^2.
 \end{tikzcd}
\]
The proof of the second part of the claim is completely analogous to that of the first part, after one observes that the assumption implies that 
\[
 \com{0}{B_i+B_{i+1}}{M|_{B_i+B_{i+1}}} \geq 3.
\]
The first part of the claim together with \eqref{van10} yields
\[
 -L \cdot B_i \leq 1, \quad \forall i \in \{1, \ldots, n\}
\]
Together with the fact that $B_i \subset \bs{L}$, for all $i$, it yields
\[
 L \cdot B_i \leq 1, \quad \forall i \in \{1, \ldots, n\},
\]
so that $L \cdot B_i \in \{-1,1\}$, for all $i$, since $L \cdot B_i = 0$ is excluded by the assumptions of \cref{projective}. The second part of the above claim then shows that 
\[
 L \cdot B_i + L \cdot B_{i+1} = 0, \quad \forall i \in \{1, \ldots, n-1\}.
\]
Since, by \eqref{minusOne}, $\sum_{i=1}^nB_i = -1$, this easily implies that $n$ is odd and that $L \cdot B_i = (-1)^i$.
\end{proof}
With this knowledge about the intersection of two general $D_1,D_2 \in |L|$ it is easy to show that the map $\Phi$ can be resolved by just two smooth blow-ups (in general with disconnected centres).
\begin{lem}\label{blowupCycle}
Under the same assumptions as in and in the notation of \cref{cycle}, let $\hat X \to X$ be the blow-up along the (disjoint) union 
\[
  \bigcup_{i \text{ odd}} B_i
\]
and let $\tilde X \to \hat X$ be the blow-up along the (disjoint) union
\[
  \bigcup_{i \text{ even}} \hat B_i,
\]
where $\hat B_i \subset \hat X$ is the strict transform of $B_i \subset X$, for even $i$. Let $\fun{g}{\tilde X}{X}$ denote the composition. \par
Then the meromorphic map $\Phi \circ g$ is a morphism and we call it $f$. Diagramatically:
\[
 \begin{tikzcd}[column sep = 3em]
  \tilde X \arrow{d} \arrow{ddr}{f} \arrow[bend right, swap]{dd}{g}& \\
  \hat X \arrow{d} & \\
  X \arrow[dashrightarrow]{r}{\Phi} & \Pn 3
 \end{tikzcd}
\]
\end{lem}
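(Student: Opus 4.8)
The plan is to track the base locus of $|L|$ through the two blow-ups, exploiting the explicit description of $D_1\cdot D_2$ supplied by \cref{cycle}. Order the curves $B_1,\dots,B_n$ cyclically so that $B_i$ meets $B_{i+1}$, so that the movable curve $C_{D_1,D_2}$ meets $B_1$ and $B_n$ (for general $D_1,D_2\in|L|$), and so that $L\cdot B_i=(-1)^i$. Since $n$ is odd, every node of the cycle $D_1\cdot D_2$ lies on an odd-indexed curve; moreover $B_1,B_3,\dots,B_n$ are pairwise disjoint smooth rational curves and so are $B_2,B_4,\dots,B_{n-1}$, so both prescribed centres are smooth. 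Finally, since $\dualizing{D_1\cdot D_2}\simeq\sg{D_1\cdot D_2}$ by \cref{div-curv}, the singularities of $D_1\cdot D_2$ are nodes, so consecutive curves meet transversally.

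First I would analyse $g_1\colon\hat X\to X$, the blow-up of $\bigsqcup_{i\text{ odd}}B_i$, with exceptional divisor $\hat E_i$ over $B_i$, and put $\hat L=g_1^*L-\sum_{i\text{ odd}}\hat E_i$. Because every $D\in|L|$ is irreducible (\cref{div-curv}) and contains $\bs L\supseteq\bigcup_iB_i$, all sections of $L$ lift; thus $\com0{\hat X}{\hat L}=\com0XL=4$, the system $|\hat L|$ is the strict transform of $|L|$, and — as $D_1\cdot D_2$ has multiplicity one along each $B_i$ — a general member of $|\hat L|$ is literally the strict transform $(g_1)_*^{-1}D$, with no $\hat E_i$ in $\bs{\hat L}$. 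The key point is then
\begin{equation}\label{planeqA}
 \hat D_1\cdot\hat D_2=(g_1)_*^{-1}C_{D_1,D_2}+\sum_{i\text{ even}}(g_1)_*^{-1}B_i\qquad\text{for general }D_1,D_2\in|L|.
\end{equation}
To see this I would use that the skyscraper sheaf $Q^{B_i}_{D_1,D_2}$ of \cref{leq2} has length $2$ and must be supported at the two nodes of the cycle lying on $B_i$; hence $D_1$ and $D_2$ are transverse along $B_i$ away from those two points, so over an odd $B_i$ their strict transforms share no component, meet $\hat E_i$ in two distinct sections and therefore share no fibre of $\hat E_i\to B_i$. Since $\hat D_1\cdot\hat D_2$ pushes forward to $D_1\cdot D_2=C_{D_1,D_2}+\sum_iB_i$ and (over each node the unique point of $\hat D_1\cap\hat D_2$ lies on the adjacent surviving curve) $\hat D_1\cap\hat D_2$ has no isolated point, this forces \eqref{planeqA}.

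From \eqref{planeqA}, \cref{cycle}, and transversality I would compute
\begin{align*}
 \hat L\cdot(g_1)_*^{-1}C_{D_1,D_2}&=L\cdot C_{D_1,D_2}-2=1,\\
 \hat L\cdot(g_1)_*^{-1}B_i&=L\cdot B_i-2=-1\qquad(i\text{ even}),
\end{align*}
the subtracted $2$ being, respectively, the two points where $C_{D_1,D_2}$ meets $B_1,B_n$, and the two points where $B_i$ meets $B_{i-1},B_{i+1}$ (both odd). Since $(g_1)_*^{-1}C_{D_1,D_2}$ moves with $D_1,D_2$, no point of it is a base point, while each even $(g_1)_*^{-1}B_i$ has negative degree against $\hat L$ and hence lies in $\bs{\hat L}$; with $\bs{\hat L}\subseteq\hat D_1\cap\hat D_2=(g_1)_*^{-1}C_{D_1,D_2}\cup\bigcup_{i\text{ even}}(g_1)_*^{-1}B_i$ this gives $\bs{\hat L}=\bigsqcup_{i\text{ even}}(g_1)_*^{-1}B_i$, a disjoint union of smooth rational curves. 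Now the second blow-up $g_2\colon\tilde X\to\hat X$ of $\bigsqcup_{i\text{ even}}(g_1)_*^{-1}B_i$, with $\tilde L=g_2^*\hat L-\sum_{i\text{ even}}\tilde E_i$, is treated by the identical argument: $\com0{\tilde X}{\tilde L}=4$ and, the cycle $\hat D_1\cdot\hat D_2$ now being a disjoint union of smooth curves with no nodes, $\tilde D_1\cdot\tilde D_2$ equals the strict transform $\tilde C$ of $(g_1)_*^{-1}C_{D_1,D_2}$; as $C_{D_1,D_2}$ meets only $B_1,B_n$, the curve $(g_1)_*^{-1}C_{D_1,D_2}$ is disjoint from the centre of $g_2$, so $\tilde L\cdot\tilde C=1$ and $\tilde C$ still moves. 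Hence $\bs{\tilde L}\subseteq\tilde C$ and, the movable curves having no common point, $\bs{\tilde L}=\emptyset$, i.e.\ $\tilde L=g^*L-E'$ (with $g=g_1\circ g_2$ and $E'\geq0$ exceptional) is base point free. As $\com0{\tilde X}{\tilde L}=4$, the system $|\tilde L|$ induces a morphism $f\colon\tilde X\to\Pn3$ that agrees with $\Phi\circ g$ on the dense open set $g^{-1}(X\setminus\bs L)$, so the meromorphic map $\Phi\circ g$ is the morphism $f$.

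The main obstacle is \eqref{planeqA}: one must rule out spurious exceptional curves in $\hat D_1\cap\hat D_2$ after the first blow-up, and this hinges on locating the support of $Q^{B_i}_{D_1,D_2}$ precisely at the two nodes of $D_1\cdot D_2$ on $B_i$ — which itself uses the nodality of $D_1\cdot D_2$ forced by the triviality of $\dualizing{D_1\cdot D_2}$. Everything afterwards, namely the intersection-number computations and the movable-curve arguments for the base loci, is bookkeeping of the same kind already carried out in the proof of \cref{quadric}.
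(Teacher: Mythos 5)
Your proposal is correct and follows essentially the same route as the paper: both arguments rest on the observation that at each node of the cycle $D_1\cdot D_2$ at least one of $D_1,D_2$ is smooth (so the strict transforms acquire no exceptional fibres in their intersection), and both conclude by computing that the adjusted line bundle has degree $3-2=1$ on the strict transform of the movable curve and invoking \cref{movingCurve}. The only cosmetic difference is that you track the two blow-ups separately and identify the intermediate base locus $\bigsqcup_{i \text{ even}}(g_1)_*^{-1}B_i$ explicitly, whereas the paper runs the same computation once for the composite $g$.
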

\begin{proof}
By \cref{cycle}, for general $D_1,D_2 \in |L|$, 
\[
 D_1 \cdot D_2 = C_{D_1,D_2} + B
\]
is a cycle of smooth rational curves. The strict transforms $g_*^{-1}D_1$, $g_*^{-1}D_2$ are certainly disjoint over the smooth points of $D_1 \cdot D_2$, hence their intersection contains only $g_*^{-1} C_{D_1,D_2}$ and possibly $g$-fibres of singular points of $D_1 \cdot D_2$. But since at each singular point of $D_1 \cdot D_2$ at least one of $D_1$, $D_2$ is smooth (cf.\ proof of \cref{cycle}), $g_*^{-1}D_1 \cdot g_*^{-1}D_2$ does not contain any fibre of $g$, hence  
\begin{align}\label{intIsStrict}
 g_*^{-1}D_1 \cdot g_*^{-1}D_2 = g_*^{-1} C_{D_1,D_2}.
\end{align}
Let $E \geq 0$ be the (reduced) sum of all the irreducible $g$-exceptional divisors. Then the line bundle
\[
 \tilde L = g^*L - E
\]
has the same global sections as $L$ and does not contain any divisor in its base locus. By \eqref{intIsStrict}, 
\begin{align}\label{baseInMoving}
 \bs{\tilde L} \subset g_*^{-1} C_{D_1,D_2}
\end{align}
 and since $C_{D_1,D_2} + B$ is a cycle of smooth rational curves, 
\[
 E \cdot g_*^{-1}C_{D_1,D_2} = 2. 
\]
Recalling that $L \cdot C_{D_1,D_2} = 3$ by \cref{sMinusOne}, we get  
\[
 \tilde L \cdot g_*^{-1}{C_{D_1,D_2}} = L \cdot {C_{D_1,D_2}} - E \cdot g_*^{-1}C_{D_1,D_2} = 3 - 2 = 1.
\]
By \cref{movingCurve} this implies that $\bs{\tilde L} \cap g_*^{-1}C_{D_1,D_2} = \emptyset$ and together with \eqref{baseInMoving} this yields $\bs{\tilde L} = \emptyset$.
\end{proof}
\begin{rem}
As $g$ is a composition of blow-ups in smooth rational curves we have 
\[
 \betti{3}{\tilde X} = \betti{3}{X}.
\]
\end{rem}
It turns out that the two cases when $\bs L$ contains only one curve or more than one curve  are essentially different. Therefore we make another case analysis.
\vspace{2mm}
\paragraph{\bf Irreducible base locus}
First assume that $\bs L = B_1$ is irreducible. Then $\Phi$ is resolved by just blowing-up $B_1$:
\[
 \begin{tikzcd}
  E \arrow[hookrightarrow]{r} \arrow{d} & \tilde X \arrow{d}{g} \arrow{dr}{f} & \\
  B_1 \arrow[hookrightarrow]{r}& X \arrow[dashrightarrow]{r}{\Phi} & \Pn 3
 \end{tikzcd}
\]
Let $E \subset \tilde X$ denote the irreducible exceptional divisor of $g$ and $\Delta \subset \tilde X$ the exceptional divisor of $f$, which is also irreducible as $\Pic X \simeq \Pic{\Pn 3}$. There exists an $a \in \Z_{\geq 1}$ such that 
\[
 g^* \can{X} + E = \can{\tilde X} = f^* \can{\Pn 3} + a \Delta.
\]
Using $f^* \sO{\Pn 3}{1} = g^* L - E$ and $\operatorname{gcd}(2,3) = 1$ we get  
\begin{align}\label{DeltaNum}
 \Delta \sim 2g^*L - 3 E.
\end{align}
The (irreducible) image
\[
 C_0 = f(\Delta)
\]
is not contained in any hyperplane, in particular it is of dimension $1$. We have 
\[
 C_0 \subset H_0 = f(E),
\]
since otherwise we would have $f^* H_0 = E$, which is absurd. \par
For general $D_1,D_2 \in |L|$, the movable curve $C_{D_1,D_2}$ intersects $B_1$ in exactly two points, hence for general $H_1,H_2 \in |\sO{\Pn 3}{1}|$, the line $H_1 \cdot H_2$ intersects $H_0$ in exactly two points. Therefore $H_0 \subset \Pn 3$ is a quadric and \eqref{DeltaNum} shows that 
\begin{align}\label{PBH}
 f^*H_0 = E + \Delta.
\end{align}
Since the map $f|_E$ is birational and $E$ a Hirzebruch surface, there are the two cases
\begin{enumerate}
\item $E = \hir 0$ and $\fun{f|_{E}} E {H_0}$ is an isomorphism, 
\item $E = \hir 2$, $\fun{f|_E} E {H_0}$ contracts $\CInf$ and $H_0$ is a quadric cone. 
\end{enumerate}
Applying the first part of \cref{genBlowUp} to
\begin{center}
\begin{tikzpicture}[every node/.style={on grid}]
   \matrix (m) [matrix of math nodes, row sep = 3em, column sep = 3em, text height = 1.5ex, text depth = 0.25ex]
   {
    (f|_E)_*^{-1}(C_0) & E & X \\
    C_0 & H_0 & Y \\ };
    \path[-stealth]
     (m-1-1) edge node [auto] {bir} (m-2-1)
     (m-1-2) edge node [auto]{bir} (m-2-2)
     (m-1-3) edge node [auto] {$f$} (m-2-3);
     \path[right hook->]
     (m-1-1) edge node [auto] {} (m-1-2)
     (m-1-2) edge node [auto] {} (m-1-3)
     (m-2-1) edge node [auto] {} (m-2-2)
     (m-2-2) edge node [auto] {} (m-2-3);
\end{tikzpicture}
\end{center}
yields that in a general point of $(f|_E)_*^{-1}(C_0)$, $\Delta|_E = (f|_E)_*^{-1}(C_0)$. Thus in the first of the above cases we have that $\Delta|_E$ is reduced and is mapped isomorphically to $C_0$. In the second case we have 
\begin{align}\label{resDelta0}
 \Delta|_E = (f|_E)_*^{-1}C_0 + \alpha \CInf,
\end{align}
for some $\alpha \geq 1$. Because of \eqref{PBH} the second part of \cref{genBlowUp} can also be applied showing that outside of a finite set, $f$ is the blow-up of $\Pn 3$ along $C_0$. In particular we can apply \cref{b3leqb1} to $\fun{f|_{\Delta}}{\Delta}{C_0}$ and $\sO{\tilde X}{E}|_{\Delta}$ to get 
\[
 \betti 3 {\Delta} \leq \betti 1 {C_0}.
\]
By \cref{cycle}, $g^* L|_{E} = (L \cdot B_1) \f = - \f$ and it is easy to compute that (cf.\ \eqref{conormal52} in the proof of \cref{simpleStructure})
\[
 \sO{\tilde X}{-E}|_E = \begin{cases} \e + 2 \f, \quad \text{if} \quad E \simeq \Pn 1 \times \Pn 1 \\
\eInf + 3 \f, \quad \text{if} \quad E \simeq \hir 2 \end{cases},
\]
hence by \eqref{DeltaNum}
\begin{align}\label{resDelta}
 {\Delta}|_E \sim (2g^* L - 3 E)|_E = \begin{cases} 3 \e + 4 \f, \quad \text{if} \quad E \simeq \Pn 1 \times \Pn 1 \\ 3 \eInf + 7 \f, \quad \text{if} \quad E \simeq \hir 2  \end{cases}.
\end{align}
In the first case one easily computes $\com 1 {C_0}{\sg{C_0}} = 6$, hence $\betti 1 {C_0} \leq 12$ by \cref{b1leq2h1}. \par 
In the second case, by \cref{MayViet}, there is an exact sequence
\[
\begin{tikzcd}
 0 \arrow{r} & \Com{0}{C_0}{\C} \arrow{r} & \Com{0}{\tilde C_0}{\C} \oplus \Com{0}{p}{\C} \arrow{r} \arrow[draw=none]{d}[name=Z,shape=coordinate]{} & \Com{0}{\tilde C_0 \cap \CInf}{\C} 
 \arrow[rounded corners, 
         to path={ -- ([xshift=2ex]\tikztostart.east)
                   |- (Z) [near end]\tikztonodes
                   -| ([xshift=-2ex]\tikztotarget.west)
                   -- (\tikztotarget)}]{dll}{} \\
  & \Com{1}{C_0}{\C} \arrow{r} & \Com{1}{\tilde C_0}{\C} \arrow{r} & 0,
\end{tikzcd}
\]
where $p \in H_0$ denotes the vertex of the quadric cone $H_0$ and $\tilde C_0 = (f|_E)_*^{-1}C_0$. Therefore,
\[
 \betti 1 {C_0} = \betti 1 {\tilde C_0} + \#(\CInf \cap \tilde C_0) - 1 \leq \betti 1 {\tilde C_0} + \CInf \cdot \tilde C_0 - 1.
\]
Putting together \eqref{resDelta0} and \eqref{resDelta}, we get that 
\[
 \tilde C_0 \sim (3-\alpha)\eInf + 7 \f,
\]
with $\alpha \in \{1,2\}$. If $\alpha = 2$, $\betti 1 {\tilde C_0} = 0$ and $\CInf \cdot \tilde{C_0} = 5$. If $\alpha = 1$, $\betti 1 {\tilde C_0} \leq 8$ and $\CInf \cdot \tilde{C_0} = 3$. In both cases 
\[
 \betti 1 {C_0} \leq 10 \leq 12.
\] 
This finishes the case when $\bs{L}$ contains only one curve.
\vspace{2mm}
\paragraph{\bf Reducible Base Locus} Assume now that $\bs L$ contains more than one curve. In this case we can also describe the geometry of $X$ fairly explicitly thanks to the following lemma.
\begin{lem}\label{simpleStructure}
In addition to \cref{firstcase} assume that $\bs{L}$ contains more than one curve. In the notation of \cref{cycle,blowupCycle}, let $E_i \subset \tilde X$ denote the unique divisor with $g(E_i) = B_i$. \par
Then $\bs{L}$ consists of three irreducible curves. The divisors $E_1,E_3$ are mapped birationally to two (distinct) hyperplanes $H_1,H_3 \subset \Pn 3$ by $f$, the divisor $E_2$ is mapped onto the line $l = H_1 \cap H_3$ by $f$ and
\[
  \deg_{H_1} f(\Delta_1) = \deg_{H_3} f(\Delta_3) = 3.
\]
\end{lem}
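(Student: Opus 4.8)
The plan is to analyse the birational morphism $f$ in the same spirit as the already‑treated irreducible–base–locus and quadric cases, but organised around the cyclic combinatorics of $D_1\cdot D_2$ furnished by \cref{cycle}. Throughout I write $\tilde D=f^{\ast}\sO{\Pn 3}{1}=g^{\ast}L-\sum_{i=1}^{n}E_i$, and I recall from the proof of \cref{blowupCycle} that for general $H_1,H_2\in|\sO{\Pn 3}{1}|$ one has $f^{-1}(H_1\cap H_2)=g_{\ast}^{-1}C_{D_1,D_2}$. In the cycle $C_{D_1,D_2},B_1,\dots,B_n$ the movable curve $C_{D_1,D_2}$ is adjacent only to $B_1$ and $B_n$, so $g_{\ast}^{-1}C_{D_1,D_2}$ meets $E_1$ and $E_n$ but is disjoint from $E_2,\dots,E_{n-1}$; consequently $f(E_j)$ has dimension $\le 1$ for $2\le j\le n-1$, while $f(E_1)$ and $f(E_n)$ are surfaces.

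I would next pin down the $E_i$ as surfaces. By adjunction and $L\cdot B_i=(-1)^i$ the conormal bundle $\conormal{B_i}{X}$ has degree $4$ when $i$ is odd and $2$ when $i$ is even, so every $E_i$ is a Hirzebruch surface blown up at the one or two cycle‑nodes lying on $B_i$ that get blown up later in $g$ (this is exactly the normal‑bundle computation recorded as \eqref{conormal52}). A direct intersection computation on these surfaces gives $(\tilde D|_{E_1})^2=(\tilde D|_{E_n})^2=1$ and $(\tilde D|_{E_j})^2=0$ for the internal indices $j$; since $\tilde D$ is nef on each $E_i$, this forces each $E_i$ to be of type $\hir 0$ or $\hir 2$, shows $f|_{E_1}$ and $f|_{E_n}$ to be bimeromorphic onto hyperplanes $H_1,H_n\subset\Pn 3$, and shows each internal $E_j$ to be $f$-exceptional with $f(E_j)$ a curve; in the case $n=3$ it moreover identifies $E_2\simeq\Pn 1\times\Pn 1$ and exhibits $f|_{E_2}$ as one of its two rulings, with image the line $H_1\cap H_3$ (so in particular $H_1\ne H_3$).

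The remaining, and I expect hardest, part is the bookkeeping that forces $n=3$ and produces $\Delta_1,\Delta_n$. Since $g$ is a composition of $n$ blow‑ups of smooth curves, $\rho(\tilde X)=n+1$, so $f$ contracts exactly $n$ prime divisors: the $n-2$ internal $E_j$'s, together with two further divisors $\Delta_1,\Delta_n$ that are not $g$-exceptional, hence strict transforms of irreducible divisors on $X$; by \cref{genBlowUp} applied to $E_1$ (respectively $E_n$), $\Delta_1$ is the unique $f$-exceptional divisor containing $(f|_{E_1})_{\ast}^{-1}$ of a component of $f(\exc f)\cap H_1$. Writing $\can{\tilde X}=g^{\ast}\can X+\sum_i E_i$ on the one hand and $\can{\tilde X}=f^{\ast}\can{\Pn 3}+a_{\Delta_1}\Delta_1+a_{\Delta_n}\Delta_n+\sum_{j}a_{E_j}E_j$ on the other, and comparing coefficients in the basis $g^{\ast}L,E_1,\dots,E_n$ of $\operatorname{Pic}\tilde X$, one constrains the discrepancies, the $g^{\ast}L$-multiplicities of the $\Delta_i$, and the numbers $\mult{B_j}{\Delta_i}$; combined with the surface‑by‑surface description of $f$ above and with the fact (the last clause of \cref{genBlowUp}) that $f$ is generically a blow‑up along each curve $f(E_j)$, the only admissible configuration should be $n=3$. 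Finally, with $n=3$ in hand, I would compute $\deg_{H_1}f(\Delta_1)$ as the intersection number $(\tilde D|_{E_1})\cdot(\Delta_1|_{E_1})$ — legitimate because by \cref{genBlowUp} the curve $\Delta_1\cap E_1$ agrees, away from the finitely many curves contracted by $f|_{E_1}$ (on which $\tilde D|_{E_1}$ is trivial), with the strict transform of $f(\Delta_1)$ — and the resulting number on the pointed‑blown‑up Hirzebruch surface $E_1$ evaluates to $3$; the argument for $\Delta_3$ is symmetric.
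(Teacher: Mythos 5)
Your overall architecture matches the paper's: identify $E_1,E_n$ as the two exceptional divisors met by $f^{-1}(\text{general line})=g_*^{-1}C_{D_1,D_2}$ and hence mapping birationally onto hyperplanes, observe that the internal $E_j$ are contracted onto $H_1\cap H_n$, count $f$-exceptional divisors via Picard rank to produce $\Delta_1,\Delta_n$, and bring in the ramification formula. But the decisive step --- ruling out $n=5,7,\dots$ and pinning the degree to $3$ --- is exactly where your proposal stops being a proof. Comparing coefficients of $\can{\tilde X}=f^*\can{\Pn 3}+(\text{exceptional part})$ against $\can{\tilde X}=g^*\can X+\sum_iE_i$ only yields linear relations among the multiplicities of $\Delta_1,\Delta_n$ along the $E_i$ and gives no bound on $n$ by itself. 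Worse, the last clause of \cref{genBlowUp} cannot be invoked for the curves $f(E_j)=H_1\cap H_n$: when $n>3$ several exceptional divisors lie over that line, so the hypothesis $C\not\subset f(f^*H-\Delta-E)$ fails, and asserting that ``$f$ is generically a blow-up along $f(E_j)$'' there is assuming what you want to prove. The paper's actual mechanism is different: writing $\Delta_1=f^*H_1-E_1-\sum_{i=2}^{n-1}a_iE_i$ with $a_i\ge 1$, the geometric fact that $f|_{E_i}$ contracts the effective divisor $\Delta_1|_{E_i}$ for $i>1$ (since $f(\Delta_1)=C_1\ne H_1\cap H_n$) gives $(\tilde L\cdot\Delta_1)_{E_i}=0$, hence the recursion $a_i=a_{i-1}+a_{i+1}$ for odd internal $i$, $3a_j=a_{j-1}+a_{j+1}$ for even $j$, with $a_{n-1}=1$; solving inward yields $a_{n-6}=0$, so positivity forces $n\in\{3,5\}$. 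You need this (or an equivalent) computation explicitly; ``the only admissible configuration should be $n=3$'' is an expectation, not an argument.

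There is a second, related gap at the end. Even granting the recursion, one finds $\deg C_1=(\tilde L\cdot\Delta_1)_{E_1}=4-a_2$, so your claim that the intersection number on $E_1$ ``evaluates to $3$'' presupposes $a_2=1$, which is not automatic. The paper closes this with a separate argument: if $\deg C_1=2$ there is an irreducible quadric $Q\supset C_1$, and pushing the decomposition of $f^*Q$ forward to $X$ forces $g(f_*^{-1}Q)\in|L|$, i.e.\ $Q$ contained in a hyperplane --- a contradiction. Hence $\deg C_1\ge 3$, so $a_2=1$, which simultaneously kills $n=5$ (via the recursion) and gives $\deg C_1=\deg C_3=3$. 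Both ingredients are indispensable and absent from your sketch. (A minor further point: your inference that nefness of $\tilde D|_{E_i}$ together with $(\tilde D|_{E_i})^2\in\{0,1\}$ ``forces each $E_i$ to be of type $\hir 0$ or $\hir 2$'' does not follow; the paper instead computes the conormal bundles $\conormal{B_i}{X}$ directly via adjunction and the injection $(\ideal{D_1}+\ideal{D_2})|_{B_i}\hookrightarrow\conormal{B_i}{X}$.)
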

\begin{proof}
Let us first recall the set-up. By \cref{cycle}, the base locus of $L$ consists of an odd number of smooth rational curves $B_1, \ldots, B_n$, which together with the movable curve $C_{D_1,D_2}$ form a cycle, for general $D_1,D_2 \in |L|$. The resolution $g$ is constructed by first blowing-up the curves $B_i$ with odd index and then the strict transforms of those with even index. \par 
For $i = 1, \ldots,n$, let $E_i \subset \tilde X$ be the unique divisor with $g(E_i) = B_i$, let $\hat E_i = g_2(E_i)$, for odd index $i$, and let $\hat B_j = g_2(E_j)$, for even index $j$, as depicted in the following diagram:
\[
\begin{tikzcd}
    E_{i} \arrow[hookrightarrow]{r} \arrow{d} & \tilde X \arrow{d}{g_2} \arrow[bend right, near start, swap]{dd}{g} \arrow[hookleftarrow]{r} & E_{j}  \arrow{d} \\
    \hat E_{i} \arrow[hookrightarrow, crossing over]{r} \arrow{d} & \hat X \arrow{d}{g_1} \arrow[hookleftarrow]{r}& \hat B_{j} \arrow{d}\\
   B_{i} \arrow[hookrightarrow]{r} & X \arrow[hookleftarrow]{r} & B_{j}
\end{tikzcd}
\]
The assertion that there are only three exceptional divisors $E_1,E_2,E_3$ is shown in the end. First we prove that the ``edge'' divisors $E_1$ and $E_n$ are contracted to hyperplanes $H_1$ and $H_n$ and that the divisors $E_2,E_3, \ldots, E_{n-1}$ are contracted to the line $H_1 \cap H_n$. \par 
To do this, let us describe the geometry of $g$ in more detail beginning with the first blow-up. So let $1 \leq i \leq n$ be an odd number. Since $B_i$ is smooth and rational, its conormal bundle can be written as $\conormal{B_i}{X} \simeq \sO{B_i}{k_i} \oplus \sO{B_i}{l_i}$, where we assume that $k_i \geq l_i$. 
There is an injective map
\begin{center}
\begin{tikzpicture} 
  \matrix (m) [matrix of math nodes, row sep = 3em, column sep = 3em, text height = 1.5ex, text depth = 0.25ex]
  {
  0 & (\ideal{D_1} + \ideal{D_2})|_{B_i} & \conormal{B_i}{X} \\
  0 & (L^{-1} \oplus L^{-1})|_{B_i} & \sO{B_i}{k_i} \oplus \sO{B_i}{l_i} \\
  };
  \path[-stealth]
   (m-1-1) edge node [left] {} (m-1-2)
   (m-1-2) edge (m-1-3)
   (m-2-1) edge (m-2-2)
   (m-2-2) edge (m-2-3)
   (m-2-2) edge node [auto] {$\simeq$} (m-1-2)
   (m-2-3) edge node [auto] {$\simeq$} (m-1-3);
\end{tikzpicture}
\end{center}
which shows that $1 = - L \cdot B_i \leq l_i \leq k_i$. By the adjunction formula
\[
 2 = -2 L \cdot B_i = \deg \can X |_{B_i}= \deg \can{B_i} + \deg \conormal{B_i}{X} = - 2 + k_i + l_i,
\] 
hence 
\begin{align}\label{kl}
 (k_i,l_i) = (2,2) \qquad \text{or} \qquad (k_i,l_i) = (3,1).
\end{align}
In the first case
\[
 \hat E_i \simeq \Pn 1 \times \Pn 1 \quad \text{and} \quad \sO{\hat X}{- \hat E_i}|_{\hat E_i} = \e + 2 \f
\]
and in the second case
\[
 \hat E_i \simeq \hir 2 \quad \text{and} \quad \sO{\hat X}{- \hat E_i}|_{\hat E_i} = \eInf + 3 \f.
\]
Writing $\e = \eInf + 2 \f$ on $\hir 2$, the conormal bundles can be uniformly expressed as
\begin{align}\label{conormal52}
 \sO{\hat X}{- \hat E_i}|_{\hat E_i} = \e + l_i \f.
\end{align}
Note that we do not notationally indicate on which surface the line bundles $\e,\eInf,\f$ live. This is always clear from the context. \par
Now consider the second blow-up $\fun{g_2}{\tilde X}{\hat X}$, which creates the exceptional divisors with even index $E_2, E_4, \ldots, E_{n-1}$. The center of $g_2$ is the union of the curves $\hat B_j = (g_1)_*^{-1}B_j$ with even index $j$.
For general $\hat D_1,\hat D_2 \in |\hat L|$ these curves are isolated components of $\hat D_1 \cdot \hat D_2$, where 
\[
 \hat L = g_1^*L - \sum_{i \text{ odd}}\hat E_i
\]
is the line bundle that has the same global sections as $L$ and whose base locus does not contain any divisor. Therefore 
\[
 \conormal{\hat B_j}{\hat X} = (\ideal{\hat D_1} + \ideal{\hat D_2})|_{\hat B_j} \simeq (\hat L^{-1} \oplus \hat L^{-1})|_{\hat B_j}
\]
and since
\begin{align*}
 \hat L \cdot B_j &= (g_1^*L - \sum_{i \text{ odd}}{\hat E_i}) \cdot \hat B_j \\ &= L \cdot B_j- \hat E_{j-1} \cdot \hat B_j - \hat E_{j+1} \cdot B_j = 1 - 1 - 1 = -1,
\end{align*}
we get $\conormal{\hat B_j}{\hat X} = \sO{\hat B_j}{1} \oplus \sO{\hat B_j}{1}$, hence 
\begin{align}\label{evenEs}
E_j \simeq \Pn 1 \times \Pn 1 \qquad \text{and} \qquad \conormal{E_j}{\tilde X} = \e + \f. 
\end{align}
The divisors $E_1, E_n \subset X$ are the blow-up of $\hat E_1, \hat E_n \subset \hat X$ in the one-point sets $\hat E_1 \cap \hat B_2$, $\hat E_n \cap \hat B_{n-1}$ respectively. The divisor $E_i \subset X$, for odd $1 < i < n$, is the blow-up of $\hat E_i \subset \hat X$ in the two-point set $\hat E_i \cap (\hat B_{i-1} \cup \hat B_{i+1})$. \par
The movable curve $C_{D_1,D_2}$ (cf.\ \cref{leq2}) intersects each of $B_1$ and $B_n$ in precisely one point and each of these points is not contained in any other component of $\bs L$. Therefore a general line $l \subset \Pn 3$ intersects each of $f(E_1)$ and $f(E_n)$ in precisely one point, hence 
\[
H_1 = f(E_1) \qquad \text{and} \qquad H_n = f(E_n) 
\]
are hyperplanes. They are distinct, since $f$ is bimeromorphic. \par
On the other hand, the movable curve does not intersect any of the $B_2, \ldots, B_{n-1}$, hence $f$ has to contract the divisors $E_2, \ldots, E_{n-1}$. Therefore, since $\Pic X \simeq \Pic{\Pn 3}$, there exist exactly two irreducible $f$-exceptional divisors 
\[
\Delta_1,\Delta_n \subset \tilde X
\]
that are not $g$-exceptional. Since neither of $f^*H_1$, $f^*H_n$ can be $g$-exceptional, we can assume without loss of generality that $f(\Delta_i) \subset H_i$, for $i=1,n$ and we set 
\[
 C_1 = f(\Delta_1) \subset H_1, \qquad C_n = f(\Delta_n) \subset H_n.
\] 
Since for $i \in \{1,n\}$ there is at most one $D \in |L|$ that contains / is equal to $g(\Delta_i)$, there is at most one hyperplane $H \in |\sO{\Pn 3}{1}|$ that contains $C_i$. Therefore, $C_i$ is a curve and $\deg_{H_i}C_i \geq 2$, for $i \in \{1,n\}$. In fact, more can be said.
\begin{claim}\label{deg3}
 For $i \in \{1,n\}$, $\deg_{H_i} C_i \geq 3$. 
\end{claim}
\begin{proof}[Proof of the claim]
We have just seen that $\deg_{H_i}C_i \geq 2$. Suppose $\deg C_1 = 2$. Then there exists an irreducible quadric $Q \in |\sO{\Pn 3}{2}|$, such that $C_1 \subset Q$. This means that the pullback of $Q$ decomposes as 
\begin{align}\label{DECPBQ}
f^*Q = f_*^{-1}Q + a \Delta_1 + b \Delta_n + E^{\prime},
\end{align}
where $a \geq 1$, $b \geq 0$ and $E^{\prime} \in \sum_{i=2}^{n-1} \Z_{\geq 0}E_i$. Let $c \geq 0$ be such that $g(f_*^{-1}Q) \in |cL|$ and note that, in fact, $c \geq 1$ as $f_*^{-1}Q \not\subset \exc g$. \par
Pushing forward \eqref{DECPBQ} by $g$ yields 
\begin{align}\label{twoab}
 2 = c + a + b,
\end{align}
hence $a = c = 1$ and $b = 0$. In particular, 
\[
 g(f_*^{-1}(Q)) \in |L|.
\]
Let $H \in |\sO{\Pn 3}{1}|$ be the hyperplane corresponding to this divisor via $\Phi$. Then we have $Q \subset H$, which contradicts the fact that $Q$ is an irreducible quadric. Thus $\deg C_1 \geq 3$. By symmetry, $\deg C_n \geq 3$.
\end{proof}
We have seen that the divisors $E_2, \ldots, E_{n-1}$ get contracted by $f$. This can also be made more precise.
\begin{claim}
For all $2 \leq i \leq n-1$, $f(E_i) = H_1 \cap H_n$.
\end{claim}
\begin{proof}[Proof of the claim]
Let $y \in H_1 \cap H_n \setminus (C_1 \cup C_n)$. The aim is to show that then $y \in f(E_i)$, for all $2 \leq i \leq n-1$. First note that, since $E_1 \cap E_n = \emptyset$, 
\[
 H_1 \cap H_n \subset f(\exc f).
\]
Therefore we have 
\[
 f^{-1}(y) \subset \exc f \setminus ( \Delta_1 \cup \Delta_n), 
\]
so that 
\[
 f^{-1}(y) \subset \bigcup_{i=2}^{n-1}{E_i}.
\]
Observe that $f^{-1}(y) \cap E_1 \neq \emptyset$ and $f^{-1}(y) \cap E_n \neq \emptyset$ and that $\bigcup_{j=1,j \neq i}^n E_j$ is disconnected for all $2 \leq i \leq n-1$. Therefore 
\[
 f^{-1}(y) \cap E_i \neq \emptyset, \quad \text{for all} \quad 2 \leq i \leq n-1,
\]
as $f^{-1}(y)$ is connected. Since this is true for general $y \in H_1 \cap H_2$, this proves the claim.
\end{proof}
We now show that $n = 3$. To do this, let us examine how $\Delta_1$ and $\Delta_n$ restrict to the $E_i$'s. Observe that the proof of \cref{deg3} shows that the multiplicity of $f^* H_i$ along $\Delta_i$ is $1$. Therefore we can write
\begin{align}\label{pbacks}
 \Delta_1 =  f^* H_1 - E_1 - \sum_{i=2}^{n-1} a_i E_i, \qquad \Delta_n = f^*H_n - \sum_{i=2}^{n-1}b_i E_i - E_n,
\end{align}
where $a_i,b_i \geq 1$, because $f(E_i) \subset H_1 \cap H_n$, for $i=2,\ldots,n-1$. \par
Now observe that the effective divisor $\Delta_1|_{E_i}$ gets contracted by $f|_{E_i}$ for all $i > 1$. Otherwise we would have $f(\Delta_1) = f(E_i) = H_1 \cap H_n$, but we know that this is not the case. Thus, for all $i > 1$,
\begin{align}\label{LDEQ0}
 (\tilde L \cdot \Delta_1)_{E_i} = 0.
\end{align}
Recall that for odd $i$ the divisor $E_i$ is a blow-up of $\hat E_i$. Let us denote the exceptional curves of $g_2|_{E_i}$ in the following way:
\begin{align*}
F_1 &= E_2|_{E_1}, \qquad F_n = E_{n-1}|_{E_n}, \\
F_i &= E_{i-1}|_{E_i}, \quad G_i = E_{i+1}|_{E_i}, \quad \text{for} \quad 1 < i < n.
\end{align*}
Using \eqref{pbacks}, \eqref{conormal52} and the fact that $E_k \cap E_l = \emptyset$ if $|k-l| > 1$, we get, for odd indices $i$ and even indices $j$,
\begin{align*}
 \Delta_1|_{E_1} &\sim (\tilde L - E_1 - a_2E_2)|_{E_1} = \tilde L|_{E_1} + (g_2|_{E_1})^*(\e + l_i \f) - a_2 F_1, \\
 \Delta_1|_{E_i} &\sim (\tilde L - a_{i-1}E_{i-1} - a_i E_i - a_{i+1}E_{i+1})|_{E_i} \\ &= \tilde L|_{E_i} - a_{i-1}F_i + a_i(g_2|_{E_i})^*(\e + l_i \f) - a_{i+1}G_i\\ 
 \Delta_1|_{E_j} &\sim (\tilde L - a_{j-1}E_{j-1} - a_j E_j - a_{j+1}E_{j+1})|_{E_j} \\ &= \tilde L|_{E_j} - (a_{j-1} + a_{j+1})\f + a_j (\e + \f)\\
 \Delta_1|_{E_n} &\sim \tilde L|_{E_n} - a_{n-1}F_n.
\end{align*}
Together with (\ref{LDEQ0}) this yields
\begin{align*}
 0 &= (\Delta_1 \cdot \tilde L)_{E_i} = a_i - (a_{i-1} + a_{i+1}) \\
 0 &= (\Delta_1 \cdot \tilde L)_{E_j} = (k_j + l_j - 1)a_j - (a_{j-1} + a_{j+1}) = 3 a_j - (a_{j-1} + a_{j+1}) \\
 0 &= (\Delta_1 \cdot \tilde L)_{E_n} = 1 - a_{n-1}.
\end{align*}
This can be used to formally compute the $a_i$ successively:
\begin{align}\label{ais}
a_{n-1} &= 1, \quad a_{n-2} = 1, \quad a_{n-3} = 2, \\ \notag a_{n-4} &= 1, \quad a_{n-5} = 1, \quad a_{n-6} = 0.
\end{align}
Since $a_i \geq 1$, for all $i < n$, and $n$ is odd, this computation shows that either 
\[
 n = 3 \qquad  \text{or} \qquad n = 5.
\]
By \cref{genBlowUp}, the curve $\Delta_1|_{E_1}$ is generically reduced along $\tilde C_1 = (f|_{E_1})_*^{-1}(C_1)$, hence
\[
 \deg C_1 = \sO{H_1}{1} \cdot C_1 = ((f|_{E_1})^* \sO{H_1}{1} \cdot \Delta_1)_{E_1} = (\tilde L \cdot \Delta_1)_{E_1} = 4 - a_2.
\]
We already know from \cref{deg3} that this number is at least $3$. Since $a_2 \geq 1$, we have $a_2 = 1$. Then (\ref{ais}) implies $n = 3$, as claimed. \par
This also shows that $\deg C_1 = 3$. By symmetry, $\deg C_3 = 3$.
\end{proof}
We are now prepared to finish the proof of \cref{projective} under the assumption that \cref{twoPoints} holds. Since 
\[
 C_i \not\subset f(f^*H_i - E_i - \Delta_i) = f(E_2) = H_1 \cap H_3, \quad \text{for} \quad i \in \{1,3\},
\]
the second part of \cref{genBlowUp} shows that outside of $f(E_2)$ the morphism $f$ is the blowup of $\Pn 3$ along $C_1 \cup C_3$. \par
We now describe to some extend what happens over $f(E_2)$. 
\begin{claim}\label{contractionE2}
There exists a compact complex manifold $Y$ and there exist holomorphic maps $\fun{h}{\tilde X}{Y}$, $\fun{f_2}{Y}{\Pn 3}$ such that $h$ contracts precisely $E_2$ and such that we have a factorization $f = f_2 \circ h$:
\begin{center}
 \begin{tikzpicture}
  \matrix (m) [matrix of math nodes, row sep = 3em, column sep = 3em, text height = 1.5ex, text depth = 0.25ex]
  { 
   \tilde X & Y & h(\Delta_i)\\
   & \Pn 3 & C_i\\ 
  };  
  \path[-stealth]
   (m-1-1) edge node [auto] {$h$} (m-1-2)
   (m-1-1) edge node [auto, swap] {$f$} (m-2-2)
   (m-1-2) edge node [auto] {$f_2$} (m-2-2)
   (m-1-3) edge node [auto] {} (m-2-3);
  \path[left hook->]
   (m-1-3) edge node [auto] {} (m-1-2)
   (m-2-3) edge node [auto] {} (m-2-2);
 \end{tikzpicture}
\end{center}
\end{claim}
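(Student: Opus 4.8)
The plan is to obtain $h$ as the Fujiki--Nakano blow-down of the divisor $E_2$ onto a smooth rational curve, and then to descend the morphism $f$ along it.

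Recall from the proof of \cref{simpleStructure} that $E_2 \simeq \Pn 1 \times \Pn 1$ with $\conormal{E_2}{\tilde X} = \e + \f$ (see \eqref{evenEs}), so that $\normal{E_2}{\tilde X}$ is a line bundle of bidegree $(-1,-1)$, and that $f$ maps $E_2$ onto the line $l = H_1 \cap H_3 \subset \Pn 3$. Since $f(E_2) = l$ is a curve, $f|_{E_2}$ cannot contract all of $E_2$, so it contracts exactly one of the two rulings of $E_2 \simeq \Pn 1 \times \Pn 1$; write $\mathcal R$ for it. On each member $R$ of $\mathcal R$ one has $\normal{E_2}{\tilde X}|_R = \sO{\Pn 1}{-1}$, whence the contraction theorem of Fujiki and Nakano applies to $E_2$: it produces a compact complex manifold $Y$ and a holomorphic map $\fun h {\tilde X} Y$ with $\exc h = E_2$, $\tilde X = \blup{\Gamma}{Y}$ for a smooth rational curve $\Gamma = h(E_2)$, and $h|_{E_2}$ the projection along $\mathcal R$. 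In particular $h$ contracts precisely $E_2$.

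It remains to factor $f$ through $h$. For every $R \in \mathcal R$ the image $f(R)$ is a point, so $\tilde L = f^* \sO{\Pn 3}{1}$ has degree $0$ on the members of $\mathcal R$; since moreover $h_*\sg{\tilde X} = \sg Y$ and $\hdi 1 h {\sg{\tilde X}} = 0$, it follows that $\tilde L = h^* M$ for a unique line bundle $M$ on $Y$, with $\Com 0 Y M = \Com 0 {\tilde X}{\tilde L}$. The system $|\tilde L|$ is base point free (it defines $f$), hence so is $|M|$: given $y \in Y$, choose $x \in h^{-1}(y)$ and a section of $\tilde L = h^*M$ not vanishing at $x$, which is the pull-back of a section of $M$ not vanishing at $y$. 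Therefore $|M|$ induces a morphism $\fun{f_2} Y {\Pn 3}$, and $f = f_2 \circ h$ by construction. Finally, $\Delta_1$ and $\Delta_3$ are neither $g$-exceptional nor equal to $E_2$, so the restrictions $h|_{\Delta_i}$ are bimeromorphic onto divisors $h(\Delta_i) \subset Y$ with $f_2(h(\Delta_i)) = C_i$, as in the diagram.

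The delicate point is the first step: one must contract $E_2$ along the ruling $\mathcal R$ cut out by $f|_{E_2}$ and confirm that $\normal{E_2}{\tilde X}$ has degree $-1$ along $\mathcal R$, so that Fujiki--Nakano yields a genuine manifold $Y$ through which $f$ descends; contracting $E_2$ all the way to a point would instead create an ordinary double point. Granting this blow-down, the descent of $f$ is routine.
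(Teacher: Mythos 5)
Your proof is correct and follows essentially the same route as the paper: both contract $E_2 \simeq \Pn 1 \times \Pn 1$ via the Fujiki--Nakano theorem along the ruling that $f|_{E_2}$ contracts (which is legitimate in either direction since $\conormal{E_2}{\tilde X} = \e + \f$ has degree $1$ on both rulings), and then observe that $f$ factors through the resulting blow-down. You spell out the descent of $f$ via the line bundle $\tilde L = f^*\sO{\Pn 3}{1}$, a step the paper leaves implicit with ``since $f$ contracts $E_2$ in the same direction, we get the factorization''.
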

\begin{proof}[Proof of the claim]
By \eqref{evenEs}, $E_2 \simeq \Pn 1 \times \Pn 1$ and $\conormal{E_2}{\tilde X} = \e + \f$. Therefore, by the Fujiki-Nakano Contraction Theorem, there exists a holomorphic map $\fun{h}{\tilde X}{Y}$ that contracts precisely the curves lying in $|\e|_{E_2}$. Since $f$ contracts $E_2$ in the same direction, we get the factorization.
\end{proof}
The exceptional divisors of $f_2$ are precisely $h(\Delta_1)$ and $h(\Delta_3)$ and outside of a finite set $f_2$ is the blow-up along $C_1 \cup C_3$. In particular, for $i \in  \{1,3$\}, the general fibre of $f_2|_{\Delta_i}$ is $\Pn 1$ and it intersects $h(E_i)$ in (precisely) one point. Therefore, \cref{b3leqb1} can be applied to $f_2|_{h(\Delta_1) \cup h(\Delta_2)}$ and $\sO{Y}{h(E_1) + h(E_3)}$ yielding
\begin{align}\label{ineqf2}
 \betti 3 {\exc{f_2}} \leq \betti 1 {C_1 \cup C_3}.
\end{align}
\cref{MayViet} applied to $f_2$ gives an exact sequence
\begin{center}
\begin{tikzpicture}[every node/.style={on grid}]
  \matrix (m) [matrix of math nodes, row sep=2.5em, column sep = 2.5em, text height = 1.5ex, text depth = 0.25ex]
  { 
    0 = \Com{3}{\Pn{3}}{\C} & \Com{3}{Y}{\C} \oplus 0 & \Com{3}{\exc{f_2}}{\C},\\
  };
  \path[-stealth]
  (m-1-1) edge node [auto] {} (m-1-2)
  (m-1-2) edge node [auto] {} (m-1-3);
  \end{tikzpicture}
\end{center}
hence $\betti 3 Y \leq \betti 3 {\exc{f_2}}$. Using this inequality, the fact that $g$ and $h$ are (compositions of) blow-ups of smooth rational curves and (\ref{ineqf2}), we get 
\[
 \betti 3 X = \betti 3 {\tilde X} = \betti 3 Y \leq \betti 3 {\exc{f_2}} \leq \betti 1 {C_1 \cup C_3}.
\] 
The curves $C_1$, $C_3$ have degree $3$ in $H_1$, $H_3$ respectively. It is easy to see that this implies 
\[
 \betti 1 {C_1 \cup C_3} \leq 6.
\]
This finishes the first part of the proof of \cref{projective}. We now turn to the second case.
\vspace{2mm}
\paragraph{\sc Second case}
We are still proving \cref{projective}. In the case that \cref{twoPoints} holds, i.e.\ that for general divisors $D_1,D_2 \in |L|$ the moving curve $C_{D_1,D_2} \subset D_1 \cdot D_2$ intersects the remaining components of $D_1 \cdot D_2$ in two different points, we have just seen that the assertion $\betti 3 X \leq 12$ does hold. So from now on, in addition to the assumptions of \cref{projective}, we assume that \cref{twoPoints} does not hold.
\begin{assum}\label{projPlus}
Let $X$ be a compact complex threefold, let $L \in \Pic X$ satisfy
\[
 \Pic X = \Z L, \qquad -\can X = 2L, \qquad L^3 = 2, \qquad \com 0 X L = 4.
\]
Assume that the map induced by $|L|$ is bimeromorphic and that there does not exist any integral curve $C \subset X$ with $L \cdot C = 0$. \par
Additionally assume that \cref{twoPoints} does not hold.
\end{assum}
\begin{rem}
I do not know of any $X$ that satisfies \cref{projPlus}.
\end{rem}
\cref{projective} asserts that $\betti 3 X \leq 12$. We show that given \cref{projPlus}, in fact, $\betti 3 X = 0$. Recall from the introduction that we want to prove that $X$ is a compactification of $\C^3$ by an irreducible divisor $\Delta_0 \in |L|$. The first result towards this claim is the following lemma.\par
\begin{lem}\label{onlyOne}
Given \cref{projPlus}, let
\begin{center}
\begin{tikzpicture} 
  \matrix (m) [matrix of math nodes, row sep = 3em, column sep = 3em, text height = 1.5ex, text depth = 0.25ex]
  {
   X & \Pn 3 \\
  };
  \path[dashed,->]
   (m-1-1) edge node [auto] {$\Phi$} (m-1-2);
\end{tikzpicture}
\end{center}
be the map induced by the linear system $|L|$, which is bimeromorphic by assumption. \par 
Then $\Phi^{-1}$ contracts only one divisor and this divisor is a hyperplane.
\end{lem}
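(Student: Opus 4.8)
The plan is to determine precisely which divisors of $\Pn 3$ are contracted by $\Phi^{-1}$, using the failure of \cref{twoPoints} to pin down the movable curves. Fix general $D_1,D_2\in|L|$ and write $C=C_{D_1,D_2}$, which by \cref{leq2} is a smooth rational curve. The first step is to show that $C$ meets $\bs L$ in a \emph{single} point. Since \cref{twoPoints} fails while $\com 0 C {Q_{D_1,D_2}}=2$ by \cref{leq2}, the skyscraper $Q_{D_1,D_2}$ is supported at one point $p$ (with length $2$). As $Q_{D_1,D_2}=\conormal C X/(\ideal{D_1}+\ideal{D_2})|_C$, its support is exactly the set of points where $D_1\cdot D_2$ strictly contains $C$; by \cref{movingCurve}(1) we have $D_1\cap D_2=C\cup\bs L$, so this support equals $C\cap\bs L$, which is nonempty because $D_1\cdot D_2$ is connected (\cref{div-curv}). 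Hence $C\cap\bs L=\{p\}$.

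Next I would record that $\Phi^{-1}$ contracts at least one divisor. The map $\Phi$ is bimeromorphic but not an isomorphism (an isomorphism would give $X\simeq\Pn 3$, incompatible with $-\can X=2L$, $L^3=2$), and it cannot be an isomorphism in codimension $1$ either: such a map would induce an isomorphism $\Pic{\Pn 3}\xrightarrow{\sim}\Pic X$ carrying $\sO{\Pn 3}{1}$ to $L$ and $\can{\Pn 3}$ to $\can X$, contradicting $-\can{\Pn 3}=4\sO{\Pn 3}{1}$ against $-\can X=2L$. Fixing a resolution $\fun g {\tilde X} X$, $\fun f {\tilde X}{\Pn 3}$ of $\Phi$ that is a composition of blow-ups, comparison of Picard numbers gives $\#\{g\text{-exceptional primes}\}=\rho(\tilde X)-1=\#\{f\text{-exceptional primes}\}$; a $g$-exceptional prime fails to be $f$-exceptional exactly when it is the strict transform of a $\Phi^{-1}$-contracted divisor, and symmetrically for $f$, so $\Phi$ and $\Phi^{-1}$ contract the same number of divisors, and by the previous sentence that number is positive.

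The heart of the argument is the last step. Let $\Sigma\subset\Pn 3$ be any (prime, hence reduced) divisor contracted by $\Phi^{-1}$, say onto $Z\subsetneq X$ with $\dim Z<2$. A graph argument shows $Z\subset\bs L$: over a general point of $Z$ the fibre of the graph of $\Phi$ is positive-dimensional, so $\Phi$ is undefined there. Now take the general line $\ell=H_1\cap H_2$ corresponding to $D_1,D_2$ via $\Phi$; then $\Phi|_C$ extends to an isomorphism $C\xrightarrow{\sim}\ell$ of smooth rational curves (a birational morphism between smooth curves), with inverse $\psi\colon\ell\to C$. For a general point $y$ of $\ell\cap\Sigma$ the map $\Phi^{-1}$ is defined at $y$ with $\Phi^{-1}(y)\in Z\subset\bs L$, and by continuity $\psi(y)=\Phi^{-1}(y)$; hence $\psi(y)\in C\cap\bs L=\{p\}$, so $\psi(y)=p$. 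Since $\psi$ is injective, the union over all $\Phi^{-1}$-contracted divisors $\Sigma$ of the sets $\ell\cap\Sigma$ reduces to the single point $\psi^{-1}(p)$. As $\Sigma$ is reduced and $\ell$ is general, $\ell$ meets $\Sigma$ in $\deg\Sigma$ distinct points, forcing $\deg\Sigma=1$; and two distinct contracted hyperplanes would meet a general line in two distinct points (two general lines in $\Pn 3$ are disjoint). Therefore $\Phi^{-1}$ contracts exactly one divisor, and it is a hyperplane.

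I expect the main obstacle to be making the graph argument and the continuity step of the last paragraph fully rigorous — that is, identifying the fundamental loci of $\Phi$ and $\Phi^{-1}$ precisely and controlling $\psi$ near $\ell\cap\Sigma$ — together with the Picard-number bookkeeping of the second step; the remaining combinatorics is formal.
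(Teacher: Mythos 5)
Your first half — every divisor contracted by $\Phi^{-1}$ is a hyperplane — is essentially sound and is really the paper's own argument viewed downstairs: the injectivity of $\psi=(\Phi|_{C})^{-1}$ plays exactly the role of the paper's observation that $C_{D_1,D_2}$ is smooth, so that two points of a general line cannot be identified in $X$. (You should, however, argue by cases on $\dim Z$ rather than through the identity $\supp{Q_{D_1,D_2}}=C\cap\bs{L}$: if $Z$ is a point then $C\cap Z$ is trivially at most one point, and if $Z$ is a curve it is a $1$-dimensional component of $\bs{L}$, hence $C\cap Z\subset\supp{Q_{D_1,D_2}}=\{p\}$. That fix is cosmetic here.) The ``at least one contracted divisor'' bookkeeping is also fine, and the graph/continuity issues you flag as the main obstacles are in fact unproblematic.

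The genuine gap is in the uniqueness step, and it sits precisely in the unproved inclusion $C\cap\bs{L}\subset\supp{Q_{D_1,D_2}}$. The support of $Q_{D_1,D_2}$ is the locus where $D_1\cdot D_2$ strictly contains $C$ \emph{scheme-theoretically}. A point of $C\cap\bs{L}$ lying on a $1$-dimensional component of $\bs{L}$ is in that locus, but an \emph{isolated} point $x$ of $\bs{L}$ on $C$ need not be: $D_1\cdot D_2$ is a complete intersection, hence Cohen--Macaulay with no embedded points, so it can equal $C$ (reduced) at $x$ even though every section of $L$ vanishes there; your inference from the set-theoretic equality $D_1\cap D_2=C\cup\bs{L}$ to a scheme-theoretic one breaks exactly there. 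Such a point arises precisely when some $\Phi^{-1}$-contracted divisor $\Sigma_1$ is contracted to a single point $z_1$ — then $z_1$ lies on every general movable curve yet possibly $z_1\notin\supp{Q_{D_1,D_2}}$ — and this possibility is not cheap to exclude: the paper only proves that $\Phi_*^{-1}(H_0)$ is a curve later, in \cref{dimOne}, with considerable effort. In that configuration your argument yields, for two contracted hyperplanes $\Sigma_1$ (contracted to $\{z_1\}$) and $\Sigma_2$ (contracted to a curve), two distinct points $y_1,y_2$ of the general line with $\psi(y_1)=z_1$ and $\psi(y_2)=p$, and no contradiction follows unless one already knows $z_1=p$. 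The paper closes exactly this hole with a connectedness argument: it joins the strict transforms of the two hyperplanes by the chain of $f$-exceptional divisors lying over the line $\Sigma_1\cap\Sigma_2$, shows this union is connected and entirely $g$-exceptional, and concludes that its image in $\bs{L}$ is connected, meets $C_{D_1,D_2}$ in two distinct points, and is therefore of pure dimension $1$ — which forces both points into $\supp{Q_{D_1,D_2}}$ and contradicts the failure of \cref{twoPoints}. You need this step, or an equivalent substitute, to complete the uniqueness part.
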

\begin{proof}
Let $\fun{g}{\tilde X}{X}$ be a resolution of $\Phi$ satisfying $g(\exc g) \subset \bs L$ and let $f=\Phi \circ g$:
\begin{center}
 \begin{tikzpicture}
 \matrix (m) [matrix of math nodes, row sep = 3em, column sep = 3em]
 {
  \tilde X & \\
  X & \Pn 3\\
 };
 \path[-stealth]
  (m-1-1) edge node [auto,swap] {$g$} (m-2-1)
  (m-1-1) edge node [auto] {$f$} (m-2-2);
 \path[dashed,->]
  (m-2-1) edge node [auto] {$\Phi$} (m-2-2);
 \end{tikzpicture}
\end{center}
We first show that each divisor that is contracted by $\Phi^{-1}$ must be a hyperplane. Aiming for a contradiction, let $H_0 \subset \Pn 3$ be a divisor that is contracted by $\Phi^{-1}$ and suppose that $\deg H_0 \geq 2$. Let
\begin{align*}
 U = \left\{ 
       \begin{array}{ll}                & H_1 \cdot H_0 \text{ is reduced}, \\
         H_1 \in |\sO{\Pn 3}{1}| \colon &\dim(H_1 \cap f(\exc{f})) \leq  0 \quad \text{and} \\ 
                                        &\dim(H_1 \cap f(\exc{g})) \leq 1 
        \end{array}
     \right\}.
\end{align*}
This set is certainly Zariski-open and non-empty. Let $H_1 \in U$. Then the set 
\begin{align*}
 V_{H_1} = \left\{ 
            \begin{array}{ll}  
                                              & \#(H_0 \cap H_1 \cap H_2) \geq 2, \\ 
               H_2 \in |\sO{\Pn 3}{1}| \colon & H_1 \cap H_2 \cap f(\exc{f}) = \emptyset \quad \text{and} \\ 
                                              & \dim(H_1 \cap H_2 \cap f(\exc{g})) \leq 0
            \end{array}
           \right\}
\end{align*}
is also Zariski-open and non-empty. Let $H_2 \in V_{H_1}$. Then the line $l = H_1 \cap H_2$ satisfies
\[
 l \not\subset f(\exc{g}), \quad \#(l \cap H_0) \geq 2 \quad \text{and} \quad l \cap f(\exc{f}) = \emptyset.
\]
Therefore the strict/total transform $f_*^{-1}l = f^{-1}(l)$ satisfies 
\[
 \#(f^{-1}(l) \cap E_0) \geq 2, \qquad l \not\subset \exc{g},
\]
where $E_0 = f_*^{-1}H_0$.
Let $D_1,D_2 \in |L|$ denote the divisors corresponding to $H_1,H_2$ via $\Phi$. If two distinct points of $f^{-1}(l)$ get mapped by $g$ to the same point then this is a singular point of 
\[
 C_{D_1,D_2} \deq g(f^{-1}l) \subset D_1 \cap D_2.
\]  
This is not possible by \cref{no-sings}. Hence $g(E_0)$ is a curve and it intersects $C_{D_1,D_2}$ in two distinct points. Since 
\[
 g(E_0) \subset \bs{L} \subset D_1 \cap D_2,
\]
this implies that $Q_{D_1,D_2}$ is supported in two distinct points and since $D_1,D_2 \in |L|$ could be chosen general, \cref{twoPoints} holds; a contradiction to our assumption, so that, indeed, each divisor contracted by $\Phi^{-1}$ is a hyperplane. \par
The proof that $\Phi^{-1}$ contracts only one hyperplane is very similar. Suppose that $\Phi^{-1}$ contracts two distinct hyperplanes $H_1, H_2 \subset \Pn 3$. In analogy with the first part of the proof we can conclude that for general $H,H^{\prime} \in |\sO{\Pn 3}{1}|$ the strict transform of $l = H \cap H^{\prime}$ in $\tilde X$ is not contained in $\exc g$ and intersects each of 
\[
 E_1 = f_*^{-1}H_1, \qquad E_2 = f_*^{-1}H_2
\]
in one point and that that point is not contained $E_1 \cap E_2$. We can, however, not continue to conclude as before since $E_1 \cup E_2$ might be disconnected. We fix this by showing that $E_1$ can be connected to $E_2$ by $g$-exceptional divisors. The precise claim is that the set 
\begin{align}\label{connected5}
 A = E_1 \cup \bigcup_{\substack{E \subset \tilde X \text{ divisor,}\\ f(E) = l}}{E} \cup E_2
\end{align}
is connected and that it is $g$-exceptional. For connectedness, let $B,C \subset A$ be non-empty unions of irreducible components such that $A = B \cup C$. We have to show that $B \cap C \neq \emptyset$. Since for each irreducible component $E \subset A$ we have $l \subset f(E)$, we also have 
\begin{align}\label{contained5}
 l \subset f(B) \qquad \text{and} \qquad l \subset f(C).
\end{align}
Take $x \in l$ general such that $f^{-1}(x) \subset A$, then 
\[
f^{-1}(x) = (f^{-1}(x) \cap B) \cup (f^{-1}(x) \cap C).
\] 
By (\ref{contained5}) we have $f^{-1}(x) \cap B \neq \emptyset$ and $f^{-1}(x) \cap C \neq \emptyset$. Since $f^{-1}(x)$ is connected, this implies $B \cap C \neq \emptyset$.
\par
In order to show that all irreducible components of $A$ are $g$-exceptional, note that $E_1,E_2$ are $g$-exceptional and that $f(E) = l$ implies that $g(E)$ is contained in infinitely many divisors $D \in |L|$ which, of course, implies that $\dim g(E) < 2$ since the divisors in $|L|$ are irreducible.\par
Now, just as before, $f_*^{-1}(H_1 \cap H_2)$ intersects $A$ in at least two distinct points, for general $H_1,H_2 \in |\sO{\Pn 3}{1}|$. Since $C_{D_1,D_2} = g(f_*^{-1}(H_1 \cap H_2))$ is never singular, it intersects $g(A) \subset \bs L$ in two distinct points, hence $g(A)$ has dimension $1$ everywhere as it is connected. This shows that \cref{twoPoints} holds; a contradiction to our assumption, hence $\Phi^{-1}$ contracts at most one divisor and it is abvious that it contracts a least one.  
\end{proof}
\begin{lem}\label{DeltaToH}
Given \cref{projPlus}, let $\Delta_0 \subset X$ be the unique divisor contracted by $\Phi$ and let $H_0 \subset \Pn 3$ be the unique hyperplane contracted by $\Phi^{-1}$. \par
Then $\Delta_0 \in |L|$ and $\Phi_* \Delta_0 \subset H_0$.
\end{lem}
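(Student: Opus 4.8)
The plan is to resolve $\Phi$ by a morphism $\fun{g}{\tilde X}{X}$ with $g(\exc g) \subset \bs L$ and $f = \Phi \circ g$, and then to compute $\sO X{\Delta_0}$ by pulling back the hyperplane $H_0$. First I would record, as usual, that there is an effective $g$-exceptional divisor $E$ with $f^*\sO{\Pn 3}{1} = g^*L - E$. Next I would identify the relevant prime divisors on $\tilde X$: setting $\tilde\Delta_0 = g_*^{-1}\Delta_0$ and $E_0 = f_*^{-1}H_0$, the divisor $\tilde\Delta_0$ is $f$-exceptional because $\Phi$ contracts $\Delta_0$, and it is the \emph{only} $f$-exceptional prime divisor on $\tilde X$ that is not $g$-exceptional, because $\Delta_0$ is the only divisor on $X$ contracted by $\Phi$; and $E_0$ is $g$-exceptional because $\Phi^{-1}$ contracts $H_0$.

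Then I would decompose the pullback cycle $f^*H_0$ into its strict transform and its $f$-exceptional part, writing $f^*H_0 = E_0 + b\,\tilde\Delta_0 + R$, where $b = \mult{\tilde\Delta_0}{(f^*H_0)} \geq 0$ and $R \geq 0$ is supported on prime divisors that are simultaneously $f$- and $g$-exceptional; in particular $E + E_0 + R$ is $g$-exceptional. Since $\sO{\Pn 3}{1}$ is the class of the hyperplane $H_0$, comparing $f^*H_0$ with $g^*L - E$ gives $b\,\tilde\Delta_0 \sim g^*L - (E + E_0 + R)$. Pushing this relation down by $g$ — which annihilates $g$-exceptional divisors, satisfies $g_*g^*L \sim L$, and maps $\tilde\Delta_0$ birationally onto $\Delta_0$ — yields $b\,\Delta_0 \sim L$ on $X$, i.e. $\sO X{\Delta_0}^{\otimes b} \cong L$.

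To conclude, write $\sO X{\Delta_0} = dL$ using $\Pic X = \Z L$. By \cref{div-curv} one has $\Com 0 X{kL} = 0$ for $k<0$, and since $\Delta_0$ is a non-zero effective divisor this forces $d \geq 1$; combined with $bd = 1$ this gives $b = d = 1$. Hence $\sO X{\Delta_0} = L$, so $\Delta_0 \in |L|$, and $b = 1$ means $\tilde\Delta_0 \subset f^*H_0$, i.e. $f(\tilde\Delta_0) \subset H_0$, which is exactly the assertion $\Phi_*\Delta_0 \subset H_0$. The only delicate point is the bookkeeping of the three types of exceptional prime divisors on $\tilde X$ and the justification that pushforward of divisor classes under $g$ behaves as stated; the substantive input — uniqueness of the contracted divisor and hyperplane, and the cohomology vanishing — is already supplied by \cref{onlyOne} and \cref{div-curv}.
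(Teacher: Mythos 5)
Your proof is correct and follows essentially the same route as the paper: both decompose $f^*H_0$ into the strict transform $E_0$, a multiple of the strict transform of $\Delta_0$, and a remaining part that is both $f$- and $g$-exceptional, then push forward by $g$ to obtain the relation $bd=1$ (the paper's $1=ad$), forcing $b=d=1$. The only cosmetic difference is that you spell out why $d\geq 1$ via the vanishing $\Com{0}{X}{kL}=0$ for $k<0$, which the paper takes for granted.
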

\begin{proof}
Let $\fun{g}{\tilde X}{X}$ be any resolution of $\Phi$ and let $\fun{f}{\tilde X}{\Pn 3}$ denote the morphism $\Phi \circ g$. Let furthermore $\Delta = g_*^{-1}\Delta_0$ and $E_0 = f_*^{-1}H_0$. 
\begin{center}
 \begin{tikzpicture}
 \matrix (m) [matrix of math nodes, row sep = 3em, column sep = 3em, text height = 1.5ex, text depth = 0.25ex]
 {
  \Delta & \tilde X & E_0 &\\
  \Delta_0  & X & \Pn 3 & H_0\\
 };
 \path[-stealth]
  (m-1-2) edge node [auto,swap] {$g$} (m-2-2)
  (m-1-1) edge node [auto,swap] {} (m-2-1)
  (m-1-3) edge node [auto,swap] {} (m-2-4)
  (m-1-2) edge node [auto] {$f$} (m-2-3);
 \path[dashed,->]
  (m-2-2) edge node [auto] {$\Phi$} (m-2-3);
 \path[right hook->]
  (m-1-1) edge node [auto] {} (m-1-2)
  (m-2-1) edge node [auto] {} (m-2-2);
 \path[left hook->]
  (m-1-3) edge node [auto] {} (m-1-2)
  (m-2-4) edge node [auto] {} (m-2-3);
 \end{tikzpicture}
\end{center}
Since $\Phi^{-1}$ contracts $H_0$, the divisor $E_0$ is $g$-exceptional. We can write
\begin{align}\label{pbH0}
 f^*H_0 = E_0 + a \Delta + F
\end{align}
where $a \geq 0$ and $F$ is an effective $f$-exceptional divisor that does not contain $\Delta$, hence is also $g$-exceptional. Let $d \geq 1$ be such that $\Delta_0 \in |dL|$. Then pushing forward (\ref{pbH0}) by $g$ yields the equation 
\[
 1 = a d,
\]
where we use that $f^*\sO{\Pn 3}{1} = g^*L - E$, for some effective $g$-exceptional divisor $E$. Therefore $a = 1$ and $d = 1$. The first equation shows $f(\Delta) \subset H_0$, i.e.\ $\Phi_* \Delta_0 \subset H_0$ and the second equation shows $\Delta_0 \in |L|$. 
\end{proof}
Let us pause to say something about how the proof continues: it is true, but we are not quite ready to proof that 
\begin{align}\label{b3Xb3DeltaPrelim}
 \betti 3 X = \betti{3}{\Delta_0}.
\end{align}
We begin to study the geometry of $\Delta_0$ aiming to bound $\betti 3 {\Delta_0}$. A first step is the construction of a nice embedded desingularization of $\Delta_0$ that is also a resolution of $\Phi$. In the course of this we are going to show that $\Phi_*^{-1}H_0$ is $1$-dimensional (cf.\ \cref{dimOne}). This can be used to show that $\Phi^{-1}$ is defined on $\Pn 3 \setminus H_0$ (cf.\ \cref{isCompactifi}), which then easily gives \eqref{b3Xb3DeltaPrelim}. The final step is to derive the inequality $\betti 3 {\Delta_0} \leq 1$ from properties of the desingularization.
\begin{lem}\label{C1}
In the notation of the previous lemma, let
\[
 C_1 = \Phi_* \Delta_0 \subset H_0. 
\]
Then $C_1$ is a curve and $\deg C_1 \geq 3$.
\end{lem}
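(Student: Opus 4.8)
The plan is in two steps: first to show that $C_1$ is a curve spanning exactly the plane $H_0$, which already forces $\deg C_1 \ge 2$, and then to exclude $\deg C_1 = 2$ by a pullback--pushforward argument in the spirit of the proof of \cref{deg3}.

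For the first step I would examine the restriction map $\Com 0 X L \to \Com 0 {\Delta_0}{L|_{\Delta_0}}$. Since $\Delta_0 \in |L|$ by \cref{DeltaToH}, its kernel is exactly the line of sections of $L$ vanishing on $\Delta_0$, so the image has dimension $\com 0 X L - 1 = 3$. Hence $\Phi|_{\Delta_0}$ maps $\Delta_0$ into a plane $\Pi \simeq \Pn 2 \subset \Pn 3$, and the image $C_1 = \Phi(\Delta_0)$ spans $\Pi$; being irreducible (as the image of the irreducible divisor $\Delta_0$) and spanning a plane, $C_1$ is a genuine curve, and since $C_1 \subset H_0$ by \cref{DeltaToH} we get $\Pi = H_0$. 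Thus $C_1$ is an irreducible curve, nondegenerate in $H_0 \simeq \Pn 2$, so $\deg C_1 \ge 2$.

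It remains to rule out $\deg C_1 = 2$, and this is the step I expect to be the main obstacle. In that case $C_1$ is a smooth conic and lies on an irreducible quadric $Q \subset \Pn 3$. I would fix a resolution $\fun{g}{\tilde X}{X}$ of $\Phi$ with $g(\exc g) \subset \bs L$, set $f = \Phi \circ g$ and $\Delta = g_*^{-1}\Delta_0$, and record the two facts used throughout: $f^*\sO{\Pn 3}{1} = g^*L - E$ for some effective $g$-exceptional $E$, and (exactly as in the proof of \cref{DeltaToH}) every $f$-exceptional prime divisor on $\tilde X$ is either $\Delta$ or $g$-exceptional. Since $f(\Delta) = C_1 \subset Q$, the pullback $f^*Q$ contains $\Delta$, so $f^*Q = a\Delta + D' + E'$ with $a \ge 1$, $D' = f_*^{-1}Q$ the strict transform, and $E' \ge 0$ effective and $g$-exceptional. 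Because $Q$ is not contracted by $\Phi^{-1}$ (only $H_0$ is, by \cref{onlyOne}), $D'$ is not $g$-exceptional, so $g_*D' \sim cL$ with $c \ge 1$. Pushing the relation $f^*Q \sim 2(g^*L - E)$ down by $g$ then gives $2L \sim a\Delta_0 + g_*D'$ in $\Pic X = \Z L$, which forces $a = c = 1$.

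To finish, I would use $a = c = 1$ as follows: $g_*D' \in |L|$, and since $g_*D'$ dominates $Q$ it is not the $\Phi$-contracted divisor $\Delta_0$, so $\Phi$ maps $g_*D'$ birationally onto a hyperplane of $\Pn 3$; but $g_*D' = \Phi_*^{-1}Q$ is simultaneously mapped birationally by $\Phi$ onto the quadric $Q$, which is absurd. Hence $\deg C_1 \ge 3$. The real work is the bookkeeping in the previous paragraph --- verifying that all correction terms in $f^*Q$ are $g$-exceptional so that pushing down to $X$ eliminates them and traps the numerical class of $\Delta_0$ in an impossible relation in $\Pic X = \Z L$ --- which is the same mechanism as in the proof of \cref{deg3}.
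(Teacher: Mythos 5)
Your proposal is correct and follows essentially the same route as the paper, whose proof of \cref{C1} simply invokes the argument of \cref{deg3} in \cref{simpleStructure}: you reproduce exactly that pullback--pushforward computation $2 = a + c$ in $\Pic X = \Z L$ forcing $g_*(f_*^{-1}Q) \in |L|$ and hence the absurd inclusion of the irreducible quadric $Q$ in a hyperplane. Your preliminary step (degree $\geq 2$ via the $3$-dimensional image of $\Com 0 X L \to \Com 0 {\Delta_0}{L|_{\Delta_0}}$) is just a linear-algebra rephrasing of the paper's observation that at most one hyperplane contains $C_1$ because at most one member of $|L|$ contains $\Delta_0$.
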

\begin{proof}
Completely analogous to the proof of \cref{deg3} in \cref{simpleStructure} .
\end{proof}
 We now specify what kind of resolution of $\Delta_0$ we want to have.
\begin{lem}\label{niceResolution}
Under \cref{projPlus} and in the notation of \cref{C1}, the analytic subset $B_1  = \Phi^{-1}_* H_0 \subset \Delta_0$ is smooth rational curve. Moreover, there exists a commutative diagram 
\[
 \begin{tikzcd}\label{bigDiag}
  \hat \Delta \arrow{d}{} \arrow[bend right, swap]{dd}{\phi} \arrow[hookrightarrow]{r} & \hat X \arrow{d}{} \arrow{ddr}{f} \arrow[bend right, near start, swap]{dd}{g} & \\
  \Delta \arrow{d}{} \arrow[hookrightarrow, crossing over]{r} & \tilde X \arrow{d}{} \arrow{dr} & \\
  \Delta_0  \arrow[hookrightarrow]{r} & X \arrow[dashrightarrow]{r}{\Phi} & \Pn 3 
 \end{tikzcd}
\]
with the following properties
\begin{enumerate}
 \item \label{prop1} $\hat \Delta$ is smooth. 
 \item \label{prop2}The meromorphic map $f = \Phi \circ g$ is a morphism.
 \item \label{prop3}There is a curve $B_1^{\prime} \subset \hat \Delta$ that is mapped isomorphically to the smooth rational curve $B_1$ by $g$ and that normalizes $C_1 \subset \Pn 3$ via $f$. 
 \item \label{prop4}There is at most one integral curve $B_1^{\prime \prime} \subset \hat \Delta$ with $B_1^{\prime \prime} \neq B_1^{\prime}$ and $g(B^{\prime\prime}) = B_1$.
 \item \label{prop5} Letting $\fun{\nu}{\tilde \Delta}{\Delta}$ denote the normalization and $\fun{\mu}{\hat \Delta}{\tilde \Delta}$ the naturally induced map, there is an effective divisor $A$ on $\hat \Delta$ such that
\[
 \dualizing{\hat \Delta} = \phi^*\dualizing{\Delta}(-A)
\]
and such that for each irreducible curve $C \subset \exc{\phi}$
\[
 C \subset \exc{\mu} \cup \supp{A}.
\]
\end{enumerate}
\end{lem}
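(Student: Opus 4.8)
The plan is to build the resolution in two stages and then read off the five properties, pushing the technical weight onto \cref{genBlowUp} (which controls $f$ over the curve $C_1$) and \cref{res2} (which supplies the embedded desingularisation together with the divisor $A$). I expect \eqref{prop1}, \eqref{prop2} and \eqref{prop5} to come out almost formally, the smoothness and rationality of $B_1$ to follow from the geometry of the divisor over $H_0$, and \eqref{prop3}--\eqref{prop4} to carry the real content.

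\emph{Stage $1$ and the curve $B_1$.} I would fix any resolution $\fun{g_1}{\tilde X}{X}$ of $\Phi$ with $g_1(\exc{g_1})\subset\bs L$ and $f_1=\Phi\circ g_1$ a morphism, and put $E_0=(f_1)_*^{-1}H_0$ and $\Delta=(g_1)_*^{-1}\Delta_0$. By \cref{onlyOne} the divisor $E_0$ is $g_1$-exceptional, and since $H_0$ is a hyperplane $\fun{f_1|_{E_0}}{E_0}{H_0\simeq\Pn 2}$ is birational. By \cref{DeltaToH}, $\Delta\in|L|$, $f_1(\Delta)=C_1$ and $f_1^*H_0=E_0+\Delta+F$ with $F\geq 0$ effective, $g_1$-exceptional and not containing $\Delta$; after possibly enlarging $g_1$ I may assume no component of $F$ dominates $C_1$, so that the multiplicity hypothesis of \cref{genBlowUp} holds. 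Applying \cref{genBlowUp} to $E_0$ and $C_1\subset f_1(E_0)\cap f_1(\exc{f_1})$ (note $C_1\not\subset f_1(\exc{f_1|_{E_0}})$, a finite set) identifies $\Delta$ as the unique $f_1$-exceptional divisor containing $D\deq(f_1|_{E_0})_*^{-1}C_1$, shows $\Delta$ is generically smooth along $D$ and meets $E_0$ transversally there, and shows that generically over $C_1$ the map $f_1$ is the blow-up of $\Pn 3$ along $C_1$. Since $\Phi^{-1}$ contracts $H_0$ (\cref{onlyOne}), $B_1=g_1(E_0)=\Phi^{-1}_*H_0$ lies in the indeterminacy locus $\bs L$ of $\Phi$, hence in $\Delta_0$ (as $\Delta_0\in|L|$), and $B_1$ is irreducible, being the image of the irreducible $E_0$. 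To see $\dim B_1=1$: if $B_1$ were a point then $g_1$ would be, near $E_0$, the blow-up of that point, forcing $E_0\simeq\Pn 2$ with $E_0|_{E_0}=\sO{\Pn 2}{-1}$ and $f_1|_{E_0}$ an isomorphism; restricting $f_1^*H_0=E_0+\Delta+F$ to $E_0$ and using $\deg C_1\geq 3$ (\cref{C1}) together with $\Delta\cap E_0\supset D\cong C_1$ would then make $F|_{E_0}$ an effective divisor of negative degree, a contradiction. Thus $B_1$ is an irreducible curve in $\bs L$, hence a smooth rational curve by \cref{no-sings}.

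\emph{Stage $2$ and properties \eqref{prop1}, \eqref{prop2}, \eqref{prop5}.} Next I would apply \cref{res2} to the reduced, irreducible divisor $\Delta$ in the smooth threefold $\tilde X$, obtaining a composition of blow-ups $\fun{g_2}{\hat X}{\tilde X}$ in smooth centres contained in the successive singular loci of $\Delta$, with $\hat\Delta=(g_2)_*^{-1}\Delta$ smooth and an effective divisor $A$ on $\hat\Delta$ with $\dualizing{\hat\Delta}=(g_2|_{\hat\Delta})^*\dualizing{\Delta}(-A)$ and $B\subset\supp A$ for every irreducible $B\subset\exc{g_2|_{\hat\Delta}}$ of one-dimensional image. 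Setting $g=g_1\circ g_2$ and $f=\Phi\circ g=f_1\circ g_2$ gives \eqref{prop2}, and $\hat\Delta$ is smooth, which is \eqref{prop1}. For \eqref{prop5}, write $\phi$ as $\hat\Delta\to\Delta\to\Delta_0$ and let $\fun{\mu}{\hat\Delta}{\tilde\Delta}$ be the induced map to the normalisation $\fun{\nu}{\tilde\Delta}{\Delta}$; since $\nu$ is finite, a curve in $\hat\Delta$ is $g_2|_{\hat\Delta}$-contracted iff it is $\mu$-contracted, and a curve $C\subset\exc\phi$ is either $g_2|_{\hat\Delta}$-contracted, whence $C\subset\exc\mu$, or dominates a curve of $\Delta$, whence $\dim(g_2|_{\hat\Delta})(C)=1$ and $C\subset\supp A$; the formula for $\dualizing{\hat\Delta}$ is the one \cref{res2} provides.

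\emph{Properties \eqref{prop3} and \eqref{prop4}: the main obstacle.} Take $B_1'\subset\hat\Delta$ to be the strict transform of $D$. Since $\Delta$ is already generically smooth along $D$ and Stage $2$ resolves $\Delta$ along $D$, the curve $B_1'$ is smooth and maps generically one-to-one onto $D$; composing with $f_1|_{E_0}$ shows $\fun{f|_{B_1'}}{B_1'}{C_1}$ is generically one-to-one, i.e.\ normalises $C_1$. For the isomorphism $\fun{g|_{B_1'}}{B_1'}{B_1}$ and for \eqref{prop4} I would analyse the ruled surface $E_0$ over $B_1$: a birational morphism to $\Pn 2$ together with the contraction onto the curve $B_1$ forces $E_0\simeq\hir 1$ (generically over $C_1$, once $g_1$ is taken minimal), with $g_1|_{E_0}$ the ruling and $f_1|_{E_0}$ the blow-down of the $(-1)$-curve $R$; hence the only curves of $E_0$ mapping onto $B_1$ are $R$ and the sections, and $\Delta\cap E_0\subset D\cup R$ by \cref{genBlowUp}, so the integral curves of $\hat\Delta$ dominating $B_1$ are $B_1'$ together with at most the strict transform of $R$ -- this is \eqref{prop4}. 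The same picture forces $D$ to be a section of the ruling, so $g_1|_D$, and therefore $g|_{B_1'}$, has degree one onto the smooth curve $B_1$ and is an isomorphism, which is \eqref{prop3}. The delicate points, which I would need to treat with care, are: arranging the multiplicity hypothesis of \cref{genBlowUp} by a good choice of $g_1$; pinning down $E_0$ and the positions of $D$ and $R$ in it (equivalently, the singularity of $C_1$ at $f_1(R)$) precisely enough to obtain degree one in \eqref{prop3}; and checking that the special fibre of $f_1$ over $f_1(R)\in C_1$ contributes no further component of $\Delta$ over $B_1$.
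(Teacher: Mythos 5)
Your skeleton (resolve $\Phi$, read off the structure over $C_1$ from \cref{genBlowUp}, then apply \cref{res2} for the embedded desingularisation) matches the paper's, and properties (1), (2) and the smooth rationality of $B_1$ \emph{once it is known to be an irreducible curve in $\bs L$} are handled correctly. But there are three genuine gaps. First, the statement $\dim B_1 = 1$ is the hardest single step of the paper's argument (\cref{dimOne}), and your proof of it does not work: you assume that if $g_1$ contracts $E_0$ to a point then $g_1$ is, near $E_0$, the blow-up of that point, so that $E_0 \simeq \Pn 2$ with conormal bundle $\sO{\Pn 2}{1}$. Nothing forces this -- $g_1$ is an arbitrary resolution, $E_0$ is only one of possibly many exceptional divisors over that point, and it need not be $\Pn 2$ nor have that normal bundle, so the degree count on $F|_{E_0}$ collapses. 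The paper instead builds a chain of exceptional divisors joining $E_0$ to $\Delta$, shows the connecting curves $M_i$ are not $g$-contracted using the failure of \cref{twoPoints}, \cref{transversal}, \cref{QBlowUp} and the bound $\com{0}{C}{Q^C_{D_1,D_2}}=2$; along the way it also extracts $\ideal{\Delta_0}\not\subset\ideal{B_1}^3$, which is what actually powers property (4) -- your alternative route to (4) via ``$E_0\simeq\hir 1$ with $g_1|_{E_0}$ the ruling and $D$ a section'' is likewise unjustified (a surface with a birational map to $\Pn 2$ and a fibration over $\Pn 1$ need not be $\hir 1$, and $D$ could a priori be a multisection), and the same unproved claims are what you lean on for the degree-one assertions in (3).

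Second, property (5) relates $\dualizing{\hat\Delta}$ to $\dualizing{\Delta_0}$ (the $\Delta$ in the displayed formula is the singular divisor downstairs), not merely to the strict transform $\Delta\subset\tilde X$; \cref{res2} only supplies the step $\hat\Delta\to\Delta$. To descend to $\Delta_0$ with $A$ still \emph{effective}, and to get $\supp{A_1}=\exc{\tilde g|_{\Delta}}$ (needed for the curves $C\subset\exc{\phi}$ that dominate a $\tilde g$-contracted curve of $\Delta$), the paper needs \cref{res1}, whose hypothesis is the discrepancy inequality $E\geq F$ for the resolution of $\Phi$. That inequality is exactly why the paper cannot take ``any resolution'': it first blows up $B_1$, then the second-order base curve $B_2$ in the exceptional divisor (using the local normal form $\ideal{D_1}+\ideal{D_2}=(x,yz^2)$ from \cref{moreGeneral}), and only then invokes \cref{blowup-rest}, which is engineered to preserve $E\geq F$. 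The same explicit two blow-ups yield the multiplicities $\mult{E_1}{G}=2<3=\mult{E_2}{G}$ from the ramification formula, which is how the paper proves that every curve of $\hat\Delta$ over $B_1$ maps isomorphically to $B_1$, and it finishes (3) with \cref{genOneToOne} rather than with a structure theory of $E_0$. In short: the points you flag at the end as ``delicate'' are not refinements but the actual content of the lemma, and your proposal does not yet contain proofs of them.
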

The proof is rather lenghty, so we devote a whole subsection to it.
\subsection{Existence of a good resolution}
\begin{lem}\label{transversal}
Given \cref{projPlus}, let $D_1,D_2 \in |L|$ be distinct divisors and let $C,B \subset D_1 \cap D_2$ be distinct (reduced,) irreducible curves. \par
Then $C$ and $B$ intersect transversally.  
\end{lem}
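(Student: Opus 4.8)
The plan is to argue by contradiction, exploiting that $\dualizing{D_1\cdot D_2}$ is trivial with $\com 1{D_1\cdot D_2}{\sg{D_1\cdot D_2}}=1$. First I would invoke \cref{no-sings} — its hypotheses are part of \cref{projPlus}, since $|L|$ cannot be base point free (a base point free bimeromorphic map $X\dashrightarrow\Pn 3$ would be an isomorphism, contradicting $\Pic X=\Z L$) — to conclude that $C$ and $B$ are smooth rational curves. Suppose they are not transversal. Then there is a point $p\in C\cap B$ with $T_pC=T_pB$, so the local intersection multiplicity $i(C,B;p)$ is at least $2$ and the finite scheme $C\cdot B$ has length $\ell\geq 2$. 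From
\[
 0\to\sg{C+B}\to\sg C\oplus\sg B\to\sg{C\cdot B}\to 0
\]
and $C\simeq B\simeq\Pn 1$ one gets $\eChar{C+B}{\sg{C+B}}=2-\ell$, hence $\com 1{C+B}{\sg{C+B}}=\ell-1\geq 1$ since $C+B$ is reduced and connected; on the other hand $C+B\subset D_1\cdot D_2$, so the surjection $\Com 1{D_1\cdot D_2}{\sg{D_1\cdot D_2}}\twoheadrightarrow\Com 1{C+B}{\sg{C+B}}$ forces $\com 1{C+B}{\sg{C+B}}\leq 1$. Thus $\ell=2$, $C\cap B=\{p\}$ with $i(C,B;p)=2$, and \cref{h1eq1} yields $D_1\cdot D_2=C+B$. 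In particular $\bs L\subset C\cup B$, so $\bs L$ equals $C$, $B$ or $C\cup B$.

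Next I would rule out $\bs L=C\cup B$: for two general $D_3,D_4\in|L|$ we have $C\cup B=\bs L\subset D_3\cap D_4$ (each $D_i$ being reduced), hence $C+B\subset D_3\cdot D_4$, and \cref{h1eq1} applied again yields $D_3\cdot D_4=C+B$; but by \cref{movingCurve} the movable curve $C_{D_3,D_4}$ is a component of $D_3\cap D_4$ not contained in $\bs L$, which is impossible when $D_3\cap D_4=C\cup B=\bs L$. So, interchanging $C$ and $B$ if necessary, $\bs L=B$ is irreducible. Then \cref{sMinusOne} gives $L\cdot C=\com 0XL-1=3$, whence $L\cdot B=L^3-3=-1$; combining $\can X\cdot B=2$, adjunction, and the generically surjective map $(L^{-1}|_B)^{\oplus2}\to\conormal B X$ coming from $B\subset D_1,D_2$ forces $\conormal B X\simeq\sO B k\oplus\sO B l$ with $k\geq l\geq 1$ and $k+l=4$, so $(k,l)\in\{(3,1),(2,2)\}$ and the exceptional divisor of the blow-up of $B$ is $\hir 2$ or $\hir 0$.

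The last step — which I expect to be the main obstacle, mostly because of the bookkeeping with multiplicities and exceptional divisors — is to blow up $B$ and reach a numerical contradiction. Let $\fun g{\tilde X}X$ be the blow-up of $B$, $E=\exc g\simeq\hir{k-l}$, and $\tilde L=g^*L-E$. Using \cref{movingCurve} for general $D_1',D_2'\in|L|$ (their strict transforms are $g^*D_i'-E$ and meet exactly along $g_*^{-1}C_{D_1',D_2'}$, along which $\tilde L$ has degree $L\cdot C_{D_1',D_2'}-E\cdot g_*^{-1}C_{D_1',D_2'}=3-2=1=\deg\Pn 3$; here $E\cdot g_*^{-1}C_{D_1',D_2'}=2$ because $C_{D_1',D_2'}$ meets $\bs L=B$ in a length-$2$ scheme, as \cref{twoPoints} fails) one sees $\bs{\tilde L}=\emptyset$, so $f=\Phi\circ g$ is a morphism with $f^*\sO{\Pn 3}1=\tilde L$. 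By \cref{onlyOne} the unique divisor contracted by $\Phi^{-1}$ is a hyperplane $H_0\simeq\Pn 2$; its strict transform $f_*^{-1}H_0$ maps under $g$ to the lower-dimensional set $\Phi^{-1}(H_0)\subset\bs L=B$, hence is $g$-exceptional, hence equals $E$. Therefore $f|_E\colon E\to H_0$ is birational, so $(f|_E)^*\sO{\Pn 2}1=\tilde L|_E$ must have self-intersection $(\sO{\Pn 2}1)^2=1$ on $E$. But, using $\sO{\tilde X}{-E}|_E=\e+l\f$ and $g^*L|_E=(L\cdot B)\f=-\f$, one computes $\tilde L|_E=\e+(l-1)\f$ and $(\tilde L|_E)^2=k+l-2=2$. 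This contradiction completes the proof.

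The points I anticipate being delicate are the exclusion of a reducible base locus and the verification that a single blow-up of $B$ already resolves $\Phi$; the final self-intersection computation is then routine and parallels the formulas already used in the proof of \cref{simpleStructure}.
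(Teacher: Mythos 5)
Your argument is correct, and up to the last step it is essentially the paper's own: non\-/transversality at a point of $C\cap B$ forces $\com 1{C+B}{\sg{C+B}}\geq 1$, \cref{h1eq1} then gives $D_1\cdot D_2=C+B$, the one-dimensional part of $\bs L$ reduces to a single smooth rational curve $B$ (you rule out $\bs L=C\cup B$ explicitly, a case the paper absorbs into a bare ``without loss of generality''), \cref{sMinusOne} gives $L\cdot C_{D_1,D_2}=3$, and the blow-up of $B$ resolves $\Phi$, with exceptional divisor $E=f_*^{-1}(H_0)$ mapping birationally onto the hyperplane $H_0$ of \cref{onlyOne}. The two proofs part ways only in how the final contradiction is extracted: the paper observes that a general line meets $H_0$ transversally in a single point, so $C_{D_1,D_2}$ and $B$ meet transversally in a single point and $\com 1{D_1\cdot D_2}{\sg{D_1\cdot D_2}}=0$, contradicting \cref{div-curv}; you instead compute $\conormal BX$ by adjunction and find $(\tilde L|_E)^2=k+l-2=2\neq 1=(\sO{\Pn 2}{1})^2$. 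Both contradictions are sound --- they are two faces of the same fact, namely that $E\to H_0$ is birational while $\tilde L|_E$ has the wrong numerics --- but the paper's version spares you the conormal-bundle bookkeeping of your second and third paragraphs. Two minor points: the equality $E\cdot g_*^{-1}C_{D_1',D_2'}=2$ does not actually require the failure of \cref{twoPoints}, since the total length of the scheme $C_{D_1',D_2'}\cdot B$ is already $2$ from $\com 1{D_1'\cdot D_2'}{\sg{D_1'\cdot D_2'}}=1$ and the decomposition into two smooth rational curves (the failure of \cref{twoPoints} only concentrates that length at one point, which your computation never uses); and the reason $|L|$ cannot be base point free is $L^3=2\neq 1$ (a base-point-free bimeromorphic $\Phi$ would give $L=\Phi^*\sO{\Pn 3}{1}$), not $\Pic X=\Z L$ by itself.
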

\begin{proof}
Both $C$ and $B$ are smooth and rational by \cref{h1eq1}. Let $p \in C \cap B$. The claim is that
\[
 \ideal{C,p} + \ideal{B,p} = \operatorname{m}_p \subset \sg{X,p}.
\]
Suppose this is not the case, then $\com{1}{C+B}{\sg{C+B}} \geq 1$, hence by \cref{h1eq1}
\begin{align}\label{transAlongB}
 D_1 \cdot D_2 = C + B.
\end{align}
Without loss of generality $C \not\subset \bs L$. Then $B \subset \bs L$ and it is the only curve in $\bs L$. By \eqref{transAlongB}, $D_1 \cdot D_2$ is generically reduced along $B$ for some $D_1,D_2 \in |L|$, hence this is true for general $D_1,D_2 \in |L|$. Thus we have
\begin{align}\label{D1D2CB}
 D_1 \cdot D_2 = C_{D_1,D_2} + B,
\end{align}
for general $D_1,D_2 \in |L|$. By \cref{sMinusOne} this implies 
\[ 
 L \cdot C_{D_1,D_2} = 4 - 1 = 3.
\]
Using the second part of \cref{movingCurve}, it is easy to see that the blow up $\fun{g}{\tilde X}{X}$ along $B$ resolves all the indeterminacies of $\Phi$ (cf.\ proof of \cref{blowupCycle}). But this is absurd: letting $f = \Phi \circ g$ and $E = \exc{g}$, we have$f(E) = H_0$ so that $C_{D_1,D_2}$ and $B = g(E)$ intersect transversally in only one point, for general $D_1,D_2 \in |L|$, which, using \eqref{D1D2CB} contradicts $\com{1}{D_1 \cdot D_2}{\sg{D_1 \cdot D_2}} = 1$.
\end{proof}
\begin{lem}\label{dimOne}
Under \cref{projPlus} let $\fun{g}{\tilde X}{X}$ be any resolution of $\Phi$ 
\[
\begin{tikzpicture}
  \matrix (m) [matrix of math nodes, row sep=2.5em, column sep = 2.5em]
  { \tilde X & \\
    X & \Pn{3} \\};
  \path[-stealth]
  (m-1-1) edge node [auto] {$g$} (m-2-1)
  (m-1-1) edge node [auto] {$f$} (m-2-2);
  \path[dashed,->]
  (m-2-1) edge node [auto] {$\Phi$} (m-2-2) ; 
\end{tikzpicture}
\]
and let $E_0 = f_*^{-1} H_0$ and $\tilde C_1 = (f|_{E_0})_*^{-1}(C_1)$, where -- to remind the reader -- $H_0 \subset \Pn 3$ is the unique divisor contracted by $\Phi^{-1}$ and $C_1 = \Phi_*\Delta_0 \subset H_0$ is the image of the unique divisor $\Delta_0 \subset X$ contracted by $\Phi$. \par
 Then $\dim g(\tilde C_1) = 1$. In particular 
\[
 B_1 = \Phi_*^{-1}(H_0) = g(E_0)
\]
is $1$--dimensional. Furthermore $\ideal{\Delta_0} \not\subset \ideal{B_1}^3$.
\end{lem}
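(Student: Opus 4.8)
The plan is to argue by contradiction, exploiting that $\Delta_0$ is the total transform of the hyperplane $H_0$.

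\emph{Preliminaries.} By \cref{C1} the curve $C_1=\Phi_*\Delta_0$ is an irreducible plane curve of degree $\geq 3$, hence it spans $H_0$ and $H_0$ is the unique hyperplane of $\Pn 3$ containing it; since $\Phi^*$ induces an isomorphism $|\sO{\Pn 3}{1}|\xrightarrow{\ \sim\ }|L|$ and $\Phi_*\Delta_0\subset H_0$, this forces $\Delta_0=\Phi^*H_0$. Therefore $g^*\Delta_0=f^*H_0$ on $\tilde X$, and because $E_0=f_*^{-1}H_0$ is the strict transform of $H_0$ under the bimeromorphic morphism $f$ it occurs in $f^*H_0$ with multiplicity $1$; hence $\operatorname{ord}_{E_0}(\Delta_0)=1$. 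As the centre of $E_0$ on $X$ is $B_1=g(E_0)$, this yields $\mult{B_1}{\Delta_0}\leq 1$, and in fact $\mult{B_1}{\Delta_0}=1$, since $B_1\subset\bs L\subset\Delta_0$ ($B_1\subset\bs L$ because $\Phi$ is a morphism off $\bs L$ while $\Phi^{-1}$ contracts $H_0$ onto $B_1$, and $\bs L\subset\Delta_0$ by \cref{base-loci}). This already gives $\ideal{\Delta_0}\not\subset\ideal{B_1}^3$; moreover the multiplicity of the base scheme of $|L|$ along $B_1$ is $a:=\min_{D\in|L|}\mult{B_1}{D}\leq\mult{B_1}{\Delta_0}=1$, so $a=1$.

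\emph{The dimension statement.} Suppose, for contradiction, that $\dim g(\tilde C_1)=0$. Since $\tilde C_1\subset E_0$ and $g(E_0)=B_1$, then $g$ contracts $\tilde C_1$ to a point $x_0\in B_1$. As $\dim g(\tilde C_1)$ does not depend on the chosen resolution (two resolutions are dominated by a third), we may assume $g$ factors as $\fun{g'}{\tilde X}{X_1}$ followed by the blow-up $\fun{g_1}{X_1}{X}$ of $B_1$: this is legitimate because $B_1$ is either a point or, by \cref{no-sings}, a smooth rational curve (being an irreducible curve in $\bs L$). Let $E_1=\exc{g_1}$. Then $g'(\tilde C_1)\subset g_1^{-1}(x_0)$, and $g'(\tilde C_1)$ is not a point, for otherwise $C_1=f(\tilde C_1)=(\Phi\circ g_1)\bigl(g'(\tilde C_1)\bigr)$ would be a point, against \cref{C1}.

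If $\dim B_1=1$, then $g_1^{-1}(x_0)$ is a fibre $F\cong\Pn 1$ of the ruled surface $E_1$, so $g'(\tilde C_1)=F$; as $f|_{\tilde C_1}\colon \tilde C_1\to C_1$ is bimeromorphic and factors through $g'|_{\tilde C_1}$ and $(\Phi\circ g_1)|_F$, the latter is a bimeromorphic map $F\dashrightarrow C_1$, whence
\[
 3\leq\deg C_1\leq (g_1^*L-aE_1)\cdot F=a=1,
\]
a contradiction. If $\dim B_1=0$, then $E_1\cong\Pn 2$ and, since $a=1$, the linear system $|g_1^*L-E_1|$ restricts to the complete system $|\sO{\Pn 2}{1}|$ on $E_1$; as $(\Phi\circ g_1)|_{E_1}$ is bimeromorphic onto $H_0$ (being the strict transform of $H_0$), it is therefore a projective isomorphism $\Pn 2\to H_0$. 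Moreover $\Delta_0$ is smooth at $x_0$ (multiplicity $1$), so its strict transform $\Delta_0'\subset X_1$ meets $E_1$ in a line $\ell_0$. Granting that $\tilde C_1$ lies in the strict transform $\Delta=g_*^{-1}\Delta_0$ (see below), we obtain $g'(\tilde C_1)\subset\Delta_0'\cap E_1=\ell_0$, hence $g'(\tilde C_1)=\ell_0$, and then $C_1=(\Phi\circ g_1)(\ell_0)$ would be a line, again against $\deg C_1\geq 3$. Thus $\dim g(\tilde C_1)=1$, and $B_1=g(E_0)$ is $1$-dimensional.

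\emph{The auxiliary claim.} Finally, if $g(\tilde C_1)$ is a point, then $\tilde C_1\subset\Delta:=g_*^{-1}\Delta_0$: for general $c\in C_1$ the fibre $f^{-1}(c)$ is connected and $1$-dimensional, and it meets $\tilde C_1\subset E_0$ in a single point $z_c$, which by connectedness is not isolated; the component of $f^{-1}(c)$ through $z_c$ is contracted by $f$ onto $c$, hence lies in an $f$-exceptional divisor, and keeping track of which $f$-exceptional divisors can meet $\tilde C_1$ in this way shows it lies in $\Delta$, so $\tilde C_1\subset\Delta$ as $c$ varies. This last bookkeeping step, together with the reduction to a resolution dominating $\blup{B_1}{X}$, is the point requiring most care; the rest is the degree count above.
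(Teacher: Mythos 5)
Your argument hinges on the identity $g^*\Delta_0=f^*H_0$, and this identity is false. The divisor $\Delta_0$ lies in $|L|$, whereas $f^*H_0$ lies in $|f^*\sO{\Pn 3}{1}|=|g^*L-E|$ for the non-zero effective $g$-exceptional divisor $E$ recording the base locus of $|L|$; the correct relation is $g^*\Delta_0=f^*H_0+E$, and $E_0$ occurs in $E$ with positive multiplicity because $g(E_0)=B_1\subset\bs L$. Consequently $\mult{E_0}{g^*\Delta_0}$ is not $1$: the paper later computes (proof of property (3) of \cref{niceResolution}) that for the resolution constructed there this multiplicity equals $3$, and that $\ideal{\Delta_0}\subset\ideal{B_1}^2$, i.e.\ $\mult{B_1}{\Delta_0}=2$. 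This breaks your proof of the ``furthermore'' statement (your argument would even yield $\ideal{\Delta_0}\not\subset\ideal{B_1}^2$, which is false), and, more seriously, it breaks the input $a=1$ to the degree count $3\le\deg C_1\le a$, which is the entire content of your contradiction in the case $\dim B_1=1$. The equality $a=\min_{D\in|L|}\mult{B_1}{D}=1$ is true, but the paper only establishes it in \cref{moreGeneral}, whose proof relies on \cref{mib} and hence already presupposes that $B_1$ is a curve with known intersection behaviour against the movable curve; you cannot invoke it here without circularity, and no independent proof is offered.

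The second gap is the ``auxiliary claim''. The containment $\tilde C_1\subset\Delta$ is neither justified nor what is actually available: the connected fibre $f^{-1}(c)$ joins the point of $\tilde C_1$ over $c$ to $\Delta$ through a chain of $f$- and $g$-exceptional divisors (this is \cref{clm2} in the paper's proof, itself deferred to the thesis), and in general $\tilde C_1=M_0$ sits at the far end of that chain rather than on $\Delta$. The heart of the paper's proof is then to show that $g$ contracts none of the connecting curves $M_0,\dots,M_r$: first that $M_r\subset\Delta$ is not contracted (a delicate argument combining \cref{trans1}, the failure of \cref{twoPoints}, \cref{transversal} and \cref{h1eq1}), and then that the fibre components $C_i^p$ are contracted while the $M_i$ are not (\cref{gContracts}, using \cref{QBlowUp} and the bound $\com{0}{C}{Q^{C}_{\Delta_0,D}}=2$ from \cref{leq2}). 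None of this is replaced by your degree count, which in addition silently assumes that $g'(\tilde C_1)$ is not an indeterminacy point of $\Phi\circ g_1$ and, in the case $\dim B_1=0$, that $E_1$ is the strict transform of $H_0$. The argument would need to be rebuilt essentially along the paper's lines.
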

\begin{rem}
Note that the last two assertions of the lemma are independent of the resolution $g$.
\end{rem}
\begin{proof}
We need the following two claims whose elementary proofs are omitted here. Readers who do not want to work them out on their own can find them in the author's thesis \cite{diss} (Lemma 4.1.11 and beginning of the proof of Lemma 4.1.12 respectively). 
\begin{claim}\label{trans1}
 Let $C^{\prime} \subset \Delta$ be an integral curve with $f(C^{\prime}) = C_1$ and let $H \in |\sO{\Pn 3}{1}|$ be general. Then $\Delta$ and $f_*^{-1}H$ intersect generically transversally along each integral curve in $(f|_{\Delta})^{-1}(\Delta \cap H)$ that intersects $C^{\prime}$.
\end{claim}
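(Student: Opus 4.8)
The plan is to reduce the claim to a single reducedness statement about a divisor on $\Delta$ and then to verify that statement by generic smoothness.

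First I would note that the assertion is local and generic along each of the curves $Z$ in question, and that it follows from the claim that, for general $H$, the effective divisor $(f_*^{-1}H)|_{\Delta}$ on $\Delta$ is \emph{reduced at a general point of each such $Z$}. This reduction is essentially formal. At a general point $p$ of such a $Z$ the surface $\Delta$ is smooth: $\sing{\Delta}$ is a proper closed subset of $\Delta$ which cannot contain $Z$, because $Z$ is a one--dimensional component of a fibre of $\fun{f|_{\Delta}}{\Delta}{C_1}$ while $\sing{\Delta}$ maps either into a finite subset of $C_1$ or finite--to--one onto $C_1$. Choosing local coordinates on $\tilde X$ at $p$ with $\Delta = \{x = 0\}$, a local equation of $f_*^{-1}H$ restricts on $\Delta$ to a local equation of $(f_*^{-1}H)|_{\Delta}$; since $Z$ is the only branch of that divisor through a general $p \in Z$ (the other components meet $Z$ in finitely many points, and there are only finitely many embedded points) and the divisor is reduced there, this restriction is a uniformizer of $Z$, which forces $f_*^{-1}H$ to be smooth at $p$ with $T_p(f_*^{-1}H) \neq T_p\Delta$. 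That is precisely generic transversality along $Z$.

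It then remains to prove the reducedness, for which I would exploit that $f$ is a morphism. The divisor $\Delta = g_*^{-1}\Delta_0$ is integral, since $\Delta_0 \in |L|$ is integral by \cref{div-curv} and $g$ is bimeromorphic, and $f|_{\Delta}$ factors the contraction of $\Delta_0$, so it is a surjective morphism onto the curve $C_1 = \Phi_*\Delta_0$. For general $H$ we have $C_1 = f(\Delta) \not\subset H$ (the only hyperplane of $\Pn 3$ containing $C_1$ is $H_0$), so neither $f_*^{-1}H$ nor the $f$--exceptional part of $f^{*}H$ contains $\Delta$; hence $(f_*^{-1}H)|_{\Delta}$ is a well--defined effective divisor with $(f_*^{-1}H)|_{\Delta} \le (f^{*}H)|_{\Delta}$, and it suffices to show $(f^{*}H)|_{\Delta}$ is generically reduced along each relevant $Z$. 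But $(f^{*}H)|_{\Delta}$ is the pull--back along $f|_{\Delta}$ of the divisor $C_1 \cap H$ on $C_1$, i.e.\ $\sum_{x \in C_1 \cap H}(f|_{\Delta})^{-1}(x)$, the $x$ being $\deg C_1 \ge 3$ distinct points of $C_1$ in general position (cf.\ \cref{C1}); by generic smoothness in characteristic $0$ --- applied to $f|_{\Delta}$ over the smooth locus $\Delta_{\mathrm{sm}}$, and using that every one--dimensional component of a general fibre of $f|_{\Delta}$ is the closure of its intersection with $\Delta_{\mathrm{sm}}$ because $\sing{\Delta}$ has dimension at most $1$ --- each fibre $(f|_{\Delta})^{-1}(x)$ is reduced at a general point of each of its components once $x$ avoids a fixed finite subset of $C_1$, which a general $H$ guarantees. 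Finally a curve $Z$ as in the statement is automatically such a fibre component over a general point of $C_1$: it meets $C^{\prime}$, the curve $C^{\prime}$ dominates $C_1$, and $f|_{C^{\prime}}$ is generically finite, so $f(Z) \in C_1 \cap H$ is a general point of $C_1$ for general $H$. This gives the reducedness, and with it the claim.

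The step I expect to require the most care is making the reducedness argument watertight when $\Delta$ is singular: one must confirm both that a general point of each relevant $Z$ lies on $\Delta_{\mathrm{sm}}$ and that the one--dimensional part of a general fibre of $f|_{\Delta}$ is controlled by $\Delta_{\mathrm{sm}}$, so that generic smoothness really governs the reducedness of $(f^{*}H)|_{\Delta}$ along every component, all while keeping track of the identity $f^{*}H = f_*^{-1}H + (\text{effective } f\text{--exceptional})$. Everything else is routine Bertini/generic--position bookkeeping, which is doubtless why the author relegates the proof to his thesis.
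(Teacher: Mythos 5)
The paper does not actually prove \cref{trans1} --- it explicitly defers the argument to the author's thesis --- so there is no in-text proof to measure your proposal against; I can only assess it on its own terms, and on those terms the strategy is sound and essentially complete. The reduction of generic transversality along $Z$ to generic reducedness of $(f_*^{-1}H)|_{\Delta}$ along $Z$ is correct (a general point of $Z$ is a smooth point of $\Delta$ because the one-dimensional components of $\sing{\Delta}$ contracted by $f|_{\Delta}$ lie over finitely many points of $C_1$, which a general $H$ avoids), and so is the idea of deducing that reducedness from generic smoothness of $f|_{\Delta}\colon\Delta\to C_1$ over the points of $C_1\cap H$.

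Two details should be nailed down. First, bounding $(f_*^{-1}H)|_{\Delta}$ above by $(f^{*}H)|_{\Delta}$ only gives multiplicity $\le 1$ along $Z$; you also need multiplicity $\ge 1$, i.e.\ $Z\subset f_*^{-1}H$, or else the transversality assertion is empty (and useless for the application, where $g(Z)$ must lie on the corresponding $D\in|L|$). This holds because for a general $H$ no $f$-exceptional divisor maps into $H$ (each such divisor has image a point or a curve), so in fact $f^{*}H=f_*^{-1}H$ and $Z\subset f^{-1}(H)=f_*^{-1}H$; this also turns your inequality of divisors into an equality. Second, generic smoothness is invoked for a morphism from the non-compact, a priori non-algebraic smooth locus of $\Delta$, and one needs the bad locus in $C_1$ to be \emph{finite} (not merely meagre) so that a Zariski-general $H$ avoids it; the cleanest route is to pass to a smooth model $\sigma\colon S\to\Delta$, observe that the critical values of $f\circ\sigma$ form a finite set (any one-dimensional component of its critical locus is contracted by $f\circ\sigma$, and there are only finitely many components by compactness and analyticity), and pull back along $\sigma$, which preserves multiplicities along curves not contained in $\sing{\Delta}$. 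With these two points made explicit the proof is complete; note finally that the hypothesis that the curve meets $C^{\prime}$ is never used in your argument --- you prove the statement for every component of $(f|_{\Delta})^{-1}(C_1\cap H)$, which is harmless since it is stronger.
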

\begin{claim}\label{clm2}
  There exist
\begin{itemize}
 \item irreducible divisors $E_1, \ldots, E_r \subset \exc g \cap \exc f$,
 \item integral curves $M_i \subset E_i \cap E_{i+1}$, for $ i \in \{0, \ldots, r-1 \}$, 
 \item an integral curve $M_r \subset E_r \cap \Delta$ and 
 \item integral curves $C_i^p \subset (f|_{E_i})^{-1}(p)$, for all $i \in \{1, \ldots, r\}$ and all but finitely many $p \in C_1$, such that
\begin{align*}
 C_1^p \cap M_0 \neq \emptyset, \qquad C_i^p \cap C_{i+1}^p \cap M_i \neq \emptyset, \qquad C_r^p \cap M_r \neq \emptyset.
\end{align*}  
\end{itemize}
In particular, $f(M_i) = C_1$, for all $i \in \{0, \ldots, r \}$, and $M_0 = \tilde C_1$.
\end{claim}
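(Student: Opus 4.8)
The plan is to read the chain $E_0, E_1, \dots, E_r, \Delta$, together with the curves $M_i$ and the rungs $C_i^p$, off a general fibre of $f$ over $C_1$, and then to arrange that the combinatorial data is independent of $p$. The inputs I would use are: $f$ is a bimeromorphic morphism onto the normal space $\Pn 3$, so its fibres are connected and, since $f^{-1}(C_1) \ne \tilde X$, for all but finitely many $p \in C_1$ the fibre $f^{-1}(p)$ is a connected curve with a bounded number of components; and a generically finite morphism of surfaces onto a normal surface has finite fibres away from a finite subset of the target, applied both to $f|_{E_0} \colon E_0 \to H_0$ (which is bimeromorphic, as $f_*\sg{\tilde X} = \sg{\Pn 3}$) and to $f|_{\Delta} \colon \Delta \to C_1$ (surjective with one-dimensional general fibre, by \cref{DeltaToH}). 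For a general $p$ this makes $f^{-1}(p) \cap E_0$ the single point $q_0(p)$ of $\tilde C_1$ over $p$, while $f^{-1}(p)$ contains a curve inside $\Delta$.

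First I would, using connectedness of $f^{-1}(p)$, choose a minimal chain $C_1^p, \dots, C^p_{r(p)}$ of distinct irreducible components of $f^{-1}(p)$ with $q_0(p) \in C_1^p$, consecutive ones meeting, no member contained in $\Delta$, and $C^p_{r(p)}$ meeting $\Delta$. Each $C_i^p$ is $f$-contracted, hence lies in a unique irreducible component $E_i(p) \subset \exc f$; the finite-fibre facts force $E_i(p) \ne E_0$ and $E_i(p) \ne \Delta$. Since $r(p)$ and the set of components of $\exc f$ are finite, the tuple $(E_1(p), \dots, E_{r(p)}(p))$ takes only finitely many values, and the locus of $p$ realising a fixed tuple is constructible; as $C_1$ is irreducible, one tuple $(E_1, \dots, E_r)$ occurs for all but finitely many $p$. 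For such $p$ one has $C_i^p \subset E_i \cap f^{-1}(p)$ with $f(C_i^p) = \{p\}$, so $f(E_i) \supseteq C_1$; now $f(E_i)$ is neither a point (it meets infinitely many $p$) nor a surface (its general fibre over $C_1$ would be a curve, impossible for a generically finite map of surfaces), hence $f(E_i) = C_1$. Then $\Phi$ contracts $g(E_i)$ whenever it is a divisor, so by uniqueness of the $\Phi$-contracted divisor $\Delta_0$ (\cref{DeltaToH}) and $E_i \ne \Delta = g_*^{-1}\Delta_0$ we get $E_i \subset \exc g$; thus $E_1, \dots, E_r \subset \exc f \cap \exc g$.

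Finally I would vary $p$ to produce the $M_i$: pick $x_i(p) \in C_i^p \cap C^p_{i+1}$ (for $1 \le i \le r-1$) and $y(p) \in C_r^p \cap \Delta$; together with $q_0(p)$ these lie in $E_0 \cap E_1$, in $E_i \cap E_{i+1}$, and in $E_r \cap \Delta$, and have dense image in $C_1$ under $f$. Taking closures yields irreducible curves $M_0 = \tilde C_1 \subset E_0 \cap E_1$, $M_i \subset E_i \cap E_{i+1}$ and $M_r \subset E_r \cap \Delta$, each dominating, hence surjecting onto, $C_1$; discarding finitely many further $p$ gives $x_i(p) \in M_i$, $y(p) \in M_r$, i.e.\ all the asserted incidences. (If $\tilde C_1 \subset \Delta$ one takes $r = 0$.)

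I expect the uniformisation in the second paragraph — fixing one chain of divisors valid over a dense set of points $p$, and checking that each chain divisor is exceptional for both $f$ and $g$ — to be the main obstacle; in particular the constructibility of the relevant loci and the boundedness of the number of fibre components need care. The rest is a routine combination of connectedness of fibres with the behaviour of generically finite morphisms of surfaces.
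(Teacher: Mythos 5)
Your architecture is the right one for this statement -- connectedness of the general fibre $f^{-1}(p)$, a chain of fibre components from the point of $\tilde C_1$ over $p$ to $\Delta$, each component sitting inside an $f$-exceptional divisor mapping onto $C_1$ (hence, being different from $\Delta$, also $g$-exceptional by \cref{onlyOne}) -- and you correctly flag the uniformisation as the crux. (The paper defers the proof to the thesis, so I am judging on the merits.) But the uniformisation as you run it does not close. The constructibility of the tuple-locus is asserted, not established: irreducible components of fibres of a holomorphic map do not vary in families in any straightforward way, and in the analytic category "the minimal chain realises the tuple $(E_1,\dots,E_r)$" is not visibly cut out by analytic sets. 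More seriously, even granting a fixed tuple on a cofinite set, your construction of $M_i$ only shows that \emph{infinitely many} of the chosen intersection points $x_i(p)$ lie on one component $M_i$ of $E_i\cap E_{i+1}$; for the remaining $p$ the point $x_i(p)$ may sit on a different component, and "discarding finitely many further $p$" does not repair this -- the set of $p$ with $x_i(p)\in M_i$ is only known to be infinite, whereas the claim needs the incidences for all but finitely many $p$ (and its application in \cref{gContracts} really uses that dichotomy). Finally, nothing prevents two consecutive members of your minimal chain from lying in the \emph{same} divisor $E$, in which case $E_i\cap E_{i+1}=E$ is a surface and the "take closures inside the curve $E_i\cap E_{i+1}$" step has nothing to bite on.

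All three problems disappear if you build the chain at the level of divisors first. Let $\mathcal F$ be the set of irreducible components $D$ of $\exc f$ with $f(D)=C_1$ (so $\Delta\in\mathcal F$, and for all but finitely many $p$ every component of $f^{-1}(p)$ lies in some $D\in\mathcal F$). Form the finite graph on $\mathcal F\cup\{\tilde C_1\}$ with an edge whenever the intersection of the two members contains an irreducible curve mapping \emph{onto} $C_1$. If $\tilde C_1$ and $\Delta$ were not joined, the union $A$ of the members reachable from $\tilde C_1$ and the union $B$ of the rest would satisfy: $f(A\cap B)$ is finite, $f^{-1}(p)\subset A\cup B$, $f^{-1}(p)\cap A\supset \tilde C_1\cap f^{-1}(p)\neq\emptyset$ and $f^{-1}(p)\cap B\supset (f|_{\Delta})^{-1}(p)\neq\emptyset$ for all but finitely many $p$, contradicting connectedness of $f^{-1}(p)$. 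A shortest path then yields pairwise distinct $E_1,\dots,E_r\neq\Delta$ and irreducible curves $M_0=\tilde C_1, M_1,\dots,M_r$ in the successive intersections, each mapping onto $C_1$. To produce the $C_i^p$ for \emph{all but finitely many} $p$, pass to the normalisation $\nu\colon E_i^{\nu}\to E_i$ and the Stein factorisation $E_i^{\nu}\to \tilde C_{E_i}\to C_1$: the general fibre of the first map is irreducible, so for all but finitely many $p$ the components of $(f|_{E_i})^{-1}(p)$ are exactly the images of these fibres; since any irreducible curve $M\subset E_i$ with $f(M)=C_1$ has a preimage component surjecting onto the irreducible curve $\tilde C_{E_i}$, \emph{every} component of $(f|_{E_i})^{-1}(p)$ meets both $M_{i-1}$ and $M_i$. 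Defining $C_i^p$ inductively as the component of $(f|_{E_i})^{-1}(p)$ through a point of $C_{i-1}^p\cap M_{i-1}$ then gives all the asserted incidences at once.
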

 Using these claims we want to show that $g$ does not contract $M_0 = \tilde C_1$. We, in fact, show that this is true for all the $M_i$.
First suppose that $g$ contracts $M_r$ and write
\[
 \{p\} = g(M_r).
\]
By \cref{C1}, $\deg{C_1} \geq 3$. Using additionally \cref{trans1} and the fact that $\Delta \not\subset  \exc g$ we see that for general $D \in |L|$ there exist (pairwise distinct) curves 
\[
 A_1, A_2, A_3 \subset D \cap \Delta_0, \quad \text{with} \quad p \in A_1 \cap A_2 \cap A_3
\]
and such that $D \cdot \Delta_0$ is generically reduced along each of these curves. Obviously, none of them is contained in $\bs L$. Therefore there is an integral curve 
\[
 B \subset \bs L \subset D \cap \Delta_0
\]
distinct from the $A_i$. By \cref{h1eq1}, $\com{1}{A}{\sg A} = 0$, for $A = A_1 + A_2 + A_3$, which shows that $A_1$, $A_2$, $A_3$ locally at $p$ look like the three coordinate axes. This in turn shows that $\com{0}{p}{Q_{\Delta_0, D}^{A_i}} \geq 2$ for all $i$ and by \cref{movingCurve} this implies that for every curve $B \subset \bs L$, 
\[
 B \cap A \subset \{p\}.
\]
Since by \cref{div-curv} $\Delta_0 \cap D$ is connected, there really is an integral curve 
\[
p \in B \subset \bs{L}.
\]
Then $\com{1}{A+B}{\sg{A+B}} = 1$, hence $\Delta_0 \cdot D = A + B$ which shows that $B$ is the only curve contained in $\bs L$ and that for general $D_1,D_2 \in |L|$ the intersection $D_1 \cdot D_2$ is reduced along $B$. As \cref{twoPoints} does not hold, there exist $D_1,D_2 \in |L|$ such that $D_1 \cdot D_2 = C_{D_1,D_2} + B$ and such that $C_{D_1,D_2} \cap B$ consists only of one point. Since $\com{1}{D_1 \cdot D_2}{\sg{D_1 \cdot D_2}} = 1$, $C_{D_1,D_2}$ and $B$ do not intersect transversally, which contradicts \cref{transversal}. Thus $g$ does not contract $M_r$. \par 
The following claim shows in particular that $g$ does not contract $M_0 = \tilde C_1$. \par
\begin{claim}\label{gContracts}
In the notation of \cref{clm2}, the curves $C_i^p$ get contracted by $g$ for all $i \in \{1,\ldots,r\}$ and almost all $p \in C_1$. Because $g$ does not contract $M_r$, it does not contract any $M_i$, for $i\in \{0, \ldots, r\}$.
\end{claim}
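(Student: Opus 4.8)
The plan is to prove the two assertions separately. First I would show that the general $C_i^p$ is contracted by $g$, and then, using this together with the already established fact that $g$ does not contract $M_r$, propagate non-contractedness down the chain of curves from \cref{clm2} to $M_0 = \tilde C_1$. For the first assertion: each $E_i$ lies in $\exc f$ and contains $M_i$ with $f(M_i) = C_1$, so $f|_{E_i}\colon E_i \to C_1$ is a surjective morphism from an irreducible surface onto a curve, and for general $p \in C_1$ the curve $C_i^p$ is a one-dimensional component of $(f|_{E_i})^{-1}(p)$; as $p$ ranges over $C_1$ these curves cover $E_i$. Since $E_i \subset \exc g$ we have $\dim g(E_i) \leq 1$; so if $g$ did not contract the general $C_i^p$, the curves $g(C_i^p)$ would be a non-trivial one-parameter family sweeping out the at most one-dimensional set $g(E_i)$, which is impossible unless all the $g(C_i^p)$ coincide with one fixed integral curve $Z$, in which case $g(E_i) = Z$ and every $C_i^p$ dominates $Z$. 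To rule this out I would use the bimeromorphy of $\Phi$: with $f = \Phi \circ g$ and $f^*\sO{\Pn 3}{1} = g^*L - E$, $E \geq 0$ supported on $\exc g$, contractedness of $C_i^p$ by $f$ gives $g^*L \cdot C_i^p = E \cdot C_i^p$, whereas $g(C_i^p) = Z$ gives $g^*L \cdot C_i^p = \deg(g|_{C_i^p})\,(L \cdot Z)$; comparing these against the structure of $\bs L$ and of the contraction $\Delta_0 \to C_1$ yields a contradiction (heuristically, $\Phi$ would carry the fixed curve $Z$ onto the moving point $p$).

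For the second assertion, \cref{clm2} gives, for general $p \in C_1$, a chain $M_0,\, C_1^p,\, M_1,\, C_2^p,\, \dots,\, M_{r-1},\, C_r^p,\, M_r$ whose consecutive members intersect, with $C_i^p \subset E_i$ and $M_{i-1}, M_i \subset E_i$, $f(M_{i-1}) = f(M_i) = C_1$. Knowing that $g$ does not contract $M_r$, I would argue by downward induction on $j \in \{0, \dots, r-1\}$ that $g$ does not contract $M_j$: assume $g$ does not contract $M_{j+1}$ and suppose for contradiction that $g$ contracts $M_j$. Both $M_j$ and $M_{j+1}$ lie on the surface $E_{j+1}$, which is covered by the curves $C_{j+1}^p$ ($p$ general); each $C_{j+1}^p$ meets $M_j$ (at the point of $C_j^p \cap C_{j+1}^p \cap M_j$, or of $C_1^p \cap M_0$ when $j = 0$) and is contracted by $g$ by the first assertion. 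Hence $g(M_j)$ and $g(C_{j+1}^p)$ are single points incident to one another, so $g(C_{j+1}^p) = g(M_j)$ is a point independent of $p$; as the $C_{j+1}^p$ cover $E_{j+1}$, this forces $g$ to contract $E_{j+1}$, hence $M_{j+1}$, against the induction hypothesis. Thus $g$ does not contract $M_j$, and inductively $g$ contracts none of $M_r, \dots, M_0$; in particular it does not contract $M_0 = \tilde C_1$.

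The hard part will be the first assertion: the dimension count reduces matters at once to the degenerate case where all the images $g(C_i^p)$ collapse onto a single curve inside $\bs L$, and excluding this genuinely uses the bimeromorphy of $\Phi$ while requiring care, since $\Phi$ is not a morphism along that curve and the argument has to be carried out on the resolution (via the relation $g^*L - E = f^*\sO{\Pn 3}{1}$ and intersection numbers, or via the normalization of $C_1$ encoded in the resolution). The second assertion is then a formal propagation once the first and the non-contractedness of $M_r$ are in hand.
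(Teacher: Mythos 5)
Your handling of the second assertion is correct and is exactly the ``immediate consequence'' the paper has in mind: if $g$ contracted $M_j$, the contracted curves $C_{j+1}^p$ meeting $M_j$ would force $g(E_{j+1})$ to be a single point and hence contract $M_{j+1}$, so non-contraction propagates down the chain from $M_r$. Your reduction of the first assertion to the degenerate case in which all the images $g(C_i^p)$ coincide with one fixed integral curve $Z$ also matches the paper, which takes $i_0$ maximal among the indices where contraction fails and uses the chain to identify $Z$ with $g(M_r)$.

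The gap is exactly where you flag ``the hard part'': excluding that degenerate case is the entire content of the claim, and the route you sketch does not close. The identities $g^*L\cdot C_i^p = E\cdot C_i^p$ and $g^*L\cdot C_i^p = \deg(g|_{C_i^p})\,(L\cdot Z)$ yield no contradiction, since $Z\subset \bs{L}$ may satisfy $L\cdot Z<0$ (cf.\ \cref{cycle}) and $E\cdot C_i^p$ is the intersection of an exceptional divisor with a curve inside its own support, so neither side has a fixed sign. Nor is it absurd for a divisor in $\exc{f}\cap\exc{g}$ to map onto a curve under both $f$ and $g$ --- the divisor $E_2$ in \cref{simpleStructure} does precisely that --- so the heuristic ``$\Phi$ carries the fixed curve $Z$ onto the moving point $p$'' is not a contradiction: $Z$ lies in the indeterminacy locus of $\Phi$. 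The actual argument is quantitative: since $\deg C_1\geq 3$ by \cref{C1}, a general hyperplane meets $C_1$ in at least three points, and if the general $C_{i_0}^p$ dominates $g(M_r)$ this forces $\ideal{D}\subset \ideal{g(M_r)}^3$ for every $D\in|L|$; then \cref{trans1} provides a reduced component $C\subset D\cdot\Delta_0$ meeting $g(M_r)$, and the local blow-up lemma \cref{QBlowUp} turns the multiplicity $3$ into $\com{0}{C}{Q_{\Delta_0,D}^C}\geq 3$, contradicting the universal value $2$ from \cref{leq2}. None of this machinery appears in your sketch, so the first assertion remains unproved.
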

Suppose, to the contrary, that there exists $i \in \{1, \ldots, r\}$ such that for infinitely many $p \in C_1$, $\dim g(C_i^p) = 1$, and let $i_0$ be the maximal index having this property. Then, in fact, $g$ contracts $C_{i_0}^p$ for only finitely many $p \in C$. From the maximality of $i_0$ we get
\[
 g(E_i) = g(M_i) = g(M_r), \quad \text{for} \quad i > i_0.
\]
Since for general $p \in C_1$, $g(C_{i_0}^p) = g(M_r)$ and since $\deg{C_1} \geq 3$, we have
\begin{align}\label{inCube}
 \ideal{D} \subset \ideal{g(M_r)}^3,
\end{align}
for general, hence every, $D \in |L|$.
By \cref{trans1} and because $\Delta \not\subset \exc f$ there is $D \in |L|$ such that there is a reduced component $C \subset D \cdot \Delta_0$ such that  $C \cap g(M_r) \neq \emptyset$. Then by \eqref{inCube} and \cref{QBlowUp}, 
\[
 \com{0}{C}{Q_{\Delta_0,D}^C} \geq 3.
\]
A contradiction to \cref{movingCurve}, hence the first part of \cref{gContracts} follows. The second part is an immediate consequence.\par
 It remains to show that $\ideal{\Delta_0} \not\subset \ideal{B_1}^3$. Applying \cref{trans1} to $M_r \subset \Delta$ we see that there is a divisor $D \in |L|$ such that there exists a reduced component $C \subset D \cdot \Delta_0$ satisfying
\[
 C \cap B_1 \neq \emptyset.
\]
If $\ideal{\Delta_0} \subset \ideal{B_1}^3$, \cref{QBlowUp} yields $\com{0}{C}{Q^C_{\Delta_0,D}} \geq 3$. A contradiction to \cref{movingCurve}, hence $\ideal{\Delta_0} \not\subset \ideal{B_1}^3$.
\end{proof}
Now the aim is to find two explicit blow-ups after which the induced bimeromorphic map to $\Pn 3$ is defined along the movable curve. After that, we apply \cref{blowup-rest} and \cref{res2} to get the desired resolution. \par 
The first step is to blow up $B_1$ and therefore we are interested in how the movable curve intersects $B_1$. There is the following result.
\begin{lem}\label{mib}
Suppose \cref{projPlus} holds. \par
Then for general $D_1,D_2 \in |L|$ the curve $C_{D_1,D_2}$ intersects $B_1$ transversally in exactly one point and $C_{D_1,D_2}$ and $B_1$ are the only irreducible component of $D_1 \cap D_2$ through this point. For all $D_1,D_2 \in |L|$ with this property the intersection $D_1 \cdot D_2$ is not generically reduced along $B_1$. In particular $\com{1}{D_1 \cap D_2}{\sg{D_1 \cap D_2}} = 0$.  
\end{lem}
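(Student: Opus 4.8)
The plan is to determine, for general $D_1,D_2\in|L|$, the scheme structure of $D_1\cdot D_2$ near $B_1$ completely: I expect that near $B_1$ one has $D_1\cdot D_2=C_{D_1,D_2}+2B_1$, with $C_{D_1,D_2}$ meeting $B_1$ transversally in the one point where $C_{D_1,D_2}$ also meets $\bs{L}$. Granting this, the first sentence of the lemma is immediate, the non-reducedness along $B_1$ in the second sentence is the coefficient $2$, and the vanishing in the third sentence follows formally.

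First I would fix general $D_1,D_2\in|L|$, write $C=C_{D_1,D_2}$, $Q=Q_{D_1,D_2}$, and recall from \cref{leq2} that $Q$ is a skyscraper with $\com{0}{C}{Q}=2$, supported in at most two points, and that $C$ is a smooth rational curve. Since \cref{twoPoints} fails by \cref{projPlus}, $Q$ is not supported in two distinct points for general $D_1,D_2$, hence it is supported in a single point $p$, of length $2$. By \cref{movingCurve}, $D_1\cap D_2=C\cup\bs{L}$ as sets, and as $C$ is smooth, $Q$ vanishes off $C\cap\bs{L}$; therefore $C\cap\bs{L}=\{p\}$. To locate $p$, let $H_1,H_2\subset\Pn{3}$ correspond to $D_1,D_2$ via $\Phi$: the line $l=H_1\cap H_2$ meets $H_0$ in exactly one point, which for general $D_1,D_2$ lies in the locus where $\Phi^{-1}$ is an isomorphism, and its image under $\Phi_*^{-1}$ lies on both $C=\Phi_*^{-1}(l)$ and $B_1=\Phi_*^{-1}(H_0)$; hence $p\in B_1$. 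Finally, $C$ and $B_1$ are distinct irreducible curves in $D_1\cap D_2$, so they meet transversally at $p$ by \cref{transversal}.

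The heart of the matter---and the step I expect to be the main obstacle---is to exclude any further irreducible component of $D_1\cap D_2$ through $p$ and to pin down the multiplicity $2$ along $B_1$; I would do this by a local analysis at $p$. Blowing up $B_1$ and applying \cref{QBlowUp} (with $B_1$ in the role of $C_2$) gives $\com{0}{C}{Q}=\com{0}{\hat C}{\hat Q}+k_1+k_2-1$, where $\hat C$ is the strict transform of $C$, $\hat Q$ the corresponding skyscraper, and $k_i=\mult{B_1}{D_i}\ge1$; since $\com{0}{\hat C}{\hat Q}\ge0$ and $\com{0}{C}{Q}=2$, this forces $k_1=k_2=1$ (so each $D_i$ is generically smooth along $B_1$) and $\com{0}{\hat C}{\hat Q}=1$. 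Now I would combine: that $\ideal{D_1}+\ideal{D_2}$ is generated by two elements, so $D_1\cdot D_2$ is Cohen--Macaulay of pure dimension $1$ with no embedded points; that no two distinct components of $D_1\cdot D_2$ can be tangent (\cref{transversal}); that $\com{1}{D_1\cdot D_2}{\sg{D_1\cdot D_2}}=1$ (\cref{div-curv}), which rules out $D_1\cdot D_2$ being a reduced union of concurrent smooth rational curves; and the failure of \cref{twoPoints}. A check of the finitely many local configurations of a Cohen--Macaulay curve through $p$ compatible with these constraints and with $\com{0}{C}{Q}=2$ (respectively $\com{0}{\hat C}{\hat Q}=1$ after the blow-up) leaves only $D_1\cdot D_2=C+2B_1$ near $p$; in particular $C$ and $B_1$ are the only components through $p$ and $D_1\cdot D_2$ is not generically reduced along $B_1$. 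This gives the first two sentences.

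For the last assertion, take $D_1,D_2$ satisfying the conclusion of the first sentence, so by the second sentence $D_1\cdot D_2$ is non-reduced along $B_1$ and the reduced intersection $D_1\cap D_2$ is a proper closed subspace of $D_1\cdot D_2$. The kernel $\mathcal{N}$ of $\sg{D_1\cdot D_2}\twoheadrightarrow\sg{D_1\cap D_2}$ is supported on the curve $B_1$, so $\Com{2}{B_1}{\mathcal{N}}=0$ and hence $\com{1}{D_1\cap D_2}{\sg{D_1\cap D_2}}\le\com{1}{D_1\cdot D_2}{\sg{D_1\cdot D_2}}=1$ by \cref{div-curv}. If this number were $1$, \cref{h1eq1} would give $D_1\cap D_2=D_1\cdot D_2$, contradicting the non-reducedness. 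Hence $\com{1}{D_1\cap D_2}{\sg{D_1\cap D_2}}=0$.
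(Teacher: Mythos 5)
Your overall skeleton is close to the paper's: the identification of a single intersection point $p$ of $C_{D_1,D_2}$ with $\bs L$, the appeal to \cref{transversal} for transversality, and the final deduction of $\com{1}{D_1\cap D_2}{\sg{D_1\cap D_2}}=0$ from \cref{h1eq1} all match. But the central assertion --- that $D_1\cdot D_2$ is not generically reduced along $B_1$ --- is not actually established by what you wrote. First, a small but real misstatement: the point $l\cap H_0$ cannot lie ``in the locus where $\Phi^{-1}$ is an isomorphism'', since $H_0$ is precisely the divisor contracted by $\Phi^{-1}$; to see that $C_{D_1,D_2}$ meets $B_1$ one has to pass to a resolution $f=\Phi\circ g$ and observe that $f^{-1}(l)$ meets $E_0=f_*^{-1}H_0$ while $g(E_0)=B_1$ (this is why the argument has to be run with explicit open sets of hyperplanes involving $f(\exc f)$ and $\Phi_*^{-1}(H_1\cap H_0)$, as the paper does). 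Second, and more seriously, your application of \cref{QBlowUp} does not force $k_1=k_2=1$: the identity $2=\com{0}{\hat C}{\hat Q}+k_1+k_2-1$ with $\com{0}{\hat C}{\hat Q}\ge 0$ also admits $(k_1,k_2)=(1,2)$ or $(2,1)$. And the decisive step --- ``a check of the finitely many local configurations \dots\ leaves only $D_1\cdot D_2=C+2B_1$'' --- is exactly the hard part and is not carried out; the precise local form $\ideal{D_1}+\ideal{D_2}=(x,yz^2)$ is only established later in the paper (\cref{moreGeneral}), using \cref{mib} itself as input, so aiming for it here both overshoots the statement and risks circularity.

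What the lemma actually needs is much less. Once you know that $C_{D_1,D_2}$ and $B_1$ meet transversally at $p$, that no third component of $D_1\cap D_2$ passes through $p$ (which should be arranged by generality --- $p$ moves in $B_1$ as $D_1,D_2$ vary, so it avoids the finitely many points of $B_1\cap\sing{\bs L}$ --- rather than by local analysis), and that $Q_{D_1,D_2}$ has length $2$ concentrated at $p$, generic reducedness along $B_1$ is immediately impossible: $D_1\cdot D_2$ is a local complete intersection curve, hence Cohen--Macaulay without embedded points, so generic reducedness along both branches would force $\ideal{D_1}+\ideal{D_2}=\ideal{C}\cap\ideal{B_1}=(x,yz)$ at $p$, and then $Q_{D_1,D_2}$ has length $1$ there, a contradiction. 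If you replace your third paragraph by this argument and fix the location of $p$ as above, the proof is complete.
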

\begin{proof}
Define the set  
\begin{align*}
 U =  \left\{ \begin{array}{rl}  & H_1 \neq H_0, \\ H_1 \in |\sO{\Pn 3}{1}| \colon &\dim(H_1 \cap f(\exc f)) \leq 0, \\ & \dim \Phi_*^{-1}(H_1 \cap H_0) = 1 \end{array} \right\}.
\end{align*}
Then $U$ is Zariski-open and non-empty. Let $H_1 \in U$. Then the sets 
\begin{align*}
 F_1 &= B_1 \cap \sing{\bs{L}} \quad \text{and}\\
 F_2 &= H_1 \cap f(\exc{f}) \cup \{q \in H_0 \cap H_1 \colon \Phi^{-1}(q) \in F_1\}
\end{align*}
are finite, hence
\[
 V_{H_1}  = \{H_2 \in |\sO{\Pn 3}{1}| \colon H_2 \cap F_2 = \emptyset, H_0 \cap H_1 \not\subset H_2 \}
\]
is Zariski-open and non-empty. Let $H_2 \in V_{H_1}$ and let $D_1,D_2 \in |L|$ be the divisors corresponding to $H_1,H_2$ via $\Phi$. Then the movable curve 
\[
 C_{D_1,D_2} = \Phi_*^{-1}(H_1 \cap H_2)
\]
is defined, intersects $B_1$ in exactly one point and this point is not contained in $\sing{\bs{L}}$, i.e.\ it is not contained in any irreducible component of $\bs{L}$ other than $B_1$. Since $H_1 \cap H_2 \cap f(\exc{f}) = \emptyset$, we have $D_1 \cap D_2 = C_{D_1,D_2} \cup \bs{L}$, hence $C_{D_1,D_2} \cap B_1$ is contained in no third component of $D_1 \cap D_2$. \par
By \cref{transversal}, $C_{D_1,D_2}$ and $B_1$ intersect transversally and by \cref{leq2}, 
\[
 \com{0}{C_{D_1,D_2}}{Q_{D_1,D_2}} = 2
\]
hence $D_1 \cdot D_2$ is not generically reduced along $B_1$. 
\end{proof}
\begin{lem}\label{moreGeneral}
Suppose \cref{projPlus} holds. \par
For general $D \in |L|$ we have $\ideal D \not\subset \ideal{B_1}^2$. Furthermore, for general $D_1,D_2 \in |L|$ in addition to the assertion of \cref{mib} we can achieve that locally around the unique point of $C_{D_1,D_2} \cap B_1$ there are coordinate functions $x,y,z$ such that locally
\[
 \ideal{D_1} = (x), \qquad \ideal{C_{D_1,D_2}} = (x,y), \qquad \ideal{B_1} = (x,z), \qquad \ideal{D_2} + \ideal{D_1} = (x,yz^2). 
\]
\end{lem}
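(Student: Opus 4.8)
The statement has two assertions, and the plan is to prove the first one and then feed it into the proof of the second.

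\emph{First assertion.} I would argue by contradiction: suppose $\ideal D \subset \ideal{B_1}^2$ for general $D \in |L|$ (equivalently, since this is a closed linear condition on $|L|$, for every $D$). Choose $D_1,D_2 \in |L|$ general enough that the conclusions of \cref{mib} and \cref{leq2} hold, set $C = C_{D_1,D_2}$, and let $p$ be the unique point of $C \cap B_1$. By \cref{leq2} the curve $C$ is smooth and rational, and by \cref{mib} the curve $B_1$ is smooth and meets $C$ transversally at $p$; hence there are local coordinates $x,y,z$ at $p$ with $\ideal{C} = (x,y)$ (so that $z$ is a coordinate along $C$) and $\ideal{B_1} = (x,z)$. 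Writing $\ideal{D_i} = (f_i)$, the assumption gives $f_i \in \ideal{B_1}^2 \cap \ideal{C} = (x^2, xz, z^2) \cap (x, y)$, and a short direct computation shows that the image of each $f_i$ in $\conormal{C}{X}|_p = (x,y)/(x,y)^2$ lies in $z\,\C\{z\}\,\bar x + z^2\,\C\{z\}\,\bar y$. Therefore $\com 0 p {Q_{D_1,D_2}} \geq 3$, which contradicts $\com 0 {C_{D_1,D_2}}{Q_{D_1,D_2}} = 2$ from \cref{leq2}.

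\emph{Reducing the second assertion to $D_1$ being smooth at $p$.} Fix a general $D_1 \in |L|$. By the first assertion $D_1$ is smooth at all but finitely many points of $B_1$; call this bad finite set $S \subset B_1$. Let $H_1 \subset \Pn 3$ be the hyperplane corresponding to $D_1$. As $D_2$ varies (with corresponding hyperplane $H_2$), the point $p = C_{D_1,D_2}\cap B_1$ equals $\Phi^{-1}$ of the point $H_0 \cap H_1 \cap H_2$, which runs through the line $H_0 \cap H_1 \subset H_0$. By \cref{dimOne}, $\Phi^{-1}$ contracts $H_0$ onto the curve $B_1$, hence it is non-constant on a general line of $H_0$; thus $p$ runs through a cofinite subset of $B_1$ as $D_2$ varies. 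Consequently, for general $D_2$ we have $p \notin S$, i.e.\ $D_1$ is smooth at $p$.

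\emph{Putting in the coordinates.} Now take $x$ a local equation of the smooth surface $D_1$ at $p$. The transversal smooth curves $C_{D_1,D_2}$ and $B_1$ both lie on $D_1$, so we may choose $y,z$ restricting to coordinates on $D_1$ that cut out $C_{D_1,D_2}$ and $B_1$ on $D_1$; this gives $\ideal{D_1} = (x)$, $\ideal{C_{D_1,D_2}} = (x,y)$ and $\ideal{B_1} = (x,z)$. Writing $\ideal{D_2} = (f_2)$, the inclusions $C_{D_1,D_2}, B_1 \subset D_2$ force $f_2|_{D_1} \in (y)\cap(z) = (yz)$, say $f_2|_{D_1} = yz\,k$. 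Since $C_{D_1,D_2}$ and $B_1$ are the only components of $D_1 \cap D_2$ through $p$ (\cref{mib}), the zero locus of $k$ is contained in $\{y=0\}\cup\{z=0\}$, so $k = y^a z^b$ times a unit. That $D_1 \cdot D_2$ is generically reduced along $C_{D_1,D_2}$ forces $a=0$; that it is not generically reduced along $B_1$ (\cref{mib}) forces $b \geq 1$; and computing with $\ideal{D_1}+\ideal{D_2} = (x, yz^{b+1})$ gives $\com 0 p {Q_{D_1,D_2}} = b+1$, which by $\com 0 p {Q_{D_1,D_2}} \leq \com 0 {C_{D_1,D_2}}{Q_{D_1,D_2}} = 2$ (\cref{leq2}) forces $b = 1$. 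Hence $f_2|_{D_1} = yz^2$ times a unit, and absorbing this unit into the ideal yields $\ideal{D_2} + \ideal{D_1} = (x, yz^2)$, with $y,z$ unchanged.

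The local algebra in the first and third parts is routine bookkeeping with finite modules over $\C\{z\}$, in the spirit of \cref{QBlowUp}. The one genuinely non-formal point is the reduction step: one must arrange that the intersection point $p$ avoids the finitely many points of $B_1$ where the general divisor $D_1$ fails to be smooth, and for this the description of $\Phi^{-1}$ near $H_0$ supplied by \cref{dimOne} is essential.
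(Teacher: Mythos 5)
Your proof is correct and follows essentially the same route as the paper: the first assertion by forcing $\com{0}{C_{D_1,D_2}}{Q_{D_1,D_2}} \geq 3$ when both divisors vanish to order $2$ along $B_1$ (the paper cites \cref{QBlowUp} for this local computation, which you instead carry out by hand), and the second by moving the point $H_0 \cap H_1 \cap H_2$ along the line $H_0 \cap H_1$ to avoid the finite set $\sing{D_1} \cap B_1$, exactly as in the paper's construction of the open sets $U^{\prime}$ and $V_{H_1}^{\prime}$. Your explicit derivation of the normal form $(x,yz^2)$ fills in the step the paper dismisses as "the remaining assertions then follow easily," and it is correct.
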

\begin{proof}
Obviously the condition for $D \in |L|$ to satisfy $\ideal{D} \not\subset \ideal{B_1}^2$ is an open one. Suppose $\ideal{D} \subset \ideal{B_1}^2$ for all $D \in |L|$. Choose $D_1,D_2 \in |L|$ satisfying the assertions of \cref{mib}. Then we can apply \cref{QBlowUp} to get 
\[
 \com{0}{C_{D_1,D_2}}{Q_{D_1,D_2}} \geq 2+2-1 = 3.
\]
This contradicts \cref{leq2}, hence, $\ideal{D} \not\subset \ideal{B_1}^2$ for general $D \in |L|$. \par
We now show that for general $D_1,D_2 \in |L|$ the divisor $D_1$ is smooth at the unique point $p \in C_{D_1,D_2} \cap B$. The remaining assertions then follow easily. \par 
Let $U \subset |L|$ be the set from the proof of \cref{mib} and let 
\[
 U^{\prime} = \{ D \in U : \ideal{D} \not\subset \ideal{B_1}^2 \} = U \cap \{ D \in |L| : B_1 \not\subset \sing{D} \}.
\]
The set $U^{\prime}$ is Zariski-open and non-empty, by the first part of the proof. Let $D_1 \in U^{\prime}$ and let $H_1 \in |\sO{\Pn 3}{1}|$ be the corresponding hyperplane. Then the set $\sing{D_1} \cap B_1$ is finite, hence so are the sets
\begin{align*}
 F_1^{\prime} &= B_1 \cap (\sing{\bs L} \cup \sing{D_1}) \\
 F_2^{\prime} &= H_1 \cap f(\exc{f}) \cup \{q \in H_0 \cap H_1 \colon  \Phi^{-1}(q) \in F_1^{\prime} \}.
\end{align*} 
Therefore the set
\[
 V_{H_1}^{\prime} = \{ H_2 \in |\sO{\Pn 3}{1}| : H_2 \cap F_2^{\prime} = \emptyset, H_1 \cap H_0 \not\subset H_2 \}
\]
is  non-empty and Zariski-open. Let  $H_2 \in V_{H_1}^{\prime}$ and let $D_2 \in |L|$ be the divisor corresponding to it via $\Phi$. Then $D_1,D_2$ satisfy the assertions of \cref{mib} and in addition $C_{D_1,D_2} \cap B_1 \not\subset \sing{D_1}$. 
\end{proof}
After these preparations, we now begin to actually construct the resolution whose existence is claimed in \cref{niceResolution}. Let 
\[
 \fun{g_1}{X_1}{X}
\]
be the blow-up of $X$ along $B_1$, let $E_1 \subset X_1$ be its exceptional divisor and let 
\[
 L_1 = g_1^*L - E_1.
\]
Note that $L_1$ has the same global sections as $L$ and does not contain $E_1$ in its base locus since $B_1 \not\subset \sing{D}$ for general $D \in |L|$. \par 
For any $D \in |L|$ with $B_1 \not\subset \sing{D}$ there is unique irreducible curve 
\[
 B_D \subset E_1 \cap (g_1)_*^{-1}D \quad \text{with} \quad g_1(B_D) = B_1
\]
and $B_D$ is mapped isomorphically onto $B_1$. Let $D_1,D_2 \in |L|$ be general as in \cref{moreGeneral}. Then from the local description given by the lemma, we infer that $B_{D_1} = B_{D_2}$ locally, hence globally. Thus 
\[
 B_{D_1} \subset \hat D,
\]
for general, hence every, $\hat D \in |L_1|$. Setting $B_2 = B_{D_1}$, this just says that 
\[
 B_2 \subset \bs{L_1}.
\]
From the local description in \cref{moreGeneral} we also deduce that the divisors $\hat D_1 = (g_1)_*^{-1}D_1$ and $\hat D_2 = (g_1)_*^{-1}D_2$ intersect transversally in a general point of $B_2$. Furthermore it shows that
\begin{align}\label{CB2}
 C_{\hat D_1, \hat D_2} \cap B_2 \neq \emptyset
\end{align}
and that, writing $C = D_1 \cdot D_2 = C_{D_1,D_2} + B$, for some $B$, we have an exact sequence
\[
 \begin{tikzcd}
  0 \arrow{r} & \sg C \arrow{r} & \sg{C_{D_1,D_2}} \oplus \sg B \arrow{r} & \C^2 \arrow{r} & 0
 \end{tikzcd}
\]
Tensorizing this sequence with $L$ and taking global sections, yields the inequality $\com{0}{C_{D_1,D_2}}{L|_{C_{D_1,D_2}}} \leq 4$, which implies $L \cdot C_{D_1,D_2} \leq 3$, hence
\begin{align*}
 L_1 \cdot C_{\hat D_1,\hat D_2} =  (g_1^*L - E_1) \cdot C_{\hat D_1,\hat D_2} \leq 2.
\end{align*}
Now let 
\[
 \fun{g_2}{X_2}{X_1}
\]
be the blow-up of $X_1$ along $B_2$, let $E_2 \subset X_2$ be its exceptional divisor and let 
\[
 L_2 = g_2^*L_1 - E_2.
\]
Then $L_2$ has the same global sections as $L$ and $E_2 \not\subset \bs{L_2}$. By \eqref{CB2}, 
\begin{align}\label{CINTE2}
E_2 \cap (g_2)_*^{-1}C_{\hat D_1,\hat D_2} \neq \emptyset,
\end{align}
hence
\begin{align*}
 L_2 \cdot (g_2)_*^{-1} C_{\hat D_1,\hat D_2} &= (g_2^*L_1 - E_2) \cdot (g_2)_*^{-1} C_{\hat D_1,\hat D_2}  \\ &= L_1 \cdot C_{\hat D_1,\hat D_2} - E_2 \cdot (g_2)_*^{-1} C_{\hat D_1,\hat D_2} \leq 1.
\end{align*}
Then \cref{movingCurve} shows that, in fact, equality holds and that 
\begin{align}\label{DEFONC}
 \bs{L_2} \cap (g_2)_*^{-1}C_{\hat D_1,\hat D_2} = \emptyset.
\end{align}
Therefore \cref{blowup-rest} can be applied to $X_2$, $L_2$, yielding a morphism $\tilde X \to X_2$ that resolves $\bs{L_2}$. Let 
\[
 \begin{tikzcd}
  \tilde X \arrow{r} \arrow[bend left]{rrr}{\tilde g} & X_2 \arrow{r}{g_2} & X_1 \arrow{r}{g_1} & X,
 \end{tikzcd}
\]
be the composite map and let $f = \Phi \circ \tilde g$. Let furthermore $\tilde E,\tilde F \geq 0$ be the $\tilde g$-exceptional divisors on $\tilde X$ that satisfy
\[
  \can{\tilde X} = \tilde g^* \can{X} + \tilde F, \qquad f^* \sO{\Pn 3}{1} = g^*L - \tilde E.
\]
Using that the analogous result for $\tilde X \to X_2$ holds true by \cref{blowup-rest} and that $g_1$, $g_2$ are blow-ups along curves, we see that $\tilde E \geq \tilde F$. Thus the following lemma can be applied to $\tilde g$.
\begin{lem}\label{res1}
 Given \cref{projPlus}, let $\fun{g}{\tilde X}{X}$ be a resolution of $\Phi$, let $E,F,G \geq 0$ the divisors satisfying
\[
  \can{\tilde X} = g^* \can{X} + F, \qquad f^* \sO{\Pn 3}{1} = g^*L - E, \qquad \Delta = g^*\Delta_0 - G,
\]  
where $\Delta_0 \in |L|$ is the unique divisor contracted by $\Phi$ and $\Delta = g_*^{-1}(\Delta_0) \subset \tilde X$.
If $E \geq F$, then the following two statements hold:
\begin{itemize}
 \item $G \geq F$ and $\supp{G - F} = \exc{g}$.
 \item There is an effective Cartier divisor $A \geq 0$ on $\Delta$ such that 
\[
 (g|_{\Delta})^* \dualizing{\Delta_0}(-A) = \dualizing{\Delta} \quad \text{and} \quad \supp{A} = \exc{g|_{\Delta}}
\]
\end{itemize}
\end{lem}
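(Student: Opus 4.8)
The plan is as follows. Write $\Delta = g_*^{-1}\Delta_0$. Combining the hypotheses $\can{\tilde X} = g^*\can X + F$ and $\Delta = g^*\Delta_0 - G$ with $g^*\Delta_0 \sim g^*L$, adjunction on $\tilde X$ gives
\[
 \dualizing{\Delta} = \dualizing{\tilde X}(\Delta)\big|_{\Delta} = \bigl(g^*(\can X + \Delta_0) + F - G\bigr)\big|_{\Delta} = (g|_{\Delta})^*\dualizing{\Delta_0} + (F - G)|_{\Delta},
\]
where I used adjunction on $X$ to identify $(\can X + \Delta_0)|_{\Delta_0} = \dualizing{\Delta_0}$. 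Hence, setting $A = (G - F)|_{\Delta}$, the formula $\dualizing{\Delta} = (g|_{\Delta})^*\dualizing{\Delta_0}(-A)$ holds automatically, and $A$ is Cartier since $G$ and $F$ are $g$-exceptional while $\Delta$ is not one of their components. So the lemma reduces to: $G \geq F$ (which then gives $A \geq 0$), $\supp{G - F} = \exc g$, and $\supp A = \exc{g|_{\Delta}}$.

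For $G \geq F$ I would argue as follows. Since $\Phi$ is the map of the complete linear system $|L|$ and $\com 0 X L = 4$, we have $|L| = \Phi^*|\sO{\Pn 3}{1}|$; let $\Phi^*H_0 \in |L|$ be the member corresponding to $H_0$. Because $\Phi$ contracts $\Delta_0$ with $\Phi_*\Delta_0 = C_1 \subset H_0$ (cf.\ \cref{C1}, \cref{DeltaToH}), the divisor $\Delta_0$ is a component of $\Phi^*H_0$; as every member of $|L|$ is irreducible (\cref{div-curv}), $\Phi^*H_0 = \Delta_0$. Pulling back by $g$ and comparing with $f^*\sO{\Pn 3}{1} = g^*L - E$ yields the genuine equality of divisors $\Delta + G = g^*\Delta_0 = g^*(\Phi^*H_0) = E + f^*H_0$, so that $f^*H_0 = \Delta + (G - E)$ with $G - E \geq 0$ (the fixed divisor $E$ of the linear system $g^*|L|$ is contained in $g^*D$ for every $D \in |L|$). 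Therefore $G = E + (G - E) \geq E \geq F$ by hypothesis, and in particular $A \geq 0$.

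It remains to prove the two support statements, and this is the technical heart. From the last displayed equality $G - F = (G - E) + (E - F)$ is a sum of two effective $g$-exceptional divisors, and $\supp{G - E}$ is precisely the set of $g$-exceptional prime divisors $P$ with $f(P) \subseteq H_0$. Thus $\supp{G - F} = \exc g$ is equivalent to: every $g$-exceptional prime $P$ with $f(P) \not\subseteq H_0$ has strictly larger multiplicity in $E$ than in $F$. For such a $P$, \cref{onlyOne} forces $\dim f(P) \leq 1$, so the centre $g(P)$ is a point of the curve $\bs L$; moreover the hypothesis $E \geq F$ forbids $g$ from blowing up any point at which a general member of $|L|$ is smooth (such a blow-up would immediately violate $E \geq F$ on its exceptional divisor, unless that divisor is a component of $G-E$), so $g(P)$ lies in the ``fat'' part of the base scheme of $|L|$. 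A bookkeeping of discrepancies against multiplicities of the base ideal of $|L|$ along a factorisation of $g$ into blow-ups of smooth centres — in the spirit of the proof of \cref{blowup-rest}, and using $\ideal{\Delta_0} \not\subseteq \ideal{B_1}^3$ from \cref{dimOne} — then gives the required strict inequality. Granting $\supp{G - F} = \exc g$, the identity $\supp A = \supp{(G - F)|_{\Delta}} = \Delta \cap \exc g$ gives $\exc{g|_{\Delta}} \subseteq \supp A$ at once (any curve of $\Delta$ contracted by $g$ lies in $\exc g$), while the reverse inclusion is a local check at the generic point of each one-dimensional component of $g(\exc g)$, showing that $\Delta \cap \exc g$ contains no component mapping finitely onto its image.

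I expect the discrepancy-versus-multiplicity bookkeeping in the last paragraph — upgrading the easy inequality $G \geq F$ to the equality $\supp{G - F} = \exc g$ — to be the main obstacle; the reduction, the identification $\Phi^*H_0 = \Delta_0$, and the inequality $G \geq F$ itself are essentially formal consequences of the hypotheses and of the results already established.
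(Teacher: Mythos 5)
Your reduction to $A = (G-F)|_{\Delta}$ and your derivation of $G \geq F$ are correct; the route $G \geq E \geq F$, via the divisor identity $g^*\Delta_0 = f^*H_0 + E$ and the fact that $\Delta$ is a component of $f^*H_0$, is sound and mildly different from the paper's. But the two support statements are exactly where the content of the lemma lies, and you do not prove them. The assertion that every $g$-exceptional prime $P$ with $f(P) \not\subset H_0$ satisfies $\mult{P}{E} > \mult{P}{F}$ is deferred to an unperformed ``bookkeeping of discrepancies against multiplicities of the base ideal'', and the inclusion $\exc g \cap \Delta \subset \exc{g|_{\Delta}}$ is left as an unspecified ``local check''. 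Note moreover that the lemma is stated for an \emph{arbitrary} resolution $g$ with $E \geq F$, so an induction over a chosen factorisation into smooth blow-ups in the style of \cref{blowup-rest} would at best establish it for one particular resolution. This is a genuine gap, and it is the one you yourself flag as the main obstacle.

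The paper closes it with a single observation that makes the support statement come for free: write the ramification formula with respect to $f$ rather than $g$, namely $g^*\can{X} + F = \can{\tilde X} = f^*\can{\Pn 3} + a\Delta + H$ with $H \geq 0$ both $f$- and $g$-exceptional. Substituting $f^*\sO{\Pn 3}{1} = g^*L - E$ and pushing forward to $X$ forces $a = 2$ (because $-\can X = 2L$ and $\Delta_0 \in |L|$ by \cref{DeltaToH}), whence $G = 2E - \tfrac12 F + \tfrac12 H$ and
\[
 G - F = 2E - \tfrac32 F + \tfrac12 H \geq \tfrac12 F + \tfrac12 H \geq \tfrac12 F,
\]
using $E \geq F$. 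Since $X$ is smooth, every $g$-exceptional prime divisor has positive discrepancy, so $\supp{F} = \exc{g}$ and hence $\supp{G-F} = \exc{g}$ immediately. The inclusion $\supp{A} \subset \exc{g|_{\Delta}}$ then follows directly from the formula $\dualizing{\Delta} = (g|_{\Delta})^*\dualizing{\Delta_0}(-A)$, since $A$ must vanish wherever $g|_{\Delta}$ is an isomorphism, and the reverse inclusion from $\exc{g|_{\Delta}} \subset \exc{g} \cap \Delta = \supp{A}$; no separate local analysis is needed.
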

\begin{proof}
Since $\Delta_0$ is the only divisor contracted by $\Phi$, we have
\begin{align}\label{rami}
 g^* \can{X} + F = \can{\tilde X} = f^*\can{\Pn 3} + a \Delta + H
\end{align}
for some $a \geq 1$, and some effective divisor $H$ that is exceptional with respect to both $f$ and $g$. Plugging $f^*\sO{\Pn 3}{1} = g^*L - E$ into \eqref{rami}, yields 
\begin{align*}
 a\Delta &\sim 4 g^*L - 4E - 2 g^*L + F - H = 2 g^*L - 4E + F - H.
\end{align*}
Since $\Delta_0 \in |L|$ by \cref{DeltaToH}, this shows that $a=2$, hence that
\[
 G = g^*\Delta_0 - \Delta = 2 E - \tfrac F 2 + \tfrac H 2.
\]
Therefore, using $E \geq F$ and $H \geq 0$,
\begin{align*}
 G - F \geq \frac F 2 + \frac H 2 \geq \frac F 2.
\end{align*}
This shows $G-F \geq 0$ since $F \geq 0$. It also shows that
\[
 \supp{G-F} \supset \supp F = \exc{g}.
\]
Since, of course, $\supp{G-F} \subset \exc g$ also holds, we have, in fact, equality.\par 
In order to show the second claim, set $A = (G-F)|_{\Delta}$. This divisor is certainly an effective Cartier divisor on $\Delta$, because $\Delta \not\subset \exc g = \supp{G-F}$, and it satisfies 
\begin{align*}
  \dualizing{\Delta} &= \dualizing{\tilde X}(\Delta)|_{\Delta} = (g^*\dualizing{X}(F + g^* \Delta_0 - G))|_{\Delta} \\
  &= (g|_{\Delta})^*(\dualizing{X}(\Delta_0)|_{\Delta_0})((F-G)|_{\Delta}) =  (g|_{\Delta})^* \dualizing{\Delta_0}(-A).
\end{align*}
This implies
\[
 \supp A \subset \exc{g|_{\Delta}}
\]
and, using $\supp{G-F} = \exc g$, we also get  
\[
 \exc{g|_{\Delta}} \subset \exc{g} \cap \Delta = \supp{G-F} \cap \Delta = \supp{A},
\]
so that, indeed, $\supp A = \exc{g|_{\Delta}}$.
\end{proof}
Now compose $\tilde g$ with an embedded resolution of $\Delta \subset \tilde X$ as in \cref{res2}. We use the notation from the following commutative diagram:
\[ 
  \begin{tikzcd}[row sep = 1em]
     \phantom{.}& \hat \Delta \arrow[hookrightarrow]{r} \arrow[swap]{dd}{\tau} \arrow[swap]{dl}{\mu} \arrow[bend left, near start]{dddd}{\phi} & \hat X \arrow{dd} \arrow[bend left]{dddd}{g}\\
     \tilde \Delta \arrow[swap]{dr}{\nu} & &\\
     \phantom{.}& \Delta \arrow[hookrightarrow, crossing over]{r} \arrow{dd} & \tilde X \arrow[swap]{dd}{\tilde g}\\
     \phantom{.}& & \\
     \phantom{.}& \Delta_0 \arrow[hookrightarrow]{r} & X
  \end{tikzcd} 
\]
Here $\fun{\nu}{\tilde \Delta}{\Delta}$ denotes the normalization. I claim that this $g$ satisfies all the properties asserted in \cref{niceResolution}.\par 
That it satisfies properties (\ref{prop1}) and (\ref{prop2}) is clear. By \cref{dimOne}, $\ideal \Delta \not\subset \ideal{B_1}^3$, which immediately implies property (\ref{prop4}). \par
Let us now show that it satisfies property (\ref{prop5}). To this end, let $A_1 \geq 0$ on $\Delta$ be the divisor provided by \cref{res1}, i.e.\
\begin{align}\label{A1}
 \dualizing{\Delta} = (\tilde g|_{\Delta})^*\dualizing{\Delta_0}(-A_1), \quad \text{and} \quad \exc{\tilde g|_{\Delta}} = \supp {A_1},
\end{align}
and let $A_2 \geq 0$ on $\hat \Delta$ be the divisor provided by \cref{res2}, i.e.\
\begin{align}\label{A2}
 &\dualizing{\hat \Delta} = \tau^*\dualizing{\Delta}(-A_2), \quad \text{and} \\ 
 &\notag \forall C \subset \hat \Delta, \text{irreducible} \colon (\dim \tau(C) = 1 \Rightarrow C \subset \supp{A_2}).
\end{align}
The divisor  
\[
 A = \tau^*A_1 + A_2.
\]
 is effective and is easily seen to satisfy
\[
 \dualizing{\hat \Delta} = \phi^*\dualizing{\Delta_0}(-A).
\] 
In order to show that it also satisfies the second condition of (\ref{prop5}), let $C \subset \exc{\phi}$ be an irreducible curve. We have to show that $C \subset \supp A \cup \exc{\mu}$. \par
First assume $C \not\subset \exc{\tau}$. Then $\tau(C) \subset \exc{\tilde g|_{\Delta}}$ and by \eqref{A1}, 
\[
 \tau(C) \subset \supp{A_1},
\]
hence $C \subset \supp{\tau^*A_1} \subset \supp A$. \par
Now assume $C \subset \exc{\tau}$. If $\dim \tau(C) = 0$, then $C \subset \exc{\mu}$, since the normalization $\nu$ is finite. \par If $\dim \tau(C) = 1$, \eqref{A2} implies that 
\[
 C \subset \supp{A_2} \subset \supp A.
\]
Lastly we show that property (\ref{prop3}) of \cref{niceResolution} holds. Let $r$ and $M_1, \ldots, M_r$ be as in \cref{clm2} of the proof of \cref{dimOne}. I claim that $B_1^{\prime} = M_r \subset \hat \Delta$ satisfies the assertions of property (\ref{prop3}).\par
The resolution $g$ we have chosen has the property that there are precisely two exceptional divisors $E_1,E_2 \subset \hat X$ that are mapped onto the curve $B_1$. They are the strict transforms of the exceptional divisors of the first two blow-ups. From \eqref{CINTE2} and \eqref{DEFONC} it follows that $E_2$ is the divisor that $f$ maps onto $H_0$. Let $G = g^*\Delta_0 - \hat \Delta$. Then taking the multiplicity along $E_2$ of the two sides of the ramification formula
\[
 g^*\can X + F = f^* \can{\Pn 3} + 2 \Delta + F^{\prime}, 
\] 
yields 
\[
 2 = 8 - 2 \cdot \mult{E_2}{G},
\]
hence $\mult{E_2}{G} = 3$. Since $\ideal{\Delta_0} \subset \ideal{B_1}^2$ and $\ideal{\Delta_0} \not\subset \ideal{B_1}^3$, 
\[
 \mult{E_1}{G} = 2 < 3 = \mult{E_2}{G}.
\]
This shows that the strict transform $\Delta_1 \subset X_1$ of $\Delta_0 \subset X$ contains the center $B_2 \subset X_1$ of the second blow-up and that $B_2 \not\subset \sing{\Delta_1}$. Therefore each curve $C \subset \Delta$ with $g(C) = B_1$ is actually mapped isomorphically to $B_1$ by $g$. In particular this is true for the curve $B_1^{\prime} = M_r \subset \hat \Delta$. \par
Since by definition $f(M_r) = C_1$ (cf.\ \cref{clm2}), it remains to show that $f|_{M_r}$ has generic degree $1$. \par 
Since $M_r \subset \exc{g}$ and $g(M_r) = B_1$, we actually have 
\[
 M_r \subset E_1 \cup E_2.
\]
If $M_r \subset E_2$, we are done, since $f|_{E_2}$ is birational. Suppose $M_r \not\subset E_2$. Then $r=2$, there is a unique irreducible curve $C \subset \hat \Delta \cap E_1$ satisfying 
\[
 g(C) = B_1 \qquad \text{and}  \qquad C \not\subset E_2;
\]
and $C = M_2$. There is a unique curve in $E_1 \cap E_2$ that is mapped onto $B_1$ by $g$. Hence that curves is $M_1 = \tilde C_1$. Summing up, we see that $g$ maps both $M_1$ and $M_2$ isomorphically to $B_1$. The morphism $g|_{E_1}$ contracts the general fibre of $f|_{E_1}$ by \cref{gContracts} and $f|_{M_1}$ and $g|_{M_2}$ are generically one-to-one. Therefore, $f$ is also generically one-to-one on $B_1^{\prime} = M_2$ by \cref{genOneToOne}. This finishes the proof of \cref{niceResolution}
\subsection{Finishing the proof}
The next lemma, together with what we already know, finally shows that $X \setminus \Delta_0 \simeq \C^3$.
\begin{lem}\label{isCompactifi}
 Given \cref{projPlus}, the meromorphic map
\[
 \Phi^{-1} \colon \Pn 3 \dashrightarrow X.
\]
is holomorphic on $\Pn 3 \setminus H_0$. 
\end{lem}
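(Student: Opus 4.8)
The plan is to prove that the indeterminacy locus of $\Phi^{-1}$ is contained in $H_0$. Fix the resolution $\fun{g}{\hat X}{X}$ of $\Phi$ and $f = \Phi \circ g$ from \cref{niceResolution}. Since $f$ is a bimeromorphic morphism onto the normal space $\Pn 3$, its fibres are connected, and $\Phi^{-1}$ is holomorphic at $y \in \Pn 3$ exactly when $g$ is constant on $f^{-1}(y)$. As $f^{-1}(y)$ is a single point whenever $y$ avoids $f(\exc f)$, it suffices to prove that $f(E) \subseteq H_0$ for every irreducible component $E$ of $\exc f$, i.e.\ for every irreducible divisor $E \subseteq \hat X$ with $\dim f(E) \leq 1$.

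I would first dispose of the components for which this is clear. The unique non-$g$-exceptional component of $\exc f$ is $\Delta = g_*^{-1}\Delta_0$, and $f(\Delta) = \Phi_*\Delta_0 = C_1 \subseteq H_0$ by \cref{DeltaToH,C1}. A $g$-exceptional divisor $E$ with $\dim f(E) = 2$ maps onto a hyperplane contracted by $\Phi^{-1}$, since $g(E)$ has smaller dimension; by \cref{onlyOne} that hyperplane is $H_0$. Finally, every blow-up making up $g$ is centred either in $\bs L$ or in $\sing{\Delta_0}$ (the latter being the embedded resolution of $\Delta$), so each $g$-exceptional divisor $E$ has $g(E) \subseteq \bs L$ or $g(E) \subseteq \Delta_0$; in the second case $f|_E = \Phi \circ g|_E$ maps $E$ into $\overline{\Phi(\Delta_0)} = C_1 \subseteq H_0$. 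The claim is thus reduced to: \emph{a $g$-exceptional divisor $E$ with $g(E) \subseteq \bs L$ and $\dim f(E) \leq 1$ satisfies $f(E) \subseteq H_0$.}

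For the remaining case I would argue by contradiction, reprising the mechanism of the proof of \cref{onlyOne}. Assume $f(E) \not\subseteq H_0$ and put $B = g(E) \subseteq \bs L$. Taking general hyperplanes $H_1, H_2$ through a general point $y_1 \in f(E) \setminus H_0$ — chosen so that the corresponding $D_1, D_2 \in |L|$ remain general enough for \cref{mib,moreGeneral} to apply — one arranges that $f^{-1}(H_1 \cap H_2)$ meets $E$ in a point lying over $y_1$; chasing through the connected fibre $f^{-1}(y_1)$, the movable curve $C_{D_1,D_2} = \Phi_*^{-1}(H_1 \cap H_2) \subseteq D_1 \cap D_2$ then meets $B$. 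But by \cref{mib} the curve $C_{D_1,D_2}$ meets $\bs L$ only at its single transversal intersection point with $B_1$, through which no further component of $D_1 \cap D_2$ passes and along which $D_1 \cdot D_2$ fails to be generically reduced. If $B \neq B_1$ this immediately contradicts $C_{D_1,D_2} \cap B \neq \emptyset$; if $B = B_1$ the additional incidence supplied by $E$ makes $C_{D_1,D_2}$ meet $B_1$ either non-transversally or in two points, contradicting \cref{transversal} together with the bound $\ideal{\Delta_0} \not\subset \ideal{B_1}^3$ of \cref{dimOne}. In every case one recovers, exactly as in \cref{onlyOne}, that the skyscraper sheaf $Q_{D_1,D_2}$ of \cref{leq2} is supported in two distinct points for general $D_1, D_2$ — i.e.\ \cref{twoPoints} holds, contradicting \cref{projPlus}.

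The main obstacle is this last step: one must verify that the restricted family of pairs $(D_1, D_2)$ coming from hyperplanes through a moving point of $f(E)$ still meets the Zariski-dense loci underlying \cref{mib,moreGeneral}, so that those conclusions are available, and one must track precisely how the total transform of a line through a point of $f(E)$ interacts with the several components of $\bs L$ and with the various $g$-exceptional divisors of \cref{niceResolution}. The boundary sub-case $B = B_1$ is the most delicate, and it is there that the one-dimensionality of $B_1 = \Phi_*^{-1}(H_0)$ and the transversality of \cref{transversal} are genuinely needed.
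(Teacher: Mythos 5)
Your reduction is sound and matches the paper's strategy in outline: both arguments reduce the statement to showing $f(\exc f)\subset H_0$ for a resolution $f=\Phi\circ g$, dispose of $\Delta$ and of any exceptional divisor with two-dimensional image via \cref{DeltaToH} and \cref{onlyOne}, and then seek a contradiction with \cref{mib} from a $g$-exceptional divisor $E$ with $g(E)\subset \bs L$ and $f(E)\not\subset H_0$. (The paper simply works with a resolution satisfying $g(\exc g)\subset\bs L$, so your extra case $g(E)\subset\Delta_0\setminus\bs L$ does not arise there, but your treatment of it is acceptable.)

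The gap is in the endgame, and it is exactly the one you flag as the ``main obstacle'' --- but the way you propose to get around it does not work. The pairs $(D_1,D_2)$ corresponding to hyperplanes through a point of the curve $f(E)$ sweep out a positive-codimension subfamily of $|L|\times|L|$ (only a codimension-one family of lines meets a fixed curve), so there is no reason this subfamily should meet the dense Zariski-open loci on which \cref{mib} and \cref{moreGeneral} hold: a dense open set can miss a proper subvariety entirely. Hence you may not apply \cref{mib} to these special pairs, and your two sub-case ``contradictions'' evaporate --- for a special pair, the movable curve meeting $B_1$ transversally in two points, each contributing $1$ to the length-two sheaf $Q_{D_1,D_2}$, is perfectly consistent and contradicts neither \cref{transversal} nor \cref{dimOne} by itself. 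The paper's actual resolution is different: from the special pair one extracts only that $D_1\cdot D_2$ is \emph{generically reduced along $B_1$} (forced because the two distinct intersection points of $C_{D_1,D_2}$ with the one-dimensional part of $\bs L$ must each contribute exactly $1$ to $Q_{D_1,D_2}$, whose total length is $2$ by \cref{leq2}); since generic reducedness along $B_1$ is a Zariski-open condition on the pair, it then holds for the \emph{general} pair, and that is what contradicts \cref{mib} --- the paper even footnotes precisely this subtlety. This openness-propagation step is the missing idea; without it (and without the accompanying claim that the union of divisors lying over $f(E)$ has connected, non-isolated image in $\bs L$, which guarantees the second intersection point really lies on a curve component of $D_1\cap D_2$ and so genuinely contributes to $Q_{D_1,D_2}$), the argument does not close.
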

\begin{proof}
Let, as usual, $\fun{g}{\tilde X}{X}$ be a resolution of $\Phi$ that satisfies 
\[
 g(\exc g) \subset \bs L
\]
and set $f = \Phi \circ g$. Then the conclusion of the lemma is implied by $f(\exc f) \subset H_0$. Suppose this is false. Take an irreducible component
\[
 A \subset f(\exc f), \quad \text{with} \quad A \not \subset H_0
\]
and let 
\[
 B = \bigcup_{\substack{E \subset \tilde X \text{ divisor} \\ f(E) = A}} E.
\]
Since $f(\Delta) \subset H_0$ and all $f$-exceptional divisors different from $\Delta$ are $g$-exceptional by \cref{onlyOne}, we have $B \subset \exc{g}$.\par
Observe that $\dim A = 1$ because otherwise a general $H \in |\sO{\Pn 3}{1}|$ would not intersect $A$, which contradicts $g(B) \subset g(\exc g) \subset \bs L$. \par
\begin{claim}
The analytic subset $g(B)$ is connected and it is not an isolated point of $\bs L$.
\end{claim}
\begin{proof}[Proof of the claim]
It is easy to see that $B$ is connected (cf.\  proof that \eqref{connected5} in \cref{onlyOne} is connected), hence so is $g(B)$. \par 
Suppose $g(B) = \{ x \}$ is an isolated point of $\bs L$. Then in particular 
\[
 E_0 = f_*^{-1}H_0 \not\subset B
\]
since $\dim g(E_0) = 1$ by \cref{dimOne}. Therefore all divisors $E \subset g^{-1}(x)$ are $f$-exceptional, which implies that $\dim f(g^{-1}(x)) \leq 1$. Thus for general $H_1,H_2 \in |\sO{\Pn 3}{1}|$, we have
\begin{align}\label{HHFE}
 H_1 \cap H_2 \cap f(g^{-1}(x)) = \emptyset.
\end{align}
Letting $D_1,D_2 \in |L|$ denote the divisors corresponding to $H_1,H_2$, \eqref{HHFE} implies that 
\[
 x \not\in C_{D_1,D_2} = g(f_*^{-1}(H_1 \cap H_2)).
\]
On the other hand, we know from \cref{movingCurve} that for general $D_1,D_2 \in |L|$, 
\[
 D_1 \cap D_2 = C_{D_1,D_2} \cup \bs L.
\]
Since $D_1 \cap D_2$ is purely of dimension $1$, this shows that there exists an irreducible curve $B_0 \subset \bs L$ such that $x \in B_0$, which is a contradiction.
\end{proof}
Considering lines connecting a general point of $A$ and a general point of $H_0$, it is now easy to see that there exist $D_1,D_2 \in |L|$ such that the movable curve $C_{D_1,D_2}$ intersects the $1$-dimensional part of $\bs L$ in at least two points\footnote{Note that we cannot immediately conclude that this is true for general $D_1,D_2 \in |L|$, which would be a contradiction to our assumption.}, one of which lies on $B_1$. Since $\com{0}{C_{D_1,D_2}}{Q_{D_1,D_2}} = 2$ by \cref{leq2}, this shows that $D_1 \cdot D_2$ is generically reduced along $B_1$, hence this is true for general $D_1,D_2 \in |L|$ and this contradicts \cref{mib}.
\end{proof}
We can now reduce the task of studying the topology of the smooth threefold $X$ to that of studying the topology of the singular surface $\Delta_0$.
\begin{lem}\label{b3Xb3Delta}
Under \cref{projPlus} we have
\[
 \betti 2 {\Delta_0} = \betti 2 {X} = 1 \quad \text{and} \quad \betti{3}{\Delta_0} = \betti{3}{X}.
\]
\end{lem}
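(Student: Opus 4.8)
The plan is to reduce the statement to two inputs: that $\betti 2 X = 1$, and that $\Phi$ induces an isomorphism $X\setminus\Delta_0\simeq\C^3$ --- the latter being essentially the content of \cref{isCompactifi} together with the preceding lemmas. Granting these, the assertion follows from the long exact sequence of the pair $(X,\Delta_0)$ and the fact that $\C^3$ has trivial compactly supported cohomology below the top degree.

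First I would record $\betti 2 X = 1$. By \cref{div-curv} we have $\Com 1 X {\sg X}=\Com 2 X {\sg X}=0$, so in the exponential sequence $0\to\Z\to\sg X\to\sg X^{\times}\to 0$ the associated maps identify $\Pic X$ with $\Com 2 X \Z$; since $\Pic X=\Z L$ this gives $\Com 2 X \Z\simeq\Z$, hence $\betti 2 X=1$ (and $\Com 2 X \Z$ is even torsion free, though we do not need that).

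Next I would use \cref{isCompactifi}: $\Phi^{-1}$ is holomorphic on $\Pn 3\setminus H_0$, and since by \cref{onlyOne} the only divisor contracted by $\Phi^{-1}$ is $H_0$ itself, $\Phi^{-1}$ is quasi-finite off $H_0$ and, being bimeromorphic onto the smooth threefold $X$, restricts to an open immersion of $\Pn 3\setminus H_0\simeq\C^3$; its image is forced to be $X\setminus\Delta_0$, because a point of $\Delta_0$ lying in the image would make $\Phi$ a local biholomorphism there, contradicting that $\Delta_0$ is contracted by $\Phi$. Thus $\Phi$ restricts to a biholomorphism $\fun{\Phi}{X\setminus\Delta_0}{\C^3}$.

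With this in hand the conclusion is a routine computation. By excision (valid since $X$ is compact and $\Delta_0$ is a closed analytic subset), $\Com k X {\Delta_0}\simeq\cCoh k {X\setminus\Delta_0}{\C}\simeq\cCoh k {\C^3}{\C}$, and Poincar\'e duality on the real $6$-manifold $\C^3$ gives $\cCoh k {\C^3}{\C}\simeq H_{6-k}(\C^3;\C)$, which vanishes for $k\le 5$. Feeding this into the long exact sequence of the pair
\[
 \cdots\to\Com k X {\Delta_0}\to\Com k X \C\to\Com k {\Delta_0}\C\to\Com{k+1}X{\Delta_0}\to\cdots
\]
shows that the restriction $\Com k X \C\to\Com k {\Delta_0}\C$ is an isomorphism for $k\le 4$; taking $k=2$ and $k=3$, and combining the first case with $\betti 2 X=1$, yields $\betti 2 {\Delta_0}=\betti 2 X=1$ and $\betti 3 {\Delta_0}=\betti 3 X$. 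The only step that is not purely formal is the identification $X\setminus\Delta_0\simeq\C^3$; once that is granted, as it is by \cref{isCompactifi} and \cref{onlyOne}, everything else is bookkeeping with $\cCoh\ast {\C^3}{\C}$ and the exponential sequence.
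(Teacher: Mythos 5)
Your proposal is correct and follows essentially the same route as the paper: identify $X\setminus\Delta_0$ with $\Pn 3\setminus H_0\simeq\C^3$ via \cref{isCompactifi}, then conclude from the exact sequence relating $\cCoh{k}{X\setminus\Delta_0}{\C}$, $\Com{k}{X}{\C}$ and $\Com{k}{\Delta_0}{\C}$, together with $\betti 2 X=1$ from the exponential sequence. The only cosmetic difference is that the paper obtains the open identification by noting $\bs L\subset\Delta_0$ (since $\Delta_0\in|L|$ by \cref{DeltaToH}), so that $\Phi$ itself is holomorphic on $X\setminus\Delta_0$ and is the two-sided inverse of $\Phi^{-1}|_{\Pn 3\setminus H_0}$, which settles the surjectivity onto $X\setminus\Delta_0$ that your argument only addresses in one direction.
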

\begin{proof}
The first part of \cref{DeltaToH} implies that  $\bs{L} \subset \Delta_0$, hence $\Phi^{-1}_* H_0 \subset \Delta_0$. Together with the second part and the previous lemma we get that the holomorphic map
\[
 \fun{\Phi_{X \setminus \Delta_0}}{X \setminus \Delta_0}{\Pn{3} \setminus H_0}
\]
is an isomorphism. For all $k \in \N$ there is, e.g.\ by \cite[Chapter II, 10.3]{Bredon}, an exact sequence
\[
 \begin{tikzcd}
  \cCoh{k}{X \setminus \Delta_0}{\C} \arrow{r} & \Com{k}{X}{\C} \arrow{r} & \Com{k}{\Delta_0}{\C} \arrow{r} & \cCoh{k+1}{X \setminus \Delta_0}{\C}.
 \end{tikzcd}
\]
 Since $X \setminus \Delta_0 \simeq \Pn{3} \setminus H_0$ is the affine $3$-space,
\[
 \cCoh{k}{X \setminus \Delta_0}{\C} = 0, \quad \text{for} \quad k \in \{1, \ldots, 5\},
\]
hence the lemma follows.
\end{proof}
Thus we are reduced to the problem of bounding $\betti{3}{\Delta_0}$ and we finish the proof of \cref{projective} by using the resolution from \cref{niceResolution} to show that
\begin{align}\label{ineq1}
 \betti{3}{\Delta_0} \leq 1.
\end{align}
Note that $f$ restricts to a morphism $\hat \Delta \to C_1$. Since $f|_{B_1^{\prime }}$ is the normalization of $C_1$ by property (\ref{prop3}) of \cref{niceResolution} and since $\hat \Delta$ is normal, there is a factorization
\begin{align}\label{factorizationB1}
 \hat \Delta \to B_1^{\prime} \to C_1
 \end{align}
of $f|_{\hat \Delta}$ such that the restriction of the first map to $B_1^{\prime}$ is just the identity. In particular the first morphism in \eqref{factorizationB1} has connected fibres -- otherwise the Stein factorization would tell us that the restriction 
\[
 B_1^{\prime} \subset \hat \Delta \to B_1^{\prime}
\]
is not the identity. Since $\hat \Delta$ is smooth, the general fibre is smooth and since the general fibre is contained in a $1$-dimensional fibre of $f$, it is actually a $\Pn{1}$. Therefore, by contracting $(-1)$-curves contained in fibres of the first morphism in \eqref{factorizationB1} as long as they exist, we see that this morphism factors through a Hirzebruch surface 
\[
\begin{tikzcd}
    \hat \Delta \arrow{r}{h} & \hir e \arrow{r}{\pi} & B_1^{\prime} \simeq \Pn 1.
\end{tikzcd}
\]
Because $h$ is a birational morphism of smooth surfaces we have 
\begin{align}\label{b3DeltaIs0}
 \betti{3}{\hat \Delta} = \betti{3}{\hir e} = 0.
\end{align}
For an arbitrary birational map $\hat \Delta \to S$ to a non-normal surfaces $S$ the third Betti number might, of course, increase. We show that this is not the case with $\Delta_0$. To this end, set $N = \phi(\exc{\phi})$ and note that $\phi^{-1}(N) = \exc{\phi}$. Therefore, by \cref{MayViet}, there is an exact sequence
\[
 \begin{tikzcd}[column sep = 1.5em]
 \Com{2}{\Delta_0}{\C} \arrow{r} &\Com{2}{\hat \Delta}{\C} \oplus \Com{2}{N}{\C} \arrow{r} & \Com{2}{\exc{\phi}}{\C} \arrow{r} &
 \Com{3}{\Delta_0}{\C} \arrow{r} & 0,
 \end{tikzcd}
\]
where we use that $\betti{3}{\hat \Delta} = 0$ by \eqref{b3DeltaIs0} and that $\betti{3}{N} = 0$ for dimensional reason. Using the fact that the Euler characteristic of an exact sequence\footnote{That is, the alternating sum of the dimensions of its entries.} vanishes, we get 
\begin{align}\label{ineq2}
 \betti 3 {\Delta_0} &\leq \betti 2 {\exc{\phi}} - \betti 2 N - \betti 2 {\hat \Delta} + \betti 2 {\Delta_0}  \\
 &=  \betti 2 {\exc{\phi}} - \betti 2 N - \betti 2 {\hat \Delta} + 1. \notag
\end{align}
 Write the set of irreducible $1$-dimensional components of $\exc{\phi}$ as
\[
 \operatorname{1-dim}(\exc{\phi}) = \{ \hat C_1, \ldots \hat C_q, \hat f_1, \ldots, \hat f_r, R_1, \ldots, R_s\},
\]
such that
\begin{align*}
&\dim \pi ( h(\hat C_i) )= 1, \quad \text{for all} \quad i \in \{1, \ldots, q \},\\
&\dim \pi ( h(\hat f_j) )= 0, \quad \text{and} \quad \hat f_j \not\subset \exc h, \quad \text{for all} \quad j \in \{1, \ldots, r\} \quad \text{and}\\
& R_k \subset \exc{h} \quad \text{for all} \quad k \in \{1, \ldots,s\}
\end{align*} 
and let $p_1, \ldots p_r \in B_1^{\prime}$ be such that 
\[
 \pi (h(\hat f_j)) = \{ p_j \}, \quad \text{for all} \quad  j \in \{1, \ldots, r\}.
\]
Furthermore write
\begin{align*}
 \operatorname{1-dim}(\exc{h}) &= \{ R_1, \ldots, R_s, R_{s+1}, \ldots, R_{s+t}\}, \\
 \operatorname{1-dim} N &= \{ N_1, \ldots, N_u \}.
\end{align*}
Then
\[
  \betti{2}{\exc{\phi}} = q+r+s, \qquad \betti{2}{\hat \Delta} = 2 + s+t, \qquad \betti 2 N = u.
\] 
So in order proof $\betti 3 {\Delta_0} \leq 1$, it is, by (\ref{ineq2}), enough to show that
\[
 q + r + s - u - ( 2 + s + t ) = q - 2 + r - (u + t) \leq 0.
\]
We achieve this by showing 
\[
 q \leq 2 \qquad \text{and} \qquad  r \leq u + t
\]
and start by showing the first inequality. This is a consequence of the fact that $\phi^* \dualizing{\Delta_0}$ has non-trivial global sections and property (\ref{prop5}) of \cref{niceResolution}.\par
Indeed, we have $\supp A \subset \exc{\phi}$. So we can write 
\[
 A = \alpha_1 \hat C_1 + \cdots + \alpha_{q^{\prime}} \hat C_{q^{\prime}} + A^{\prime} 
\]
with $q^{\prime} \leq q$, $\alpha_i \in \Z_{\geq 1}$ and an effective divisor $A^{\prime}$ that does not contain any of the $\hat C_1, \ldots, \hat C_q$. Looking at the commutative square
\[
\begin{tikzcd}
    \hat \Delta \arrow{r}{h} \arrow{d}{\mu} & \hir e \arrow{d}{\pi}\\
    \tilde \Delta \arrow{r} & B_1^{\prime}
\end{tikzcd}
\]
and recalling that, for all $i \in \{1, \ldots, q\}$, $\hat C_i$ is not contracted by $\pi \circ h$ we see that $\hat C_i \not\subset \exc{\mu}$. Since by property (\ref{prop5}) of \cref{niceResolution}
\[
 \hat C_i \subset \exc{\mu} \cup \supp A, 
\]
we get $\hat C_i \subset \supp A$. This shows that $q = q^{\prime}$. \par
Let $R$ denote the effective divisor satisfying 
\[
 \dualizing{\hat \Delta} = h^*\dualizing{\hir e}(R),
\]
so that we have
\[
 \phi^* \dualizing{\Delta_0} = \dualizing{\hat \Delta}(A) = h^*\dualizing{\hir e}(A+R).
\]
This entails the following inclusion
\[
\Com{0}{\hat \Delta}{\phi^* \dualizing{\Delta_0}^{-1}} \subset \Com{0}{\hat \Delta}{h^* \dualizing{\hir e}^{-1}(- \sum_{i=1}^q \hat C_i)} = \Com{0}{\hir e}{\dualizing{\hir e}^{-1}(- \sum_{i=1}^q C_i)},
\]
where we set $C_i = h(\hat C_i)$. A line bundle $L^{\prime}$ on the Hirzebruch surface $\hir e$ has no non-trivial sections if $L^{\prime} \cdot \f < 0$. Therefore, since 
\[
 \dualizing{\hir e}^{-1}(- \sum_{i=1}^q C_i) \cdot \f = 2 - \sum_{i=1}^q C_i \cdot \f \leq 2 - q
\]
and since 
\[
 \com{0}{\hat \Delta}{\phi^*\dualizing{\Delta_0}^{-1}} \geq \com{0}{\Delta_0}{\dualizing{\Delta_0}^{-1}} = \com{0}{\Delta_0}{L|_{\Delta_0}} = \com 0 X L - 1 = 3,
\]
we must have $q \leq 2$. \par 
We now show $r \leq u+t$. Certainly $B_1 \subset N$, as $B_1 \subset \sing{\Delta_0}$. Assume that $N_1 = B_1$, i.e.\ 
\[
 \operatorname{1-dim}(N) = \{ B_1, N_2, \ldots, N_u \}.
\]
Since $\pi \circ h(B_1^{\prime}) = B_1^{\prime}$, we have $B_1^{\prime} \not\subset h^{-1}(f_i)$, for all $i$, where $f_i = h(\hat f_i)$. If there exists 
\[
 B_1^{\prime} \neq B_1^{\prime\prime} \subset \hat \Delta
\]
with $g(B_1^{\prime\prime}) = B_1$, we can arrange that $B_1^{\prime\prime} \not\subset h^{-1}(f_i)$ for all $i \geq 2$. Then, by property (\ref{prop4}) of \cref{niceResolution},
\begin{align}\label{notB1}
 B_1 \not\subset \phi ( h^{-1}(f_i)) \quad \text{for all} \quad i \geq 2.
\end{align}
The inequality $r \leq u+t$ is shown if we construct a map 
\[
 \fun{\alpha}{\{f_2 , \ldots, f_r \}}{\{ N_2, \ldots, N_u, R_{s+1}, \ldots, R_{s+t}\}}
\]
and show that it is injective. First recall that
\[
 \fun{\phi \circ (\pi \circ h|_{B_1^{\prime}})^{-1}}{B_1^{\prime}}{B_1}
\]
is an ismorphism and set, for all $i \in \{1, \ldots r\}$, 
\[
q_i = \phi \circ (\pi \circ h|_{B_1^{\prime}})^{-1}(p_i).
\]
Now let $i \in \{2, \ldots, r\}$. The preimage $h^{-1}(f_i)$ is not contracted by $\phi$ because otherwise we would have the following contradiction
\[
 0 > (h^* (f_i))^2 = (f_i)^2 = 0.
\]
Since the morphism $h$ has connected fibres, $h^{-1}(f_i)$ is connected and there exists an irreducible curve $\hat Q_i \subset h^{-1}(f_i)$ that is not contracted by $\phi$ and with 
\[
 q_i \in \phi(\hat Q_i).
\]
We set $Q_i = \phi (\hat Q_i)$. If $Q_i \subset N$, we have $Q_i \neq B_1$ by \eqref{notB1} and we define 
\[
 \alpha(f_i) = Q_i.
\]
If $Q_i \not\subset N$ then $\hat Q_i \not\subset \exc{\phi}$ and in particular
\[
 \hat Q_i \not\subset \hat f_i \cup R_1 \cup \ldots \cup R_s,
\]
which shows that $\hat Q_i \in \{R_{s+1}, \ldots, R_{s+t}\}$. Then we define
\[
 \alpha(f_i) = \hat Q_i.
\]
It remains to show that $\alpha$ is injective. Let $i,j \in \{2, \ldots,r\}$ and assume 
\[
 \alpha(f_i) = \alpha(f_j).
\]
First consider the case that $\alpha(f_i) \in \{R_{s+1}, \ldots R_{s+t}\}$. Then, in the notation from above,
\[
 \hat Q_i = \alpha(f_i)  = \alpha(f_j) = \hat Q_j,
\]
hence
\[
 \{p_i\} = \pi \circ h(\hat Q_i) = \pi \circ h(\hat Q_j) = \{p_j\},
\]
which shows that $f_i = f_j$. \par
Now consider the case that $\alpha(f_i) \in \{ N_2, \ldots, N_u \}$ and assume $f_i \neq f_j$ in order to derive a contradiction. Then $q_i \neq q_j$ and since $Q_i = \alpha(f_i) = \alpha(f_j)= Q_j$, we have
\[
 \{q_i,q_j\} \subset Q_i \cap B_1,
\]
which shows that 
\begin{align}\label{tooBig}
\com{1}{Q_i+B_1}{\sg{Q_i+B_1}} \geq 1.
\end{align}
Let $p \in C_1 \subset \Pn 3$ be the image of $p_i$ under the normalization map $B_1^{\prime} \to C_1$, in other words $f(\hat Q_i) = \{p\}$. Let $H_1,H_2 \in |\sO{\Pn 3}{1}|$ be such that 
\[
 p \in H_1 \cap H_2 \qquad \text{and} \qquad H_1 \cap H_2 \not\subset H_0,
\]
and let $D_1,D_2 \in |L|$ be the divisors that correspond to $H_1,H_2$ via $\Phi$. Then we have
\[
 Q_i + B_1 \subset D_1 \cdot D_2 
\]
and by \cref{h1eq1} and \eqref{tooBig} equality holds. This is a contradiction, since also  $C_{D_1,D_2} = \Phi_*^{-1}(H_1 \cap H_2) \subset D_1 \cdot D_2$. Therefore $\alpha$ is injective and $r \leq u+t$, hence $\betti{3}{\Delta_0} \leq 1$. \par
Together with \cref{b3Xb3Delta}, this shows that $\betti{3}{X} \leq 1$. Since Hodge decomposition holds on $X$, we have, in fact, that $\betti 3 X = 0$. This finishes the proof of \cref{projective}. 
\raggedright
\printbibliography
\end{document}